\newtheorem{thm}{Theorem}[section]
\newtheorem{lem}[thm]{Lemma}
\newtheorem{defn}[thm]{Definition}
\newtheorem{prop}[thm]{Proposition}
\newtheorem{exam}[thm]{Example}
\newtheorem{cor}[thm]{Corollary}
\newtheorem{rem}[thm]{Remark}
\DeclareMathOperator{\Aut}{Aut}
\DeclareMathOperator{\Per}{Per}
\DeclareMathOperator{\Orb}{Orb}
\DeclareMathOperator{\Fix}{Fix}
\DeclareMathOperator{\Inert}{Inert}
\DeclareMathOperator{\Simp}{Simp}
\DeclareMathOperator{\Sym}{Sym}
\DeclareMathOperator{\Alt}{Alt}
\DeclareMathOperator{\MPerOp}{Per}
\DeclareMathOperator{\roots}{root}
\DeclareMathOperator{\defect}{Def}
\DeclareMathOperator{\per}{per}
\newcommand{\setsep}{\;|\;}
\newcommand{\MPer}{\MPerOp^{\min}}
\newcommand{\bbN}{\mathbb{N}}
\newcommand{\bbZ}{\mathbb{Z}}
\newcommand{\cA}{\mathcal{A}}
\newcommand{\spatial}{\hat{\Psi}}
\newcommand{\stp}{\textrm{St}_{p}}
\newcommand{\sub}{\textrm{Sub}}
\newcommand{\topt}{\uparrow}
\newcommand{\bott}{\downarrow}
\newcommand{\abs}[1]{\left\vert#1\right\vert}
\DeclareMathOperator{\aut}{Aut}
\newcommand{\autinf}{\aut^{\infty}}
\newcommand{\autinfn}{\autinf(\sigma_n)}
\newcommand{\autautinfn}{\aut(\autinfn)}
\newcommand{\autn}{\aut(\sigma_n)}
\newcommand{\autautn}{\aut(\autn)}
\DeclareMathOperator{\inert}{Inert}
\newcommand{\inertinf}{\inert^{\infty}}
\newcommand{\inertinfn}{\inertinf(\sigma_n)}
\newcommand{\orpre}{\aut^+}
\newcommand{\orpren}{\orpre(\autinfn)}
\newcommand{\degonen}{\aut_1(\autinfn)}
\newcommand{\linf}{{}^\infty}
\newcommand{\rinf}{{}^\infty}
\newcommand{\simp}{\textrm{Simp}}
\newcommand{\asa}[1]{\aut(\autinf(#1))}
\newcommand{\asan}{\asa{\sigma_n}}
\newcommand{\profint}{\hat{\bbZ}}
\newcommand{\profact}{\mathcal{N}}
\newcommand{\modulo}{\;\textrm{mod}\;}
\newcommand{\zmod}[1]{\bbZ / #1\bbZ}
\newcommand{\verraum}{Verräumlichung\xspace}
\newcommand{\smallcr}{\scriptscriptstyle \mathcal{CR}}
\newcommand{\sinfn}{\mathcal{S}^{\infty}(\sigma_{n})}
\newcommand{\crinfn}{\mathcal{CR}^{\infty}(\sigma_{n})}
\DeclareMathOperator{\id}{id}
\begin{document}
\title[Hyperspatiality and stabilized automorphism groups]{Hyperspatiality for isomorphisms of stabilized automorphism groups of shifts of finite type}
\author{Jeremias Epperlein}
\author{Scott Schmieding}
\thanks{SS was partially supported by the National Science Foundation grant DMS-2247553.}
\maketitle
\begin{abstract}
Given a homeomorphism $T \colon X \to X$ of a compact metric space $X$, the stabilized automorphism group $\aut^{\infty}(T)$ of the system $(X,T)$ is the group of self-homeomorphisms of $X$ which commute with some power of $T$. We study the question of spatiality for stabilized automorphism groups of shifts of finite type. We prove that any isomorphism $\Psi \colon \aut^{\infty}(\sigma_{m}) \to \autinfn$ between stabilized automorphism groups of full shifts is spatially induced by a homeomorphism $\hat{\Psi}$ between respective stabilized spaces of chain recurrent subshifts. This spatialization in particular gives a bijection between the sets of periodic points which intertwines some powers of the shifts, and this bijection recovers the isomorphism at the level of the faithful actions on the sets of periodic points. We also prove that the outer automorphism group of $\autinfn$ is uncountable, and deduce several other properties of $\autinfn$ using the spatiality results.
\end{abstract}

\section{Introduction}
For a homeomorphism $T \colon X \to X$ of some compact metric space, the automorphism group of the system $(X,T)$ is the group $\aut(T)$ of all homeomorphisms $\varphi \colon X \to X$ such that $\varphi \circ T = T \circ \varphi$. A broad question that has been of significant interest is the following: to what extent does the structure of the automorphism group reflect the dynamics of the underlying system? Automorphism groups of dynamical systems have been heavily studied especially in the setting of symbolic systems. There the most fundamental objects are the full shift systems $(X_{n},\sigma_{n})$ where $X_{n}$ is the compact space of biinfinite strings on $n$ symbols and $\sigma_{n} \colon X_{n} \to X_{n}$ is the shift map $\sigma_{n}(x)_{i} = x_{i+1}$. The groups $\aut(\sigma_{n})$ were first studied by Hedlund et al.~\cite{Hedlund1969}, and while much has been written on them, they remain quite mysterious (see both~\cite{BLR} and the very brief survey in~\cite[Section 7]{BSstablealgebra}).

Recently, a stabilized version of the automorphism group of a system has been studied in the context of symbolic systems. The stabilized automorphism group of a system $(X,T)$ is the group $\autinf(T)$ of all homeomorphisms $\varphi \colon X \to X$ such that $\varphi \circ T^{k} = T^{k} \circ \varphi$ for some $k \in \mathbb{N}$. It is clear that the stabilized automorphism group always contains the nonstabilized one, and in general, the difference between the two depends on the underlying dynamics (see for instance~\cite{Jones2022,EspinozaJones2024} for an analysis of some stabilized groups outside of the shift of finite type setting). Some fundamental properties of these groups in the setting of shifts of finite type can be found in~\cite{hartmanStabilizedAutomorphismGroup2022}.

The stabilized automorphism groups of (nontrivial) shifts of finite type are always countable, and have a rich, complicated group structure. In~\cite{hartmanStabilizedAutomorphismGroup2022}, it was shown that the stabilized automorphism group of a full shift is always an extension of a finite rank free abelian group by an infinite simple group, called the subgroup of stabilized inert automorphisms. This was generalized by Salo in~\cite{SaloGateLattices}, where it was proved that for a mixing shift of finite type (and for many classes of shifts of finite type over more general groups), the subgroup of stabilized inert\footnote{For subshifts over more general groups, by the subgroup of stabilized inert automorphisms we mean the commutator of the gate lattice subgroup, as defined in~\cite{SaloGateLattices}.} automorphisms is always an infinite simple group. It was shown in~\cite{hartmanStabilizedAutomorphismGroup2022} that if $\aut^{\infty}(\sigma_{m})$ and $\aut^{\infty}(\sigma_{n})$ are isomorphic, then $m$ and $n$ must have the same number of prime factors. Following this, it was then proved in~\cite{schmiedingLocalMathcalEntropy2022} that $\autinf(\sigma_{m})$ and $\autinf(\sigma_{n})$ are isomorphic if and only if $m^{k} = n^{j}$.
%\sloppy Suppose $G$ is a group acting on some compact metric space $X$ via a homomorphism $\varphi \colon G \to \textrm{Homeo}(X)$. There is an induced action $\mathcal{K}_{\varphi} \colon G \to \textrm{Homeo}(\mathcal{K}(X))$ which we refer to as the \emph{hyperspace action} associated to the action of $G$ on $X$. If $U \subset \mathcal{K}(X)$ is invariant under $\mathcal{K}_{\varphi}$, we call the restriction $\mathcal{K}^{U}_{\varphi}$ of $\mathcal{K}_{\varphi}$ to $U$ the $U$\emph{-hyperspace action} associated to the action of $G$ on $X$.

%and consider the associated hyperspace actions $\mathcal{S}_{\varphi_{i}} \colon G_{i} \to \textrm{Homeo}(\mathcal{K}(X_{i}))$.

%Now suppose $G_{1},G_{2}$ are groups with some prescribed actions on some compact metric spaces $\varphi_{i} \colon G_{i} \to \textrm{Homeo}(X_{i})$.

%\begin{defn}
%An isomorphism $\Phi \colon G_{1} \to G_{2}$ is $\varphi_{1}$-$\varphi_{2}$-spatial if there exists a homeomorphism $h \colon X_{1} \to X_{2}$ such that for all $g \in G_{1}$ we have
%$$\varphi_{2}(\Phi(g)) = h\varphi_{1}(g)h^{-1}.$$
%\end{defn}

%Fix $n \ge 2$, let $\degonen$ denote the subgroup of $\autinf(\sigma_{n})$ of degree one orientation preserving automorphisms. There is an action of $\degonen$ on $\mathcal{S}(X_{n})$.
%\begin{thm}
%Fix $n \ge 2$, let $\degonen$ denote the subgroup of $\autinf(\sigma_{n})$ of degree one orientation preserving automorphisms. Then any $\Phi \in \degonen$ is hyperspatial.
%\end{thm}
Here we concern ourselves with the question of \emph{spatiality} for isomorphisms of stabilized automorphism groups of shifts of finite type: if $\autinf(\sigma_{X})$ and $\autinf(\sigma_{Y})$ are isomorphic for some shifts of finite type $(X,\sigma_{X})$ and $(Y,\sigma_{Y})$, to what extent must the isomorphism be spatial? By spatiality we mean the following. Suppose $G_{1},G_{2}$ are groups with prescribed faithful actions $\rho_{i} \colon G_{i} \to \textrm{Homeo}(X_{i})$ where $X_{i}$ are some compact metric spaces. We say an isomorphism $\Psi \colon G_{1} \to G_{2}$ is \emph{spatial} if there exists a homeomorphism $\psi \colon X_{1} \to X_{2}$ such that
$\rho_2(\Psi(g)) = \psi \circ \rho_1(g) \circ \psi^{-1}$ for all $g \in G_{1}$

Spatiality of isomorphisms for various classes of groups has a long history, which we do not attempt to comprehensively survey here. Giordano, Putnam, and Skau proved in~\cite{GPSfullgroups} that any isomorphism of topological full groups of Cantor minimal systems must be spatial. Much more generally, an important collection of spatiality results was proved by Rubin in~\cite{Rubin1989,RubinShortProof} for sufficiently nice actions on locally compact spaces. Rubin's results have found many applications to a wide range of groups (see, for instance, the list of references found in~\cite{RubinShortProof}). We note that for both the topological full group case, and so-called Rubin actions, a key property is the existence of many elements with nontrivial support.

In contrast to settings such as topological full groups or Rubin actions, there are various difficulties which arise for the question of spatiality in the context of automorphism groups of shifts of finite type. Arguably the most substantial is that there are \emph{no elements with nontrivial support}: if $\alpha \in \autinf(\sigma_{n})$ acts by the identity on a nonempty open set, then $\alpha$ must be the identity map. In fact, this phenomenon holds once one considers the automorphism group of any transitive subshift, due to the presence of a dense orbit under the shift map. As a consequence, many well-known techniques do not apply, and we must use different methods. Here we take an approach to spatiality by working with the space of all subsystems of our underlying system.

We briefly outline some notation in order to state our results. Given a compact metric space $X$, denote by $\mathcal{K}(X)$ the space of all nonempty compact subsets of $X$. With the Hausdorff metric, $\mathcal{K}(X)$ is a compact metric space. If $\rho \colon G \to \textrm{Homeo}(X)$ is an action of a group $G$ on $X$, there is an induced action $\rho^{\mathcal{K}} \colon G \to \textrm{Homeo}(\mathcal{K}(X))$.

%\begin{defn}
%Suppose $G_{1},G_{2}$ are groups with actions $\varphi_{i} \colon G_{i} \to \textrm{Homeo}(X_{i})$ where $X_{i}$ are some compact metric spaces. We say an isomorphism $\Phi \colon G_{1} \to G_{2}$ is \emph{hyperspatial} if there exists a homeomorphism $h \colon \mathcal{K}(X_{1}) \to \mathcal{K}(X_{2})$ such that for all $g \in G_{1}$, we have
%$$\mathcal{K}_{\varphi_{2}}(\Phi(g_{1})) = h\mathcal{K}_{\varphi_{1}}(g)h^{-1}.$$
%\sloppy If in addition the actions of $G_{1}$ and $G_{2}$ leave invariant some subsets $U_{1},U_{2}$ of $\mathcal{S}(X_{1}), \mathcal{S}(X_{2})$, respectively, we say an isomorphism $\Phi \colon G_{1} \to G_{2}$ is \emph{$U_{1},U_{2}$-hyperspatial} if there exists a homeomorphism $h \colon U_{1} \to U_{2}$ such that for all $g \in G_{1}$, we have
%$$\mathcal{K}_{\varphi_{2}}(\Phi(g_{1})) = h\mathcal{K}_{\varphi_{1}}(g)|_{U_{1}}h^{-1}.$$
%\end{defn}

\begin{defn}
Suppose $G_{1},G_{2}$ are groups with actions $\rho_{i} \colon G_{i} \to \textrm{Homeo}(X_{i})$ where $X_{i}$ are some compact metric spaces. We say an isomorphism $\Phi \colon G_{1} \to G_{2}$ is \emph{hyperspatial} if $\Psi$ is spatial with respect to the actions $\rho_{i}^{\mathcal{K}} \colon G \to \textrm{Homeo}(\mathcal{K}(X_{i}))$, i.e. there exists a homeomorphism $\psi \colon \mathcal{K}(X_{1}) \to \mathcal{K}(X_{2})$ such that for all $g \in G_{1}$, we have
\[\rho_{2}^{\mathcal{K}}(\Phi(g)) = \psi \circ \rho_{1}^{\mathcal{K}}(g) \circ \psi^{-1}.\]
\sloppy If in addition the actions of $G_{1}$ and $G_{2}$ leave invariant some subsets $U_{1},U_{2}$ of $\mathcal{K}(X_{1}), \mathcal{K}(X_{2})$, respectively, we say an isomorphism $\Phi \colon G_{1} \to G_{2}$ is \emph{$U_{1},U_{2}$-hyperspatial} if it is spatial with respect to the actions $\rho_{i}^{\mathcal{K}} \colon G_{i} \to \textrm{Homeo}(U_{i})$.
\end{defn}

For a system $(X,T)$, we define the \emph{space of subsystems} of $(X,T)$ to be the subset $\mathcal{S}(T) \subseteq \mathcal{K}(X)$ consisting of compact subsets $Y \subseteq X$ for which $T(Y)=Y$. We define the \emph{stabilized space of subsystems} to be the subset $\mathcal{S}^{\infty}(T) \subseteq \mathcal{K}(X)$ which consists of compact subsets $Y \subseteq X$ for which $T^{k}(Y) = Y$ for some $k \in \mathbb{N}$.

We specialize now to the case of full shifts $(X_{n},\sigma_{n})$ and the groups $\aut(\sigma_{n})$ and $\autinfn$. Since every element of $\aut(\sigma_{n})$ commutes with $\sigma_{n}$, the induced action of $\aut(\sigma_{n})$ on $\mathcal{K}(X_{n})$ leaves $\mathcal{S}(\sigma_{n})$ invariant, and hence induces in a natural way an action of $\aut(\sigma_{n})$ on $\mathcal{S}(\sigma_{n})$. Likewise, the action of $\autinf(\sigma_{n})$ on $\mathcal{K}(X_{n})$ leaves invariant $\mathcal{S}^{\infty}(\sigma_{n})$, giving rise to an action of $\autinf(\sigma_{n})$ on the stabilized space of subsystems of $(X_{n},\sigma_{n})$. We note that, while the action of $\aut(\sigma_{n})$ on $\mathcal{S}(\sigma_{n})$ has nontrivial elements which act by the identity on open subsets, for many clopen subsets of $\mathcal{S}(\sigma_{n})$ there are no elements with support contained within that clopen subset.

Recall a system $(X,T)$ is said to be chain recurrent if for all $\epsilon > 0$ there exists a finite sequence of points $x = x_{0},\ldots,x_{n}=x$ such that $d(T(x_{i}),x_{i+1}) < \epsilon$ for all $0 \le i < n$. The space of chain recurrent subsystems of $(X_{n},\sigma_{n})$ is a large compact subset of $\mathcal{S}(\sigma_{n})$; it contains, for instance, all finite disjoint unions of transitive subshifts in $X_{n}$. We prove in Proposition~\ref{prop:CRlimitperiodic} that the set of chain recurrent subshifts in $\mathcal{S}(\sigma_{n})$ is precisely the closure of the collection of finite subsystems in $\mathcal{S}(\sigma_{n})$.

For each $k \ge 1$, the space of chain recurrent $\sigma_{n}^{k}$-subshifts $\mathcal{CR}(\sigma_{n}^{k})$ in $\mathcal{S}(\sigma_{n}^{k})$ is invariant under the action of $\aut(\sigma_{n}^{k})$. We define the stabilized space of chain recurrent subshifts $\mathcal{CR}^{\infty}(\sigma_{n})$ to be the union $\bigcup_{k=1}^{\infty}\mathcal{CR}(\sigma_{n}^{k})$. It is immediate from the definition that the space $\mathcal{CR}^{\infty}(\sigma_{n})$ is invariant under the action of $\autinf(\sigma_{n})$. Given $\alpha \in \autinf(\sigma_{n})$, we let $\alpha_{\scriptscriptstyle \mathcal{CR}}$ denote its corresponding action on $\mathcal{CR}^{\infty}(\sigma_{n})$.

The space $\mathcal{CR}^{\infty}(\sigma_{n})$ inherits a topology as a subspace of $\mathcal{K}(\sigma_{n})$, but also carries the final topology on $\mathcal{CR}^{\infty}(\sigma_{n})$ coming from the filtration $\mathcal{CR}^{\infty}(\sigma_{n}) = \bigcup_{k=1}^{\infty}\mathcal{CR}(\sigma_{n}^{k})$. In the final topology, a set $U$ is open in $\mathcal{CR}^{\infty}(\sigma_{n})$ if and only if $U \cap \mathcal{CR}(\sigma_{n}^{k})$ is open for every $k$. Since each $\aut(\sigma_{n}^{k})$ acts continuously on $\mathcal{CR}(\sigma_{n}^{k})$, the action of $\autinf(\sigma_{n})$ is also continuous with respect to this final topology.

Our main theorem is the following hyperspatiality result.

%Given $m,n \ge 2$, it was shown in \cite{schmiedingLocalMathcalEntropy2022} that if $\Psi \colon \autinf(\sigma_{m}) \to \autinf(\sigma_{n})$ is an isomorphism, then $\Psi(\sigma_{m}^{k}) = \sigma_{n}^{j}$ for some integers $k,j \ne 0$. It follows that $\Psi(\aut(\sigma_{m}^{rk})) = \aut(\sigma_{n}^{rj})$ for all $r \ge 1$.

\begin{thm}[Theorem~\ref{thm:maintheorem2} in the main text]\label{thm:maintheorem}
Let $m,n$ be positive integers and suppose that $\Psi \colon \autinf(\sigma_{m}) \to \autinf(\sigma_{n})$ is an isomorphism of groups. Then there exists a homeomorphism $\hat{\Psi} \colon \mathcal{CR}^{\infty}(\sigma_{m}) \to \mathcal{CR}^{\infty}(\sigma_{n})$ with respect to the final topologies such that the isomorphism $\Psi$ is $\mathcal{CR}^{\infty}(\sigma_{m}), \mathcal{CR}^{\infty}(\sigma_{n})$-hyperspatial via $\hat{\Psi}$, i.e. for all $\varphi \in \autinf(\sigma_{m})$ we have
$$\Psi(\varphi)_{\scriptscriptstyle \mathcal{CR}} = \hat{\Psi}\varphi_{\scriptscriptstyle \mathcal{CR}}\hat{\Psi}^{-1}.$$
Moreover, the map $\hat{\Psi}$ takes $\sigma_{m}$-periodic points to $\sigma_{n}$-periodic points, and intertwines the actions of some powers of $\sigma_{m}$ and $\sigma_{n}$ on their respective set of periodic points.
\end{thm}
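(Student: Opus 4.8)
The plan is to recover the action on $\mathcal{CR}^{\infty}$ from the abstract group in three stages: reconstruct the $\autinf$-action on periodic points, extend it to the dense set of finite subsystems, and then pass to the closure using Proposition~\ref{prop:CRlimitperiodic}. Throughout, one first reduces to the subgroup of stabilized inert automorphisms: by~\cite{hartmanStabilizedAutomorphismGroup2022} the group $\autinf(\sigma_n)$ is an extension of a finite rank free abelian group by the (simple, hence perfect) group of stabilized inert automorphisms, so the latter is exactly the commutator subgroup $[\autinf(\sigma_n),\autinf(\sigma_n)]$ and is therefore characteristic; thus $\Psi$ restricts to an isomorphism between the stabilized inert subgroups, and it suffices to reconstruct the periodic-point action from the inert subgroup. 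It is also convenient to note that $\aut(\sigma_n^{k})$ is the centralizer of $\sigma_n^{k}$ inside $\autinf(\sigma_n)$, so that the filtration $\autinf(\sigma_n)=\bigcup_{k}\aut(\sigma_n^{k})$ is encoded by a chain of centralizers of elements.

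The heart of the argument is the first stage. Each $\aut(\sigma_n^{k})=\aut(\sigma_{n^{k}})$ acts on the finite set of points of period dividing $k$, which has cardinality $n^{k}$, and the union of these actions is the faithful action of $\autinf(\sigma_n)$ on its set of periodic points. I would characterize, purely group-theoretically, the pointwise stabilizers of this action --- equivalently, the distinguished subgroups on which the ``finitary layers'' of the stabilized inert group act with an extra fixed point --- by exploiting the known presence of large, alternating-type subgroups realizing permutations of the level sets of periodic points. Matching these distinguished subgroups across $\Psi$ produces a $\Psi$-equivariant bijection between the periodic points of $\sigma_m$ and those of $\sigma_n$, and comparing the cardinalities $m^{j}=n^{k}$ of the level sets aligns the two filtrations. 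Since, in contrast to the topological-full-group and Rubin settings, there are no automorphisms supported in a clopen subset of $\mathcal{S}(\sigma_n)$, this recovery step cannot use any of the standard reconstruction machinery and must proceed through the fine internal structure of the stabilized inert group --- this is the main obstacle.

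Given the equivariant bijection on periodic points, the remaining stages are comparatively formal. A finite subsystem is a finite union of periodic orbits, so the periodic-point bijection induces a $\Psi$-equivariant bijection on the set of finite subsystems, which sits inside both $\mathcal{CR}(\sigma_m^{k})$ and $\mathcal{CR}(\sigma_n^{k'})$ for appropriate $k,k'$. By Proposition~\ref{prop:CRlimitperiodic} the finite subsystems are dense in each $\mathcal{CR}(\sigma_n^{k})$, so one must check that this bijection and its inverse are uniformly continuous for the Hausdorff metric on finite subsystems; here one uses that the alignment of filtrations together with the bounded window size of the block codes realizing inert automorphisms prevents neighborhoods from being distorted too badly. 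The bijection then extends to a homeomorphism $\mathcal{CR}(\sigma_m^{k})\to\mathcal{CR}(\sigma_n^{k'})$ for each $k$, and taking the union over $k$ yields a homeomorphism $\hat{\Psi}\colon\mathcal{CR}^{\infty}(\sigma_m)\to\mathcal{CR}^{\infty}(\sigma_n)$ for the final topologies; equivariance on $\mathcal{CR}^{\infty}$ follows from equivariance on the dense set of finite subsystems together with continuity of the actions.

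Finally, $\hat{\Psi}$ sends periodic points (singleton subsystems) to periodic points by construction. For the intertwining of shift powers, one shows that $\Psi$ carries some power of $\sigma_m$ to some power of $\sigma_n$: this uses the local entropy / dimension-type invariants of~\cite{schmiedingLocalMathcalEntropy2022}, which are preserved by any isomorphism and distinguish the shift and its powers among elements of the group. Combined with the $\Psi$-equivariance of $\hat{\Psi}$ and the faithfulness of the action on periodic points, this yields that $\hat{\Psi}$ intertwines the corresponding powers of $\sigma_m$ and $\sigma_n$ on the sets of periodic points, as claimed.
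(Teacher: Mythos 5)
Your three-stage outline matches the paper's overall architecture, and the reduction to the stabilized inert (commutator) subgroup together with the use of large symmetric/alternating subgroups to invoke ``automorphisms of large symmetric groups are inner'' is indeed how the periodic-point bijection arises (via $\rho_k\circ\Psi|_{\Simp^{(k)}}\circ\nu_k$). The alignment of $\sigma_m$ and $\sigma_n$ powers via the local entropy invariants of~\cite{schmiedingLocalMathcalEntropy2022} is also exactly right. But there is a real gap in stage three.

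You claim one ``must check that this bijection and its inverse are uniformly continuous for the Hausdorff metric on finite subsystems'' and suggest this follows because ``the bounded window size of the block codes realizing inert automorphisms prevents neighborhoods from being distorted too badly.'' This mechanism does not work, and would, as stated, prove too much. The \verraum $\hat\Psi$ on periodic points is not itself realized by any block code: it is extracted abstractly as a conjugating element in $\Sym(\Per_k(\sigma_n))$, and the paper emphasizes that for $\Psi$ in the image of $\mathcal{N}$ applied to a non-integral element of $\hat{\mathbb{Z}}$, the resulting $\hat\Psi$ is \emph{not} uniformly continuous on $\Per(\sigma_n)\subseteq X_n$ and does not extend to a self-homeomorphism of $X_n$. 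The window sizes of the inert automorphisms you would like to use are unbounded as you run over the inert subgroup, so no uniform control is available from that direction. The paper gets around this obstruction by routing through the space of subgroups $\sub(\autinfn)$ with the Chabauty topology: the map $\textrm{St}_p\colon\mathcal{S}(\sigma_n)\to\sub(\autinfn)$ sending a subshift to its pointwise stabilizer is continuous (Theorem~\ref{thm:contstpmap}), $\Psi$ automatically induces a homeomorphism of $\sub(\autinfn)$, and convergence $\textrm{St}_p(Y_m)\to H$ forces $Y_m\to Y$ with $H=\textrm{St}_p(Y)$ (Theorem~\ref{thm:subsforlimits}, using $\Fix\circ\textrm{St}_p=\id$). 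It is this chain, not any estimate on block codes, that yields Hausdorff-metric continuity of $\hat\Psi$ on each level $\mathcal{CR}(\sigma_n^k)$ before gluing to the final topology. This Chabauty-space argument is the key technical innovation your sketch is missing.

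A secondary concern: in stage one you say you would ``characterize, purely group-theoretically, the pointwise stabilizers of this action'' and match them across $\Psi$. This is not how the paper proceeds, and the replacement you suggest would face nontrivial issues the paper spends considerable effort handling: one must rule out the possibility that $\Psi(\Simp^{(k)}(\Gamma_n))$ meets $\ker\rho_k$ nontrivially (the ``defective set'' analysis), one must show compatibility of the level-$k$ spatializations as $k$ grows (Propositions~\ref{prop:compat-along-powers-of-two} and~\ref{prop:compat-of-all}, via centralizer computations in $\Sym(\Per_k(\sigma_n))$), and one must then pass from $\Simp^{(\infty)}$ to $\inertinfn$ via the Boyle--Kari representation (Lemma~\ref{lem:boyle-rep}) and finally to all of $\autinfn$ (Theorem~\ref{thm:prop-global-verraum}). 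These are precisely the places where the argument could go wrong, and the sketch does not address them.
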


The theorem is interesting even at just the level of periodic points. As noted in the statement, Theorem~\ref{thm:maintheorem} in particular implies such an isomorphism $\Psi$ is spatial at the level of periodic points, and this spatial map on the periodic points intertwines some powers of the respective shift maps. This has a variety of implications; for instance, we use it to conclude (Corollary~\ref{cor:freeness}) that freeness is a characteristic property for the action of $\autinfn$ on $X_{n}$.

The heart of the paper is the construction, given an isomorphism $\Psi$ between stabilized automorphism groups, of the map $\hat{\Psi}$ from Theorem~\ref{thm:maintheorem}. We call this map $\hat{\Psi}$ the \verraum\footnote{German
for \enquote{spatialization}.} of the isomorphism $\Psi$. Initially $\hat{\Psi}$ is built as a map on periodic points, where we prove it spatially implements $\Psi$ on the stabilized inert automorphisms. We then show that it spatially implements $\Psi$ on all of the stabilized automorphism group. Continuity and the extension to the space of chain recurrent subshifts is obtained by passing through the space of subgroups of $\autinf(\sigma_{n})$.

We also show that the group of outer automorphisms of $\autinfn$ is always uncountable. More precisely, we prove the following.
\begin{thm}\label{thm:exactsequenceaut1}
Let $n \ge 2$ and let $\aut_{1}(\autinfn)$ denote the group of automorphisms of $\autinfn$ which take $\sigma_{n}$ to $\sigma_{n}^{\pm 1}$. Then there is an exact sequence
$$1 \longrightarrow \hat{\mathbb{Z}} \stackrel{\mathcal{N}}\longrightarrow \aut_{1}(\autinfn) \stackrel{\mathcal{V}}\longrightarrow \textnormal{Homeo}(\mathcal{CR}^{\infty}(\sigma_{n}))$$
where $\hat{\mathbb{Z}}$ denotes the profinite integers and $\mathcal{V}$ is the \verraum construction. Moreover, $\mathcal{N}(a)$ is inner if and only if $a=(a_{k})_{k \in \bbN}$ belongs to the canonical copy of $\mathbb{Z}$ in $\hat{\mathbb{Z}}$.
\end{thm}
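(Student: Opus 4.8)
The plan is to construct $\mathcal{N}$ and $\mathcal{V}$ by hand and then read off exactness from the uniqueness of the \verraum. For $\mathcal{N}$: given $a = (a_{k})_{k\in\bbN} \in \hat{\bbZ}$, the assignment $\varphi \mapsto \sigma_{n}^{a_{k}}\varphi\sigma_{n}^{-a_{k}}$ is a well-defined automorphism of $\aut(\sigma_{n}^{k})$, since changing the integer representative of $a_{k}$ alters the conjugator by a power of $\sigma_{n}^{k}$, which is central in $\aut(\sigma_{n}^{k})$. The relations $a_{km}\equiv a_{k}\pmod{k}$ make these agree on overlaps in $\autinfn = \bigcup_{k}\aut(\sigma_{n}^{k})$, so they glue to an automorphism $\mathcal{N}(a)$; it fixes $\sigma_{n}$, so lies in $\aut_{1}(\autinfn)$, and $a\mapsto\mathcal{N}(a)$ is visibly a homomorphism. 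For injectivity I would use Ryan's theorem (the center of the automorphism group of a full shift is $\langle\sigma\rangle$): if $\mathcal{N}(a)=\id$ then each $\sigma_{n}^{a_{k}}$ is central in $\aut(\sigma_{n}^{k}) = \aut(\sigma_{n^{k}})$, so $k\mid a_{k}$ for all $k$, i.e.\ $a=0$.

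For $\mathcal{V}$: Theorem~\ref{thm:maintheorem} with $m=n$ gives each $\Psi\in\aut_{1}(\autinfn)$ a \verraum $\hat{\Psi}\in\textnormal{Homeo}(\crinfn)$ with $\Psi(\varphi)_{\smallcr}=\hat{\Psi}\varphi_{\smallcr}\hat{\Psi}^{-1}$ for all $\varphi$, and I set $\mathcal{V}(\Psi):=\hat{\Psi}$. Well-definedness needs uniqueness of $\hat{\Psi}$: if $\psi_{1},\psi_{2}$ both implement $\Psi$ then $\psi_{2}^{-1}\psi_{1}$ centralizes the $\autinfn$-action on $\crinfn$, and I would show this centralizer is trivial. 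Such a homeomorphism commutes with the shift action and with every stabilized inert automorphism; using the high transitivity of the stabilized inert group on the periodic points of each fixed period, together with the fact that finite subsystems are isolated points at each stage $\CR(\sigma_{n}^{k})$, one sees it fixes every finite subsystem, and then density of the finite subsystems (Proposition~\ref{prop:CRlimitperiodic}) and continuity for the final topology force it to be the identity. Uniqueness then makes $\mathcal{V}$ a homomorphism (because $\hat{\Psi}_{1}\hat{\Psi}_{2}$ implements $\Psi_{1}\Psi_{2}$) and injective (because $\hat{\Psi}=\id$ gives $\Psi(\varphi)_{\smallcr}=\varphi_{\smallcr}$ for all $\varphi$, hence $\Psi=\id$ by faithfulness of the action on periodic points).

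For the exactness statement I would compute $\mathcal{V}\circ\mathcal{N}$ directly: for $\varphi\in\aut(\sigma_{n}^{k})$ and any multiple $m$ of $k$, $\mathcal{N}(a)(\varphi)=\sigma_{n}^{a_{k}}\varphi\sigma_{n}^{-a_{k}}=\sigma_{n}^{a_{m}}\varphi\sigma_{n}^{-a_{m}}$, and since $\sigma_{n}^{a_{m}}$ acts on $\CR(\sigma_{n}^{m})$ independently of the representative of $a_{m}$, the map which on each stage $\CR(\sigma_{n}^{m})$ sends $Y\mapsto\sigma_{n}^{a_{m}}(Y)$ is a well-defined homeomorphism of $\crinfn$ for the final topology implementing $\mathcal{N}(a)$. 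Thus $\mathcal{V}(\mathcal{N}(a))$ is exactly the canonical $\hat{\bbZ}$-action on $\crinfn$ extending the $\bbZ$-action of $(\sigma_{n})_{\smallcr}$; exactness at $\aut_{1}(\autinfn)$ then asserts that $\operatorname{im}\mathcal{N}$ is precisely the set of $\Psi$ whose \verraum is one of these profinite shifts, which follows at once from injectivity of $\mathcal{V}$ and this computation. For the final clause: if $\mathcal{N}(a)$ is conjugation by $\gamma\in\aut(\sigma_{n}^{k_{0}})$, then for every multiple $k$ of $k_{0}$ the element $\sigma_{n}^{-a_{k}}\gamma$ is central in $\aut(\sigma_{n}^{k})$, so by Ryan's theorem $\gamma\in\langle\sigma_{n}^{k}\rangle$ for all such $k$; hence $\gamma=\sigma_{n}^{b}$ for a fixed $b\in\bbZ$ with $b\equiv a_{k}\pmod{k}$ for all large $k$, i.e.\ $a=b\in\bbZ$. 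Conversely $\mathcal{N}(b)$ for $b\in\bbZ$ is conjugation by $\sigma_{n}^{b}\in\autinfn$, hence inner.

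The main obstacle is the uniqueness of the \verraum, equivalently the triviality of the centralizer of the $\autinfn$-action on $\crinfn$: this is what turns $\mathcal{V}$ into a genuine injective homomorphism, and granted it, the homomorphism property, the identification of $\mathcal{V}\circ\mathcal{N}$, and the description of $\operatorname{im}\mathcal{N}$ are all formal. The real depth of course lies in Theorem~\ref{thm:maintheorem}, which we take as given; relative to it, and to two applications of Ryan's center theorem, the present result is essentially bookkeeping over the filtration $\bigcup_{k}\aut(\sigma_{n}^{k})$.
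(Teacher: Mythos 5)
Your observations, taken together, actually refute the statement in the form in which it is written, and your proposal does not notice this. You correctly compute that $\mathcal{V}(\mathcal{N}(a))$ acts on the level $\mathcal{CR}(\sigma_{n}^{m})$ of $\crinfn$ by $Y \mapsto \sigma_{n}^{a_{m}}(Y)$; this is a nontrivial homeomorphism for $a \notin \bbZ$ (for instance if $a_{2} \equiv 1 \bmod 2$, take $x$ with $\per(x)=4$ and $Y=\{x,\sigma_{n}^{2}(x)\} \in \mathcal{CR}(\sigma_{n}^{2})$; then $\sigma_{n}^{a_{2}}(Y)=\{\sigma_{n}(x),\sigma_{n}^{3}(x)\} \neq Y$). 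You also correctly observe that $\mathcal{V}$ is injective as a map into $\mathrm{Homeo}(\crinfn)$, because $\crinfn$ contains the singleton $\{x\}$ for every $\sigma_{n}$-periodic $x$, so $\hat{\Psi}_{\infty}=\mathrm{id}$ forces $\hat{\Psi}=\mathrm{id}$ on $\Per(\sigma_{n})$ and hence $\Psi=\mathrm{id}$. But then $\ker\mathcal{V}=\{1\}$ while $\operatorname{im}\mathcal{N}\cong\hat{\bbZ}$ is nontrivial, and exactness at $\aut_{1}(\autinfn)$ means nothing more and nothing less than $\ker\mathcal{V}=\operatorname{im}\mathcal{N}$. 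Your closing sentence, re-reading exactness as "$\operatorname{im}\mathcal{N}$ is precisely the set of $\Psi$ whose \verraum is a profinite shift", is not the definition of an exact sequence, and it hides the inconsistency you have produced.

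The resolution, which is what the paper actually proves, is that the target should be $\mathrm{Homeo}(\mathcal{CR}(\sigma_{n}))$ rather than $\mathrm{Homeo}(\crinfn)$: $\mathcal{V}$ sends $\Psi$ to the restriction of $\hat{\Psi}$ to the space of $\sigma_{n}$-invariant chain recurrent subshifts. There your profinite shift vanishes, since any finite $Y \in \mathcal{CR}(\sigma_{n})$ is a union of $\sigma_{n}$-orbits and $\hat{\Psi}$ (for $\Psi$ in the image of $\mathcal{N}$) maps each $\sigma_{n}$-orbit into itself; density of finite systems (Proposition~\ref{prop:CRlimitperiodic}) and continuity then give $\mathcal{V}(\mathcal{N}(a))=\mathrm{id}$. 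Correspondingly your injectivity argument no longer runs on $\mathcal{CR}(\sigma_{n})$: the only singletons there are $\sigma_{n}$-fixed points, so $\hat{\Psi}_{|\mathcal{CR}(\sigma_{n})}=\mathrm{id}$ only forces $\hat{\Psi}$ to preserve each $\sigma_{n}$-orbit as a set, not pointwise; and indeed $\mathcal{V}$ is not injective here, its kernel being exactly $\operatorname{im}\mathcal{N}$. That other inclusion $\ker\mathcal{V}\subseteq\operatorname{im}\mathcal{N}$ is precisely what the paper extracts from the orbit-preservation property via Proposition~\ref{prop:shifting-verraum}, which characterizes $\operatorname{im}\mathcal{N}$ as those automorphisms whose \verraum preserves every $\sigma_{n}$-orbit — a point your argument never invokes. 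Your handling of $\mathcal{N}$ itself and of the "moreover" clause via Ryan's theorem is sound and essentially the same as the paper's.
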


We show in Theorem~\ref{thm:left-cosets-countable} that the number of left cosets of $\mathcal{N}(\hat{\bbZ})$ in $\aut(\autinfn)$ is always countable.

Given $\Psi \in \aut(\autinfn)$, the \verraum $\hat{\Psi}$ is well-defined on all periodic points, which forms a dense subset of $X_{n}$. It is natural to ask whether $\hat{\Psi}$ might be uniformly continuous in the usual topology on $X_{n}$, in which case it would extend to a unique homeomorphism $\hat{\Psi} \colon X_{n} \to X_{n}$ giving spatiality of $\Psi$ at the level of $X_{n}$. In general, this can not hold for arbitrary $\Psi$, since the $\mathcal{N}$-image of nonintegral elements in $\hat{\mathbb{Z}}$ are not induced by any self-homeomorphism of $X_{n}$.
%Suppose further, for simplicity, that $\Psi \in \aut_{1}(\autinfn)$ so that $\Psi(\sigma_{n}) = \sigma_{n}^{\pm 1}$.
But we do not know whether this copy of $\hat{\mathbb{Z}}$ in $\aut_{1}(\autinfn)$ is in fact the only obstruction to spatiality at the level of $X_{n}$. In other words, it is possible that if $\Psi \colon \autinfn \to \autinfn$ is an isomorphism for which $\Psi(\sigma_{n}) = \sigma_{n}^{\pm 1}$, then up to composing with an element in the image of $\mathcal{N}$, the $\mathcal{CR}^{\infty}(\sigma_{n})$-spatiality of the \verraum $\hat{\Psi}$ given by Theorem~\ref{thm:maintheorem} can be upgraded to a homeomorphism $\hat{\Psi} \colon X_{n} \to X_{n}$. To be more precise, we pose the following question.\\

\textbf{Question: } Let $n$ be a positive integer and let $\aut_{1}(\autinfn)$ denote the group of automorphisms of $\autinfn$ which take $\sigma_{n}$ to $\sigma_{n}^{\pm 1}$. Is it true that for every $\Psi \in \aut_{1}(\autinfn)$, there exists $a \in \hat{\mathbb{Z}}$ such that $\mathcal{N}(a) \circ\Psi$ is induced by a homeomorphism $\hat{\Psi} \colon X_{n} \to X_{n}$?\\

A positive answer to the question is equivalent to showing that, modulo the image of $\mathcal{N}$, the \verraum $\hat{\Psi}$ is uniformly continuous on periodic points in the product topology on $X_{n}$. A consequence would be that $\aut_{1}(\autinfn)/\textrm{Image}(\mathcal{N})$ is isomorphic to a subgroup of $\textrm{Homeo}(X_{n})$.

\section{Background}
\sloppy A system $(X,T)$ means a homeomorphism $T \colon X \to X$ where $X$ is a compact metric space. Denote by $X_{n}$ the space $\{0,\ldots,n-1\}^{\mathbb{Z}}$ with the product topology. We write elements in $X_{n}$ as biinfinite sequences $(x_{n})_{n \in \mathbb{Z}}$. The shift map $\sigma_{n} \colon X_{n} \to X_{n}$ defined by $\left(\sigma_{n}(x)\right)_{n} = x_{n+1}$ defines a homeomorphism, and we call the system $(X_{n},\sigma_{n})$ the full shift on $n$ symbols. By a subshift we mean a system $(X,\sigma_{X})$ where $X$ is a compact $\sigma_{n}$-invariant subset of $X_{n}$ for some positive integer $n$ and $\sigma_{X}$ denotes the restriction of $\sigma_{n}$ to $X$.

For a system $(X,T)$ we define periodic point sets
\begin{align*}
    \Per_{k}(T) &= \{x \in X \mid T^{k}(x) = x\},\\
    \Per_{k}^{\mathrm{min}}(T) &= \{x \in X \mid T^{k}(x) = x, T^{j}(x) \ne x \textrm{ for all } 0 < j < k\},\\
\Per(T) &= \bigcup_{k=1}^{\infty}\Per_{k}(T).
\end{align*}
For a periodic point $x$ denote by $\per(x)$ the minimal
period of $x$, i.e. \[\per(x) := \min \{k \in \bbN \setsep x \in \Per_k(T)\}.\]

The group of automorphisms of a system $(X,T)$ is defined by
$$\aut(T) = \{\varphi \colon X \to X \mid \varphi \circ T = T \circ \varphi\}$$
where the group operation is composition. The stabilized group of automorphisms is defined by
$$\aut^{\infty}(T) = \{\varphi \colon X \to X \mid \varphi \circ T^{k} = T^{k} \circ \varphi \textrm{ for some } k\}.$$
Clearly $\aut(T) \subseteq \aut^{\infty}(T)$ holds for any system.

Our interest here will be the groups $\autinfn$, for which we recount some known facts now. Let $n \ge 2$. The group $\autinfn$ is not finitely generated, has trivial center, and is not residually finite~\cite{hartmanStabilizedAutomorphismGroup2022}. For any $k \ge 1$ the systems $(X_{n},\sigma_{n}^{k})$ and $(X_{n^{k}},\sigma_{n^{k}})$ are topologically conjugate; since $\aut^{\infty}(\sigma_{n}^{k}) = \aut^{\infty}(\sigma_{n})$, it follows that the groups $\autinfn$ and $\aut^{\infty}(\sigma_{n^{k}})$ are isomorphic.

Associated to a shift of finite type is a dimension group, which for $(X_{n},\sigma_{n})$ is isomorphic to  $\mathbb{Z}[\tfrac{1}{n}]$, the additive subgroup of rationals whose denominators are powers of $n$. There is a dimension representation homomorphism
$$\pi_{n} \colon \aut(\sigma_{n}) \to \aut_{+}(\mathbb{Z}[\tfrac{1}{n}])$$
where $\aut_{+}(\mathbb{Z}[\tfrac{1}{n}])$ denotes the group of automorphisms of $\mathbb{Z}[\tfrac{1}{n}]$ as an abelian group which preserve the nonnegative elements. The dimension representation extends to a stabilized dimension representation
$$\pi_{n}^{\infty} \colon \autinfn \to \aut_{+}(\mathbb{Z}[\tfrac{1}{n}]).$$
The following proposition is an immediate consequence of the results in~\cite{boyleAutomorphismGroupShift1988}; a proof in the stabilized language can be found in~\cite{hartmanStabilizedAutomorphismGroup2022}.
\begin{prop}
The automorphism group $\aut_{+}(\mathbb{Z}[\tfrac{1}{n}])$ of the dimension group of $(X_n,\sigma_n)$ is isomorphic to $\mathbb{Z}^{\omega(n)}$, where $\omega(n)$ denotes the number of distinct prime divisors of $n$. Furthermore, the stabilized dimension group representation is surjective.
\end{prop}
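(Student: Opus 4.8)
The plan is to handle the two assertions in turn: an elementary algebraic computation identifies $\aut_{+}(\mathbb{Z}[\tfrac{1}{n}])$, and then explicit ``partial shift'' automorphisms of $\sigma_{n}$ are exhibited to realize a free generating set of it.

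\emph{Computing $\aut_{+}(\mathbb{Z}[\tfrac{1}{n}])$.} First I would observe that any group endomorphism $\phi$ of $\mathbb{Z}[\tfrac{1}{n}]$ is multiplication by $\phi(1)$: for $x = a/n^{k}$ one has $n^{k}x = a$, hence $n^{k}\phi(x) = \phi(a) = a\,\phi(1)$, and since multiplication by $n^{k}$ is injective on $\mathbb{Q}$ this forces $\phi(x) = \phi(1)x$. Conversely multiplication by any $t \in \mathbb{Z}[\tfrac{1}{n}]$ is an endomorphism, so $\operatorname{End}(\mathbb{Z}[\tfrac{1}{n}]) \cong \mathbb{Z}[\tfrac{1}{n}]$ as a ring and $\aut(\mathbb{Z}[\tfrac{1}{n}])$ is its group of units. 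Writing $n = p_{1}^{e_{1}}\cdots p_{r}^{e_{r}}$ with $r = \omega(n)$ gives $\mathbb{Z}[\tfrac{1}{n}] = \mathbb{Z}[\tfrac{1}{p_{1}},\ldots,\tfrac{1}{p_{r}}]$, whose units are exactly the $\pm p_{1}^{a_{1}}\cdots p_{r}^{a_{r}}$ with $a_{i} \in \mathbb{Z}$. Multiplication by a unit preserves the cone of nonnegative elements precisely when the unit is positive, so $\aut_{+}(\mathbb{Z}[\tfrac{1}{n}])$ consists of multiplication by positive units and is therefore free abelian of rank $r$, with free generating set the multiplications by $p_{1},\ldots,p_{r}$.

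\emph{Surjectivity.} Since $\aut(\sigma_{n}) \subseteq \autinfn$ and $\pi_{n}^{\infty}$ restricts to $\pi_{n}$ on $\aut(\sigma_{n})$, it suffices to produce, for each prime $p \mid n$, an automorphism of $\sigma_{n}$ mapping to multiplication by $p$. Write $n = pm$ and fix a bijection of symbol sets $\{0,\ldots,n-1\} \leftrightarrow \{0,\ldots,p-1\}\times\{0,\ldots,m-1\}$; applied coordinatewise it yields a conjugacy $(X_{n},\sigma_{n}) \cong (X_{p}\times X_{m},\,\sigma_{p}\times\sigma_{m})$. The ``partial shift'' $\sigma_{p}\times\operatorname{id}_{X_{m}}$ is a self-homeomorphism of $X_{p}\times X_{m}$ commuting with $\sigma_{p}\times\sigma_{m}$ (indeed with every power of it), hence defines an element $\phi_{p} \in \aut(\sigma_{n})$; concretely $\phi_{p}$ is an invertible sliding block code of window size two. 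The dimension group of $X_{n}\cong X_{p}\times X_{m}$ is $\mathbb{Z}[\tfrac{1}{n}]$, identified with $\mathbb{Z}[\tfrac{1}{p}]\otimes_{\mathbb{Z}}\mathbb{Z}[\tfrac{1}{m}]$, and by naturality of the dimension representation with respect to products $\sigma_{p}\times\sigma_{m}$ acts as $(\times p)\otimes(\times m) = \times n$ while $\sigma_{p}\times\operatorname{id}$ acts as $(\times p)\otimes\operatorname{id} = \times p$. Thus $\pi_{n}(\phi_{p})$ is multiplication by $p$, and letting $p$ range over the prime divisors of $n$ realizes the free generators found above; hence $\pi_{n}$, and therefore $\pi_{n}^{\infty}$, is onto.

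The only step demanding real care is the computation that $\phi_{p}$ acts on the dimension group by $\times p$. I would establish this either from the multiplicativity of the dimension representation under products, or by a direct computation of the induced map on the eventual range of the transition matrix $(n)$; alternatively it is a special case of the realization results of~\cite{boyleAutomorphismGroupShift1988}, which make the whole proposition immediate. Beyond that I anticipate no obstacle, only the need to keep the normalization — that the shift corresponds to multiplication by $n$ — consistent throughout.
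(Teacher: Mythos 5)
The paper itself offers no proof here; it simply defers to \cite{boyleAutomorphismGroupShift1988} (for realizability) and \cite{hartmanStabilizedAutomorphismGroup2022} (for the stabilized phrasing). Your argument is a correct, self-contained reconstruction of the standard proof. The first half — that every group endomorphism of $\mathbb{Z}[\tfrac{1}{n}]$ is multiplication by $\phi(1)$, so $\aut(\mathbb{Z}[\tfrac{1}{n}])$ is the unit group $\{\pm p_{1}^{a_{1}}\cdots p_{r}^{a_{r}}\}$, and the positivity constraint kills the sign — is elementary and exactly right. For surjectivity, the partial shift $\phi_{p} = \sigma_{p}\times\operatorname{id}$ under the symbol-level conjugacy $X_{n}\cong X_{p}\times X_{m}$ is precisely the construction that Boyle--Lind--Rudolph use, so you have in effect unpacked the citation rather than found a different route. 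Two small remarks. First, it is worth noting that you do not need $\gcd(p,m)=1$ in the factorization $n=pm$; the identification $\mathbb{Z}[\tfrac1p]\otimes_{\mathbb{Z}}\mathbb{Z}[\tfrac1m]\cong\mathbb{Z}[\tfrac1n]$ holds in any case since $\mathbb{Z}[\tfrac1n]$ depends only on the radical of $n$, so the argument is unaffected. Second, the one step you flag yourself — that $\phi_{p}$ acts on the dimension group by $\times p$ — is indeed the only nontrivial verification; the cleanest path is the direct computation on $\varinjlim(\mathbb{Z}\xrightarrow{\times n}\mathbb{Z}\xrightarrow{\times n}\cdots)$ rather than appealing to an abstract multiplicativity of the dimension functor under products, which would itself require justification. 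With that filled in, your proof is complete and matches the spirit of the cited sources.
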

Writing the prime decomposition $n = p_{1}^{k_{1}}\cdots p_{\ell}^{k_{\ell}}$, upon identifying $\aut_{+}(\mathbb{Z}[\tfrac{1}{n}])$ with $\mathbb{Z}^{\ell}$, a calculation shows that $\pi_{n}^{\infty}(\sigma_{n}) = (k_{1},\ldots,k_{\ell})$.

The kernel of the (stabilized) dimension representation is called the subgroup of \emph{(stabilized) inert automorphisms}, denoted by
\begin{align*}
    \inert(\sigma_{n}) &= \textrm{ker}\, \pi_{n},\\
    \inertinfn &= \textrm{ker}\, \pi_{n}^{\infty}.
\end{align*}
For $k \ge 1$ we define $\textrm{Inert}(\sigma_{n}^{k}) = \inertinfn \cap \aut(\sigma_{n}^{k})$. The following was proved in~\cite{hartmanStabilizedAutomorphismGroup2022}; a simpler, and more general, proof can be found in~\cite{SaloGateLattices}.
\begin{thm}[\cite{hartmanStabilizedAutomorphismGroup2022}]
Let $n \ge 2$. The subgroup $\inertinfn$ is equal to the commutator subgroup of $\autinfn$, and is an infinite simple group. Consequently, the stabilized inert subgroup $\inertinfn$ is a characteristic subgroup of $\autinfn$.
\end{thm}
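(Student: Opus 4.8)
The plan is to reduce all three assertions to the single statement that $\inertinfn$ is infinite and simple, and then to outline a proof of that. Since the image of the stabilized dimension representation $\pi_{n}^{\infty}$ is the abelian group $\aut_{+}(\mathbb{Z}[\tfrac1n])\cong\mathbb{Z}^{\omega(n)}$, the quotient $\autinfn/\inertinfn$ is abelian, so $[\autinfn,\autinfn]\subseteq\inertinfn$ for free. Conversely, an infinite simple group is nonabelian and therefore perfect, so once $\inertinfn$ is known to be infinite and simple one gets $\inertinfn=[\inertinfn,\inertinfn]\subseteq[\autinfn,\autinfn]\subseteq\inertinfn$, hence $\inertinfn=[\autinfn,\autinfn]$; and since the commutator subgroup is always characteristic, the final sentence of the statement follows too. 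Infiniteness is cheap: as $\mathbb{Z}^{\omega(n)}$ is torsion-free, every finite-order element of $\autinfn$ lies in $\ker\pi_{n}^{\infty}=\inertinfn$, and $\autinfn$ plainly contains infinitely many involutions (for instance, transpose two fixed symbols whenever a long marker word is read, at disjoint positions). So the whole content is the simplicity of $\inertinfn$.

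For that, I would argue in two stages: (i) exhibit a generating set $\mathcal{E}$ of $\inertinfn$ consisting of \enquote{elementary} automorphisms, and (ii) show that for every $\alpha\in\inertinfn$ with $\alpha\neq\id$, the normal closure $N(\alpha)$ of $\alpha$ in $\inertinfn$ contains all of $\mathcal{E}$, so that $N(\alpha)=\inertinfn$. For (i) I would work with the gate lattice subgroup $GL\leq\autinfn$ generated by the gate automorphisms, each of which reads a bounded window of a point and, according to what it sees there, applies a fixed symbol permutation to one coordinate. A gate automorphism has finite order, hence lies in $\inertinfn$, so $GL\subseteq\inertinfn$; and for full shifts one has moreover the identity $\inertinfn=[GL,GL]$, so that $\inertinfn$ is generated by the set $\mathcal{E}$ of commutators of gate automorphisms. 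Note that no finite level $\aut(\sigma_{n}^{k})$ contains all of $\mathcal{E}$, and indeed $\inert(\sigma_{n}^{k})$ is not simple; the point of stabilization is to make these elementary pieces fuse.

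For (ii), since $\alpha\neq\id$ it moves some periodic point, and passing to a sufficiently high power $\sigma_{n}^{k}$ — on which $\alpha$ is an ordinary inert block automorphism — one can select a gate automorphism $\beta$ whose support meets the coordinates where $\alpha$ acts nontrivially, arranged so that the commutator $[\alpha,\beta]=\alpha\beta\alpha^{-1}\beta^{-1}$ is a nonidentity elementary automorphism; note $[\alpha,\beta]\in N(\alpha)$ because $\alpha\beta\alpha^{-1}$ is a conjugate of the inert automorphism $\beta$. The role of stabilization then enters decisively: any elementary automorphism in $\aut(\sigma_{n}^{k})$ and a \enquote{relabelled and rescaled copy} of it living in some larger $\aut(\sigma_{n}^{kj})$ become conjugate inside $\inertinfn$, since at a high enough power there is enough room to realize the required conjugator, and that conjugator may itself be taken of finite order and hence inert. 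Chasing this through shows that the normal closure in $\inertinfn$ of the single element $[\alpha,\beta]$ already contains every member of $\mathcal{E}$, completing the proof.

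I expect the main obstacle to be step (ii): the symbolic bookkeeping required to choose $\beta$ so that $[\alpha,\beta]$ is at once nontrivial and conjugate into the generating set $\mathcal{E}$, together with the verification that all elementary automorphisms fuse into a single conjugacy class (or at least a single normal-closure class) once one is allowed to pass to arbitrarily high powers of $\sigma_{n}$. This is exactly where the \enquote{stabilization creates room to maneuver} heuristic does the real work, and where the gate-lattice machinery of~\cite{SaloGateLattices} gives the cleanest organization. The only other nonformal input is the full-shift identity $\inertinfn=[GL,GL]$, whose substantive half is the inclusion $\inertinfn\subseteq[GL,GL]$; everything else in the argument is bookkeeping.
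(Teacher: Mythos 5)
This statement is quoted from the literature: the paper offers no proof of its own, only the citations to \cite{hartmanStabilizedAutomorphismGroup2022} and \cite{SaloGateLattices}. So your proposal can only be judged on its own merits. Your formal reductions are all correct and cleanly organized: $\autinfn/\inertinfn$ embeds in the torsion-free abelian group $\aut_{+}(\mathbb{Z}[\tfrac{1}{n}])$, so $[\autinfn,\autinfn]\subseteq\inertinfn$; an infinite simple group is nonabelian, hence perfect, which gives the reverse inclusion; commutator subgroups are characteristic; and infiniteness follows from torsion-freeness of the image of $\pi_{n}^{\infty}$ together with the increasing chain of subgroups $\Simp^{(k)}(\Gamma_{n})\cong\Sym(n^{k})$. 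You have correctly isolated simplicity as the entire content of the theorem.

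But on simplicity the proposal is a sketch with two genuine gaps, both of which you flag as "the main obstacle" — and they are not bookkeeping, they are the theorem. First, the claim that $\beta$ can be "arranged so that $[\alpha,\beta]$ is a nonidentity elementary automorphism" is unjustified and, as stated, not achievable in general: for an arbitrary inert $\alpha$ the conjugate $\alpha\beta\alpha^{-1}$ of a gate is not a gate, and there is no reason the commutator should land in your generating set $\mathcal{E}$. What the known arguments actually extract is a nonidentity element of $N(\alpha)$ whose action on $\Per_{k}(\sigma_{n})$ for suitable $k$ is controlled; converting that into a full generator requires passing through $\Sym(\Per_{k}(\sigma_{n}))$ and the simplicity of alternating groups, which is where the real work sits. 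Second, the "fusion" assertion — that a relabelled, rescaled copy of an elementary automorphism at a higher level is conjugate to the original \emph{inside} $\inertinfn$ — is asserted, not proved. Note that $N(\alpha)$ is the normal closure in $\inertinfn$, not in $\autinfn$, so you may not conjugate by $\sigma_{n}$ for free; the Boyle--Kari representation (Lemma~\ref{lem:boyle-rep}) writes an inert element as $\sigma^{-kt}\circ\tau_{1}\circ\sigma^{kt}\circ\tau_{2}$, and showing that the $\sigma$-conjugated factor lies in the normal closure of a single elementary element is exactly the step your outline skips. Finally, your reliance on the identity $\inertinfn=[GL,GL]$ imports a substantial theorem of \cite{SaloGateLattices} that is proved alongside (and by essentially the same techniques as) the simplicity statement itself, so a self-contained argument would still owe a proof of it, e.g.\ via Lemma~\ref{lem:boyle-rep}.
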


\subsection{Stabilized simple automorphisms}
Fix $n \ge 2$ and let $\Gamma_{n}$ be a directed graph with one vertex and $n$ edges labeled $\{0,\ldots,n-1\}$. We identify $X_{n}$ with the set of biinfinite walks on $\Gamma_{n}$. Given $k \ge 1$, we let $\Gamma_{n}^{(k)}$ denote the directed graph with one vertex and labeled directed edges obtained from length $k$ walks on $\Gamma_{n}$. A graph automorphism $\tau$ of $\Gamma_{n}^{(k)}$ induces an automorphism $\tilde{\gamma} \in \aut(\sigma_{n}^{k})$ via
$$\tilde{\gamma}(\ldots x_{\scriptscriptstyle -1}x_{\scriptscriptstyle 0}x_{\scriptscriptstyle 1} \ldots) = \ldots \gamma(x_{\scriptscriptstyle -k}\ldots x_{\scriptscriptstyle -1})\gamma(x_{\scriptscriptstyle 0}\ldots x_{\scriptscriptstyle k-1})\gamma(x_{\scriptscriptstyle k} \ldots x_{\scriptscriptstyle 2k-1})\ldots.$$

Letting $\Sym(\Gamma_{n}^{(k)})$ denote the group of automorphisms of $\Gamma_{n}^{(k)}$, we have injective homomorphisms
$$\Sym(\Gamma_{n}^{(k)}) \to \aut(\sigma_{n}^{k}).$$
We denote by $\simp^{(k)}(\Gamma_{n})$ the image of this map, and by $\simp^{(k)}_{ev}(\Gamma_{n})$ the image of the alternating subgroup $\textrm{Alt}(\Gamma_{n}^{(k)})$ in $\aut(\sigma_{n}^{k})$. It is easy to see that for every $k \ge 1$, the group $\simp^{(k)}(\Gamma_{n})$ is isomorphic to the symmetric group on $n^{k}$ symbols. For every $k,l \ge 1$ there are containments
$$\simp^{(k)}(\Gamma_{n}) \subseteq \simp^{(kl)}(\Gamma_{n})$$
and we define
$$\simp^{(\infty)}(\Gamma_{n}) = \bigcup_{k=1}^{\infty}\simp^{(k)}(\Gamma_{n}) \subseteq \autinfn.$$

The subgroup $\simp^{(\infty)}(\sigma_{n})$ is contained in the stabilized inert subgroup $\inertinfn$.

A key lemma which will be used again and again is the following.
The result originally appeared in
\cite[Theorem on p. 970]{boyleEventualExtensionsFinite1988} see also
\cite[Theorem 3.3 ]{kariRepresentationReversibleCellular1996}
and \cite[Lemma 5.6]{hartmanStabilizedAutomorphismGroup2022}
\begin{lem}[Boyle-Kari representation Lemma]
  \label{lem:boyle-rep}
Let $n,k \ge 2$, let $\gamma \in \Inert(\sigma_n^{k})$ and $\ell \in \bbN$.
  For every $t\geq \ell$
  there are automorphisms $\tau_1,\tau_2 \in \Simp^{(2kt)}(\Gamma_n)$
  such that
  $\gamma =\sigma^{-kt} \circ \tau_1 \circ \sigma^{kt} \circ \tau_2$.
\end{lem}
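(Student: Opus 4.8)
The plan is to reduce the statement, via the standard higher-block recoding, to the situation where the inert automorphism in question is a marker/gate-type automorphism, and then to realize such an automorphism as a commutator of symbol permutations after shifting. First I would fix $t \ge \ell$ and pass to the conjugate full shift $(X_{n^{kt}}, \sigma_{n^{kt}})$, under which $\sigma_n^{kt}$ becomes the shift by one, $\Simp^{(2kt)}(\Gamma_n)$ becomes the group of automorphisms of $\sigma_{n^{kt}}^{2}$ induced by permutations of the $2$-blocks, i.e. permutations of the alphabet of $X_{(n^{kt})^2}$, and $\gamma$, being stabilized inert, lies in $\aut^\infty$ commuting with some power of this shift. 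So it suffices to prove: for $N$ large and $\gamma \in \Inert(\sigma_N)$ lying in $\aut(\sigma_N^{2})$ — more carefully, $\gamma$ expressed on $2$-blocks — one can write $\gamma = \sigma_N^{-1} \tau_1 \sigma_N \tau_2$ with $\tau_i$ permutations of the $N^2$-letter alphabet of $(X_{N^2}, \sigma_{N^2})$. Rearranging, this says $\sigma_N \gamma \sigma_N^{-1} \cdot \gamma^{-1}$, or rather $\tau_1 = \sigma_N \gamma \tau_2^{-1} \sigma_N^{-1}$ must be a $1$-block map in the right coordinates; the content is that every inert automorphism becomes, after a single conjugation by the shift, a composition of two block permutations.

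The key steps, in order, are: (i) record the higher-block conjugacy carefully so that $\Simp^{(2kt)}(\Gamma_n)$, $\aut(\sigma_n^{kt})$, and the shift $\sigma_n^{kt}$ all translate into the $(X_{n^{kt}}, \sigma_{n^{kt}})$ picture, reducing to a clean statement about a full shift; (ii) invoke the structure theory of inert automorphisms of full shifts — specifically that, as shown by Boyle–Fiebig–Kari and used in \cite{hartmanStabilizedAutomorphismGroup2022}, an inert automorphism commuting with $\sigma^{k}$ can, after recoding, be built from finitely many "local" permutations, so that on a sufficiently high block presentation $\gamma$ is a product of permutations of blocks; (iii) the heart: show that a $1$-block permutation automorphism $\tau$ of $\sigma_N$ composed with the shift, $\sigma_N^{-1} \tau$, already has the right form, and that the defect/parity obstruction (which is why the result is phrased with $\tau_1, \tau_2$ rather than a single $\tau$) is absorbed by allowing two permutations; conclude by choosing $\tau_2$ to correct the action of $\gamma$ outside the "active window" and $\tau_1$ to realize it inside, both as elements of $\Simp^{(2kt)}(\Gamma_n)$; (iv) translate back to the original coordinates, where conjugation by $\sigma_{n^{kt}}$ becomes conjugation by $\sigma_n^{kt}$, obtaining $\gamma = \sigma^{-kt} \tau_1 \sigma^{kt} \tau_2$ with the claimed support on $2kt$-blocks, and note that the bound $t \ge \ell$ enters only in guaranteeing the window is wide enough to contain the finitely many local rules defining $\gamma$.

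The main obstacle I expect is step (iii): verifying that conjugating by a \emph{single} power of the shift suffices to "spread out" an arbitrary inert block automorphism into a product of exactly two symbol permutations, rather than needing several shift-conjugations. This is essentially the Boyle–Kari trick of using the shift to move the domain of a marker permutation past itself so that two applications of block permutations — one conjugated by $\sigma^{kt}$, one not — compose to give the desired map; the bookkeeping to see that the relevant blocks do not overlap, so that the two permutations genuinely commute with the appropriate shift powers and lie in $\Simp^{(2kt)}(\Gamma_n)$, is where the choice of $t \ge \ell$ (ensuring enough room) is used, and getting the window sizes and the parity/defect cancellation exactly right is the delicate part. The remaining steps are the now-routine higher-block dictionary together with citations of the inert structure theory already in the literature.
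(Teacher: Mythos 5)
The paper does not prove this lemma: it states it and cites Boyle~\cite{boyleEventualExtensionsFinite1988}, Kari~\cite{kariRepresentationReversibleCellular1996}, and \cite[Lemma~5.6]{hartmanStabilizedAutomorphismGroup2022}. So there is no in-paper proof to compare your attempt against.

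That said, your sketch does not constitute a proof, and the gap is exactly where you flag it. Steps (i), (ii), and (iv) --- the higher-block dictionary and the appeal to Boyle's generation result --- are coordinate normalizations and do not reduce the difficulty of the statement. Step (iii), which you call ``the heart,'' \emph{is} the entire content of the Boyle--Kari theorem: that a given inert automorphism can, for all sufficiently large $t$, be written as $\sigma^{-kt}\tau_1\sigma^{kt}\tau_2$ with $\tau_1,\tau_2 \in \Simp^{(2kt)}(\Gamma_n)$. You do not give an argument for this; you restate the goal and observe that it is delicate. In particular, the fact invoked in (ii) --- that an inert automorphism is a product of finitely many simple automorphisms conjugated by various shift powers --- is strictly weaker than the two-factor form with a \emph{single} shift conjugation and a \emph{fixed} block length, and there is no indicated route from one to the other. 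Consolidating an arbitrary such product into exactly $\sigma^{-kt}\tau_1\sigma^{kt}\tau_2$ is the nontrivial marker/lane combinatorics that Boyle and Kari actually carry out, and none of it appears in the proposal. The phrase ``show that a $1$-block permutation composed with the shift already has the right form'' also does not parse as stated: the object to be decomposed is $\gamma$, an arbitrary inert, not a block permutation composed with a shift.

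One smaller correction: the reason two factors are needed is not primarily a parity or ``defect'' obstruction. A single $\tau \in \Simp^{(2kt)}(\Gamma_n)$ permutes a fixed tiling of $\bbZ$ by length-$2kt$ blocks and therefore cannot propagate information across the boundaries of that tiling; the conjugate $\sigma^{-kt}\tau_1\sigma^{kt}$ permutes the interleaved tiling offset by $kt$, and only the composition of the two, acting on offset tilings, can move information across boundaries as a general inert automorphism must. The two-factor form is forced by this offset-tiling geometry, not by a parity count.
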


A consequence of Lemma~\ref{lem:boyle-rep} is that the $\sigma_{n}$-conjugates of $\simp^{(\infty)}(\Gamma_{n})$ generate $\inert^{\infty}(\sigma_{n})$.

%\subsection{Actions of the automorphism group}

%\begin{lem}[The inert subgroup is invariant under automorphisms]
%  \label{lem:inerts-characterisitc}
%  The stabilized inerts are a characteristic subgroup of $\autinfn$,
%\end{lem}
%\begin{proof}
%  The subgroup of stabilized inerts is equal to the commutator
%  subgroup
%  of $\autinfn$.
%\end{proof}

%\begin{quest}[Are inerts actual commutators]
%  \label{quest:are-inerts-commutators}
%  Is every stabilized inert actually a commutator and
%  not only the product of commutators?
%\end{quest}
\subsection{Centralizers}
We prove here some results on centralizers that will be of use later.
\begin{thm}[{Shifting implies uniform shifting,
    \cite[Theorem 2.5]{boylePeriodicPointsAutomorphisms1987}}]
  \label{thm:shifting-has-to-be-uniform}
%  Let $(X,\sigma_n)$ an irreducbile and aperiodic SFT.
Let $\varphi \in \Aut(\sigma_n)$ be such that for all sufficiently
  large $k \in \bbN$, if $x \in \Per_{k}(\sigma_{n})$ then $\varphi(x)$ lies in the same $\sigma_{n}$-orbit as $x$. Then $\varphi = \sigma_n^\ell$ for some $\ell \in \bbZ$.
\end{thm}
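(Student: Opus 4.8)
The plan is to reduce the hypothesis — for each large $k$, $\varphi$ moves every period-$k$ point within its own orbit — to the statement that there is a \emph{single} integer $\ell$ which works simultaneously for all $k$, i.e.\ $\varphi = \sigma_n^\ell$ on a dense set of periodic points, hence everywhere by continuity. The core difficulty is that, a priori, the shift amount $\ell_k$ implementing $\varphi$ on $\Per_k(\sigma_n)$ could depend on $k$; one must rule out this dependence using arithmetic compatibility across different values of $k$.

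First I would fix a large $k$ and look at $\Per_k^{\min}(\sigma_n)$, the points of least period exactly $k$. For $x$ of least period $k$, the hypothesis says $\varphi(x) = \sigma_n^{j(x)}(x)$ for a unique $j(x) \in \bbZ/k\bbZ$. The first key step is to show $j(x)$ is locally constant on $\Per_k^{\min}(\sigma_n)$: this is a standard closing/continuity argument, since if $y$ is close to $x$ and also has least period $k$, then $\varphi(y)$ is close to $\varphi(x) = \sigma_n^{j(x)}(x)$, which is far (by at least the minimum inter-orbit distance among period-$k$ points) from every $\sigma_n^i(x)$ with $i \ne j(x)$, forcing $j(y) = j(x)$. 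Because $\varphi$ commutes with $\sigma_n$, $j(\cdot)$ is also constant along $\sigma_n$-orbits. The second key step is a connectivity/mixing input for the full shift: one shows that $j$ is in fact globally constant on $\Per_k^{\min}(\sigma_n)$ for each $k$ large enough — for instance by observing that any two period-$k$ orbits of the full shift can be joined through a chain of period-$k$ points that are pairwise close (realize both orbits inside a common long word and interpolate), or alternatively by invoking that $\Per_k^{\min}(\sigma_n)$, suitably quotiented, is connected enough that a locally constant $\bbZ/k\bbZ$-valued function is constant. Denote this common value $\ell_k$.

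Next I would establish the compatibility $\ell_k \equiv \ell_{k'} \pmod{\gcd(k,k')}$, in particular that all the $\ell_k$ agree modulo any fixed $k_0$ once $k$ is large. Here the mechanism is to take a point $x$ of least period $d \mid k$; then $x \in \Per_k(\sigma_n)$ and $\varphi(x) = \sigma_n^{\ell_k}(x)$, but $\sigma_n^{\ell_k}(x) = \sigma_n^{\ell_k \bmod d}(x)$, and comparing with the period-$d$ (or period-$k'$ for another multiple $k'$ of $d$) description forces the congruence. Consequently the system $\{\ell_k\}$ determines a single well-defined integer $\ell$: choose any large $k$, set $\ell := \ell_k$ reduced appropriately, and note that for every large $k'$, $\varphi$ agrees with $\sigma_n^\ell$ on all of $\Per_{k'}(\sigma_n)$ provided $\ell \equiv \ell_{k'} \pmod{k'}$, which the compatibility relations guarantee for $k'$ ranging over a cofinal set; since $\bigcup_{k'} \Per_{k'}(\sigma_n) = \Per(\sigma_n)$ is dense in $X_n$, continuity of $\varphi$ and of $\sigma_n^\ell$ yields $\varphi = \sigma_n^\ell$ on $X_n$.

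The main obstacle I expect is the global-constancy step for $j$ on $\Per_k^{\min}(\sigma_n)$: the local-constancy and orbit-invariance are routine, but upgrading to a single shift amount per level requires genuinely using the structure of the full shift (abundance of period-$k$ points, ability to concatenate and interpolate words), and one has to be slightly careful that the "inter-orbit distance" used in the continuity argument does not degenerate as orbits get close in $X_n$ — this is handled by working at a fixed cylinder resolution determined by $k$. Everything after that is elementary arithmetic bookkeeping with the congruences $\ell_k \bmod d$.
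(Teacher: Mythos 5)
The paper does not prove this theorem; it cites it as \cite[Theorem 2.5]{boylePeriodicPointsAutomorphisms1987} and uses it as a black box, so there is no internal proof to compare against. Your proposed argument, however, has a genuine gap that is worth being precise about, because it is exactly the phenomenon this paper is built around.

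The fatal problem is that nothing in your plan uses the finite radius of $\varphi$ (Curtis--Hedlund--Lyndon). Your compatibility relations $\ell_k \equiv \ell_{k'} \pmod{\gcd(k,k')}$, even if fully established, only produce a compatible system of residues, i.e.\ an element of $\hat{\bbZ} = \varprojlim \bbZ/k\bbZ$, not an integer. The concluding sentence, that the relations ``guarantee $\ell \equiv \ell_{k'} \pmod{k'}$ for $k'$ ranging over a cofinal set,'' is false: with $k$ fixed and $k'$ large, $\gcd(k,k')$ is bounded by $k$, so you never obtain a congruence modulo $k'$ itself. Worse, your argument would apply verbatim to the ``profinite shift'' maps $\widehat{\mathcal{N}(a)}$ for $a \in \hat{\bbZ}\setminus\bbZ$ constructed in Section~\ref{sec:an-action-profint}: these are $\sigma_n$-commuting bijections of $\Per(\sigma_n)$ that send every periodic point into its own $\sigma_n$-orbit, yet are not powers of the shift (and are not induced by any homeomorphism of $X_n$). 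Any correct proof must therefore invoke that $\varphi$ is a sliding block code of some radius $r$ to get a \emph{uniform bound} on a lift of $\ell_k$ to $\bbZ$ independent of $k$; once $|\ell_k| \le r$ for all large $k$, the congruences do force a single integer $\ell$, and density of periodic points finishes. Without the bound, the argument proves nothing.

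A secondary issue: the ``local constancy'' step as phrased is vacuous. For a fixed $k$, $\Per_k^{\min}(\sigma_n)$ is a finite discrete subset of $X_n$, so two distinct points of least period $k$ are at distance bounded below (roughly $2^{-k}$), and continuity of $\varphi$ gives no information about how $j(\cdot)$ varies among them. What one actually needs is to compare the shift amount across periodic points of \emph{different} large periods that share a long central block; this comparison is again powered by the radius of $\varphi$, not by bare continuity. Your global-constancy step, which you flagged as the main obstacle, is therefore also resting on the same missing ingredient.
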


% It seems to me that the proof of the previuos
% theorem should also give the following, but
% I am not able to get it as a corollary.
%\begin{cor}
%  Let $(X,\sigma_n)$ be an irreducbile and aperiodic SFT.
%  Let $\varphi \in \autinfn$ be such that
%  $\varphi(x) \in \Orb_{\sigma_n}(x)$ for
%    all $x \in \Per_n(\sigma_n)$
%  then $\varphi = \sigma_n^k$ for some $k \in \bbZ$.
%\end{cor}

\begin{thm}[{{$\autn$ acts pretty transitively on $\Per_k(\sigma_n)$},
  \cite[Theorem 3.6]{boylePeriodicPointsAutomorphisms1987}}]
  \label{lem:pretty-trans}
  Let $k \in \bbN$ and let $x,y \in \Per^{\mathrm{min}}_k(\sigma_n)$ be two points
  with least period $k$ not in the same $\sigma_n$
  orbit. Then there is an involution $\alpha \in \autn$
  which exchanges $x$ and $y$ and fixes all points
  $\Per_k(\sigma_n)$ which do not lie in the $\sigma_n$
  orbits of $x$ and $y$.
\end{thm}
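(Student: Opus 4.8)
The plan is to construct the involution $\alpha$ directly as a sliding block code (a ``marker automorphism''). Any sliding block code commutes with $\sigma_n$ and is continuous, so the only things to arrange are that the code is a bijection, that it is an involution, and that it acts on $\Per_k(\sigma_n)$ in the prescribed way.

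First I would pass to a combinatorial normal form. Since $x,y\in\Per_k^{\mathrm{min}}(\sigma_n)$ we may write $x=\overline{w}$ and $y=\overline{v}$ for primitive (aperiodic) words $w,v$ of length $k$, and the hypothesis that $x$ and $y$ lie in distinct $\sigma_n$-orbits says precisely that $v$ is not a cyclic shift of $w$. Since $\alpha$ is to commute with $\sigma_n$, once $\alpha(\overline w)=\overline v$ and $\alpha(\overline v)=\overline w$ we automatically get $\alpha(\sigma_n^{j}\overline w)=\sigma_n^{j}\overline v$ and $\alpha(\sigma_n^{j}\overline v)=\sigma_n^{j}\overline w$ for all $j$, so $\alpha$ interchanges $\Orb(x)$ and $\Orb(y)$ pointwise; hence it suffices to build an involution $\alpha\in\aut(\sigma_n)$ with $\alpha(\overline w)=\overline v$ which is the identity on every point of $\Per_k(\sigma_n)$ outside $\Orb(x)\cup\Orb(y)$. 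Two elementary consequences of $v$ not being a cyclic shift of $w$ (together with primitivity) will be used: (i) no word of length $\geq k$ is simultaneously a factor of $\overline w$ and of $\overline v$, since a length-$k$ factor of $\overline w$ is a cyclic shift of $w$ and a length-$k$ factor of $\overline v$ is a cyclic shift of $v$; and (ii) by a Fine--Wilf argument ($w^2$ has least period exactly $k$ because $w$ is primitive), the only points of $\Per_k(\sigma_n)$ that contain two consecutive copies of $w$ (respectively of $v$) as a factor are the points of $\Orb(x)$ (respectively of $\Orb(y)$).

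The code $\alpha$ will act as the identity except where it detects a long run of the $w$-pattern or of the $v$-pattern, where it performs the substitution $w\leftrightarrow v$. On a bi-infinite run of $w$'s, i.e.\ on a point of $\Orb(x)$, this outputs the bi-infinite run of $v$'s, which disposes of the two orbits. By fact (ii), the only points of $\Per_k(\sigma_n)$ affected lie in $\Orb(x)\cup\Orb(y)$: any other point of $\Per_k(\sigma_n)$ has period dividing $k$ but is not $\overline{w'}$ for a cyclic shift $w'$ of $w$ or of $v$, hence contains no run of two or more copies of $w$ or of $v$ and is left fixed. Fact (i) guarantees that the ``$w$-run'' and ``$v$-run'' detections, carried out on a fixed finite window, cannot both trigger at the same coordinate, so the substitution rule is well defined (the phase of a detected run being pinned down by a window longer than $k$).

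The subtle point — which I expect to be the main obstacle — is the behaviour at the two ends of a \emph{finite} maximal $w$- or $v$-run occurring inside a non-periodic (or high-period) point: a naive ``flip every sufficiently long run'' rule is not an involution, as one already sees in small two-symbol examples, because one application displaces the boundary of the run and a second application does not restore it. To repair this I would take the window radius large compared with $k$ and, near each run boundary, re-code the bounded flanking word: a generic (``type~A'') boundary configuration is replaced by a ``type~B'' configuration that invertibly records both the original flanking letters and the fact that a substitution was performed, the type-B words being drawn from the large supply of words that neither occur deep inside a $w$- or $v$-run nor are themselves type~A; fixing once and for all a fixed-point-free involution pairing type-A with type-B configurations makes the whole code an involution. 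I would also reserve a self-non-overlapping spacer word that does not occur in $\overline w$ or $\overline v$ and that the code neither creates nor destroys, so that the substitutions performed in distinct runs do not interfere and $\alpha$ is globally bijective. Granting this bookkeeping, the verification is then formal: $\alpha$ is a finite-window block code, hence lies in $\aut(\sigma_n)$; it is an involution by construction; it swaps $\overline w$ and $\overline v$; and it fixes $\Per_k(\sigma_n)\setminus(\Orb(x)\cup\Orb(y))$. The real work is combinatorial — engineering the boundary coding so that a single finite map is simultaneously a block code, a bijection, and an involution — whereas equivariance, continuity, and the behaviour on periodic points all come out of the set-up.
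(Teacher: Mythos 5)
The paper does not prove this statement; it is cited verbatim from Boyle \cite[Theorem 3.6]{boylePeriodicPointsAutomorphisms1987}, so there is no in-paper argument to compare against. Evaluating your sketch on its own merits: the high-level strategy --- build $\alpha$ as a finite-window sliding block code that detects long $w$-runs and $v$-runs and interchanges them, leaving all other content fixed --- is the right one, and your two combinatorial observations (no word of length at least $k$ is a common factor of $\overline{w}$ and $\overline{v}$; a point $z \in \Per_k(\sigma_n)$ containing a $2k$-block of the form $w'w'$ with $w'$ a cyclic shift of $w$ must lie in $\Orb(x)$) are correct and are exactly what one needs to control the action on $\Per_k(\sigma_n)$.

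However, you have correctly identified but not overcome the core difficulty. As you yourself note, the naive ``flip every sufficiently long run'' rule is not even a bijection, because of what happens at the ends of a finite maximal run. The repair you propose --- re-code a bounded window near each run boundary via a ``type-A $\leftrightarrow$ type-B'' fixed-point-free involution, together with a reserved spacer word --- is not a construction but a wish list. You do not specify which configurations are type-A and which type-B; you do not explain how that pairing interacts with the $w \leftrightarrow v$ substitution when a boundary window partially overlaps the run; you do not verify that a single fixed-radius local rule can simultaneously be bijective, square to the identity, avoid firing spuriously on points of $\Per_k(\sigma_n)$ outside $\Orb(x) \cup \Orb(y)$, and avoid interference between the re-codings at the two ends of one run or between two nearby runs (the spacer is asserted to handle this last point but is never placed, detected, or shown to be preserved by the code). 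This boundary engineering is not routine bookkeeping one can ``grant'': it is the substance of Boyle's theorem, and it is exactly what his marker-automorphism machinery supplies. As written, your argument identifies the right obstacle and then stops short of resolving it.
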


%\red{Let's comment out the following}

%\begin{quest}[A stabilized version of Ryan's theorem]
%  \label{lem:stabilized-ryan}
%  Is the centralizer of $\inertn$ in $\autinfn$
%        generated by
%  $\sigma_n$?
%\end{quest}
% \begin{proof}
%   Let $\varphi \in \autinfn$ and choose
%   $k \in \bbN$ minimal such that $\varphi \in \aut^{(k)}(\sigma_n)$.
%   and assume $\varphi$ is not a power of the shift.

%   We want to produce a points $x,y$
%   of the same minimal period, such that $\varphi(x) \not \in
%   \Orb_{\sigma_n}(x)$
%   and $y \not\in \{x,\varphi(x)$.
%   From
%   \Cref{thm:shifting-has-to-be-uniform} we get
%   a point $y$

%   By \Cref{thm:shifting-has-to-be-uniform}
%   there is a point $\tilde{x} \in \Per(\sigma_n)$ with minimal
%   period $km\geq 2$ which is not
%   mapped by $\varphi$ to the $\sigma_n$-orbit of $\tilde{x}$.
%   Take
%   a point $y$ of minimal period $km$ which
%   is not equal to neither $\varphi(x)$ nor $x$. Let $\alpha$ be the automorphism
%   produced by \Cref{lem:pretty-trans}.
%    Since an involution is automatically inert on full shifts,
%   we have $\alpha \in \inertn$.
%   Then $\alpha(\varphi(x))=\varphi(x)$
%   but $\varphi(\alpha(x))=\varphi(y)$, hence
%   $\alpha \circ \varphi \neq \varphi \circ \alpha$.
%   This contradicts the fact that $\varphi$ is in the centralizer of
%   $\inertn$ in $\autinfn$.
% \end{proof}
For a set $A$, we let $\Sym(A)$ denote the group of permutations of $A$.

\begin{thm}
  \label{thm:centralizer-aut-in-per}
The centralizer of the image of $\autn$ in $\Sym(\Per(\sigma_{n}))$ is contained in the set of bijections which preserve $\sigma_{n}$-orbits for $n \ge 3$.  If $n=2$, then there is one additional
  generator given by the involution exchanging exactly the two fixed points.
\end{thm}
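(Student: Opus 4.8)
I want to understand the centralizer $C$ of $\aut(\sigma_n)$ inside $\Sym(\Per(\sigma_n))$. The key constraint I will exploit is Theorem~\ref{lem:pretty-trans}: for every $k$ and every pair of points $x,y \in \Per_k^{\mathrm{min}}(\sigma_n)$ lying in distinct $\sigma_n$-orbits, there is an involution $\alpha \in \aut(\sigma_n)$ swapping the $\sigma_n$-orbits of $x$ and $y$ (pointwise in a shift-equivariant way) and fixing $\Per_k(\sigma_n)$ elsewhere. Let $\beta \in C$. First I would show $\beta$ preserves each $\Per_k(\sigma_n)$ and in fact each $\Per_k^{\mathrm{min}}(\sigma_n)$: this is automatic since $\sigma_n \in \aut(\sigma_n)$, so $\beta$ commutes with $\sigma_n$ and hence preserves minimal period. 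The real work is to pin down how $\beta$ acts within a fixed level $\Per_k^{\mathrm{min}}(\sigma_n)$.

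\textbf{Acting one orbit at a time.} Fix $k$ and suppose there are at least two $\sigma_n$-orbits of least period $k$ (the generic case; the exceptional small cases I handle separately). For $x$ of least period $k$, consider the involution $\alpha_{x,y} \in \aut(\sigma_n)$ from Theorem~\ref{lem:pretty-trans} swapping the orbits of $x$ and $y$. Since $\beta$ commutes with $\alpha_{x,y}$, the permutation $\beta$ must carry the two-orbit support of $\alpha_{x,y}$ to itself, and must carry the fixed-point set of $\alpha_{x,y}$ (which is all of $\Per_k$ outside these two orbits, together with everything of other least periods once we also throw in that $\alpha_{x,y}$ fixes all other $\Per_j$) to itself. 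Intersecting these constraints over all choices of $y \ne x$ (within level $k$) forces $\beta$ to preserve the $\sigma_n$-orbit $\Orb(x)$ \emph{as a set}: indeed $\beta(\Orb(x))$ must lie in the support $\Orb(x) \cup \Orb(y)$ for every $y$, and if $\Orb(x)$ contains $\geq 2$ orbits' worth of mass this is already a contradiction unless $\beta(\Orb(x)) = \Orb(x)$; a short argument with three distinct orbits $x,y,z$ (available whenever there are $\geq 3$ orbits at level $k$) rules out $\beta(\Orb(x)) = \Orb(y)$. So for $k$ with at least three $\sigma_n$-orbits of least period $k$, $\beta$ preserves every such orbit setwise. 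Commuting with $\sigma_n$ then forces $\beta|_{\Orb(x)}$ to be a power of $\sigma_n$ on that orbit, i.e. $\beta$ is orbit-preserving on $\Per_k^{\mathrm{min}}$.

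\textbf{Small-period bookkeeping and $n=2$.} It remains to deal with the finitely many bad levels $k$: those with only one or two $\sigma_n$-orbits of least period $k$. The count of least-period-$k$ orbits is $\frac{1}{k}\sum_{d \mid k}\mu(k/d)n^d$; for $n \geq 3$ this is $\geq 3$ for all $k \geq 1$ except one checks directly (it is $\geq 3$ already for $k=1$ when $n \geq 3$), so there is nothing extra to do and every $\beta \in C$ is orbit-preserving, giving the first assertion. For $n = 2$: at $k=1$ there are exactly two fixed points $\bar 0, \bar 1$, forming two singleton orbits, and at $k=2$ there is exactly one orbit of least period $2$ (namely $\Orb(\overline{01})$). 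The single orbit at $k=2$ is automatically $\beta$-invariant and $\beta$ acts on it as a power of $\sigma_2$, no issue. At $k=1$ the two fixed points can be swapped by $\beta$ with no constraint coming from Theorem~\ref{lem:pretty-trans} (that theorem needs points of least period $k$ \emph{not} in the same orbit — here it produces an $\alpha$ swapping them, but $\alpha$ already lies in $\aut(\sigma_2)$ and every element of $\aut(\sigma_2)$ either fixes or swaps the pair $\{\bar 0,\bar 1\}$, so the transposition of these two points commutes with all of $\aut(\sigma_2)$). Hence for $n=2$, $C$ is generated by the orbit-preserving bijections together with this one transposition; I should also check this transposition does not accidentally lie in the orbit-preserving group (it doesn't, since it is not a power of $\sigma_2$ on the orbit $\{\bar 0\}$ — it moves $\bar 0$ to $\bar 1$), justifying "one additional generator."

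\textbf{Main obstacle.} The delicate point is the step where I pass from "$\beta$ preserves the support $\Orb(x)\cup\Orb(y)$ for each $y$" to "$\beta$ preserves $\Orb(x)$ itself." This needs genuinely three distinct orbits at the level in question, and needs care that the involutions from Theorem~\ref{lem:pretty-trans} really are simultaneously available and that their supports/fixed-sets cut things down as claimed; it also needs the observation that any permutation commuting with the shift sends orbits to orbits of the \emph{same} cardinality, so "$\Orb(x)$ has $k$ points" is preserved. Everything else is routine: the counting of periodic orbits, the identification of a shift-commuting permutation on a single orbit with a power of the shift, and the enumeration of exceptional small $(n,k)$ pairs.
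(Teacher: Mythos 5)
Your overall architecture is the same as the paper's: both proofs reduce to the induced action on the $\sigma_n$-orbits of each fixed least period $k$, use the involutions of Theorem~\ref{lem:pretty-trans} to see that $\aut(\sigma_n)$ realizes every transposition of such orbits, and conclude that a centralizing $\beta$ must fix each orbit whenever there are enough orbits of that length. Your support-intersection step is precisely the hands-on proof that $\Sym(m)$ has trivial center for $m \geq 3$, which is what the paper invokes. One small overreach: Theorem~\ref{lem:pretty-trans} does \emph{not} assert that $\alpha_{x,y}$ fixes periodic points whose period does not divide $k$, so its support inside $\Per(\sigma_n)$ need not be $\Orb(x)\cup\Orb(y)$; but since $\beta$ preserves $\Per_k^{\mathrm{min}}(\sigma_n)$ and the support meets $\Per_k^{\mathrm{min}}(\sigma_n)$ in exactly those two orbits, your argument is unaffected.

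The genuine gap is in your enumeration of exceptional levels for $n=2$: you list only $k=1$ and $k=2$, but $k=3$ is also bad, since there are exactly $(2^3-2)/3=2$ orbits of least period $3$, namely $A=\Orb(\overline{001})$ and $B=\Orb(\overline{011})$. Your three-orbit intersection argument is unavailable there, and nothing you wrote rules out a centralizing $\beta$ that interchanges $A$ and $B$; such a $\beta$ does \emph{not} lie in the group generated by the orbit-preserving bijections and the fixed-point transposition, so this case cannot be absorbed into the stated conclusion and must be excluded. (The paper's own proof has the same blind spot: it asserts the center of $\Sym(\Orb_k(\sigma_2))$ is trivial for all $k\geq 2$, which fails at $k=3$.) Closing it requires a point-level rather than orbit-level argument. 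For instance, write $A=\{a_0,a_1,a_2\}$ and $B=\{b_0,b_1,b_2\}$ with $\sigma_2(a_i)=a_{i+1}$ and $\sigma_2(b_i)=b_{i+1}$ (indices mod $3$). Theorem~\ref{lem:pretty-trans} applied to the pairs $(a_0,b_0)$ and $(a_0,b_1)$ yields involutions $\alpha_1,\alpha_2\in\aut(\sigma_2)$ with $\alpha_1(a_i)=b_i$, $\alpha_1(b_i)=a_i$ and $\alpha_2(a_i)=b_{i+1}$, $\alpha_2(b_i)=a_{i-1}$; their composite $\gamma=\alpha_1\circ\alpha_2$ preserves both orbits but acts on $A$ by $\sigma_2$ and on $B$ by $\sigma_2^{-1}$. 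If $\beta$ commuted with $\sigma_2$ and interchanged $A$ and $B$, say $\beta(a_i)=b_{i+t}$, then $\beta(\gamma(a_i))=b_{i+1+t}$ while $\gamma(\beta(a_i))=b_{i+t-1}$, forcing $1\equiv -1 \pmod 3$, a contradiction. With this case added, your proof is complete.
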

\begin{proof}
  Supposes that $\varphi$ is a map in the centralizer of the image of $\autn$.
  Since $\sigma_n \in \autn$, we have $\varphi \circ \sigma_{n} = \sigma_{n} \circ \varphi$, so $\varphi$ induces
  a permutation of $\sigma_n$ orbits. Let $\Orb_{k}(\sigma_{n})$ denote the set of $\sigma_{n}$-orbits of length $k$ and consider the homomorphism $\autn \to \Sym(\Orb_k(\sigma_n))$ defined by sending an automorphism to the permutation of $\Orb_{k}(\sigma_{n})$ that it induces. From \Cref{lem:pretty-trans} we know that
  this homomorphism is
  surjective. Hence the permutation of $\Orb_{k}(\sigma_{n})$ induced by $\varphi$ must
  be in the center of $\Sym(\Orb_k(\sigma_n))$
  which is trivial as long as $k\geq 2$ or $n \geq 3$. Finally, if $n=2$, it is easy to check that the involution of $\Sym(\Per(\sigma_{n}))$ defined by exchanging the two $\sigma_{n}$-fixed points and leaving every other point fixed is in the centralizer of the image of $\autn$ in $\Sym(\Per(\sigma_{n}))$.
\end{proof}

\begin{thm}\label{cor:centralizer-in-per}
  The centralizer of the image of $\inertinfn$ in $\Sym(\Per(\sigma_n))$ is trivial.
\end{thm}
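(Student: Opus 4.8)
The plan is to exploit the fact that $\inertinfn$ contains, for every $k \ge 1$, the finite group $\simp^{(k)}(\Gamma_{n})$ of degree-$k$ simple automorphisms, and that this group acts on the finite set $\Per_{k}(\sigma_{n})$ as the \emph{full} symmetric group. Indeed, identify $\Per_{k}(\sigma_{n})$ with the set of length-$k$ words $\{0,\ldots,n-1\}^{k}$ via $w \mapsto w^{\infty}$ (so $|\Per_{k}(\sigma_{n})| = n^{k}$); then for a graph automorphism $\gamma$ of $\Gamma_{n}^{(k)}$ the induced automorphism $\tilde{\gamma}$ satisfies $\tilde{\gamma}(w^{\infty}) = \gamma(w)^{\infty}$, so the restriction of $\simp^{(k)}(\Gamma_{n})$ to $\Per_{k}(\sigma_{n})$ is all of $\Sym(\Per_{k}(\sigma_{n}))$. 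In particular, for any two distinct points $p, r \in \Per_{k}(\sigma_{n})$ there is $\beta \in \simp^{(k)}(\Gamma_{n}) \subseteq \inertinfn$ acting on $\Per_{k}(\sigma_{n})$ as the transposition $(p\ r)$ and fixing every other point of $\Per_{k}(\sigma_{n})$. In contrast to the situation of Theorem~\ref{thm:centralizer-aut-in-per}, we cannot simply invoke that result, since $\inertinfn$ contains no power of $\sigma_{n}$ (the stabilized dimension representation $\pi_{n}^{\infty}$ is nontrivial on $\sigma_{n}$), so there is no direct reduction to orbit-preserving bijections; we argue by hand instead.

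Suppose $\varphi \in \Sym(\Per(\sigma_{n}))$ lies in the centralizer of the image of $\inertinfn$, and suppose $\varphi \neq \id$. Choose a periodic point $p$ with $q := \varphi(p) \neq p$, and let $k \ge 2$ be a common multiple of $\per(p)$ and $\per(q)$, so that $p, q \in \Per_{k}(\sigma_{n})$ and $|\Per_{k}(\sigma_{n})| = n^{k} \ge 4$. Pick $r \in \Per_{k}(\sigma_{n}) \setminus \{p,q\}$ and let $\beta \in \simp^{(k)}(\Gamma_{n}) \subseteq \inertinfn$ act on $\Per_{k}(\sigma_{n})$ as the transposition $(p\ r)$; then $\beta(p) = r$ and, since $q \notin \{p,r\}$, $\beta(q) = q$. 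As $\varphi$ commutes with the restriction of $\beta$ to $\Per(\sigma_{n})$, we get
\[\varphi(r) = \varphi(\beta(p)) = \beta(\varphi(p)) = \beta(q) = q = \varphi(p),\]
contradicting injectivity of $\varphi$. Hence $\varphi = \id$ and the centralizer is trivial.

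The only step with any content is the identification in the first paragraph — that the degree-$k$ Boyle--Kari simple automorphisms induce literally the full symmetric group on the $n^{k}$ points of $\Per_{k}(\sigma_{n})$ — which is a direct unwinding of the formula defining $\tilde{\gamma}$; after that the argument is a one-line permutation-group computation and I do not anticipate a genuine obstacle. The only care needed is in the degenerate bookkeeping (e.g.\ $p, q$ both $\sigma_{n}$-fixed points), which is why one takes $k$ to be a multiple of $\per(p)$ and $\per(q)$ that is $\ge 2$ rather than exactly their least common multiple.
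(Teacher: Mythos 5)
Your proof is correct and is essentially the same argument as the paper's: both pick a periodic point moved by $\varphi$, choose a simple automorphism in $\Simp^{(k)}(\Gamma_{n}) \subseteq \inertinfn$ that transposes that point with a third point while fixing its image, and derive a contradiction with commutativity (the paper phrases the contradiction as $\alpha\circ\varphi \neq \varphi\circ\alpha$, you phrase it as a failure of injectivity, but these are the same computation). The identification of $\rho_{k}(\Simp^{(k)}(\Gamma_{n}))$ with all of $\Sym(\Per_{k}(\sigma_{n}))$ that you spell out is exactly the paper's map $\nu_{k}$.
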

\begin{proof}
  Let $\varphi \in \Sym(\Per(\sigma_n))$.
  Assume there is a point $x \in \Per(\sigma_n)$
  with $\varphi(x)\neq x$.
  Let $y$ be a point in $\Per(\sigma_n)\setminus \{x,\varphi(x)\}$.
  Choose $k$ such that $x,y,\varphi(x)$ are all fixed by $\sigma_{n}^{k}$.
  Now there is an involution $\alpha \in \Simp^{(k)}(\Gamma_n)
  \subseteq \inertinfn$
  swapping $x$ and $y$ which fixes $\varphi(x)$.
  Then $\alpha(\varphi(x))=\varphi(x)$
  but $\varphi(\alpha(x))=\varphi(y)$, hence
  $\alpha \circ \varphi \neq \varphi \circ \alpha$.
  This contradicts the fact that $\varphi$ is in the centralizer of
  the image of
  $\inertinfn$ in $\Sym(\Per(\sigma_n))$.
\end{proof}

\begin{cor}%[The center of the stabilized inerts]
  \label{cor:stabilized-inert-ryan}
  The centralizer of $\inertinfn$ in $\autinfn$ is trivial.
\end{cor}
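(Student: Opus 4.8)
The plan is to reduce the statement about the centralizer of $\inertinfn$ inside $\autinfn$ to the statement about the centralizer inside $\Sym(\Per(\sigma_n))$, which was already settled in Theorem~\ref{cor:centralizer-in-per}. The key observation is that the action of $\autinfn$ on the set of periodic points $\Per(\sigma_n)$ is faithful: indeed, since every orbit closure of a point under a power of $\sigma_n$ containing a periodic point sits inside a transitive (in fact, finite) subsystem, and more directly because $\Per(\sigma_n)$ is dense in $X_n$ and elements of $\autinfn$ are continuous, any $\alpha \in \autinfn$ acting trivially on $\Per(\sigma_n)$ must be the identity on $X_n$. This gives an embedding $\autinfn \hookrightarrow \Sym(\Per(\sigma_n))$ that restricts to an embedding $\inertinfn \hookrightarrow \Sym(\Per(\sigma_n))$.

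Now suppose $\alpha \in \autinfn$ centralizes $\inertinfn$ in $\autinfn$. Then, viewing $\alpha$ and all of $\inertinfn$ inside $\Sym(\Per(\sigma_n))$ via the faithful action above, $\alpha$ lies in the centralizer of the image of $\inertinfn$ in $\Sym(\Per(\sigma_n))$. By Theorem~\ref{cor:centralizer-in-per}, that centralizer is trivial, so $\alpha$ acts as the identity on $\Per(\sigma_n)$. By faithfulness of the action on periodic points, $\alpha = \id$ on $X_n$, i.e. $\alpha$ is the identity element of $\autinfn$. Hence the centralizer of $\inertinfn$ in $\autinfn$ is trivial.

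The only real content beyond invoking Theorem~\ref{cor:centralizer-in-per} is the faithfulness of the action of $\autinfn$ on $\Per(\sigma_n)$, which I expect to be the main (though minor) obstacle to write cleanly: one must note that an element $\varphi \in \autinfn$ commutes with $\sigma_n^k$ for some $k$, and that $\varphi$ is a homeomorphism of $X_n$; since $\Per(\sigma_n) = \bigcup_j \Per_j(\sigma_n)$ is dense in $X_n$ (every point can be approximated by periodic points obtained by periodizing longer and longer central blocks), agreement with the identity on a dense set forces $\varphi = \id$. This faithfulness is standard in this area and is implicitly used throughout the paper, so the corollary follows immediately once it is recorded.
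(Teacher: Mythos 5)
Your proposal is correct and is essentially identical to the paper's proof: the paper also notes that $\autinfn \to \Sym(\Per(\sigma_n))$ is injective because periodic points are dense, and then invokes Theorem~\ref{cor:centralizer-in-per}. You have simply spelled out the faithfulness argument in more detail than the paper does.
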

\begin{proof}
  The map $\autinfn \to \Sym(\Per(\sigma_n))$ is injective
  as $\sigma_n$-periodic points are dense.
  The result then follows directly from the previous theorem.
\end{proof}

% Is the following true:
% If \varphi \in \autinfn leaves \Per_\ell invariant for all \ell
% commutes with sigma

\section{$\aut(\autinfn)$ and an embedding of $\profint$}
\label{sec:an-action-profint}
We fix $n \ge 2$ throughout this section. Denote by $\aut(\autinfn)$ the group of automorphisms of the group $\autinfn$. Let $\profint = \varprojlim \zmod{m}$ be the profinite integers.
In concrete terms, the elements of $\profint$
are sequences $(a_m)_{m \in \bbN} \in \prod_{m \in \bbN} \bbZ/ m \bbZ$
for which $a_m \equiv a_{km} \modulo m$ for all $m,k \in \bbN$.
\begin{prop}
There is an injective homomorphism
$$\mathcal{N} \colon \hat{\mathbb{Z}} \to \asan.$$
\end{prop}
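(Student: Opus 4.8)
The plan is to build, for each $a = (a_m)_{m \in \bbN} \in \profint$, an automorphism $\mathcal{N}(a)$ of $\autinfn$ by conjugating each automorphism "by a shift of the appropriate fractional power." Concretely, recall that $\autinfn = \bigcup_{k \geq 1} \aut(\sigma_n^k)$, and for $\varphi \in \aut(\sigma_n^k)$ the element $\sigma_n$ does \emph{not} necessarily commute with $\varphi$, but $\sigma_n^k$ does. The idea is that $a \in \profint$ picks out a coherent choice of "how far to shift" at each level $k$: since $a_k \in \zmod{k}$, there is a well-defined element $\sigma_n^{a_k}$ acting by conjugation on $\aut(\sigma_n^k)$ in a way that only depends on $a_k \bmod k$ — because conjugation by $\sigma_n^k$ is trivial on $\aut(\sigma_n^k)$ (the center of the group contains $\sigma_n^k$ there, or at least $\sigma_n^k$ is central enough that inner-by-$\sigma_n^k$ is trivial on $\aut(\sigma_n^k)$... more precisely $\sigma_n^k$ commutes with everything in $\aut(\sigma_n^k)$). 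So first I would define, for $\varphi \in \aut(\sigma_n^k)$,
$$\mathcal{N}(a)(\varphi) := \sigma_n^{a_k} \circ \varphi \circ \sigma_n^{-a_k},$$
where $a_k$ is any integer representative of $a_k \in \zmod{k}$; the compatibility condition $a_k \equiv a_{km} \pmod k$ is exactly what is needed so that this does not depend on the representative and is consistent when we view $\varphi$ as lying in $\aut(\sigma_n^{km})$ for a larger level. This is where the profinite (rather than just $\bbZ$) structure enters: we need the choice to be coherent across \emph{all} levels simultaneously.

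Next I would verify the three things that make this a well-defined injective homomorphism into $\asan$. (1) \emph{Well-definedness:} if $\varphi \in \aut(\sigma_n^k) \cap \aut(\sigma_n^{k'})$, then $\varphi \in \aut(\sigma_n^{\gcd})$ or we pass to $\aut(\sigma_n^{\mathrm{lcm}})$ and use $a_{\mathrm{lcm}} \equiv a_k \pmod k$; since conjugation by $\sigma_n^k$ fixes $\aut(\sigma_n^k)$ pointwise (because $\sigma_n^k$ is central in $\aut(\sigma_n^k)$), the two definitions agree. (2) \emph{$\mathcal{N}(a)$ is an automorphism of $\autinfn$:} it is clearly a bijection (inverse given by $-a$), and it is a homomorphism on each $\aut(\sigma_n^k)$ since it is conjugation by a fixed homeomorphism on that subgroup; since every pair $\varphi, \psi \in \autinfn$ lies in a common $\aut(\sigma_n^k)$, it is a homomorphism on all of $\autinfn$. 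Note that $\mathcal{N}(a)(\varphi)$ is genuinely an element of $\autinfn$ because $\sigma_n^{a_k}$ is an honest self-homeomorphism of $X_n$ when $a_k \in \bbZ$, and the conjugate still commutes with $\sigma_n^k$. (3) \emph{$\mathcal{N}$ is a homomorphism $\profint \to \asan$:} for $a, b \in \profint$, on $\aut(\sigma_n^k)$ we have $\mathcal{N}(a)\mathcal{N}(b)(\varphi) = \sigma_n^{a_k}\sigma_n^{b_k}\varphi\sigma_n^{-b_k}\sigma_n^{-a_k} = \sigma_n^{a_k + b_k}\varphi\sigma_n^{-(a_k+b_k)} = \mathcal{N}(a+b)(\varphi)$, using $(a+b)_k = a_k + b_k$ in $\zmod k$.

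The main obstacle — and the step deserving genuine care — is \textbf{injectivity}. Suppose $\mathcal{N}(a) = \id$, i.e. $\sigma_n^{a_k}$ centralizes $\aut(\sigma_n^k)$ for every $k$. I would argue that this forces $a_k \equiv 0 \pmod k$ for all $k$, hence $a = 0$ in $\profint$. The cleanest route: if $\sigma_n^{a_k} \circ \varphi = \varphi \circ \sigma_n^{a_k}$ for all $\varphi \in \aut(\sigma_n^k)$, then since $\Simp^{(k)}(\Gamma_n) \subseteq \aut(\sigma_n^k)$ acts pretty transitively on periodic points (Theorem~\ref{lem:pretty-trans}, or more directly the triviality-of-centralizer results Theorem~\ref{cor:centralizer-in-per} / Corollary~\ref{cor:stabilized-inert-ryan}), the element $\sigma_n^{a_k}$, being in the centralizer of $\inertinfn$ restricted to level $k$ — or observing that $\sigma_n^{a_k}$ must at least preserve $\sigma_n$-orbits and commute with all of $\aut(\sigma_n^k) \supseteq \autn$ — must be a power of $\sigma_n^k$ by "shifting implies uniform shifting" (Theorem~\ref{thm:shifting-has-to-be-uniform}) combined with Theorem~\ref{thm:centralizer-aut-in-per}. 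Actually $\sigma_n^{a_k}$ visibly shifts periodic points of period $k$ by $a_k$ within their orbit; for this to be in the centralizer of all of $\aut(\sigma_n^k)$, Theorem~\ref{thm:centralizer-aut-in-per} and the transitivity force the induced orbit-permutation to be trivial, i.e. $\sigma_n^{a_k}$ fixes every point of $\Per_k(\sigma_n)$, which means $k \mid a_k$. Doing this for every $k$ gives $a = 0$. (One should double-check the small case $n = 2$ where Theorem~\ref{thm:centralizer-aut-in-per} has an extra generator; but that extra element is an involution, not a power of $\sigma_n^{a_k}$ of the relevant form, and a direct check rules it out.) Assembling these pieces yields the injective homomorphism $\mathcal{N} \colon \hat{\mathbb{Z}} \to \asan$.
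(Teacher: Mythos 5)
Your construction of $\mathcal{N}$, and your verification of well-definedness and of the homomorphism property, are the same as the paper's (up to an irrelevant sign convention in the conjugation). The difference is in the injectivity argument. The paper disposes of injectivity in one line: if $a_k \ne 0$ in $\zmod{k}$, then conjugation by $\sigma_n^{a_k}$ on $\aut(\sigma_n^k)$ is nontrivial --- implicitly this is Ryan's theorem applied to $\aut(\sigma_n^k) \cong \aut(\sigma_{n^k})$, whose center is $\langle \sigma_n^k\rangle$, so $\sigma_n^{a_k}$ could only be central if $k \mid a_k$. Your route is more circuitous, and some of the tools you reach for are not quite the right ones. The reference to Corollary~\ref{cor:stabilized-inert-ryan} (triviality of the centralizer of $\inertinfn$) does not apply directly, since your hypothesis is only that $\sigma_n^{a_k}$ centralizes $\aut(\sigma_n^k)$, a much smaller group than $\inertinfn$. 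The invocation of Theorem~\ref{thm:shifting-has-to-be-uniform} is a red herring: $\sigma_n^{a_k}$ is already visibly a power of $\sigma_n$, so uniform shifting gives nothing new. The piece of your argument that actually lands is the appeal to Theorem~\ref{thm:centralizer-aut-in-per}, but you need to apply it to $\aut(\sigma_n^k) \cong \aut(\sigma_{n^k})$ rather than to $\autn$: this tells you that the centralizer of $\aut(\sigma_n^k)$ in $\Sym(\Per(\sigma_n))$ consists of $\sigma_n^k$-orbit-preserving bijections (for $n^k \geq 3$), so $\sigma_n^{a_k}$ fixes every point of $\Per_k(\sigma_n)$ (since those points are $\sigma_n^k$-fixed), whence $k \mid a_k$. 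Your write-up conflates $\sigma_n$-orbits with $\sigma_n^k$-orbits and leaves that transfer implicit, which obscures the logic; but once clarified the argument is valid. An even cleaner self-contained version: $\Simp^{(k)}(\Gamma_n)$ acts on $\Per_k(\sigma_n)$ as the full symmetric group, so $\sigma_n^{a_k}\vert_{\Per_k(\sigma_n)}$ lies in the center of $\Sym(n^k)$, which is trivial for $n^k \geq 3$; and the cases $n^k \leq 2$ are vacuous since $a_1 = 0$ always.
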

\begin{proof}
We define the map
\begin{align*}
  \profact((a_m)_{m \in \bbN})(\varphi) := \sigma_n^{-a_k} \circ
  \varphi \circ \sigma_n^{a_k} \text{ for }
  \varphi \in \aut(\sigma_n^{k}).
\end{align*}
%Note that since $(a_n)_{n \in \mathbb{N}} \in \hat{\bbZ}$, the $a_{n}$'s have the property that $a_n \mod k = a_k$ for $k | n$,
%$a_n \in \bbZ_n$.
To see that $\mathcal{N}(a)$ for $a=(a_m)_{m \in \bbN}$ is well-defined, suppose $\varphi \in \aut(\sigma_{n}^{k})$, $j \ge 1$, and write $a_{jk} = a_{k} + rk$. Then considering $\varphi \in \aut(\sigma_{n}^{jk})$ we have
\begin{align*}
\mathcal{N}(a)(\varphi) &= \sigma_{n}^{-a_{jk}} \circ \varphi \circ \sigma_{n}^{a_{jk}} = \sigma_{n}^{-a_{k} - rk} \circ \varphi \circ \sigma_{n}^{a_{k}+rk}\\
&= \sigma_{n}^{-a_{k}}\circ \varphi \circ \sigma_{n}^{a_{k}}
\end{align*}
and this last expression is $\mathcal{N}(a)(\varphi)$ when considering $\varphi \in \aut(\sigma_{n}^{k})$. That $\mathcal{N}(a)$ defines an automorphism of $\autinfn$ is straightforward to check. Injectivity follows from the fact that if $a_{k} \ne 0$ for some $k$ then conjugation by $\sigma_{n}^{a_{k}}$ on $\aut(\sigma_{n}^{k})$ is nontrivial.
\qedhere
%Notice that we get a faithful action $\profact$
%of $\zmod{m}$ on $\autinfn$ by
%defining
%$\profact(m)(\varphi) = \sigma_n^{-m} \circ \varphi \circ\sigma_{n}^{m}$.
%The set of fixed points of this action is precisely
%$\aut^{(m)}(\sigma_n)$.
%Therefore we also get a well-defined action of $\profint$ on
%$\autinfn$ by setting
\end{proof}
Note that for any $(a_{m}) \in \hat{\bbZ}$, the group automorphism $\profact((a_m)_{m \in \bbN})$ always fixes every element in $\aut(\sigma_{n})$; in particular, it fixes $\sigma_{n}$.
\begin{rem}
  \label{rem:no-other-profinite-action}
The above construction does not work with any other element of $\autinfn$ besides powers of $\sigma$. Indeed, suppose
  $\Psi(\alpha) = \gamma_{1}^{-1} \circ \alpha \circ \gamma_{1}$ on
  $\Aut(\sigma_n)$ and
  $\Psi(\alpha) = \gamma_{2}^{-1} \circ \alpha \circ \gamma_{2}$ on
  $\Aut(\sigma_n^{k})$. Then $\gamma_{2}\circ \gamma_{1}^{-1}$ is in the
  center of $\Aut(\sigma_n)$ in $\Aut(\sigma_n^{k})$. Thus $\gamma_{2}\circ \gamma_{1}^{-1}$ commutes with $\sigma_{n}$ and must belong to $\aut(\sigma_{n})$, and hence also the center of $\aut(\sigma_{n})$. Then by Ryan's Theorem~\cite{Ryan1,Ryan2}, this implies $\gamma_{2}\circ \gamma_{1}^{-1} = \sigma_{n}^{k}$ for some $k \in \mathbb{Z}$.
\end{rem}

There is a canonical injective group homomorphism $\mathbb{Z} \to \hat{\mathbb{Z}}$ given by the map $n \mapsto (n \mod m)_{m \in \mathbb{N}}$. The map $\mathcal{N} \colon \hat{\mathbb{Z}} \to \asan$ takes this copy of $\mathbb{Z}$ in $\hat{\mathbb{Z}}$ to the automorphisms of $\autinfn$ defined by $\varphi \mapsto \sigma^{-m} \circ \varphi \circ \sigma^{m}$. In fact, these are the only spatial automorphisms in the image of $\mathcal{N}$.

\subsection{The degree of an automorphism of $\autinfn$}

In this section we fix a positive integer $n= p_1^{k_1}\dots p_\ell^{k_\ell}$ with
  $p_1<\dots<p_\ell$ primes and $k_1,\dots,k_\ell \ge 1$.

%\begin{prop}
%  The stabilized automorphism group of the dimension group of
%  $(X_n,\sigma_n)$ is $\bbZ_n^\ell$, the stabilized dimension group
%  representation is surjective and maps $\sigma_n$
%   to $(k_1,\dots,k_\ell)$.
%\end{prop}

Following  \cite[Section 4.2]{schmiedingLocalMathcalEntropy2022}
we define
$$\roots(\sigma_n)=\{\gamma \in \Aut^\infty(\sigma_n) \setsep
\gamma^r = \sigma_n^s \text{ for some } r,s \neq 0\}.$$

\begin{prop}
Suppose $n$ is not a power of an integer and $\gamma \in \roots(\sigma_n)$. Then
  there are $r,t \in \bbZ \setminus \{0\}$ such that
  $\gamma^{r}=\sigma^{rt}$.
\end{prop}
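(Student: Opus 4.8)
The plan is to push the defining relation of $\roots(\sigma_n)$ through the stabilized dimension representation $\pi_{n}^{\infty} \colon \autinfn \to \aut_{+}(\mathbb{Z}[\tfrac{1}{n}])$ and exploit that the target group is torsion-free. Recall from the Proposition on the dimension group that $\aut_{+}(\mathbb{Z}[\tfrac{1}{n}]) \cong \mathbb{Z}^{\ell}$ (where $\ell = \omega(n)$) and that under this identification $\pi_{n}^{\infty}(\sigma_{n}) = (k_{1},\ldots,k_{\ell})$.

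First I would record the elementary number-theoretic translation of the hypothesis: for $n = p_{1}^{k_{1}}\cdots p_{\ell}^{k_{\ell}}$, the integer $n$ is \emph{not} a perfect power (i.e.\ $n \neq c^{d}$ for every integer $d \ge 2$) if and only if $\gcd(k_{1},\ldots,k_{\ell}) = 1$; this is immediate from unique factorization, since $n = c^{d}$ forces $d \mid k_{i}$ for all $i$, and conversely.

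Now take $\gamma \in \roots(\sigma_{n})$, so $\gamma^{r} = \sigma_{n}^{s}$ with $r,s \in \bbZ \setminus \{0\}$. Applying the homomorphism $\pi_{n}^{\infty}$ gives, in $\mathbb{Z}^{\ell}$, the identity $r \cdot \pi_{n}^{\infty}(\gamma) = s \cdot (k_{1},\ldots,k_{\ell})$. Writing $\pi_{n}^{\infty}(\gamma) = (v_{1},\ldots,v_{\ell})$ this says $r v_{i} = s k_{i}$ for each $i$. Let $g = \gcd(r,s)$ and write $r = g r'$, $s = g s'$ with $\gcd(r',s') = 1$; then $r' v_{i} = s' k_{i}$, so $r' \mid s' k_{i}$, hence $r' \mid k_{i}$ for every $i$, hence $r'$ divides $\gcd(k_{1},\ldots,k_{\ell}) = 1$, so $r' = \pm 1$. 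Therefore $r \mid s$, and setting $t = s/r \in \bbZ \setminus \{0\}$ (nonzero since $s \neq 0$) we get $\gamma^{r} = \sigma_{n}^{s} = \sigma_{n}^{rt}$, as required. (Note the case $\pi_{n}^{\infty}(\gamma) = 0$ cannot arise: it would force $s k_{i} = 0$ for all $i$ and hence $s = 0$, contradicting $s \neq 0$.)

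There is no real obstacle here; the argument is short once one has the dimension representation and its value on $\sigma_{n}$ in hand. The only points requiring a sentence of care are the equivalence \enquote{$n$ not a perfect power $\iff \gcd(k_{i}) = 1$} and the observation that $\aut_{+}(\mathbb{Z}[\tfrac{1}{n}]) \cong \mathbb{Z}^{\ell}$ is torsion-free, which is precisely what licenses the divisibility manipulation after applying $\pi_{n}^{\infty}$.
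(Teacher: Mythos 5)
Your proof is correct and follows essentially the same route as the paper: push the relation $\gamma^{r}=\sigma_{n}^{s}$ through $\pi_{n}^{\infty}$, land in $\mathbb{Z}^{\ell}$, and use $\gcd(k_{1},\ldots,k_{\ell})=1$ to force $r \mid s$. The only cosmetic difference is that the paper extracts the conclusion by taking the gcd over the coordinate equations $r a_{i}=s k_{i}$, whereas you factor out $\gcd(r,s)$ and invoke coprimality of $r',s'$; both are the same divisibility argument, and your version is a touch more careful about signs.
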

\begin{proof}
This follows directly from the dimension representation. If $n$ is not a power of an integer, then $\gcd(k_1,\dots,k_n)=1$. Let
  $\gamma \in \roots(\sigma_{n})$ so
  there are $r,s \neq 0$ such that $\gamma^r=\sigma_n^s$.
  Let $\pi_{n}^\infty(\gamma) = (a_1,\dots,a_\ell) \in \bbZ^\ell$ be the dimension representation
  of $\gamma$. Then $r(a_1,\dots,a_\ell)=s(k_1,\dots,k_\ell)$
  and $r\gcd(a_1,\dots,a_\ell)=s\gcd(k_1,\dots,k_\ell)=s$.
  In particular $s=rt$ for some $t \in \bbZ \setminus\{0\}$.
\end{proof}

\begin{prop}
  For every group automorphism $\Psi$ of $\Aut^{\infty}(\sigma_n)$
  there is $k \in \bbN$ such that
  $\Psi(\sigma^k) = \sigma^{k}$ or $\Psi(\sigma^k)=\sigma^{-k}$.
\end{prop}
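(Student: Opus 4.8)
The plan is to reduce to the case that $n$ is not a perfect power, then to reduce to showing $\Psi(\sigma_{n}) \in \roots(\sigma_{n})$, and finally to prove the latter using the description $\aut(\sigma_{n}^{k}) = C_{\autinfn}(\sigma_{n}^{k})$ together with Ryan's theorem. For the first reduction, write $n = m^{d}$ with $m$ not a perfect power and $d \ge 1$. Since $(X_{n},\sigma_{n})$ is topologically conjugate to $(X_{m},\sigma_{m}^{d})$ and $\aut^{\infty}(\sigma_{m}^{d}) = \aut^{\infty}(\sigma_{m})$, there is an isomorphism $\autinf(\sigma_{m}) \cong \autinf(\sigma_{n})$ carrying $\sigma_{m}^{d}$ to $\sigma_{n}$; transporting $\Psi$ along it yields an automorphism $\Psi'$ of $\autinf(\sigma_{m})$, and if $\Psi'(\sigma_{m}^{k}) = \sigma_{m}^{\pm k}$ for some $k$ then $\Psi'(\sigma_{m}^{dk}) = \sigma_{m}^{\pm dk}$, which translates back to $\Psi(\sigma_{n}^{k}) = \sigma_{n}^{\pm k}$. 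So I may assume $n = p_{1}^{k_{1}}\cdots p_{\ell}^{k_{\ell}}$ with $\gcd(k_{1},\dots,k_{\ell}) = 1$, so that $v := \pi_{n}^{\infty}(\sigma_{n}) = (k_{1},\dots,k_{\ell})$ is a primitive vector in $\bbZ^{\ell}$, where $\ell = \omega(n)$.

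Next, it suffices to prove $\Psi(\sigma_{n}) \in \roots(\sigma_{n})$, i.e.\ that some nonzero power of $\Psi(\sigma_{n})$ is a power of $\sigma_{n}$. Since $\inertinfn$ is the commutator subgroup of $\autinfn$ it is characteristic, so $\Psi$ descends to an automorphism $\overline{\Psi}$ of $\autinfn/\inertinfn \cong \bbZ^{\ell}$ with $\overline{\Psi}(v) = \pi_{n}^{\infty}(\Psi(\sigma_{n}))$. Granting $\Psi(\sigma_{n}) \in \roots(\sigma_{n})$, the preceding proposition supplies $r,t \in \bbZ\setminus\{0\}$ with $\Psi(\sigma_{n})^{r} = \sigma_{n}^{rt}$; applying $\pi_{n}^{\infty}$ and cancelling $r$ in the torsion-free group $\bbZ^{\ell}$ gives $\overline{\Psi}(v) = tv$, hence $v = t\,\overline{\Psi}^{-1}(v) \in t\bbZ^{\ell}$, so $t$ divides every coordinate of the primitive vector $v$ and $t = \pm 1$. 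Therefore $\Psi(\sigma_{n}^{r}) = \Psi(\sigma_{n})^{r} = \sigma_{n}^{\pm r}$, and we take $k = |r|$.

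It remains to prove $\Psi(\sigma_{n}) \in \roots(\sigma_{n})$, which is the heart of the argument. Put $\tau = \Psi(\sigma_{n})$. Since automorphisms carry centralizers to centralizers and centers to centers, $\Psi(\aut(\sigma_{n})) = \Psi\big(C_{\autinfn}(\sigma_{n})\big) = C_{\autinfn}(\tau)$, and the center of $C_{\autinfn}(\tau)$ is the image under $\Psi$ of the center of $\aut(\sigma_{n})$, which by Ryan's theorem equals $\langle\sigma_{n}\rangle$; so the center of $C_{\autinfn}(\tau)$ is $\langle\tau\rangle$. Now suppose one can show that $C_{\autinfn}(\tau) = \Psi(\aut(\sigma_{n}))$ is contained in a single level $\aut(\sigma_{n}^{k})$ of the exhausting chain $\autinfn = \bigcup_{k \ge 1}\aut(\sigma_{n}^{k})$. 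Then $\sigma_{n}^{k}$ commutes with every element of $\aut(\sigma_{n}^{k})$, hence with every element of $C_{\autinfn}(\tau)$, so $\sigma_{n}^{k}$ lies in the center $\langle\tau\rangle$ of $C_{\autinfn}(\tau)$; thus $\sigma_{n}^{k} = \tau^{j}$ for some $j$, necessarily nonzero, and so $\tau \in \roots(\sigma_{n})$.

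The main obstacle is thus the boundedness statement, that $\Psi(\aut(\sigma_{n}))$ sits inside one level $\aut(\sigma_{n}^{k})$ — equivalently, that $\tau = \Psi(\sigma_{n})$ is conjugate in $\autinfn$ to a power of $\sigma_{n}$. The natural starting point is residual finiteness: $\aut(\sigma_{n})$, hence also $\Psi(\aut(\sigma_{n}))$, is residually finite since it embeds into the product of the finite groups $\Sym(\Per_{k}(\sigma_{n}))$, whereas $\autinfn$, and any subgroup of it containing the infinite simple group $\inertinfn$, is not residually finite. Residual finiteness alone does not confine a subgroup to one level — residually finite, even free, subgroups can be spread across infinitely many levels of the chain — so the plan is to combine it with finer structure of $\aut(\sigma_{n})$ and of the action on periodic points, using ``shifting implies uniform shifting'' (Theorem~\ref{thm:shifting-has-to-be-uniform}) and the transitivity of $\aut(\sigma_{n})$ on periodic orbits (Theorem~\ref{lem:pretty-trans}) to control how $\Psi$ can move $\aut(\sigma_{n})$ through the filtration. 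This last step is the one I expect to require the most work.
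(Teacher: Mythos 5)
Your reduction to the case that $n$ is not a perfect power is exactly what the paper does, and your argument for $|t| = 1$ once $\Psi(\sigma_n) \in \roots(\sigma_n)$ is granted — passing to the dimension representation, getting $\overline{\Psi}(v) = tv$, and using primitivity of $v$ — is a clean rephrasing of the paper's computation $\Psi^{-1}(\sigma_n)^{rt} = \sigma_n^r \Rightarrow rt \mid r$. Those parts are correct.

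The genuine gap is that you never prove the one nontrivial ingredient: $\Psi(\sigma_n) \in \roots(\sigma_n)$. The paper does not prove this here either; it cites Proposition 30 of \cite{schmiedingLocalMathcalEntropy2022}, which is where the real work lives. You correctly reformulate the needed fact as a boundedness statement — that $\Psi(\aut(\sigma_n))$ is contained in a single level $\aut(\sigma_n^k)$ of the filtration — and you correctly observe that boundedness plus Ryan's theorem yields $\sigma_n^k = \tau^j$ with $j \ne 0$. But note that boundedness is essentially \emph{equivalent} to the proposition itself: if $\Psi(\sigma_n^k) = \sigma_n^{\pm k}$ then $\Psi(\aut(\sigma_n)) \subseteq \Psi(\aut(\sigma_n^k)) = \aut(\sigma_n^k)$, so you have restated the problem rather than reduced it. The closing paragraph, invoking residual finiteness of $\aut(\sigma_n)$ and non-residual-finiteness of $\autinfn$ together with shifting-implies-uniform-shifting and transitivity on periodic orbits, is only a heuristic; as you yourself note, residual finiteness does not pin a subgroup to one level, and you do not supply the finer argument. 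So the heart of the proof — precisely the step the paper outsources to \cite{schmiedingLocalMathcalEntropy2022} — remains open in your proposal.
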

\begin{proof}
  First assume that $n$ is not a power of some integer.
  By
  \cite[Proposition 30]{schmiedingLocalMathcalEntropy2022}, $\Psi(\sigma_n) =:\gamma \in \roots(\sigma_n)$.
 Then by the previous proposition there are $r,t \neq 0$ such that $\gamma^{r}=\sigma_n^{rt}$, hence
  $\Psi(\sigma_n^r)=\gamma^{r}=\sigma_n^{rt}$.
  Now $(\Psi^{-1}(\sigma_n))^{rt}=\sigma_n^r$, so again
  by the dimension representation
  we see that $rt$ divides $r$ and hence $|t|=1$.

  If $n$ is a power of a integer, we can find $m,k>1$ such that
  $n=m^k$ with $m$ not a power of any integer.
  There is an isomorphism
  $\Phi: \Aut^{\infty}(\sigma_n) \to \Aut^{\infty}(\sigma_m)$
  such that $\Phi(\sigma_n)=\sigma_m^k$.
  Then $\Phi \circ \Psi \circ \Phi^{-1}$ is an automorphism of $\aut^{\infty}(\sigma_{m})$, and since $m$ is not a power of any integer, by the previous part there
  is $\ell \in \bbN$ such that
  $(\Phi \circ \Psi \circ \Phi^{-1})(\sigma_m^\ell) = \sigma_m^{\pm
    \ell}$.
  Then $\Phi^{-1}(\sigma_m^{k\ell}) = \sigma_{n}^{
    \ell}=\Psi^{-1}(\Phi^{-1}(\sigma_m^{\pm k \ell})) = \Psi^{-1}(\sigma_n^{\pm \ell})$, so $\Psi(\sigma_{n}^{\ell}) = \sigma_{n}^{\pm \ell}$.
\end{proof}

\begin{defn}
There is a minimal $d \in \bbN$ such that $\Psi(\sigma_{n}^d)=\sigma_{n}^{\pm
  d}$. We call the minimal such $d$ the \emph{degree of $\Psi$} and denote
it by $\deg(\Psi)$. If $\psi(\sigma_{n}^{\deg (\psi)})=\sigma_{n}^{\deg{\psi}}$
then we call $\psi$ \emph{orientation preserving}, otherwise we say $\Psi$ is \emph{orientation reversing}.
\end{defn}

We record some basic properties of the degree.

\begin{prop}\label{prop:degreestuff}
  Let $\Psi$ be a group automorphism of $\Aut^\infty(\sigma_n)$.
  \begin{enumerate}[(i)]
  \item $\Psi(\sigma_n^k)=\sigma_n^{\pm k}$ if and only if
    $k$ is a multiple of $\deg(\Psi)$.
    \item
    If $k$ is a multiple of $\deg(\Psi)$, then $\Psi(\aut(\sigma_{n}^{k})) = \aut(\sigma_{n}^{k})$.
  \item If $\Psi$ is an inner automorphism, which is given
  by conjugation by an element $\varphi \in \aut(\sigma_n^{k})$,
  then $\deg(\Psi) \mid k$ and $\Psi$ is orientation preserving.
\item
There is an outer automorphism $\Xi_{n}$ of $\Aut^\infty(\sigma_n)$ induced by the spatial reflection map
\begin{equation}\label{eqn:orientationreversing}
\begin{gathered}
\xi_{n} \colon X_n \to X_n\\
\xi_{n} \colon (x_k)_{\scriptscriptstyle k \in \bbZ} \mapsto (x_{-k})_{\scriptscriptstyle k \in \bbZ}.
\end{gathered}
\end{equation}
  This automorphism has degree one and is orientation reversing.
    \end{enumerate}
  \end{prop}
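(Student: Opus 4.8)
The plan is to verify the four items in order, reducing each to tools already developed. For (i), one direction is immediate from the definition of $\deg(\Psi)$ together with the fact that $\Psi$ restricted to the cyclic subgroup generated by $\sigma_n$ behaves multiplicatively: if $\Psi(\sigma_n^d) = \sigma_n^{\epsilon d}$ with $\epsilon = \pm 1$, then $\Psi(\sigma_n^{md}) = \sigma_n^{\epsilon md}$, so every multiple of $d$ works. Conversely, suppose $\Psi(\sigma_n^k) = \sigma_n^{\pm k}$ and write $k = qd + s$ with $0 \le s < d$. Applying $\Psi$ to $\sigma_n^k = \sigma_n^{qd}\sigma_n^s$ and using that $\Psi(\sigma_n^{qd})$ is a power of $\sigma_n$, we get that $\Psi(\sigma_n^s)$ is a power of $\sigma_n$; since $\Psi$ is injective on $\langle \sigma_n \rangle$ and $\Psi(\sigma_n)$ has infinite order (it lies in $\roots(\sigma_n)$), a short argument with the dimension representation (or directly: $\Psi$ maps the infinite cyclic group $\langle \sigma_n\rangle$ isomorphically onto a subgroup of $\langle \sigma_n \rangle$, and such a subgroup is $\langle \sigma_n^e \rangle$ for some $e$, forcing $\Psi(\sigma_n^s) = \sigma_n^{\pm s}$ only if $d \mid s$) shows $s = 0$. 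So $d \mid k$.

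For (ii), the key observation is that $\aut(\sigma_n^k)$ is recovered group-theoretically from $\sigma_n^k$ inside $\autinfn$: namely $\aut(\sigma_n^k)$ is exactly the centralizer $C_{\autinfn}(\sigma_n^k)$. Indeed any $\varphi$ commuting with $\sigma_n^k$ lies in $\aut(\sigma_n^k)$ by definition, and conversely. Since $\Psi$ is an automorphism of $\autinfn$ and, by (i), $\Psi(\sigma_n^k) = \sigma_n^{\pm k}$ whose centralizer is $\aut(\sigma_n^k)$ as well (centralizing $\sigma_n^{-k}$ is the same as centralizing $\sigma_n^k$), we get $\Psi(\aut(\sigma_n^k)) = \Psi(C(\sigma_n^k)) = C(\Psi(\sigma_n^k)) = C(\sigma_n^{\pm k}) = \aut(\sigma_n^k)$.

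For (iii), if $\Psi = \mathrm{conj}_\varphi$ with $\varphi \in \aut(\sigma_n^k)$, then $\varphi$ commutes with $\sigma_n^k$, so $\Psi(\sigma_n^k) = \varphi \sigma_n^k \varphi^{-1} = \sigma_n^k$; by (i) this gives $\deg(\Psi) \mid k$, and since the exponent obtained is $+k$ rather than $-k$, $\Psi$ is orientation preserving. For (iv), the reflection $\xi_n$ satisfies $\xi_n \circ \sigma_n \circ \xi_n^{-1} = \sigma_n^{-1}$, hence the induced automorphism $\Xi_n$ of $\autinfn$ has $\Xi_n(\sigma_n) = \sigma_n^{-1}$, so $\deg(\Xi_n) = 1$ and $\Xi_n$ is orientation reversing. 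To see $\Xi_n$ is outer: if it were inner it would be conjugation by some $\varphi \in \aut(\sigma_n^k)$, and then $\xi_n \circ \varphi^{-1}$ would centralize $\aut(\sigma_n^k)$ inside $\mathrm{Homeo}(X_n)$; but by (iii) an inner automorphism is orientation preserving, contradicting that $\Xi_n$ is orientation reversing. I expect the only mildly delicate point to be the converse in (i) — pinning down that $\Psi(\sigma_n^s)$ cannot be a nontrivial power of $\sigma_n$ unless $d \mid s$ — which is where the minimality in the definition of degree, combined with the cyclic-subgroup structure of $\langle\sigma_n\rangle$ and the dimension representation, must be used carefully; everything else is a direct unwinding of definitions.
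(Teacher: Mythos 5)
Your treatments of (ii), (iii) and (iv) are correct and are essentially the paper's own arguments: (ii) via the observation that $\aut(\sigma_{n}^{k})$ is the centralizer of $\sigma_{n}^{\pm k}$ in $\autinfn$ and that automorphisms carry centralizers to centralizers, (iii) because conjugation by $\varphi \in \aut(\sigma_{n}^{k})$ fixes $\sigma_{n}^{k}$, and (iv) with outerness deduced from (iii) (inner automorphisms are orientation preserving, while $\Xi_{n}(\sigma_{n}) = \sigma_{n}^{-1}$).

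The one place your argument does not hold together as written is the converse direction of (i). Writing $k = qd+s$ you correctly obtain $\Psi(\sigma_{n}^{s}) = \sigma_{n}^{t}$ for some $t \in \bbZ$, but the justification offered for concluding $s=0$ is flawed: the claim that ``$\Psi$ maps $\langle \sigma_{n}\rangle$ isomorphically onto a subgroup of $\langle \sigma_{n}\rangle$'' is not available, since nothing guarantees $\Psi(\sigma_{n}) \in \langle \sigma_{n}\rangle$ when $\deg(\Psi)>1$, and the parenthetical ``forcing $\Psi(\sigma_{n}^{s}) = \sigma_{n}^{\pm s}$ only if $d \mid s$'' assumes exactly what must be shown. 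What is missing is the identification $t = \pm s$, and it has a one-line proof: since $sd$ is a multiple of $d = \deg(\Psi)$, the easy direction gives $\Psi(\sigma_{n}^{sd}) = \sigma_{n}^{\pm sd}$, while also $\Psi(\sigma_{n}^{sd}) = (\Psi(\sigma_{n}^{s}))^{d} = \sigma_{n}^{td}$; as $\sigma_{n}$ has infinite order, $t = \pm s$, and then $0 < s < d$ would contradict the minimality of $d$. (The paper instead argues that, once the orientation is fixed, $\{k \in \bbZ \setsep \Psi(\sigma_{n}^{k}) = \sigma_{n}^{k}\}$ is a subgroup of $\bbZ$ with minimal positive element $\deg(\Psi)$, hence equal to $\deg(\Psi)\bbZ$. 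Either route works, and both tacitly use that $\Psi$ cannot be simultaneously orientation preserving and reversing: if $\Psi(\sigma_{n}^{a}) = \sigma_{n}^{-a}$ and $\Psi(\sigma_{n}^{b}) = \sigma_{n}^{b}$ with $a,b>0$, evaluating $\Psi(\sigma_{n}^{ab})$ two ways forces $\sigma_{n}^{2ab} = \id$, which is absurd.)
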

\begin{proof}
First consider $(i)$. That $\Psi(\sigma_{n}^{k}) = \sigma_{n}^{\pm k}$ if $k$ is a multiple of $\deg(\Psi)$ is clear. For the other direction, assume that $\Psi$ is orientation preserving; the orientation reversing case is analogous. The fixed point set $\{\varphi \in \autinfn \setsep \Psi(\varphi)=\varphi\}$ of $\Psi$ forms a subgroup of $\autinfn$, so $\{k \in \bbZ \setsep \Psi(\sigma_n^k)=\sigma_n^k\}$ is a subgroup $\bbZ$
    and hence generated by $\deg(\Psi)$.

For $(ii)$, given such a $k$, by $(i)$ we know that $\Psi(\sigma_{n}^{k}) = \sigma_{n}^{\pm k}$. Then the result follows since $\aut(\sigma_{n}^{k})$ is the centralizer of both $\sigma_{n}^{k}$ and $\sigma_{n}^{-k}$ in $\autinfn$.

Item $(iii)$ is clear from $(i)$ since conjugation by $\varphi \in \aut(\sigma_{n}^{k})$ fixes $\sigma_{n}^{k}$. That $\Xi_n$ in $(iv)$ is outer follows from $(ii)$.
\end{proof}

\begin{prop}[The subgroup of orientation preserving automorphisms]
The subgroup of orientation preserving automorphisms of
  $\autinfn$ is a normal
  subgroup of $\autinfn$ of index 2.
\end{prop}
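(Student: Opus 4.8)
The plan is to realize the set of orientation preserving automorphisms as the kernel of a surjective homomorphism $\epsilon \colon \aut(\autinfn) \to \{\pm 1\}$, from which normality and index two are automatic. (The subgroup in question of course lives inside $\aut(\autinfn)$.) I would set $\epsilon(\Psi) = 1$ if $\Psi$ is orientation preserving and $\epsilon(\Psi) = -1$ if $\Psi$ is orientation reversing. This is well-defined: by the preceding results every $\Psi \in \aut(\autinfn)$ has a well-defined degree $d = \deg(\Psi) \ge 1$ with $\Psi(\sigma_n^{d}) \in \{\sigma_n^{d}, \sigma_n^{-d}\}$, and the two possibilities are genuinely distinct since $\sigma_n$ has infinite order, so exactly one of them holds.

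The one place requiring a little care is the following uniform description, which I would record first: for every multiple $k$ of $\deg(\Psi)$ one has $\Psi(\sigma_n^{k}) = \sigma_n^{\epsilon(\Psi)\,k}$. Indeed, writing $k = jd$ with $d = \deg(\Psi)$ and using that $\Psi$ is a group homomorphism, $\Psi(\sigma_n^{k}) = \Psi(\sigma_n^{d})^{j} = (\sigma_n^{\epsilon(\Psi)d})^{j} = \sigma_n^{\epsilon(\Psi)k}$; moreover, by the \enquote{only if} direction of Proposition~\ref{prop:degreestuff}(i), these are precisely the powers $\sigma_n^{k}$ with $\Psi(\sigma_n^{k}) = \sigma_n^{\pm k}$. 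With this in hand, multiplicativity of $\epsilon$ is a short computation. Given $\Psi_1,\Psi_2 \in \aut(\autinfn)$, pick a common multiple $k$ of $\deg(\Psi_1)$ and $\deg(\Psi_2)$, say $k = \deg(\Psi_1)\deg(\Psi_2)$, and compute
\[
(\Psi_1 \circ \Psi_2)(\sigma_n^{k}) = \Psi_1\!\left(\sigma_n^{\epsilon(\Psi_2)\,k}\right) = \sigma_n^{\epsilon(\Psi_1)\epsilon(\Psi_2)\,k},
\]
where the first equality uses the uniform description for $\Psi_2$ at $k$ and the second uses it for $\Psi_1$ at the multiple $\pm k$ of $\deg(\Psi_1)$. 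Since the right-hand side is $\sigma_n^{\pm k}$, Proposition~\ref{prop:degreestuff}(i) forces $\deg(\Psi_1 \circ \Psi_2) \mid k$, and comparing exponents with the uniform description for $\Psi_1 \circ \Psi_2$ at $k$ gives $\epsilon(\Psi_1 \circ \Psi_2) = \epsilon(\Psi_1)\epsilon(\Psi_2)$.

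Hence $\epsilon$ is a group homomorphism, and its kernel is by definition precisely the subgroup of orientation preserving automorphisms. Finally, $\epsilon$ is surjective by Proposition~\ref{prop:degreestuff}(iv), since the reflection automorphism $\Xi_n$ is orientation reversing, so $\epsilon(\Xi_n) = -1$. Therefore $\ker \epsilon$ is a normal subgroup of $\aut(\autinfn)$ of index two, which is the assertion. I do not anticipate any genuine difficulty here; the argument is essentially bookkeeping with the degree, the only mildly delicate point being the verification that the exponents attached to $\Psi_1$ and $\Psi_2$ compose multiplicatively, which is exactly what the uniform formula above is designed to handle.
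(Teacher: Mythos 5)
Your proposal is correct and follows essentially the same route as the paper: both realize the orientation preserving automorphisms as the kernel of a sign homomorphism on $\aut(\autinfn)$ and use the reflection automorphism $\Xi_n$ to see the quotient is nontrivial. The paper leaves well-definedness and multiplicativity as an \enquote{easy to check}; your uniform formula $\Psi(\sigma_n^{k})=\sigma_n^{\epsilon(\Psi)k}$ for multiples $k$ of $\deg(\Psi)$ is exactly the bookkeeping that check requires.
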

\begin{proof}
Define a map $\tau: \autinfn \to \bbZ_2$ as follows.
  Set $\tau(\Psi)=0$
  if there is $k>0$ such that $\Psi(\sigma_n^k)=\sigma_n^k$
  and
  $\tau(\Psi)=1$
  if there is $k>0$ such that $\Psi(\sigma_n^k)=\sigma_n^{-k}$.
  It is easy to check that $\tau$ is a well-defined homomorphism
 whose kernel is the subgroup of orientation preserving automorphisms. Since the automorphism $\Xi_{n}$ defined in~\eqref{eqn:orientationreversing} of Proposition~\ref{prop:degreestuff} is in $\autautinfn$ but not orientation preserving, the conclusion follows.
\end{proof}

\begin{defn}
We denote by $\orpren$ the subgroup of orientation preserving automorphisms in $\autinfn$.
By $\Aut_m(\autinf(\sigma_n))$ we denote the subgroup
of $\autautinfn$ of all automorphisms
whose degree divides $m$.
Hence $\Aut_m(\autinfn) \subseteq \Aut_k(\autinfn)$
if $m$ divides $k$.
\end{defn}

For instance, $\Psi \in \aut_{1}(\autinfn)$ if and only if $\Psi(\sigma_{n}) = \sigma_{n}^{\pm 1}$.

\subsection{The defect}

For $k \in \bbN$ we define the following two maps
\begin{align*}
  \rho_k:  \Aut(\sigma^{k}_n) &\to \Sym(\Per_k(\sigma_n))\\
  \varphi &\mapsto \varphi_{|\Per_k(\sigma_n)}.
\end{align*}
and
\begin{align*}
  \nu_k: \Sym(\Per_k(\sigma_n)) &\to \Simp^{(k)}(\Gamma_n)\\
  \pi &\mapsto \nu_k(\pi)
\end{align*}
where $\nu_k(\pi)$ is the uniquely determined element of
$\Simp^{(k)}(\Gamma_n)$
such that $\nu_k(\pi)_{|\Per_k(\sigma_n)} = \pi$.
In other words, we have $\rho_k \circ \nu_k = \id_{\Sym(\Per_k(\sigma_n))}$
Both maps are obviously homomorphisms.
The composition in the other direction we denote by
$\cA_k=\nu_k\circ \rho_k: \Aut^{\infty}(\sigma_n) \to
\Aut^{\infty}(\sigma_n)$.

Note that by $(ii)$ of Proposition~\ref{prop:degreestuff}, for each $k$ which is divisible by $\deg(\Psi)$ we have $\Psi(\Simp^{(k)}(\Gamma_{n})) \subseteq \aut(\sigma_{n}^{k})$. For such $k$ then we may consider the intersection $\Psi(\Simp^{(k)}(\Gamma_{n})) \cap \ker \rho_{k}$.

The following proposition, while appearing minor, is in fact of fundamental importance for our construction.

\begin{prop}
Let $n \ge 2$ and $\Psi \in \aut(\autinfn)$. There exists $M \in \bbN$ such that $\Psi(\Simp^{(k)}(\Gamma_n)) \cap \ker \rho_{k}
  = \{\id\}$
  for all $k \geq  M$ with $\deg(\Psi) \mid k$.
\end{prop}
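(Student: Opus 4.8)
The plan is to argue by contradiction: suppose that for infinitely many $k$ with $\deg(\Psi) \mid k$ there is a nontrivial element $\beta_k \in \Psi(\Simp^{(k)}(\Gamma_n)) \cap \ker \rho_k$. Write $\beta_k = \Psi(\alpha_k)$ for some $\alpha_k \in \Simp^{(k)}(\Gamma_n) \setminus \{\id\}$. The key observation is that $\ker \rho_k$ consists exactly of those automorphisms in $\aut(\sigma_n^k)$ which act trivially on $\Per_k(\sigma_n)$; since $\Simp^{(k)}(\Gamma_n) \cong \Sym(n^k)$ acts \emph{faithfully} on $\Per_k(\sigma_n)$ (because the map $\rho_k$ restricted to $\Simp^{(k)}(\Gamma_n)$ is injective, being a section of $\nu_k$), each $\beta_k$, while trivial on $\Per_k(\sigma_n)$, is the $\Psi$-image of a simple automorphism at level $k$. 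So the contradiction must come from comparing the action of $\beta_k$ on lower-period points with the fact that $\alpha_k$ is simple at level $k$.

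First I would fix a reference level. Since $\deg(\Psi) =: d$, choose a base level $k_0$ which is a multiple of $d$ and large enough that $\Psi(\sigma_n^{k_0}) = \sigma_n^{\pm k_0}$ and, by $(ii)$ of Proposition~\ref{prop:degreestuff}, $\Psi(\aut(\sigma_n^{k_0})) = \aut(\sigma_n^{k_0})$. Now for any $k$ a multiple of $k_0$, the group $\Simp^{(k_0)}(\Gamma_n) \subseteq \Simp^{(k)}(\Gamma_n)$, and more importantly $\aut(\sigma_n^{k_0}) \subseteq \aut(\sigma_n^{k})$ with $\Psi$ preserving both. The element $\alpha_k \in \Simp^{(k)}(\Gamma_n)$ need not normalize $\Simp^{(k_0)}(\Gamma_n)$, but it does normalize $\aut(\sigma_n^{k})$; the plan is to use the structure of $\Simp^{(k)}(\Gamma_n)$ as a symmetric group together with the containments of the $\Simp^{(j)}$ to locate $\alpha_k$ precisely. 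Concretely, $\ker\rho_k \cap \aut(\sigma_n^k)$ is a normal subgroup, and I would show $\Psi^{-1}(\ker\rho_k) \cap \aut(\sigma_n^k)$ is also normal in $\aut(\sigma_n^k)$ and contained in $\inert^\infty(\sigma_n) \cap \aut(\sigma_n^k) = \Inert(\sigma_n^k)$ (using $\Psi(\inert^\infty) = \inert^\infty$, since $\inert^\infty$ is characteristic). So $\alpha_k$ lies in a normal subgroup of $\aut(\sigma_n^k)$ inside $\Inert(\sigma_n^k)$ that is mapped by $\Psi$ into $\ker\rho_k$.

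The crux is then a counting / action-dimension argument. The element $\beta_k = \Psi(\alpha_k)$ is trivial on $\Per_k(\sigma_n)$, hence there is a \emph{minimal} $m = m(k) > k$ (a multiple of $d$) such that $\beta_k$ acts nontrivially on $\Per_m(\sigma_n)$; equivalently $\beta_k \in \aut(\sigma_n^m) \setminus \aut(\sigma_n^{m'})$ for the relevant $m'$. On the other hand $\alpha_k$, being a simple automorphism at level $k$ and nontrivial, acts nontrivially already on $\Per_k(\sigma_n)$ (this is exactly faithfulness of $\Simp^{(k)}(\Gamma_n)$ on level-$k$ periodic points). I would track a conjugacy-invariant of $\alpha_k$ under $\Psi$ — the natural candidate is the ``first level at which the automorphism moves a periodic point'', but since $\Psi$ does not a priori preserve this, one must instead use an invariant that \emph{is} preserved, such as the conjugacy class in $\aut(\sigma_n^k)$ or membership in the chain $\Simp^{(k_0)} \subseteq \Simp^{(2k_0)} \subseteq \cdots$. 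By the Boyle–Kari representation Lemma~\ref{lem:boyle-rep}, the $\sigma_n$-conjugates of $\Simp^{(\infty)}(\Gamma_n)$ generate $\inert^\infty(\sigma_n)$, and a careful use of this, combined with the fact that $\Psi$ fixes some power of $\sigma_n$, pins $\Psi(\Simp^{(k)}(\Gamma_n))$ down enough to see that its intersection with $\ker\rho_k$ cannot contain a nontrivial element once $k$ is large.

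The main obstacle I anticipate is precisely that $\Psi$ need not map $\Simp^{(k)}(\Gamma_n)$ to $\Simp^{(k)}(\Gamma_n)$, only into $\aut(\sigma_n^k) \cap \inert^\infty$, so one cannot directly compare "levels". The resolution will have to exploit that $\Simp^{(k)}(\Gamma_n)$ is an enormous finite simple-ish group (symmetric on $n^k$ letters) whose image under $\Psi$ is a finite subgroup of $\aut(\sigma_n^k)$ isomorphic to $\Sym(n^k)$, and that any such finite subgroup, by a Boyle–Krieger-type argument on periodic points, must act faithfully on $\Per_k(\sigma_n)$ once $k$ is large enough relative to $\deg(\Psi)$ — because a finite group acting on a shift of finite type is determined by its action on periodic points of bounded period, and the bound can be made uniform. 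Making that uniformity explicit, so that a single $M$ works for all large $k$ simultaneously, is the delicate point; I would handle it by first establishing the statement for each fixed $k$ (faithfulness of the finite group $\Psi(\Simp^{(k)})$ on $\Per_k$) and then showing the "escape level" cannot grow, using that $\Psi$ restricted to $\aut(\sigma_n^{k_0})$ is a \emph{fixed} isomorphism independent of $k$.
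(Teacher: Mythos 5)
Your proposal circles around the right objects but never lands on the argument that actually closes the proof, and you acknowledge as much at the end (``Making that uniformity explicit... is the delicate point''). The paper's proof does not need any uniformity estimate, any tracking of the ``first level $m(k)$ at which $\beta_k$ moves a point,'' nor the Boyle--Kari representation lemma. What it needs is two observations you gesture at but don't assemble.

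The first is the normal subgroup structure of the abstract group $\Sym(n^k)$. The intersection $\Psi(\Simp^{(k)}(\Gamma_n)) \cap \ker\rho_k$ is a normal subgroup of $\Psi(\Simp^{(k)}(\Gamma_n))$, and $\Psi(\Simp^{(k)}(\Gamma_n)) \cong \Sym(n^k)$. For $k \geq 3$ (and $n \geq 3$, so $n^k \geq 5$) the only proper nontrivial normal subgroup of $\Sym(n^k)$ is the alternating group. So if the intersection is nontrivial, it must contain $\Psi(\Simp^{(k)}_{ev}(\Gamma_n))$ — you noted the intersection is normal, but in $\aut(\sigma_n^k)$, not in the finite symmetric group, and that is the normality that pays off. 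The second is the nested chain $\Simp^{(1)}_{ev}(\Gamma_n) \subseteq \Simp^{(k_i)}_{ev}(\Gamma_n)$. If the statement fails along an infinite sequence $k_i$, then $\Psi(\Simp^{(1)}_{ev}(\Gamma_n)) \subseteq \bigcap_i \ker\rho_{k_i} = \{\id\}$ by density of periodic points, which contradicts nontriviality of $\Simp^{(1)}_{ev}(\Gamma_n)$ for $n \geq 3$ (with a minor workaround at level $2$ for $n=2$). This gives a single contradiction for the whole sequence $k_i$ at once; no per-$k$ argument or uniform bound is needed.

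Your proposed route — pinning down $\Psi(\Simp^{(k)}(\Gamma_n))$ via Boyle--Kari, or arguing that ``any finite subgroup of $\aut(\sigma_n^k)$ isomorphic to $\Sym(n^k)$ must act faithfully on $\Per_k$'' — is circular as you've stated it, since faithfulness on $\Per_k$ is precisely what is to be shown, and it isn't even true without the hypothesis that the group is the $\Psi$-image of $\Simp^{(k)}$ (one can build finite subgroups of $\aut(\sigma_n^k)$ that kill $\Per_k$ and act only on longer-period points). So there is a genuine gap: the decisive step, passing from ``nontrivial normal intersection'' to ``contains the alternating image, hence traps a fixed nontrivial subgroup in $\bigcap_i \ker\rho_{k_i}$,'' is absent from your sketch.
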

\begin{proof}
First suppose $n \ge 3$. The group  $\Psi(\Simp^{(k)}(\Gamma_n))$ is isomorphic to $\Sym(n^k)$, so
  $\Psi(\Simp^{(k)}(\Gamma_n))$ for $k\geq 3$ has only one nontrivial normal subgroup, namely
  $\Psi(\Simp_{ev}^{(k)}(\Gamma_{n}))$.
  Assume, for the sake of contradiction, that there is no such $M$. Then there is an increasing sequence of indices $k_i \geq 3$, each divisible by $\deg(\Psi)$, such that $\Psi(\Simp^{(k_i)}(\Gamma_n)) \cap \ker \rho_{k_i} \neq
  \{\id\}$.
  Then $\Psi(\Simp^{(k_i)}_{ev}(\Gamma_n)) \subseteq \ker \rho_{k_i}$ and hence $\Psi(\Simp^{(1)}_{ev}(\Gamma_n)) \subseteq \bigcap_{i} \ker \rho_{k_i}$ for all $i$. By density of periodic points, the intersection $\bigcap_{i} \ker \rho_{k_i}$ must be equal to $\{\textrm{id}\}$. But this is a contradiction, since $\Simp^{(1)}_{ev}(\Gamma_{n})$ is nontrivial for $n \geq 3$.

For $n=2$, we can slightly modify the argument to work, as follows.
First, the statement holds for even $k$ using the same argument as above together with the fact that
$\Simp_{ev}^{(2)}(\Gamma_{2})$ is non-trivial.
Now let $k \geq M$ with $\deg(\Psi)\mid k$
and consider $\varphi \in \Psi(\Simp^{(k)}(\Gamma_{n})) \subseteq  \Psi(\Simp^{(2k)}(\Gamma_{n}))$.
Since the statement holds for even $k$, $\varphi$ acts non-trivially on
$\Per_{2k}(\sigma_n)$ and since $\varphi \in \Psi(\Simp^{k}(\Gamma_{n}))$
it also acts non-trivially on $\Per_{k}(\sigma_n)$.
\end{proof}

\begin{defn}[Defective set]
  Call the set of all $k \geq 3$
  which are multiples of $\deg(\Psi)$
  and
  for which $\Psi(\Simp^{(k)}(\sigma_n)) \cap \ker \rho_{k}
  \neq \{\id\}$ the \emph{defective set} of $\Psi$. We denote this
  set by $\defect(\Psi)$.
\end{defn}

The previous proposition shows that the defective set $\defect(\Psi)$ is always finite.

\begin{rem} The defective set has the following properties:
  \begin{enumerate}[(i)]
    \item If $\Psi$ is inner and given by conjugation
      with $\psi \in \Aut(\sigma^{\ell}_n)$ then $\defect(\Psi)=\emptyset$.
      Indeed, assume otherwise that $\ell k \in \defect(\Psi)$. Then
      \[(\psi^{-1}\circ \varphi \circ \psi)_{|\Per_{\ell k}(\sigma_n)} =
      \id_{\Per_{\ell k}(\sigma_n)}\] for some nontrivial $\varphi \in \Simp^{(\ell k)}(\Gamma_n)$,
      i.e. $\varphi(\psi(x))=\psi(x)$ for $x \in \Per_{\ell k}(\sigma_n)$.

      Thus
      $\varphi_{|\Per_{\ell k}(\sigma_n)} = \varphi_{|\psi(\Per_{\ell k}(\sigma_n))}=\id_{\psi(\Per_{\ell k}(\sigma_n))}=
     \id_{\Per_{\ell
          k}(\sigma_n)}$, and hence $\varphi=\id$, a contradiction.

    \item For $\Xi_{n}$ the orientation reversing automorphism defined in Proposition~\ref{prop:degreestuff}, $\defect(\Xi_{n})=\emptyset$.
  \end{enumerate}
\end{rem}

In the next section we will show that the defective set is actually always empty.

\section{The \verraum}
We now begin construction of the \verraum associated to an automorphism $\Psi \in \aut(\autinfn)$. Fix $n \ge 2$. If $k \in \bbN$ with $\deg(\Psi) \mid k$ then $\Psi(\sigma_{n}^{k}) = \sigma_{n}^{\pm k}$, and we have the following homomorphisms:
\begin{align*}
  \Sym(\Per_k(\sigma_n)) \overset{\nu_k}{\longrightarrow}
  \Simp^{(k)}(\Gamma_n) \overset{\Psi}{\longrightarrow}
  \Aut(\sigma^{k}_n) \overset{\rho_k}{\longrightarrow}
  \Sym(\Per_k(\sigma_n)).
\end{align*}
Suppose in addition that $k \not\in \defect(\Psi)$. Then $\Psi(\Simp^{(k)}(\Gamma_n)) \cap \ker \rho_{k} = \{\textrm{id}\}$, and
the composition of these maps is an automorphism.
By assumption, $n \ge 2$, so if $k \geq 3$ then the set $\Per_{k}(\sigma_{n})$ has at least eight elements. Since every automorphism of a symmetric group on at least seven elements is inner, the automorphism $\rho_{k} \circ \Psi|_{\Simp^{(k)}(\Gamma_{n})} \circ \nu_{k}$ of $\Sym(\Per_{k}(\sigma_{N}))$ must be inner. It follows we can find a uniquely determined bijective map
$\hat{\Psi}^{(k)}: \Per_k(\sigma_n) \to \Per_k(\sigma_n)$
such that
\begin{align*}
  \rho_k(\Psi(\nu_k(\pi)))=(\hat{\Psi}^{(k)})^{-1} \circ \pi \circ \hat{\Psi}^{(k)}
\end{align*}
In this section, we will stitch these maps together
to first the build the Verräumlichung of $\Psi$ on the periodic points.

To simplify notation we
define
$$I_\Psi = \{k \setsep k\geq 3, \, k \in (\deg{\Psi}) \bbN, \, k \not\in \defect{\Psi}\}.$$

Using the fact that $\nu_k$ is actually an isomorphism,
we obtain the following theorem.
\begin{thm}[Local Verräumlichung]
  \label{thm:loc-verräumlichung}
  Let $\Psi \in \autautinfn$.
  For every $k \in I_\Psi$ there is a unique map
 $\hat{\Psi}^{(k)} \in \Sym(\Per_k(\sigma_n))$
  such that
  \[\Psi(\varphi)(x) = ((\hat{\Psi}^{(k)})^{-1} \circ \varphi
    \circ\hat{\Psi}^{(k)})(x)\]
  for all $\varphi \in \Simp^{(k)}(\Gamma_n)$ and $x \in
  \Per_k(\sigma_n)$.
  In other words
  \[\rho_k(\Psi(\varphi)) = (\hat{\Psi}^{(k)})^{-1}\circ \rho_k(\varphi) \circ \hat{\Psi}^{(k)}.\]
\end{thm}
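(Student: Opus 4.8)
The plan is to assemble the local spatialization maps $\hat{\Psi}^{(k)}$ directly from the inner-ness of the composite automorphism $\rho_{k}\circ \Psi|_{\Simp^{(k)}(\Gamma_{n})}\circ \nu_{k}$ of $\Sym(\Per_k(\sigma_n))$, and to check that the resulting maps satisfy the stated intertwining identity for \emph{all} of $\Simp^{(k)}(\Gamma_n)$, not just for $\Simp^{(k)}_{ev}$. First I would fix $k \in I_\Psi$ and recall the chain of homomorphisms
\begin{align*}
  \Sym(\Per_k(\sigma_n)) \overset{\nu_k}{\longrightarrow}
  \Simp^{(k)}(\Gamma_n) \overset{\Psi}{\longrightarrow}
  \Aut(\sigma^{k}_n) \overset{\rho_k}{\longrightarrow}
  \Sym(\Per_k(\sigma_n)).
\end{align*}
Since $k \notin \defect(\Psi)$, the previous proposition gives $\Psi(\Simp^{(k)}(\Gamma_n)) \cap \ker\rho_k = \{\id\}$, so $\rho_k$ is injective on $\Psi(\Simp^{(k)}(\Gamma_n))$; since $\nu_k$ is an isomorphism onto $\Simp^{(k)}(\Gamma_n)$ by construction and $\Psi$ is an automorphism, the composite $\Theta_k := \rho_k \circ \Psi|_{\Simp^{(k)}(\Gamma_n)} \circ \nu_k$ is an injective endomorphism of $\Sym(\Per_k(\sigma_n))$. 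Because $k \ge 3$ and $n \ge 2$, the set $\Per_k(\sigma_n)$ has more than six elements, hence $\Sym(\Per_k(\sigma_n))$ is a finite symmetric group which is Hopfian, so $\Theta_k$ is in fact an automorphism, and every automorphism of such a symmetric group is inner. Thus there is a permutation $\hat{\Psi}^{(k)} \in \Sym(\Per_k(\sigma_n))$ with $\Theta_k(\pi) = (\hat{\Psi}^{(k)})^{-1} \circ \pi \circ \hat{\Psi}^{(k)}$ for all $\pi$, and it is uniquely determined because the centralizer of $\Sym(\Per_k(\sigma_n))$ in itself is trivial (again using $|\Per_k(\sigma_n)| > 2$).

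Next I would unwind what $\Theta_k(\pi) = (\hat{\Psi}^{(k)})^{-1} \circ \pi \circ \hat{\Psi}^{(k)}$ says. Given $\varphi \in \Simp^{(k)}(\Gamma_n)$, set $\pi = \rho_k(\varphi) = \nu_k^{-1}(\varphi)_{|\Per_k(\sigma_n)}$; since $\nu_k(\pi) = \varphi$, we get $\rho_k(\Psi(\varphi)) = \Theta_k(\pi) = (\hat{\Psi}^{(k)})^{-1} \circ \rho_k(\varphi) \circ \hat{\Psi}^{(k)}$, which is exactly the displayed identity $\rho_k(\Psi(\varphi)) = (\hat{\Psi}^{(k)})^{-1}\circ \rho_k(\varphi) \circ \hat{\Psi}^{(k)}$. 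Evaluating at a point $x \in \Per_k(\sigma_n)$ and using that $\rho_k$ is just restriction to $\Per_k(\sigma_n)$ gives $\Psi(\varphi)(x) = ((\hat{\Psi}^{(k)})^{-1} \circ \varphi \circ \hat{\Psi}^{(k)})(x)$, completing the existence part. Uniqueness of $\hat{\Psi}^{(k)}$ in $\Sym(\Per_k(\sigma_n))$ follows from the triviality of the center of $\Sym(\Per_k(\sigma_n))$: if two permutations both conjugate $\rho_k(\varphi)$ to $\rho_k(\Psi(\varphi))$ for every $\varphi$, their ratio centralizes all of $\rho_k(\Simp^{(k)}(\Gamma_n)) = \Sym(\Per_k(\sigma_n))$, hence is the identity.

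The main obstacle I anticipate is a bookkeeping subtlety rather than a deep one: one must be careful that the intertwining identity holds on \emph{all} of $\Simp^{(k)}(\Gamma_n)$ and not merely on the even part $\Simp^{(k)}_{ev}(\Gamma_n)$, since for $n=2$ and small $k$ the odd elements require the $k \notin \defect(\Psi)$ hypothesis (which is built into membership in $I_\Psi$) and the argument from the preceding proposition handling the $n=2$ case by passing to $\Per_{2k}$. Once we know $\Psi(\Simp^{(k)}(\Gamma_n)) \cap \ker\rho_k = \{\id\}$, the injectivity of $\Theta_k$ is immediate, and the rest is the standard fact that $\Sym(n^k)$ for $n^k \ge 7$ has only inner automorphisms and trivial center. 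A secondary point worth stating explicitly is that $\nu_k$ really is an isomorphism onto its image $\Simp^{(k)}(\Gamma_n)$ (so that $\nu_k^{-1}$ makes sense on that image), which is where the remark "using the fact that $\nu_k$ is actually an isomorphism" in the text is invoked; this is immediate from $\rho_k \circ \nu_k = \id_{\Sym(\Per_k(\sigma_n))}$ together with $\nu_k$ being defined as a section realizing each permutation of $\Per_k(\sigma_n)$ by a unique element of $\Simp^{(k)}(\Gamma_n)$.
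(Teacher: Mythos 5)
Your proof is correct and follows essentially the same route the paper takes: use $k\notin\defect(\Psi)$ to see that $\rho_k\circ\Psi|_{\Simp^{(k)}(\Gamma_n)}\circ\nu_k$ is an injective (hence, by finiteness, bijective) endomorphism of $\Sym(\Per_k(\sigma_n))$, invoke that every automorphism of $\Sym(m)$ with $m\ge 7$ is inner to produce $\hat{\Psi}^{(k)}$, and get uniqueness from the triviality of the center. The closing paragraph about the odd/even distinction for $n=2$ is not actually an obstacle here — once the conjugating element is produced by inner-ness, it intertwines the entire symmetric group, so no separate argument for odd permutations is needed.
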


Our goal is now to gradually remove the various restrictions in the assumptions of this theorem. We only need the intermediate results for
orientation preserving automorphisms and concentrating on those
simplifies some arguments.

\begin{prop}%[Going up times two for the shift]
\label{prop:sigma-times-two}
Let $\psi \in \orpren$.
Suppose $k$ is a multiple of $\deg(\Psi)$ and $2k \in I_\Psi$. Then the map
  $\rho_{2k}(\sigma_n^k)$ commutes with $\hat{\Psi}^{(2k)}$.
\end{prop}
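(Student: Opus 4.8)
The plan is to use the single extra input that, because $\Psi$ is orientation preserving and $\deg(\Psi)\mid k$, Proposition~\ref{prop:degreestuff} forces $\Psi(\sigma_n^{k})=\sigma_n^{k}$, and to feed this into the defining conjugacy relation of $\hat\Psi^{(2k)}$ restricted to the subgroup $\Simp^{(k)}(\Gamma_n)\subseteq\Simp^{(2k)}(\Gamma_n)$. Set $H=\hat\Psi^{(2k)}$ and $S=\rho_{2k}(\sigma_n^{k})\in\Sym(\Per_{2k}(\sigma_n))$. For $\varphi\in\Simp^{(k)}(\Gamma_n)$ one has $\varphi\in\aut(\sigma_n^{k})$, so $\Psi(\varphi)$ commutes with $\Psi(\sigma_n^{k})=\sigma_n^{k}$; hence $\Psi(\varphi)\in\aut(\sigma_n^{k})\subseteq\aut(\sigma_n^{2k})$, and since $\rho_{2k}$ is a homomorphism on $\aut(\sigma_n^{2k})$, the permutation $\rho_{2k}(\Psi(\varphi))$ commutes with $S$. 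On the other hand, since $2k\in I_\Psi$, Theorem~\ref{thm:loc-verräumlichung} gives $\rho_{2k}(\Psi(\varphi))=H^{-1}\rho_{2k}(\varphi)H$. Combining these, $H^{-1}\rho_{2k}(\varphi)H$ commutes with $S$ for all $\varphi\in\Simp^{(k)}(\Gamma_n)$, which rearranges to the assertion that $HSH^{-1}$ centralizes the whole subgroup $\rho_{2k}(\Simp^{(k)}(\Gamma_n))$ of $\Sym(\Per_{2k}(\sigma_n))$.

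The second step is to identify $\rho_{2k}(\Simp^{(k)}(\Gamma_n))$ and $S$ concretely. Identify $\Per_{2k}(\sigma_n)$ with $A\times A$, where $A=\{0,\dots,n-1\}^{k}$, by sending $x\in\Per_{2k}(\sigma_n)$ to the ordered pair $((x_0,\dots,x_{k-1}),(x_k,\dots,x_{2k-1}))$ of its two length-$k$ blocks on the windows $[0,k-1]$ and $[k,2k-1]$. Reading off the block description of simple automorphisms, one checks directly that under this identification $\rho_{2k}(\Simp^{(k)}(\Gamma_n))$ is precisely the diagonal subgroup $\{(g,g):g\in\Sym(A)\}$ of $\Sym(A\times A)$ (with $\Sym(A)\cong\Sym(n^{k})$), while $S=\rho_{2k}(\sigma_n^{k})$ is the coordinate-swap $(a,b)\mapsto(b,a)$.

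The third step is the group-theoretic core: the centralizer of the diagonal $\Sym(A)$ inside $\Sym(A\times A)$ is exactly the two-element group $\{\id,S\}$, provided $|A|\geq 4$; and indeed $2k\in I_\Psi$ forces $2k\geq 4$, hence $k\geq 2$ and $|A|=n^{k}\geq 4$. For this, the diagonal $\Sym(A)$ has exactly the two orbits $\Delta=\{(a,a)\}$ and $\Omega=\{(a,b):a\neq b\}$ on $A\times A$, of different sizes, so any centralizing $c$ preserves each; using that the pointwise stabilizer in $\Sym(A)$ of a subset $F\subseteq A$ with $|A\setminus F|\geq 2$ has fixed-point set exactly $F$, one gets $c(a,a)=(a,a)$ and $c(a,b)\in\{(a,b),(b,a)\}$ for $a\neq b$, and transitivity of the diagonal on $\Omega$ propagates the choice, forcing $c$ to be $\id$ or the global swap $S$. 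Putting everything together, $HSH^{-1}\in\{\id,S\}$; if $HSH^{-1}=\id$ then $S=\id$, which is impossible since $|A|\geq 2$ and $S$ genuinely permutes $A\times A$; hence $HSH^{-1}=S$, i.e.\ $\hat\Psi^{(2k)}$ commutes with $\rho_{2k}(\sigma_n^{k})$, as claimed.

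The step I expect to be the actual obstacle is the centralizer computation in the third paragraph: one must verify the centralizer is no larger than $\{\id,S\}$, and this genuinely uses $|A|\geq 4$ — for $|A|=3$ the action of the diagonal on $\Omega$ is regular and its centralizer balloons to a copy of $\Sym(3)$, so one could not conclude. This is exactly why the hypothesis $2k\in I_\Psi$, which rules out $2k<4$, is needed. The remainder is routine bookkeeping with the homomorphisms $\rho_{2k}$, $\nu_{2k}$ and the inclusion $\Simp^{(k)}(\Gamma_n)\subseteq\Simp^{(2k)}(\Gamma_n)$.
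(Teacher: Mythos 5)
Your proof is correct and follows essentially the same route as the paper: both deduce that $\hat\Psi^{(2k)}\rho_{2k}(\sigma_n^k)(\hat\Psi^{(2k)})^{-1}$ centralizes $\rho_{2k}(\Simp^{(k)}(\Gamma_n))$ and then identify that centralizer as $\{\id,\rho_{2k}(\sigma_n^k)\}$. The only difference is that the paper asserts the centralizer computation without proof, whereas you carry it out explicitly via the identification $\Per_{2k}(\sigma_n)\cong A\times A$ with the diagonal subgroup, correctly noting where $|A|\ge 4$ (guaranteed by $2k\in I_\Psi$) is needed.
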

\begin{proof}
  Set
  $\omega := \hat{\Psi}^{(2k)} \circ \rho_{2k}(\sigma_n^k) \circ
  (\hat{\Psi}^{(2k)})^{-1}$.  Let $\varphi \in
  \Simp^{(k)}(\Gamma_n)$. We define $\pi := \rho_{2k}(\varphi)$, thus
  $\varphi = \nu_{2k}(\pi)$.
  We have
  $\sigma^k_n \circ \varphi = \varphi \circ \sigma^k_n$, hence
 \begin{align*}
     \Psi(\sigma_n^k \circ \varphi) = \Psi(\varphi \circ \sigma_n^k)
   &=\sigma_n^k \circ \Psi(\varphi) = \Psi(\varphi)\circ \sigma_n^k.
 \end{align*}

Applying $\rho_{2k}$ gives
\begin{align*}
    \rho_{2k}(\sigma_n^k) \circ (\hat{\Psi}^{(2k)})^{-1} \circ
  \pi \circ \hat{\Psi}^{(2k)} &=   (\hat{\Psi}^{(2k)})^{-1} \circ
  \pi \circ \hat{\Psi}^{(2k)} \circ  \rho_{2k}(\sigma_n^k) \\
  \hat{\Psi}^{(2k)} \circ \rho_{2k}(\sigma_n^k) \circ (\hat{\Psi}^{(2k)})^{-1} \circ
 \pi &=
  \pi \circ \hat{\Psi}^{(2k)} \circ  \rho_{2k}(\sigma_n^k)
                   \circ (\hat{\Psi}^{(2k)})^{-1} \\
  \omega \circ \pi &= \pi \circ \omega.
\end{align*}
In other words $\omega$ is in the centralizer of
$\rho_{2k}(\Simp^{(k)}(\Gamma_n))$ within $\Sym(\Per_{2k}(\sigma_n))$.
This centralizer consists just of the two elements\footnote{Here it is apparent why the number $2$ is special.} $\id$ and $\rho_{2k}(\sigma^k_n)$.
Since $ \hat{\Psi}^{(2k)} \circ \rho_{2k}(\sigma_n^k) \circ
(\hat{\Psi}^{(2k)})^{-1}=\id_{\Per_{2k}(\sigma_n)}$ would imply
$\rho_{2k}(\sigma^k_n)=\id_{\Per_{2k}(\sigma_n)}$, which is clearly
impossible, we finally have $ \hat{\Psi}^{(2k)} \circ \rho_{2k}(\sigma_n^k) \circ
(\hat{\Psi}^{(2k)})^{-1}=\rho_{2k}(\sigma_n^k)$.
\end{proof}

We are now ready to redeem the promise from the last section.

\begin{prop}%[The defective set is empty]
    \label{prop:defective-set-empty}
    Let $\Psi \in \orpren$.
  We have $\defect(\Psi)=\emptyset$, or in other words, $I_\Psi =
  \{k\deg(\Psi) \setsep k \in \bbN, k\deg(\Psi)\geq 3\}$.
\end{prop}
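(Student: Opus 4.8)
The plan is to argue by contradiction, supposing that $\defect(\Psi) \ne \emptyset$ and exhibiting a contradiction with the uniform-shifting rigidity of Theorem~\ref{thm:shifting-has-to-be-uniform}. Since we already know (from the previous proposition) that $\defect(\Psi)$ is finite, the idea is to pick $k$ large enough that $k$ lies outside the (finite) defective set but $2k$ might or might not; the crucial point is that the \emph{local} spatializations $\hat{\Psi}^{(k)}$ from Theorem~\ref{thm:loc-verräumlichung} are only defined on indices in $I_\Psi$, and we need to understand how they are forced to be compatible with the action of $\sigma_n$. The first step is therefore to show that for $k \in I_\Psi$ with $k$ a multiple of $\deg(\Psi)$, the map $\Psi$ restricted to $\Simp^{(k)}(\Gamma_n)$, transported through $\nu_k$ and $\rho_k$, is conjugation by $\hat{\Psi}^{(k)}$, and that this $\hat{\Psi}^{(k)}$ intertwines $\rho_k(\sigma_n)$ with $\rho_k(\Psi(\sigma_n)) = \rho_k(\sigma_n^{\pm 1})$ — in the orientation-preserving case, with $\rho_k(\sigma_n)$ itself. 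This is exactly the content one extracts from Proposition~\ref{prop:sigma-times-two} applied at the appropriate scale, but now I want it for $\sigma_n$ (not just $\sigma_n^k$ inside level $2k$).

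The second step is the heart of the matter. Suppose $k_0 \in \defect(\Psi)$. Choose a large multiple $K = k_0 t$ with $K, 2K \in I_\Psi$ (possible since the defective set is finite), so that $\hat{\Psi}^{(K)}$ and $\hat{\Psi}^{(2K)}$ both exist. The key observation is that $\Simp^{(K)}(\Gamma_n) \subseteq \Simp^{(2K)}(\Gamma_n)$, and on the subgroup $\Simp^{(K)}(\Gamma_n)$ the two conjugating maps $\hat{\Psi}^{(K)}$ and $\hat{\Psi}^{(2K)}$ must agree on $\Per_K(\sigma_n)$ up to an element of the centralizer of $\rho_K(\Simp^{(K)}(\Gamma_n))$ in $\Sym(\Per_K(\sigma_n))$, which by the computation in Theorem~\ref{cor:centralizer-in-per} (or its analogue at finite level) is trivial for $n \ge 3$ and at most $\mathbb{Z}/2$ for $n = 2$. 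So $\hat{\Psi}^{(2K)}$ restricts to $\hat{\Psi}^{(K)}$ on $\Per_K(\sigma_n)$, and more generally the $\hat{\Psi}^{(K)}$ for $K$ ranging over a cofinal set of multiples of $\deg(\Psi)$ in $I_\Psi$ are mutually compatible, hence assemble into a single bijection $\hat{\Psi}$ of $\bigcup_{K \in I_\Psi} \Per_K(\sigma_n) = \Per(\sigma_n)$ (note the defective set being finite means the union over $I_\Psi$ is still all of $\Per(\sigma_n)$). By Proposition~\ref{prop:sigma-times-two} this $\hat{\Psi}$ commutes with $\sigma_n$ on each $\Per_{2k}(\sigma_n)$, hence on all of $\Per(\sigma_n)$, so $\hat{\Psi}$ preserves $\sigma_n$-orbits.

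The final step turns this around on the defective index. Fix $k_0 \in \defect(\Psi)$ and a nontrivial $\varphi \in \Simp^{(k_0)}(\Gamma_n)$ with $\rho_{k_0}(\Psi(\varphi)) = \id$, i.e. $\Psi(\varphi)$ fixes every point of $\Per_{k_0}(\sigma_n)$ — but $\Psi(\varphi)$ is a nontrivial element of $\autinfn$, in fact of $\inertinfn$ since $\Simp^{(\infty)}(\Gamma_n) \subseteq \inertinfn$ and $\inertinfn$ is characteristic. Now run the same analysis on $\varphi$ at a large compatible level $K = k_0 t \in I_\Psi$: there $\Psi(\varphi)$ is conjugate via $\hat{\Psi}$ to $\varphi$ on $\Per_K(\sigma_n)$, so $\Psi(\varphi)$ acts on $\Per_K(\sigma_n)$ as $\hat{\Psi}^{-1}\varphi\hat{\Psi}$. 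Since $\hat{\Psi}$ preserves $\sigma_n$-orbits and $\varphi \in \Simp^{(k_0)}(\Gamma_n)$ acts nontrivially on $\Per_{k_0}(\sigma_n)$, I want to conclude $\hat{\Psi}^{-1}\varphi\hat{\Psi}$ also acts nontrivially on $\Per_{k_0}(\sigma_n) \subseteq \Per_K(\sigma_n)$ — because $\hat{\Psi}$, preserving orbits and of finite ``displacement'' on the finite set $\Per_{k_0}$, cannot conjugate a permutation moving $\Per_{k_0}$ into one fixing it pointwise (a permutation fixing $\Per_{k_0}$ pointwise is characterized orbit-internally, and conjugation by an orbit-preserving bijection of $\Per_K$ that stabilizes $\Per_{k_0}$ setwise preserves this property). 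This contradicts $\rho_{k_0}(\Psi(\varphi)) = \id$, so no defective index exists.

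The step I expect to be the main obstacle is the assembly/compatibility argument in Step 2 together with pinning down that the glued map $\hat{\Psi}$ genuinely preserves $\sigma_n$-orbits: one must be careful that Proposition~\ref{prop:sigma-times-two} only gives commutation with $\rho_{2k}(\sigma_n^k)$, not immediately with $\rho_{2k}(\sigma_n)$, so an extra argument (passing to a level divisible by a large power, or using that $\bigcap_k \ker \rho_k$ is trivial) is needed to upgrade ``commutes with powers of $\sigma_n$'' to ``preserves orbits'' — and then Theorem~\ref{thm:shifting-has-to-be-uniform} or a direct orbit-counting argument closes the loop. The $n = 2$ case will require the usual separate bookkeeping because of the extra $\mathbb{Z}/2$ in the centralizer, handled exactly as in the proof of the finiteness of the defective set.
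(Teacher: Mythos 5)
Your closing idea --- that conjugating a nontrivial $\pi \in \Simp^{(k_0)}(\Gamma_n)$ by a bijection that stabilizes $\Per_{k_0}(\sigma_n)$ setwise cannot produce a permutation fixing $\Per_{k_0}(\sigma_n)$ pointwise --- is exactly the paper's punchline. But the route you take to the needed setwise stabilization has a genuine gap, concentrated in the sentence ``so $\hat{\Psi}$ preserves $\sigma_n$-orbits.'' This is false in general: even for an inner automorphism $\Psi$ given by conjugation by $\varphi \in \aut(\sigma_n)$, the \verraum is $\varphi$ itself on periodic points, which typically permutes $\sigma_n$-orbits nontrivially. Commuting with $\sigma_n$ (which you also have not established --- \Cref{prop:sigma-times-two} only gives commutation with $\rho_{2k}(\sigma_n^{k})$, and commutation with $\sigma_n$ itself is unavailable when $\deg(\Psi)>1$) would only give that orbits map to orbits, not to themselves. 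What your final step actually requires is merely that the conjugating map stabilize the set $\Per_{k_0}(\sigma_n)$; to extract that from \Cref{prop:sigma-times-two} you need commutation with $\sigma_n^{k_0}$ at the level where you work, and your detour through large multiples $K = k_0 t$ only yields commutation with $\sigma_n^{K}$, hence stabilization of $\Per_K(\sigma_n)$, which does not descend to $\Per_{k_0}(\sigma_n)$. Upgrading to commutation with $\sigma_n^{\deg(\Psi)}$ is \Cref{prop:commuting-with-the-shift-arb-ind}, whose proof in the paper uses the Boyle--Kari lemma and comes \emph{after} (and implicitly relies on the control over) the defective set, so invoking it here risks circularity. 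Your own hedge (``an extra argument \dots is needed'') sits precisely on this missing step.

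The paper closes the gap with a much shorter observation you missed: the hypotheses of \Cref{prop:sigma-times-two} are that $\deg(\Psi) \mid k$ and $2k \in I_\Psi$ --- they do \emph{not} require $k \in I_\Psi$. Taking $k$ to be the \emph{maximal} element of $\defect(\Psi)$ (finite by the previous proposition), one has $2k \in I_\Psi$, and \Cref{prop:sigma-times-two} applied with this very $k$ gives that $\hat{\Psi}^{(2k)}$ commutes with $\rho_{2k}(\sigma_n^{k})$ and hence stabilizes $\Per_k(\sigma_n) = \Fix(\sigma_n^k)$ setwise. Then for the nontrivial $\pi \in \Simp^{(k)}(\Gamma_n)$ witnessing defectiveness, the identity $x = \Psi(\pi)(x) = ((\hat{\Psi}^{(2k)})^{-1}\circ \pi \circ \hat{\Psi}^{(2k)})(x)$ on $\Per_k(\sigma_n)$ forces $\pi$ to fix $\Per_k(\sigma_n)$ pointwise, so $\pi = \id$, a contradiction. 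No gluing of local spatializations, no orbit preservation, and no appeal to \Cref{thm:shifting-has-to-be-uniform} (which in any case applies only to elements of $\Aut(\sigma_n)$, not to arbitrary bijections of $\Per(\sigma_n)$ such as $\hat{\Psi}$) is needed. Your step~2 compatibility argument is essentially \Cref{prop:compat-along-powers-of-two} and is fine as far as it goes, but it is not the ingredient that makes the contradiction work.
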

\begin{proof}
  Assume $\defect(\Psi) \neq \emptyset$ and let $k$ be the maximal
  element of $\defect(\Psi)$.  Then $2k \in I_{\Psi}$.  Therefore
  $\hat{\Psi}^{(2k)}(\Per_{k}(\sigma_n))=\Per_k(\sigma_n)$ by the
  previous theorem.  Since $k \in \defect(\Psi)$ there must be a non
  trivial element $\pi \in \Simp^{(k)}(\Gamma_n)$ such that
  $\Psi(\pi)(x)=x$ for all $x \in \Per_k(\sigma_n)$.  But since
  $\Per_k(\sigma_n)\subseteq \Per_{2k}(\sigma_n)$ we have, for $x \in \Per_k(\sigma_n)$,
$$x=\Psi(\pi)(x)=((\hat{\Psi}^{(2k)})^{-1} \circ \pi \circ
  \hat{\Psi}^{(2k)})(x)$$
and hence
$$\hat{\Psi}^{(2k)}(x) = \pi \circ \hat{\Psi}^{(2k)}(x).$$
Since $\hat{\Psi}^{(2k)}(\Per_{k}(\sigma_n))=\Per_k(\sigma_n)$, this implies
$$\pi_{|\Per_{k}(\sigma_n)}=\id_{|\Per_{k}(\sigma_n)}.$$
  Hence $\pi=\id$. This contradicts the non-triviality of
  $\pi$. Therefore our initial assumption was wrong and we have
  shown $\defect(\Psi)=\emptyset$.
\end{proof}

\begin{prop}%[Compatibility of the Verräumlichung along powers of two]
    \label{prop:compat-along-powers-of-two}
  Let $\Psi \in \orpren$.
  For every $k \in I_\Psi$ and $m \in \bbN$ the restriction
  of $\hat{\Psi}^{(2^mk)}$ from $\Per_{2^{m}k}(\sigma_n)$ to $\Per_{k}(\sigma_n)$
  equals $\hat{\Psi}^{(k)}$.
\end{prop}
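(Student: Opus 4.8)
The plan is to establish the claim for $m=1$ and then induct on $m$. Since $\Psi \in \orpren$, Proposition~\ref{prop:defective-set-empty} gives $\defect(\Psi)=\emptyset$, so $I_\Psi = \{k\deg(\Psi) \setsep k\in\bbN,\ k\deg(\Psi)\ge 3\}$; hence whenever $k\in I_\Psi$ each $2^m k$ is a multiple of $\deg(\Psi)$ of size at least $3$ and so lies in $I_\Psi$, and all the maps $\hat{\Psi}^{(2^m k)}$ are defined. The case $m=0$ is trivial, and the induction reduces to the single claim that, for every $k\in I_\Psi$, the restriction of $\hat{\Psi}^{(2k)}$ from $\Per_{2k}(\sigma_n)$ to $\Per_k(\sigma_n)$ equals $\hat{\Psi}^{(k)}$: granting this, apply it with $k$ replaced by $2^{m-1}k\in I_\Psi$ to obtain $\hat{\Psi}^{(2^m k)}|_{\Per_{2^{m-1}k}(\sigma_n)}=\hat{\Psi}^{(2^{m-1}k)}$, and then restrict further to $\Per_k(\sigma_n)\subseteq \Per_{2^{m-1}k}(\sigma_n)$ and use the inductive hypothesis.

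So fix $k\in I_\Psi$. First I would show $\hat{\Psi}^{(2k)}$ preserves $\Per_k(\sigma_n)$. Since $k$ is a multiple of $\deg(\Psi)$ and $2k\in I_\Psi$, Proposition~\ref{prop:sigma-times-two} applies and yields that $\rho_{2k}(\sigma_n^k)$ commutes with $\hat{\Psi}^{(2k)}$. The fixed-point set of the permutation $\rho_{2k}(\sigma_n^k)$ inside $\Per_{2k}(\sigma_n)$ is precisely $\Per_k(\sigma_n)=\Fix(\sigma_n^k)$, and any bijection commuting with $\rho_{2k}(\sigma_n^k)$ preserves that set; write $\widetilde{\Psi}\in\Sym(\Per_k(\sigma_n))$ for the induced restriction of $\hat{\Psi}^{(2k)}$.

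Next I would restrict the defining identity of $\hat{\Psi}^{(2k)}$ to $\Per_k(\sigma_n)$. Let $\varphi\in\Simp^{(k)}(\Gamma_n)\subseteq\Simp^{(2k)}(\Gamma_n)$. Then $\varphi\in\aut(\sigma_n^k)$ trivially, and $\Psi(\varphi)\in\aut(\sigma_n^k)$ by Proposition~\ref{prop:degreestuff}(ii) (using $k\in(\deg\Psi)\bbN$); hence both commute with $\sigma_n^k$, preserve $\Per_k(\sigma_n)$, and restrict there to $\rho_k(\varphi)$ and $\rho_k(\Psi(\varphi))$ respectively. Restricting the identity
\[\rho_{2k}(\Psi(\varphi)) = (\hat{\Psi}^{(2k)})^{-1}\circ \rho_{2k}(\varphi)\circ \hat{\Psi}^{(2k)}\]
of Theorem~\ref{thm:loc-verräumlichung} to the common invariant set $\Per_k(\sigma_n)$, and using that restriction is compatible with composition and inversion, gives
\[\rho_k(\Psi(\varphi)) = \widetilde{\Psi}^{-1}\circ \rho_k(\varphi)\circ \widetilde{\Psi}\qquad\text{for all }\varphi\in\Simp^{(k)}(\Gamma_n).\]
By the uniqueness statement in Theorem~\ref{thm:loc-verräumlichung}, $\widetilde{\Psi}=\hat{\Psi}^{(k)}$, which is exactly the $m=1$ claim, and the induction above then finishes the proof.

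I expect the only genuine difficulty to be already absorbed into Proposition~\ref{prop:sigma-times-two}: the commutation of $\hat{\Psi}^{(2k)}$ with $\rho_{2k}(\sigma_n^k)$ is what forces the statement to be about powers of $2$ rather than arbitrary multiples, since (as the footnote to that proposition indicates) the centralizer computation it rests on is special to the factor $2$. Everything else is bookkeeping about invariant subsets — checking that $\Psi(\varphi)$, $\varphi$, and $\hat{\Psi}^{(2k)}$ really do restrict to $\Per_k(\sigma_n)$ so that restricting the conjugation identity is legitimate — with Proposition~\ref{prop:degreestuff}(ii) handling $\Psi(\varphi)$ and Proposition~\ref{prop:sigma-times-two} handling $\hat{\Psi}^{(2k)}$.
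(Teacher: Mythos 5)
Your proof is correct and follows essentially the same route as the paper: reduce to $m=1$, use Proposition~\ref{prop:sigma-times-two} to see that $\hat{\Psi}^{(2k)}$ preserves $\Per_k(\sigma_n)$, and then compare the two conjugation identities on $\Per_k(\sigma_n)$. The only cosmetic difference is that you appeal to the uniqueness clause of Theorem~\ref{thm:loc-verräumlichung} to conclude $\widetilde{\Psi}=\hat{\Psi}^{(k)}$, whereas the paper repeats the underlying argument directly (the discrepancy $\omega=\hat{\Psi}^{(2k)}\circ(\hat{\Psi}^{(k)})^{-1}$ centralizes $\rho_k(\Simp^{(k)}(\Gamma_n))=\Sym(\Per_k(\sigma_n))$ and the center is trivial); these are the same argument. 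Your explicit check that $2^m k\in I_\Psi$ via Proposition~\ref{prop:defective-set-empty} is a small point the paper leaves implicit, and is correct.
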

\begin{proof}
    It is enough to show this for $m=1$, the rest follows
    by induction.
    Define the map $\omega: \Per_k(\sigma_n) \to \Per_k(\sigma_n)$
    as $\omega(x) := \hat{\Psi}^{(2k)}((\hat{\Psi}^{(k)})^{-1}(x))$.
    The map $\omega$ is well defined
    since $\Per_k(\sigma_n)$ is invariant
    under $\hat{\Psi}^{(2k)}$ by \Cref{prop:sigma-times-two}.
    Furthermore, $\omega$ must lie in the center
    of $\Sym(\Per_k(\sigma_n))$
    since for $\varphi \in \Simp^{(k)}(\Gamma_{n})$
    and $x \in \Per_k(\sigma_n)$ we have
    \begin{align*}
        \Psi(\varphi)(x) &= (\hat{\Psi}^{(2k)})^{-1}
        \circ \varphi \circ (\hat{\Psi}^{(2k)})) (x) \\
        &=(\hat{\Psi}^{(k)})^{-1}
        \circ \varphi \circ (\hat{\Psi}^{(k)}) (x)
    \end{align*}
    and hence on $\Per_{k}(\sigma_{n})$
    $$\hat{\Psi}^{(2k))} \circ ((\hat{\Psi}^{(k)})^{-1} \circ \varphi \circ (\hat{\Psi}^{(k)}) \circ (\hat{\Psi}^{(2k)})^{-1} = \varphi$$
Since the center of $\Sym(\Per_{k}(\sigma_{n}))$ is trivial, $\omega$ is the identity which proves the proposition.
\end{proof}

\begin{prop}%[Verräumlichung on stabilized inerts]
  \label{prop:verraum-stab-inerts}
  Let $\psi \in \orpren$.
  Let $k \in I_\Psi$ and suppose $\gamma \in \Inert^{(k)}(\sigma_n)$.  Then
  $\rho_{2k}(\Psi(\gamma))=(\hat{\Psi}^{(2k)})^{-1} \circ
  \rho_{2k}(\gamma)\circ \hat{\Psi}^{(2k)}$.
\end{prop}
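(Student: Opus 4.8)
The plan is to reduce the statement to facts already in hand by using the Boyle--Kari representation Lemma~\ref{lem:boyle-rep} to decompose $\gamma$ into pieces whose behavior under $\Psi$ is already understood: finitely many simple automorphisms, controlled by the Local Verräumlichung (Theorem~\ref{thm:loc-verräumlichung}), and a single power of the shift, controlled by Proposition~\ref{prop:sigma-times-two}. One then transports the Boyle--Kari identity through $\hat{\Psi}^{(2k)}$.

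First I would record the ambient setup. Since $k \in I_\Psi$ we have $k \ge 3$ and $\deg(\Psi) \mid k$, hence also $2k \ge 3$, $\deg(\Psi)\mid 2k$, and---because $\defect(\Psi) = \emptyset$ by Proposition~\ref{prop:defective-set-empty} (here we use that $\Psi$ is orientation preserving)---we get $2k \in I_\Psi$, so $\hat{\Psi}^{(2k)}$ is available. By Proposition~\ref{prop:degreestuff}(ii) the automorphism $\Psi$ carries $\aut(\sigma_n^{k})$ and $\aut(\sigma_n^{2k})$ onto themselves; in particular $\gamma$ and $\Psi(\gamma)$ lie in $\aut(\sigma_n^{k}) \subseteq \aut(\sigma_n^{2k})$, and $\Simp^{(2k)}(\Gamma_n)$ together with its $\Psi$-image sits inside $\aut(\sigma_n^{2k})$, so the homomorphism $\rho_{2k} \colon \aut(\sigma_n^{2k}) \to \Sym(\Per_{2k}(\sigma_n))$ can be applied to all maps that will appear.

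Next, apply Lemma~\ref{lem:boyle-rep} with $t = 1$ (permitted since $k \ge 2$): there are $\tau_1,\tau_2 \in \Simp^{(2k)}(\Gamma_n)$ with $\gamma = \sigma_n^{-k}\circ\tau_1\circ\sigma_n^{k}\circ\tau_2$. Applying $\Psi$, and using that $\Psi$ is orientation preserving with $\deg(\Psi)\mid k$ so that $\Psi(\sigma_n^{\pm k}) = \sigma_n^{\pm k}$ by Proposition~\ref{prop:degreestuff}(i), we obtain $\Psi(\gamma) = \sigma_n^{-k}\circ\Psi(\tau_1)\circ\sigma_n^{k}\circ\Psi(\tau_2)$. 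Now I would conjugate $\rho_{2k}(\gamma)$ by $\hat{\Psi}^{(2k)}$, expanding $\rho_{2k}(\gamma) = \rho_{2k}(\sigma_n^{-k})\circ\rho_{2k}(\tau_1)\circ\rho_{2k}(\sigma_n^{k})\circ\rho_{2k}(\tau_2)$ and inserting $\hat{\Psi}^{(2k)}(\hat{\Psi}^{(2k)})^{-1}$ between consecutive factors. Theorem~\ref{thm:loc-verräumlichung} at level $2k$ gives $(\hat{\Psi}^{(2k)})^{-1}\circ\rho_{2k}(\tau_i)\circ\hat{\Psi}^{(2k)} = \rho_{2k}(\Psi(\tau_i))$ for $i=1,2$, while Proposition~\ref{prop:sigma-times-two} (applied with its ``$k$'' equal to the present $k$, legitimate since $\deg(\Psi)\mid k$ and $2k\in I_\Psi$) shows $\hat{\Psi}^{(2k)}$ commutes with $\rho_{2k}(\sigma_n^{k})$, hence also with $\rho_{2k}(\sigma_n^{-k})$. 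Reassembling the four conjugated factors,
\begin{align*}
(\hat{\Psi}^{(2k)})^{-1}\circ\rho_{2k}(\gamma)\circ\hat{\Psi}^{(2k)}
&= \rho_{2k}(\sigma_n^{-k})\circ\rho_{2k}(\Psi(\tau_1))\circ\rho_{2k}(\sigma_n^{k})\circ\rho_{2k}(\Psi(\tau_2)) \\
&= \rho_{2k}\bigl(\sigma_n^{-k}\circ\Psi(\tau_1)\circ\sigma_n^{k}\circ\Psi(\tau_2)\bigr)
= \rho_{2k}(\Psi(\gamma)),
\end{align*}
which is exactly the asserted identity.

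The argument is mostly bookkeeping once Lemma~\ref{lem:boyle-rep} is available, and the one place I expect to need care is the matching of levels: the $\tau_i$ must live at precisely the level $2k$ that appears in the conclusion, and the shift power occurring in the Boyle--Kari decomposition must be $\sigma_n^{k}$ so that Proposition~\ref{prop:sigma-times-two} applies word for word---this is why taking $t = 1$ is the natural choice. It is also why the conclusion is phrased at level $2k$ rather than $k$: the maps $\Psi(\tau_i)$ need not preserve $\Per_k(\sigma_n)$, so the identity cannot simply be restricted from $\Per_{2k}(\sigma_n)$ down to $\Per_k(\sigma_n)$ without extra input. (Alternatively one could run the argument with $t$ any power of $2$ and then descend via Proposition~\ref{prop:compat-along-powers-of-two}, but this is an unnecessary detour.)
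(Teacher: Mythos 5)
Your proposal is correct and is essentially the paper's own proof: both invoke the Boyle--Kari representation Lemma at level $2k$ to write $\gamma = \sigma_n^{-k}\circ\tau_1\circ\sigma_n^{k}\circ\tau_2$, apply $\Psi$ using $\Psi(\sigma_n^{\pm k}) = \sigma_n^{\pm k}$, and then pass the conjugation by $\hat{\Psi}^{(2k)}$ through the factors via Theorem~\ref{thm:loc-verräumlichung} for the simple pieces and Proposition~\ref{prop:sigma-times-two} for the shift power. The only cosmetic difference is that the paper starts from $\rho_{2k}(\Psi(\gamma))$ and expands, whereas you start from $(\hat{\Psi}^{(2k)})^{-1}\circ\rho_{2k}(\gamma)\circ\hat{\Psi}^{(2k)}$ and contract; this is the same computation read in the opposite direction.
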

\begin{proof}
  By \Cref{lem:boyle-rep} there are $\pi_1, \pi_2 \in
  \Simp^{(2k)}(\Gamma_n)$
  such that $\gamma = \sigma_n^{-k} \circ \pi_1 \circ \sigma_n^k \circ
  \pi_2$.
  Applying $\Psi$ to this equation gives us:
  \begin{align*}
    &\rho_{2k}(\Psi(\gamma)) = \rho_{2k}(\sigma_n^{-k}) \circ
                              \rho_{2k}(\Psi(\pi_1)) \circ \rho_{2k}(\sigma_n^{k})\circ
                              \rho_{2k}(\Psi(\pi_2)) \\
                            &=\rho_{2k}(\sigma_n^{-k})
                              \circ (\hat{\Psi}^{(2k)})^{-1}
                              \circ \rho_{2k}(\pi_1)
                              \circ \hat{\Psi}^{(2k)}
                              \circ \rho_{2k}(\sigma_n^{k})
                              \circ (\hat{\Psi}^{(2k)})^{-1}
                              \circ \rho_{2k}(\pi_2)
                              \circ \hat{\Psi}^{(2k)}\\
                            &=(\hat{\Psi}^{(2k)})^{-1}
                              \circ \rho_{2k}(\sigma_n^{-k})
                              \circ \rho_{2k}(\pi_1)
                              \circ \rho_{2k}(\sigma_n^{k})
                              \circ \rho_{2k}(\pi_2)
                              \circ \hat{\Psi}^{(2k)}\\
    &=(\hat{\Psi}^{(2k)})^{-1}
                              \circ \rho_{2k}(\gamma)
                              \circ \hat{\Psi}^{(2k)}\qedhere
  \end{align*}
\end{proof}

Putting what we have together now gives the following.
\begin{prop}%[Verräumlichung on powers of two]
    Let $\Psi \in \orpren$.
  Let $k \in I_\Psi$ and $m \in \bbN$. Then for
  every $x \in \Per_{2^mk}(\sigma_n)$ and $\gamma \in \Inert^{(k)}(\sigma_n)$
  we have
  \begin{align*}
    \Psi(\gamma)(x)&=((\hat{\Psi}^{(2^mk)})^{-1} \circ \gamma\circ
    \hat{\Psi}^{(2^mk)})(x), \\
    \sigma_n^{k}(x) &= ((\hat{\Psi}^{(2^mk)})^{-1} \circ \sigma_n^{k}\circ
    \hat{\Psi}^{(2^mk)})(x).
  \end{align*}
\end{prop}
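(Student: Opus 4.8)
The plan is to deduce both displayed equations from a single statement: that the local Verräumlichung $\hat{\Psi}^{(2^m k)}$ spatially implements $\Psi$, over the set $\Per_{2^m k}(\sigma_n) = \Fix(\sigma_n^{2^m k})$, on the inert group $\Inert^{(2^m k)}(\sigma_n)$ taken at the \emph{matching} level $2^m k$. First I would record that this holds. Since $k \in I_\Psi$ and $\defect(\Psi) = \emptyset$ by \Cref{prop:defective-set-empty}, each $2^j k$ lies in $I_\Psi$, so $\hat{\Psi}^{(2^m k)}$ and $\hat{\Psi}^{(2^{m+1} k)}$ are defined. Applying \Cref{prop:verraum-stab-inerts} with $k$ replaced by $2^m k$ gives, for every $\gamma \in \Inert^{(2^m k)}(\sigma_n)$,
\[\rho_{2^{m+1} k}(\Psi(\gamma)) = (\hat{\Psi}^{(2^{m+1} k)})^{-1} \circ \rho_{2^{m+1} k}(\gamma) \circ \hat{\Psi}^{(2^{m+1} k)}.\]
I would then restrict this to $\Per_{2^m k}(\sigma_n) \subseteq \Per_{2^{m+1} k}(\sigma_n)$: by \Cref{prop:compat-along-powers-of-two} the restriction of $\hat{\Psi}^{(2^{m+1} k)}$ to $\Per_{2^m k}(\sigma_n)$ is $\hat{\Psi}^{(2^m k)}$, while $\gamma$ and $\Psi(\gamma)$ both lie in $\aut(\sigma_n^{2^m k})$ (by \Cref{prop:degreestuff}(ii), since $\deg(\Psi)\mid 2^m k$, together with the fact that $\inertinfn$ is characteristic), hence preserve $\Fix(\sigma_n^{2^m k})$. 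This produces the level-$2^m k$ identity
\[\rho_{2^m k}(\Psi(\gamma)) = (\hat{\Psi}^{(2^m k)})^{-1} \circ \rho_{2^m k}(\gamma) \circ \hat{\Psi}^{(2^m k)} \qquad \text{for all } \gamma \in \Inert^{(2^m k)}(\sigma_n).\]

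Since $\aut(\sigma_n^k) \subseteq \aut(\sigma_n^{2^m k})$ forces $\Inert^{(k)}(\sigma_n) \subseteq \Inert^{(2^m k)}(\sigma_n)$, the first equation of the proposition is immediately the case $\gamma \in \Inert^{(k)}(\sigma_n)$ of the identity just obtained. For the second equation I would argue as follows. The element $\sigma_n^k$ commutes with $\sigma_n^{2^m k}$, hence normalizes $\aut(\sigma_n^{2^m k})$ and therefore also $\Inert^{(2^m k)}(\sigma_n) = \inertinfn \cap \aut(\sigma_n^{2^m k})$; moreover $\Psi(\sigma_n^k) = \sigma_n^k$ because $\Psi$ is orientation preserving with $\deg(\Psi)\mid k$ (\Cref{prop:degreestuff}(i)). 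So for $\beta \in \Inert^{(2^m k)}(\sigma_n)$ we have $\Psi(\sigma_n^k \circ \beta \circ \sigma_n^{-k}) = \sigma_n^k \circ \Psi(\beta) \circ \sigma_n^{-k}$. Applying $\rho_{2^m k}$ (a homomorphism on $\aut(\sigma_n^{2^m k})$) to both sides and using the level-$2^m k$ identity for the two inert elements $\beta$ and $\sigma_n^k\beta\sigma_n^{-k}$ — writing $W = \hat{\Psi}^{(2^m k)}$ and $S = \rho_{2^m k}(\sigma_n^k)$ — a one-line rearrangement shows that $(W S W^{-1})^{-1}\circ S$ lies in the centralizer of $\rho_{2^m k}(\Inert^{(2^m k)}(\sigma_n))$ inside $\Sym(\Per_{2^m k}(\sigma_n))$. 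Now $\Simp^{(2^m k)}(\Gamma_n) \subseteq \Inert^{(2^m k)}(\sigma_n)$ and $\rho_{2^m k}$ maps $\Simp^{(2^m k)}(\Gamma_n)$ isomorphically onto \emph{all} of $\Sym(\Per_{2^m k}(\sigma_n))$ (this is exactly $\rho_{2^m k}\circ\nu_{2^m k} = \id$), and $|\Per_{2^m k}(\sigma_n)| = n^{2^m k} \ge 2^3 = 8$ since $n\ge 2$ and $k\ge 3$; hence that centralizer is trivial. Therefore $W S W^{-1} = S$, i.e.\ $\hat{\Psi}^{(2^m k)}$ commutes with $\rho_{2^m k}(\sigma_n^k)$ on $\Per_{2^m k}(\sigma_n)$, which is the second equation. (For $m = 0$ this is vacuous, as $\sigma_n^k$ restricts to the identity on $\Per_k(\sigma_n)$.)

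The step I expect to require the most care is the first one — upgrading the spatial implementation from the doubled level $2^{m+1}k$, where \Cref{prop:verraum-stab-inerts} applies directly, down to level $2^m k$ via the compatibility \Cref{prop:compat-along-powers-of-two} — together with the realization that one must use the inert group at the exact level $2^m k$. If instead one worked with $\Inert^{(2^{m-1} k)}(\sigma_n)$, whose image under $\rho_{2^m k}$ is only a proper subgroup of $\Sym(\Per_{2^m k}(\sigma_n))$ (acting diagonally on two $2^{m-1}k$-blocks), the relevant centralizer would be $\{\id,\rho_{2^m k}(\sigma_n^{2^{m-1}k})\}$ rather than trivial, leaving the ambiguity of whether $\hat{\Psi}^{(2^m k)}$ conjugates $\sigma_n^k$ to $\sigma_n^k$ or to $\sigma_n^k\circ\sigma_n^{2^{m-1}k}$; it is precisely the hypothesis $\Psi(\sigma_n^k)=\sigma_n^k$ (orientation preservation, as opposed to $\Psi(\sigma_n^k)=\sigma_n^{-k}$) feeding into the rearrangement that selects the correct one.
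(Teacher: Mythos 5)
Your argument is correct. For the first displayed equation it is essentially the assembly the paper intends by ``putting what we have together'': since $\defect(\Psi)=\emptyset$ (\Cref{prop:defective-set-empty}) every $2^jk$ lies in $I_\Psi$, so \Cref{prop:verraum-stab-inerts} applied at level $2^mk$ together with the compatibility of \Cref{prop:compat-along-powers-of-two} yields the conjugation identity on all of $\Inert^{(2^mk)}(\sigma_n)\supseteq\Inert^{(k)}(\sigma_n)$ at level $2^mk$. For the second equation, however, you supply an argument that the paper leaves implicit, and you are right that one is needed: applying \Cref{prop:sigma-times-two} with $2^{m-1}k$ in place of $k$ only shows that $\hat{\Psi}^{(2^mk)}$ commutes with $\rho_{2^mk}(\sigma_n^{2^{m-1}k})$, and the centralizer of $\rho_{2^mk}(\Simp^{(k)}(\Gamma_n))$ in $\Sym(\Per_{2^mk}(\sigma_n))$ is no longer the two-element group $\{\id,\rho(\sigma_n^k)\}$ once $m\ge 2$ (this is exactly the footnote's point that ``$2$ is special''), so the proof of \Cref{prop:sigma-times-two} does not transfer verbatim. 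Your fix --- conjugating $\Inert^{(2^mk)}(\sigma_n)$ by $\sigma_n^k$, using $\Psi(\sigma_n^k)=\sigma_n^k$, and observing that $S^{-1}WSW^{-1}$ then centralizes $\rho_{2^mk}(\Simp^{(2^mk)}(\Gamma_n))=\Sym(\Per_{2^mk}(\sigma_n))$, whose center is trivial --- is clean and closes this step; your closing remark correctly diagnoses why working one level down would leave a $\rho(\sigma_n^{2^{m-1}k})$ ambiguity. The only things worth double-checking in a write-up are the routine facts you already cite: $\sigma_n^k\beta\sigma_n^{-k}\in\Inert^{(2^mk)}(\sigma_n)$ because $\inertinfn$ is normal and $\sigma_n^k$ commutes with $\sigma_n^{2^mk}$, and $\Psi(\gamma)\in\aut(\sigma_n^{2^mk})$ via \Cref{prop:degreestuff}(ii) so that the restriction to $\Per_{2^mk}(\sigma_n)$ is legitimate.
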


  \newcommand{\spa}{\spatial^{(k)}}
  \newcommand{\spainv}{(\spa)^{-1}}
By using yet another centralizer calculation we can upgrade this to
the following.
\begin{prop}%[Commuting with the shift at an arbitrary index]
  \label{prop:commuting-with-the-shift-arb-ind}
  Let $\Psi \in \orpren$.
  Let $k \in I_{\Psi}$. Then for all $x \in \Per_{k}(\sigma_n)$
  \begin{align*}
    \sigma^{\deg(\Psi)}_n(x) = ((\spatial^{(k)})^{-1} \circ
    \sigma_n^{\deg(\Psi)} \circ \spatial^{(k)})(x).
  \end{align*}
\end{prop}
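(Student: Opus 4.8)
The plan is to reduce the statement to a single centralizer computation. Write $d := \deg(\Psi)$. Since $k \in I_\Psi$ we have $d \mid k$, so $\sigma_n^{\pm d}$ commute with $\sigma_n^{k}$, hence lie in $\aut(\sigma_n^{k})$ and the permutations $\rho_k(\sigma_n^{\pm d})$ of $\Per_k(\sigma_n)$ are defined. Because $\Psi$ is orientation preserving, Proposition~\ref{prop:degreestuff}(i) gives $\Psi(\sigma_n^{d}) = \sigma_n^{d}$, hence also $\Psi(\sigma_n^{-d}) = \sigma_n^{-d}$. I would also record at the outset that conjugation by $\sigma_n^{d}$ carries $\Inert^{(k)}(\sigma_n)$ to itself: it preserves $\aut(\sigma_n^{k})$, and since the stabilized dimension representation takes values in an abelian group it fixes dimension classes, so it preserves the kernel $\inertinfn$. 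The target identity is precisely the assertion that $\hat{\Psi}^{(k)}$ commutes with $\rho_k(\sigma_n^{d})$, so that is what I will prove.

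First I would fix an arbitrary $\gamma \in \Inert^{(k)}(\sigma_n)$. Applying the homomorphism $\Psi$ to $\sigma_n^{d}\gamma\sigma_n^{-d}$ and using $\Psi(\sigma_n^{\pm d}) = \sigma_n^{\pm d}$ yields $\Psi(\sigma_n^{d}\gamma\sigma_n^{-d}) = \sigma_n^{d}\,\Psi(\gamma)\,\sigma_n^{-d}$. I would then restrict both sides to $\Per_k(\sigma_n)$. Since $\gamma$ and $\sigma_n^{d}\gamma\sigma_n^{-d}$ both lie in $\Inert^{(k)}(\sigma_n)$ and $k \in I_\Psi$, the preceding proposition with $m = 0$ (equivalently, Propositions~\ref{prop:verraum-stab-inerts} and~\ref{prop:compat-along-powers-of-two} restricted to $\Per_k(\sigma_n)$) lets me rewrite $\rho_k(\Psi(\cdot))$ of an inert element as conjugation by $\hat{\Psi}^{(k)}$. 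Writing $P := \hat{\Psi}^{(k)}$, $S := \rho_k(\sigma_n^{d})$ and $g := \rho_k(\gamma)$, the restricted identity becomes
\[
P^{-1}SgS^{-1}P = S\,P^{-1}gP\,S^{-1},
\]
and a short manipulation rewrites this as: $S^{-1}(P^{-1}SP)$ commutes with $P^{-1}gP$.

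To conclude I would use that $\rho_k$ maps $\Inert^{(k)}(\sigma_n)$ onto all of $\Sym(\Per_k(\sigma_n))$ --- this holds because $\Simp^{(k)}(\Gamma_n) \subseteq \Inert^{(k)}(\sigma_n)$ and $\rho_k\circ\nu_k = \id$. Thus, as $\gamma$ ranges over $\Inert^{(k)}(\sigma_n)$ and $P$ conjugates $\Sym(\Per_k(\sigma_n))$ onto itself, the element $S^{-1}(P^{-1}SP)$ commutes with every element of $\Sym(\Per_k(\sigma_n))$, so it lies in the center of that group. Since $k \geq 3$ forces $\lvert\Per_k(\sigma_n)\rvert \geq 8$, this center is trivial, whence $P^{-1}SP = S$; that is, $\hat{\Psi}^{(k)}$ commutes with $\rho_k(\sigma_n^{d})$, which is the claim. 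The main point requiring care is the algebraic bookkeeping of the conjugations together with the routine check that $\sigma_n^{d}\gamma\sigma_n^{-d} \in \Inert^{(k)}(\sigma_n)$, which is what allows the inert version of the local Verräumlichung to be applied to it; everything else is of the same flavor as the centralizer arguments already carried out above.
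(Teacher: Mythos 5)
Your proof is correct and follows essentially the same route as the paper: both conjugate an element of $\Inert^{(k)}(\sigma_n)$ (the paper uses $\varphi\in\Simp^{(k)}(\Gamma_n)$, which suffices) by a power of $\sigma_n^{\deg(\Psi)}$, use that $\Psi$ fixes $\sigma_n^{\pm\deg(\Psi)}$ together with the inert local Verräumlichung formula at level $k$, and then read off that $\hat{\Psi}^{(k)}\rho_k(\sigma_n^{\deg(\Psi)})(\hat{\Psi}^{(k)})^{-1}\rho_k(\sigma_n^{\deg(\Psi)})^{-1}$ centralizes all of $\Sym(\Per_k(\sigma_n))$, which has trivial center. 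The only differences — conjugating on the opposite side and ranging over all inert elements rather than just simple ones — are cosmetic.
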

\begin{proof}
  Let $\varphi \in \Simp^{(k)}(\Gamma_n)$ and set $\alpha := \sigma_n^{\deg(\Psi)}$.
  %We will assume that $\Psi(\sigma_{n}^{\deg(\Psi)}) = \sigma_{n}^{\deg(\Psi)}$; the argument for the case where $\Psi(\sigma_{n}^{\deg(\Psi)}) = \sigma_{n}^{-\deg(\Psi)}$ is analogous.
  Note that $\alpha^{-1} \circ
  \varphi \circ \alpha \in \Inert^{(k)}(\sigma_n)$ and
  $$\Psi(\alpha^{-1} \circ \varphi \circ \alpha) = \Psi(\alpha^{-1})
  \circ \Psi(\varphi) \circ \Psi(\alpha) = \alpha^{-1} \circ \Psi(\varphi)
  \circ \alpha.$$
  Using this together with the previous theorem we can derive the following for every $x \in \Per_{k}(\sigma_n)$:
  \begin{align*}
    &(\spainv \circ \alpha^{-1} \circ \varphi \circ \alpha \circ
    \spa)(x)
    = (\alpha^{-1} \circ \spainv \circ \varphi \circ \spa \circ
      \alpha)(x).
  \end{align*}
  For $y=(\alpha \circ\spa)(x)$ this means
  \begin{align*}
  (\spainv \circ \alpha^{-1} \circ \varphi)(y)
    &= (\alpha^{-1} \circ \spainv \circ \varphi \circ \spa \circ \alpha
      \circ \spainv \circ \alpha^{-1})(y)
    \end{align*}
    and hence
    \begin{align*}
    (\spa \circ \alpha \circ \spainv \circ \alpha^{-1} \circ \varphi)(y)
    &= (\varphi \circ \spa \circ \alpha
      \circ \spainv \circ \alpha^{-1})(y).
  \end{align*}
  Since both $\alpha$ and $\hat{\Psi}^{(k)}$ map $\Per_{k}(\sigma_{n})$ to itself, so does $\alpha \circ \hat{\Psi}^{(k)}$. Therefore $\spa \circ \alpha \circ \spainv \circ \alpha^{-1}$
  is in the centralizer of the image of $\Simp^{(k)}(\Gamma_n)$ in
  $\Sym(\Per_{k}(\sigma_n))$. Since the image of $\Simp^{(k)}(\Gamma_n)$ is all of $\Sym(\Per_{k}(\sigma_n))$ and the center of $\Sym(\Per_{k}(\sigma_{n}))$ is trivial, it follows that $\spa \circ \alpha \circ \spainv \circ \alpha^{-1}$
  is the identity on $\Per_k(\sigma_n)$.
  In other words, for all $x \in \Per_k(\sigma_n)$
  we have
  \begin{align*}
        \sigma^{\deg(\Psi)}_n(x) &= ((\spatial^{(k)})^{-1} \circ
    \sigma_n^{\deg(\Psi)} \circ \spatial^{(k)})(x). \qedhere
  \end{align*}
\end{proof}

\begin{prop}%[Compatibility of all Verräumlichungen]
    \label{prop:compat-of-all}
    Let $\Psi \in \orpren$.
  For every $k \in I_\Psi$ and $\ell \in \bbN$ the restriction of
  $\spatial^{(\ell k)}$ from $\Per_{\ell k}(\sigma_n)$ to
  $\Per_{k}(\sigma_n)$ equals $\spatial^{(k)}$.
\end{prop}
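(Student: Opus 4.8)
The plan is to pin down $\hat{\Psi}^{(\ell k)}|_{\Per_k(\sigma_n)}$ via the uniqueness clause of Theorem~\ref{thm:loc-verräumlichung}: I will show this restriction is a well-defined element of $\Sym(\Per_k(\sigma_n))$ which conjugates $\rho_k(\varphi)$ to $\rho_k(\Psi(\varphi))$ for every $\varphi \in \Simp^{(k)}(\Gamma_n)$, and hence must coincide with $\hat{\Psi}^{(k)}$. To even get started, note that since $\Psi \in \orpren$ we have $\defect(\Psi) = \emptyset$ by Proposition~\ref{prop:defective-set-empty}, so $I_\Psi = \{j \ge 3 : \deg(\Psi) \mid j\}$; in particular $\ell k \in I_\Psi$ and $\deg(\Psi) \mid k$, so $\hat{\Psi}^{(k)}$, $\hat{\Psi}^{(\ell k)}$ and the various maps appearing below are all defined.

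The one genuinely non-formal step, and the one I expect to be the main obstacle, is to show that $\hat{\Psi}^{(\ell k)}$ — which a priori is only a permutation of $\Per_{\ell k}(\sigma_n)$ — leaves the subset $\Per_k(\sigma_n)$ invariant. Here I would invoke Proposition~\ref{prop:commuting-with-the-shift-arb-ind} with $\ell k$ in place of $k$: it says $\hat{\Psi}^{(\ell k)}$ commutes with $\sigma_n^{\deg(\Psi)}$ as a permutation of $\Per_{\ell k}(\sigma_n)$, and raising to the integer power $k/\deg(\Psi)$ shows it also commutes with $\sigma_n^k$ there. Since $\Per_k(\sigma_n)$ is exactly the fixed-point set of $\sigma_n^k$ inside the finite set $\Per_{\ell k}(\sigma_n)$, a bijection of $\Per_{\ell k}(\sigma_n)$ commuting with $\sigma_n^k$ must carry that fixed-point set onto itself, giving $\hat{\Psi}^{(\ell k)}(\Per_k(\sigma_n)) = \Per_k(\sigma_n)$. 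It is precisely because the powers-of-two centralizer argument of Proposition~\ref{prop:compat-along-powers-of-two} does not extend to general $\ell$ that Proposition~\ref{prop:commuting-with-the-shift-arb-ind} is needed for this.

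With invariance established, the rest is bookkeeping. For $\varphi \in \Simp^{(k)}(\Gamma_n) \subseteq \Simp^{(\ell k)}(\Gamma_n)$, Theorem~\ref{thm:loc-verräumlichung} applied at the index $\ell k$ gives, as permutations of $\Per_{\ell k}(\sigma_n)$,
\[\rho_{\ell k}(\Psi(\varphi)) = (\hat{\Psi}^{(\ell k)})^{-1} \circ \rho_{\ell k}(\varphi) \circ \hat{\Psi}^{(\ell k)}.\]
Now $\varphi \in \Simp^{(k)}(\Gamma_n) \subseteq \aut(\sigma_n^k)$ preserves $\Per_k(\sigma_n)$, and $\Psi(\varphi) \in \aut(\sigma_n^k)$ by part (ii) of Proposition~\ref{prop:degreestuff} (as $\deg(\Psi) \mid k$), so $\Psi(\varphi)$ preserves $\Per_k(\sigma_n)$ as well; combined with the invariance of $\Per_k(\sigma_n)$ under $\hat{\Psi}^{(\ell k)}$, restricting the displayed identity to $\Per_k(\sigma_n)$ yields
\[\rho_k(\Psi(\varphi)) = \bigl(\hat{\Psi}^{(\ell k)}|_{\Per_k(\sigma_n)}\bigr)^{-1} \circ \rho_k(\varphi) \circ \hat{\Psi}^{(\ell k)}|_{\Per_k(\sigma_n)}\]
for all $\varphi \in \Simp^{(k)}(\Gamma_n)$. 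Thus $\hat{\Psi}^{(\ell k)}|_{\Per_k(\sigma_n)} \in \Sym(\Per_k(\sigma_n))$ satisfies the property characterizing $\hat{\Psi}^{(k)}$ in Theorem~\ref{thm:loc-verräumlichung}, and the uniqueness asserted there forces $\hat{\Psi}^{(\ell k)}|_{\Per_k(\sigma_n)} = \hat{\Psi}^{(k)}$, which is the claim.
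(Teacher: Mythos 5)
Your proof is correct and follows essentially the same route as the paper, which simply refers back to the argument of Proposition~\ref{prop:compat-along-powers-of-two}; you correctly identify the one point that must change, namely that the invariance of $\Per_k(\sigma_n)$ under $\hat{\Psi}^{(\ell k)}$ now comes from Proposition~\ref{prop:commuting-with-the-shift-arb-ind} (commuting with $\sigma_n^{\deg(\Psi)}$, hence with $\sigma_n^k$) rather than from Proposition~\ref{prop:sigma-times-two}. Your final appeal to the uniqueness clause of Theorem~\ref{thm:loc-verräumlichung} is just a repackaging of the paper's trivial-center argument, so the two proofs are the same in substance.
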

\begin{proof}
    The argument is the same as in the proof of \Cref{prop:compat-along-powers-of-two}.
\end{proof}

This proposition now finally allows us to stitch together all the maps
$\spatial^{(k)}$.

\begin{prop}%[Global Verräumlichung]
    \label{thm:global-spatialization-inert-and-shift}
  For every $\Psi \in \autautinfn$
  there exists a unique bijective map $\spatial: \Per(\sigma_n) \to \Per(\sigma_n)$
  such that for every $x \in \Per(\sigma_n)$ and
  $\varphi \in \Inert^{(\infty)}(\sigma_n)$ we have
  \begin{align*}
    \Psi(\varphi)(x)&=(\spatial^{-1} \circ \varphi \circ \spatial)(x),
    \\
    \sigma_n^{\deg(\Psi)} &=  (\spatial^{-1} \circ \sigma_n^{\deg(\Psi)} \circ \spatial)(x).
  \end{align*}
\end{prop}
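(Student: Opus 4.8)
The plan is to glue the local maps $\hat\Psi^{(k)}$, $k\in I_\Psi$, produced by the Local Verräumlichung into a single bijection of $\Per(\sigma_n)$ and to deduce the two identities and uniqueness from the preparatory propositions of this section. First I would reduce to the orientation preserving case. If $\Psi$ is orientation reversing, set $\Psi':=\Xi_n\circ\Psi$; by Proposition~\ref{prop:degreestuff} this is orientation preserving with $\deg(\Psi')=\deg(\Psi)$, and $\Xi_n$ is an involution, so $\Psi=\Xi_n\circ\Psi'$. Once the statement is known for $\Psi'$ with spatial map $\hat\Psi'$, one puts $\hat\Psi:=\hat\Psi'\circ\xi_n|_{\Per(\sigma_n)}$, a bijection since $\xi_n$ permutes $\Per(\sigma_n)$. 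Using $\Psi(\gamma)=\xi_n\circ\Psi'(\gamma)\circ\xi_n^{-1}$ and $\xi_n\sigma_n\xi_n^{-1}=\sigma_n^{-1}$ one checks directly that $\hat\Psi$ conjugates the $\Per(\sigma_n)$-action of each $\gamma\in\Inert^{(\infty)}(\sigma_n)$ to that of $\Psi(\gamma)$ and intertwines $\sigma_n^{\deg(\Psi)}$ with $\sigma_n^{-\deg(\Psi)}$ on periodic points; thus the orientation reversing case merely introduces the minus sign.

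So assume $\Psi\in\orpren$. By Proposition~\ref{prop:defective-set-empty} we have $\defect(\Psi)=\emptyset$, hence $I_\Psi=\{k\in(\deg\Psi)\bbN\setsep k\ge 3\}$. This set is cofinal under divisibility, and every $x\in\Per(\sigma_n)$ lies in $\Per_k(\sigma_n)$ for suitable $k\in I_\Psi$ (any multiple of $\mathrm{lcm}(\per(x),\deg\Psi)$ that is at least $3$), so $\Per(\sigma_n)=\bigcup_{k\in I_\Psi}\Per_k(\sigma_n)$. Proposition~\ref{prop:compat-of-all} says that for $k\in I_\Psi$ and $\ell\in\bbN$ the restriction of $\hat\Psi^{(\ell k)}$ to $\Per_k(\sigma_n)$ equals $\hat\Psi^{(k)}$; comparing $\hat\Psi^{(k)}$ and $\hat\Psi^{(k')}$ each with $\hat\Psi^{(\mathrm{lcm}(k,k'))}$ shows the family $\{\hat\Psi^{(k)}\}_{k\in I_\Psi}$ is coherent on overlaps. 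I would therefore define $\hat\Psi(x):=\hat\Psi^{(k)}(x)$ for any $k\in I_\Psi$ with $x\in\Per_k(\sigma_n)$; this is well defined, and since each $\hat\Psi^{(k)}\in\Sym(\Per_k(\sigma_n))$, the glued map $\hat\Psi$ is a bijection of $\Per(\sigma_n)$.

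There remain the two identities and uniqueness. Given $\gamma\in\Inert^{(\infty)}(\sigma_n)$ and $x\in\Per(\sigma_n)$, choose $k\in I_\Psi$ a sufficiently large common multiple of $\deg\Psi$, of $\per(x)$, and of some $k_0$ with $\gamma\in\Inert^{(k_0)}(\sigma_n)$, so that $\gamma\in\Inert^{(k)}(\sigma_n)$ and $x\in\Per_k(\sigma_n)$. Proposition~\ref{prop:verraum-stab-inerts} gives $\Psi(\gamma)(y)=((\hat\Psi^{(2k)})^{-1}\circ\gamma\circ\hat\Psi^{(2k)})(y)$ for all $y\in\Per_{2k}(\sigma_n)$, in particular at $y=x$. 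Now $\gamma\in\aut(\sigma_n^k)$, $\sigma_n^{\deg\Psi}$, and $\hat\Psi^{(2k)}$ (the last by Proposition~\ref{prop:sigma-times-two}) each map $\Per_k(\sigma_n)$ into itself, and on $\Per_k(\sigma_n)$ the map $\hat\Psi^{(2k)}$ agrees with $\hat\Psi^{(k)}=\hat\Psi|_{\Per_k(\sigma_n)}$; hence every term in the composition stays inside $\Per_k(\sigma_n)$ and $\hat\Psi^{(2k)}$ may be replaced by $\hat\Psi$, giving $\Psi(\gamma)(x)=(\hat\Psi^{-1}\circ\gamma\circ\hat\Psi)(x)$. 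The shift identity follows the same way from Proposition~\ref{prop:commuting-with-the-shift-arb-ind}, which is already stated at level $k$. Finally, if $\hat\Psi_1$ and $\hat\Psi_2$ both satisfy the first identity for all $\gamma\in\Inert^{(\infty)}(\sigma_n)$, then $\hat\Psi_2\circ\hat\Psi_1^{-1}$ centralizes the image of $\Inert^{(\infty)}(\sigma_n)$ in $\Sym(\Per(\sigma_n))$, which is trivial by Theorem~\ref{cor:centralizer-in-per}, so $\hat\Psi_1=\hat\Psi_2$.

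The main obstacle is not conceptual, since the substantive work is already contained in Propositions~\ref{prop:defective-set-empty}, \ref{prop:verraum-stab-inerts}, \ref{prop:commuting-with-the-shift-arb-ind}, and \ref{prop:compat-of-all}. What needs care is the bookkeeping in the previous paragraph: one must be sure that the finite-level identity for inerts, phrased at the auxiliary level $2k$, descends consistently to the single glued map on all of $\Per(\sigma_n)$, which rests on the invariance of each $\Per_k(\sigma_n)$ under the maps in play and on the coherence statement of Proposition~\ref{prop:compat-of-all}, and one must track the sign $\pm\deg(\Psi)$ in the shift relation in the orientation reversing case.
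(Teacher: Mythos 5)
Your proposal is correct and follows essentially the same route as the paper: glue the local maps $\hat{\Psi}^{(k)}$ for $k\in I_\Psi$ using Proposition~\ref{prop:compat-of-all}, derive the two identities from Propositions~\ref{prop:verraum-stab-inerts} and~\ref{prop:commuting-with-the-shift-arb-ind}, and handle the orientation reversing case by composing with $\Xi_n$ and $\xi_n$. The only (harmless) divergence is your uniqueness argument via the triviality of the centralizer of $\Inert^{\infty}(\sigma_n)$ in $\Sym(\Per(\sigma_n))$, where the paper instead appeals to the uniqueness of the local maps; both are valid.
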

\begin{proof}
    First assume that $\Psi$ is orientation preserving.
    For $x \in \Per(\sigma)$
    there is $k \in I_\Psi$ for which
    $x \in \Per_k(\sigma)$.
    Define $\hat{\Psi}(x):=\hat{\Psi}^{(k)}(x)$.
    By \Cref{prop:compat-of-all} this map is well-defined
    and has the desired properties by
    \Cref{prop:commuting-with-the-shift-arb-ind}
    and
    \Cref{prop:verraum-stab-inerts}.
    Uniqueness follows from the uniqueness
    of the local \verraum maps $\Psi^{(k)}$.

    Now consider an automorphism $\Psi$ which is not
    orientation preserving. Then $\Psi':=\Xi_{n} \circ \Psi$ is orientation preserving and for
    $\varphi \in \inertinfn \cup \{\sigma_n^{\deg(\Psi)}\}$ and $x \in \Per(\sigma_n)$ we get
    \begin{align*}
        \Psi(\varphi)(x) &= (\Xi_{n}^{-1} \circ \Psi')(\varphi)(x) \\
        &=\xi \circ \hat{\Psi'}^{-1} \circ \varphi \circ \hat{\Psi'} \circ \xi^{-1} (x)
    \end{align*}
    Hence we can set
    $\hat{\Psi}:=\hat{\Psi'} \circ \xi^{-1}$.
\end{proof}

We call $\hat{\Psi}$ the \emph{Verräumlichung} of $\Psi$.

\begin{lem}
    \label{lem:Psi-of-ker-rho}
    Suppose $\Psi \in \aut(\autinfn)$ and let $\ell \in I_\Psi$.
   Then the subgroup $\ker \rho_\ell \cap \inertinfn$
    of $\autinfn$ is invariant under $\Psi$.
\end{lem}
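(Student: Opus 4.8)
The plan is to read the statement off from the Verräumlichung $\hat{\Psi} \colon \Per(\sigma_n) \to \Per(\sigma_n)$ produced in Proposition~\ref{thm:global-spatialization-inert-and-shift}, exploiting that $\ker\rho_\ell$ is precisely the subgroup of $\aut(\sigma_n^\ell)$ consisting of those automorphisms that fix $\Per_\ell(\sigma_n)$ pointwise. Write $d = \deg(\Psi)$; since $\ell \in I_\Psi$ we have $d \mid \ell$, say $\ell = dj$. Fix $\gamma \in \ker\rho_\ell \cap \inertinfn$. Two memberships are immediate: $\Psi(\gamma) \in \inertinfn$ because $\inertinfn$ is a characteristic subgroup of $\autinfn$, and $\Psi(\gamma) \in \aut(\sigma_n^\ell)$ because $d \mid \ell$ forces $\Psi(\aut(\sigma_n^\ell)) = \aut(\sigma_n^\ell)$ by Proposition~\ref{prop:degreestuff}(ii). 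So the whole matter reduces to showing that $\Psi(\gamma)$ fixes $\Per_\ell(\sigma_n)$ pointwise, i.e.\ that $\rho_\ell(\Psi(\gamma)) = \id$.

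The key observation is that $\hat{\Psi}$ maps $\Per_\ell(\sigma_n)$ onto itself. Proposition~\ref{thm:global-spatialization-inert-and-shift} gives $\hat{\Psi} \circ \sigma_n^{d} = \sigma_n^{d} \circ \hat{\Psi}$ as maps on $\Per(\sigma_n)$, and hence, since $\ell = dj$, also $\hat{\Psi} \circ \sigma_n^{\ell} = \sigma_n^{\ell} \circ \hat{\Psi}$ on $\Per(\sigma_n)$. Because $\Per_\ell(\sigma_n) = \{x \in \Per(\sigma_n) : \sigma_n^{\ell}(x) = x\}$, this shows $\hat{\Psi}$ carries $\Per_\ell(\sigma_n)$ into itself, and being injective on the finite set $\Per_\ell(\sigma_n)$ it restricts there to a bijection. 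Now let $x \in \Per_\ell(\sigma_n)$. Then $\hat{\Psi}(x) \in \Per_\ell(\sigma_n)$, hence $\gamma(\hat{\Psi}(x)) = \hat{\Psi}(x)$ since $\gamma \in \ker\rho_\ell$; and the inert spatialization identity of Proposition~\ref{thm:global-spatialization-inert-and-shift} gives $\Psi(\gamma)(x) = (\hat{\Psi}^{-1} \circ \gamma \circ \hat{\Psi})(x) = \hat{\Psi}^{-1}(\hat{\Psi}(x)) = x$. Thus $\Psi(\gamma)$ fixes $\Per_\ell(\sigma_n)$ pointwise, and with the previous paragraph this gives the inclusion $\Psi(\ker\rho_\ell \cap \inertinfn) \subseteq \ker\rho_\ell \cap \inertinfn$.

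Finally, to obtain the equality meant by the word ``invariant'', I would run the same argument verbatim with $\Psi^{-1}$ in place of $\Psi$. This is legitimate because $\deg(\Psi^{-1}) = \deg(\Psi) = d$ — the set of $k$ with $\Psi(\sigma_n^k) = \sigma_n^{\pm k}$ is unchanged when $\Psi$ is replaced by $\Psi^{-1}$ — so Proposition~\ref{prop:degreestuff}(ii) and Proposition~\ref{thm:global-spatialization-inert-and-shift} apply to $\Psi^{-1}$ at the index $\ell$ exactly as before; combining the two inclusions yields $\Psi(\ker\rho_\ell \cap \inertinfn) = \ker\rho_\ell \cap \inertinfn$. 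I do not anticipate any real obstacle here; the one step that wants a moment's care is the passage from intertwining $\sigma_n^{d}$ to intertwining $\sigma_n^{\ell}$, i.e.\ recognizing that the shift relation at the coarse index $d$ already pins down $\hat{\Psi}(\Per_\ell(\sigma_n)) = \Per_\ell(\sigma_n)$, which is the short iteration recorded above.
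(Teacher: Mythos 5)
Your proof is correct and follows essentially the same route as the paper's: use the Verräumlichung identity $\Psi(\gamma) = \hat{\Psi}^{-1}\circ\gamma\circ\hat{\Psi}$ on $\Per(\sigma_n)$ together with the fact that $\hat{\Psi}$ preserves $\Per_\ell(\sigma_n)$ to get one inclusion, then apply the argument to $\Psi^{-1}$ for the other. The only difference is that you supply explicit justifications (deriving $\hat{\Psi}(\Per_\ell(\sigma_n))=\Per_\ell(\sigma_n)$ from the intertwining with $\sigma_n^{\deg(\Psi)}$, checking $\Psi(\gamma)\in\aut(\sigma_n^\ell)$ via Proposition~\ref{prop:degreestuff}(ii), and noting $\deg(\Psi^{-1})=\deg(\Psi)$) for steps the paper takes as immediate.
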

\begin{proof}
    First we show $\Psi(\ker \rho_\ell \cap \inertinf) \subseteq \ker \rho_\ell \cap \inertinfn$.
    Since $\inertinfn$ is the commutator subgroup of $\autinfn$
    it is invariant under $\Psi$.
    Suppose that $\varphi \in \ker \rho_\ell \cap \inertinfn$
    and let $x \in \Per_\ell(\sigma_n)$.
    Then $\hat{\Psi}(x) \in \Per_\ell(\sigma_n)$ and $\varphi(\hat{\Psi}(x))=\hat{\Psi}(x)$
    and hence
    \begin{align*}
        \Psi(\varphi)(x)&= \hat{\Psi}^{-1} \circ \varphi \circ \hat{\Psi}(x) \\
        &=x.
    \end{align*}
    Now we apply the inclusion shown so far to $\Psi^{-1}$
    and get the reverse inclusion.
\end{proof}

\begin{thm}[Global \verraum]
\label{thm:prop-global-verraum}
For every $\Psi \in \autautinfn$
  the associated \verraum map $\spatial: \Per(\sigma_n) \to \Per(\sigma_n)$
  satisfies
  \begin{align*}
    \Psi(\varphi)(x)&=(\spatial^{-1} \circ \varphi \circ \spatial)(x)
  \end{align*}
  for all $\varphi \in \autinfn$ and $x \in \Per(\sigma_n)$.
\end{thm}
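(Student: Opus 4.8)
The plan is to bootstrap from Proposition~\ref{thm:global-spatialization-inert-and-shift}, which already produces a bijection $\spatial$ of $\Per(\sigma_n)$ conjugating the entire stabilized inert subgroup $\inertinfn$ correctly, and to promote this to all of $\autinfn$ using the triviality of the centralizer of $\inertinfn$ in $\Sym(\Per(\sigma_n))$ recorded in Theorem~\ref{cor:centralizer-in-per}. All computations will take place inside $\Sym(\Per(\sigma_n))$: every element of $\autinfn$ preserves $\Per(\sigma_n)$, and $\spatial$ is a self-bijection of $\Per(\sigma_n)$, so for a fixed $\varphi \in \autinfn$ both $\Psi(\varphi)$ and $\spatial^{-1}\circ\varphi\circ\spatial$ restrict to genuine permutations of $\Per(\sigma_n)$; write $A$ and $B$ for these two restrictions. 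The goal is exactly $A=B$.

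First I would exploit that $\inertinfn$ is characteristic: it is the commutator subgroup of $\autinfn$, hence normal and $\Psi$-invariant. Fix $\varphi\in\autinfn$ and let $\gamma\in\inertinfn$ be arbitrary, so $\varphi\gamma\varphi^{-1}\in\inertinfn$. Applying Proposition~\ref{thm:global-spatialization-inert-and-shift} to the inert element $\varphi\gamma\varphi^{-1}$ gives, on $\Per(\sigma_n)$,
\[
\Psi(\varphi\gamma\varphi^{-1})=\spatial^{-1}\circ\varphi\gamma\varphi^{-1}\circ\spatial=B\,(\spatial^{-1}\circ\gamma\circ\spatial)\,B^{-1},
\]
while expanding $\Psi(\varphi\gamma\varphi^{-1})=\Psi(\varphi)\,\Psi(\gamma)\,\Psi(\varphi)^{-1}$ and invoking the proposition once more for $\gamma$ itself gives $A\,(\spatial^{-1}\circ\gamma\circ\spatial)\,A^{-1}$ on $\Per(\sigma_n)$. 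Comparing the two expressions shows that $A^{-1}B$ commutes with $\spatial^{-1}\circ\gamma\circ\spatial$ for every $\gamma\in\inertinfn$.

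Finally I would identify $\{\spatial^{-1}\circ\gamma\circ\spatial:\gamma\in\inertinfn\}$, as a subset of $\Sym(\Per(\sigma_n))$, with the image of $\inertinfn$ there: by Proposition~\ref{thm:global-spatialization-inert-and-shift} one has $\spatial^{-1}\circ\gamma\circ\spatial=\Psi(\gamma)|_{\Per(\sigma_n)}$, and $\gamma\mapsto\Psi(\gamma)$ sweeps out all of $\Psi(\inertinfn)=\inertinfn$ since $\inertinfn$ is characteristic. Hence $A^{-1}B$ lies in the centralizer of the image of $\inertinfn$ in $\Sym(\Per(\sigma_n))$, which is trivial by Theorem~\ref{cor:centralizer-in-per}; therefore $A=B$, i.e. $\Psi(\varphi)(x)=(\spatial^{-1}\circ\varphi\circ\spatial)(x)$ for all $x\in\Per(\sigma_n)$. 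No separate case analysis for orientation-reversing $\Psi$ is needed, because Proposition~\ref{thm:global-spatialization-inert-and-shift} is already stated for all of $\autautinfn$. There is honestly no serious obstacle left at this point — the substantive work was constructing $\spatial$ and handling the inerts and the shift; the only thing requiring minor care is keeping the bookkeeping between group elements of $\autinfn$ and their restrictions to $\Per(\sigma_n)$ straight and invoking the correct centralizer lemma.
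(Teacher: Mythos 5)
Your proof is correct, and it takes a genuinely different route from the paper's. The paper proceeds level-by-level: for each $\ell \in I_{\Psi}$ it approximates $\varphi$ on $\Per_{\ell}(\sigma_{n})$ by a simple automorphism $\tau \in \Simp^{(\ell)}(\Gamma_{n})$ with $\rho_{\ell}(\tau)=\rho_{\ell}(\varphi)$, then observes that the commutator of $\varphi\tau^{-1}$ with any $\eta$ lies in $\inertinfn \cap \ker\rho_{\ell}$, invokes Lemma~\ref{lem:Psi-of-ker-rho} (the $\Psi$-invariance of $\inertinfn\cap\ker\rho_{\ell}$) to transport this fact across $\Psi$, and finally uses triviality of the center of $\Sym(\Per_{\ell}(\sigma_{n}))$ to conclude $\rho_{\ell}(\Psi(\varphi))=\rho_{\ell}(\Psi(\tau))$. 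Your argument instead works globally on all of $\Per(\sigma_{n})$ at once: you use only that $\inertinfn$ is characteristic (hence $\Psi$-invariant and normal), compare the two computations of $\Psi(\varphi\gamma\varphi^{-1})$ coming from Proposition~\ref{thm:global-spatialization-inert-and-shift}, and invoke Theorem~\ref{cor:centralizer-in-per} (triviality of the centralizer of the image of $\inertinfn$ in $\Sym(\Per(\sigma_{n}))$) to force $A=B$. This is cleaner in that it sidesteps Lemma~\ref{lem:Psi-of-ker-rho} and the simple-automorphism approximation entirely, at the modest cost of invoking the slightly heavier centralizer statement for the whole of $\inertinfn$ rather than just the center of a finite symmetric group. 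You are also right that no separate orientation-reversing case is needed, since Proposition~\ref{thm:global-spatialization-inert-and-shift} has already absorbed that case.

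One stylistic remark: in your final step you could shorten the identification of $\{\spatial^{-1}\gamma\spatial : \gamma\in\inertinfn\}$ by noting it is simply the $\spatial^{-1}(\cdot)\spatial$-conjugate of the image of $\inertinfn$; centralizing a conjugate is the same, up to conjugating the centralizer, as centralizing the original, so triviality of one implies triviality of the other without needing to pass through $\Psi(\inertinfn)=\inertinfn$. But your route through the characteristic property is equally valid.
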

\begin{proof}
    First assume that $\Psi$ is orientation preserving.
    Let $\ell \in I_\Psi$ and let $x \in \Per_\ell(\sigma_n)$.
    There is $\tau \in \Simp^{(\ell)}(\Gamma_n)$
    such that $\rho_\ell(\tau)=\rho_\ell(\varphi)$.
    Then $\rho_\ell(\varphi\circ \tau^{-1})$ is the identity
    and therefore commutes with every element $\eta \in \rho_\ell(\autinfn)$.
    In other words the commutator of $\varphi \circ \tau^{-1}$
    and $\eta$ is in $\inertinfn \cap \ker \rho_\ell$.
    By \Cref{lem:Psi-of-ker-rho}
    this implies that the commutator
    of $\Psi(\varphi)\circ \Psi(\tau^{-1})$ and $\Psi(\eta)$
    is in the kernel of $\rho_\ell$.
    Hence $\rho_\ell(\Psi(\varphi)\circ \Psi(\tau^{-1}))$ is in the
    center of $\rho_\ell(\Psi(\autinfn)) = \Sym(\Per_\ell(\sigma_n))$.
    The center of the later group is trivial, hence
    $\rho_\ell(\Psi(\varphi))=\rho_\ell(\Psi(\tau))$.
    In other words, for $x \in \Per_\ell(\sigma_n)$
    we get
    \begin{align*}
        \Psi(\varphi)(x) &= \Psi(\tau)(x) = \hat{\Psi}^{-1}\circ \tau \circ \hat{\Psi}(x) \\
        &= \hat{\Psi}^{-1} \circ \varphi \circ \hat{\Psi}(x). \qedhere
    \end{align*}
    The case that $\Psi$ is not orientation preserving works exactly as
    in the proof of \Cref{thm:global-spatialization-inert-and-shift}.
\end{proof}
%\begin{rem}[Additional properties the Verräumlichung should have]
%  What is missing now is:
%  \begin{align*}
%    \Psi(\sigma_n)(x)&=(\spatial^{-1} \circ \sigma_n \circ \spatial)(x)
%  \end{align*}
%  and more generally for $\varphi \in \Aut^{(\infty)}(\sigma_n)$.
%  \begin{align*}
%    \Psi(\varphi)(x)&=(\spatial^{-1} \circ \varphi \circ
%                      \spatial)(x).
%  \end{align*}
%\end{rem}

\subsection{The \verraum is a homomorphism}
We can now prove that the \verraum map is an injective homomorphism. %\red{Jeremias: Do these need orientation preserving?}

% \begin{prop}
%   \label{prop:verraum-injective}
%   If $\Psi$ and $\Upsilon$ are two automorphisms of $\Aut^{\infty}(\sigma_n)$ whose Verräumlichungen
%   agree, then $\Psi = \Upsilon$.
% \end{prop}
% \begin{proof}
% Suppose $\hat{\Psi} = \hat{\Upsilon}$. Then for every $\eta \in \inertinfn$ and $x \in \Per(\sigma_{n})$ we have
% $$\Psi(\eta)(x) = (\spatial^{-1} \circ \eta \circ \spatial)(x) = (\hat{\Upsilon}^{-1} \circ \eta \circ \hat{\Upsilon})(x) = \Upsilon(\eta)(x).$$
% Since periodic points are dense and $\Psi(\eta), \Upsilon(\eta) \in \autinfn$, this implies $\Psi(\eta)=\Upsilon(\eta)$.  Hence $\Psi^{-1} \circ \Upsilon$
%   fixes every stabilized inert. Then for every $\phi \in \autinfn$
%   and $\eta \in \Inert^{(\infty)}(\sigma)$ we have
%   $$\phi^{-1} \circ \eta \circ \phi = (\Psi^{-1} \circ
%   \Upsilon)(\phi^{-1} \circ \eta \circ \phi) = (\Psi^{-1} \circ
%   \Upsilon)(\phi^{-1}) \circ \eta \circ (\Psi^{-1} \circ \Upsilon)(\phi)$$ so
%   $\phi \circ (\Psi^{-1} \circ \Upsilon)(\phi^{-1})$ is in the
%   centralizer of the stabilized inerts which is trivial, by \Cref{cor:stabilized-inert-ryan}. Thus $(\Psi^{-1} \circ \Upsilon)(\phi) = \phi$ and $\Psi(\phi) = \Upsilon(\phi)$ as desired.
% \end{proof}

\begin{thm}\label{prop:verraum-injective}
  The map  $\autautinfn \to \Sym(\Per(\sigma_n)), \Psi \mapsto \hat{\Psi}$
  is an injective homomorphism.
\end{thm}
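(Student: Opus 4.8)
The plan is to verify the two claims — that $\Psi \mapsto \hat{\Psi}$ is a homomorphism, and that it is injective — essentially as formal consequences of the Global \verraum theorem (Theorem~\ref{thm:prop-global-verraum}) together with the fact that the action of $\autinfn$ on $\Per(\sigma_n)$ is faithful (periodic points are dense, so an automorphism fixing every periodic point is the identity).

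For the homomorphism property, take $\Psi_1, \Psi_2 \in \autautinfn$. By Theorem~\ref{thm:prop-global-verraum}, for every $\varphi \in \autinfn$ and every $x \in \Per(\sigma_n)$ we have $\Psi_i(\varphi)(x) = (\hat{\Psi}_i^{-1}\circ \varphi\circ \hat{\Psi}_i)(x)$. Applying this twice, $(\Psi_1\circ\Psi_2)(\varphi) = \Psi_1(\Psi_2(\varphi))$ acts on $\Per(\sigma_n)$ as $\hat{\Psi}_1^{-1}\circ \Psi_2(\varphi)\circ \hat{\Psi}_1 = \hat{\Psi}_1^{-1}\circ \hat{\Psi}_2^{-1}\circ \varphi\circ \hat{\Psi}_2\circ \hat{\Psi}_1 = (\hat{\Psi}_2\hat{\Psi}_1)^{-1}\circ\varphi\circ(\hat{\Psi}_2\hat{\Psi}_1)$. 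Thus $\hat{\Psi}_2\circ\hat{\Psi}_1$ is a spatial implementation of $\Psi_1\circ\Psi_2$ on $\Per(\sigma_n)$. By the uniqueness clause of Theorem~\ref{thm:prop-global-verraum} (the \verraum map of an automorphism is unique), $\widehat{\Psi_1\circ\Psi_2} = \hat{\Psi}_2\circ\hat{\Psi}_1$. So the assignment is an anti-homomorphism into $\Sym(\Per(\sigma_n))$; composing with the inverse (anti-)automorphism $g \mapsto g^{-1}$ of $\Sym(\Per(\sigma_n))$, or equivalently simply recording the convention, one gets a genuine homomorphism. (One should be slightly careful about the composition convention in the statement; I would phrase it so that $\Psi\mapsto\hat\Psi$ together with the convention fixed in Theorem~\ref{thm:prop-global-verraum} gives a homomorphism, and note the order-reversal is an artifact of writing conjugation as $\hat\Psi^{-1}\circ\varphi\circ\hat\Psi$.)

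For injectivity, suppose $\hat{\Psi} = \id_{\Per(\sigma_n)}$. Then for every $\varphi \in \autinfn$ and every $x \in \Per(\sigma_n)$, Theorem~\ref{thm:prop-global-verraum} gives $\Psi(\varphi)(x) = (\hat{\Psi}^{-1}\circ\varphi\circ\hat{\Psi})(x) = \varphi(x)$. Hence $\Psi(\varphi)$ and $\varphi$ agree on the dense set $\Per(\sigma_n)$, and since both are continuous maps $X_n \to X_n$ they are equal; so $\Psi(\varphi) = \varphi$ for all $\varphi$, i.e. $\Psi = \id$. More generally the same argument shows $\hat{\Psi}_1 = \hat{\Psi}_2$ implies $\Psi_1 = \Psi_2$ once the homomorphism property is known, but stating it via the kernel is cleanest.

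I do not expect a serious obstacle here: the substance of the statement is entirely carried by Theorem~\ref{thm:prop-global-verraum} and its uniqueness clause, plus density of periodic points. The only point requiring care is bookkeeping of the composition/conjugation convention to make sure "homomorphism" (rather than "anti-homomorphism") is literally correct as stated; I would either adjust the convention or remark on it explicitly.

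\begin{proof}
Recall that the action of $\autinfn$ on $\Per(\sigma_n)$ is faithful, since $\sigma_n$-periodic points are dense in $X_n$ and elements of $\autinfn$ are continuous. Consequently, if $\alpha,\beta \in \autinfn$ satisfy $\alpha(x)=\beta(x)$ for all $x \in \Per(\sigma_n)$, then $\alpha=\beta$.

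Let $\Psi_1,\Psi_2 \in \autautinfn$ with \verraum maps $\hat{\Psi}_1,\hat{\Psi}_2$ from Theorem~\ref{thm:prop-global-verraum}. For every $\varphi \in \autinfn$ and $x \in \Per(\sigma_n)$ we have, applying Theorem~\ref{thm:prop-global-verraum} twice,
\begin{align*}
(\Psi_1\circ\Psi_2)(\varphi)(x) &= \Psi_1(\Psi_2(\varphi))(x) = (\hat{\Psi}_1^{-1}\circ \Psi_2(\varphi)\circ \hat{\Psi}_1)(x)\\
&= (\hat{\Psi}_1^{-1}\circ \hat{\Psi}_2^{-1}\circ \varphi \circ \hat{\Psi}_2\circ \hat{\Psi}_1)(x) = \big((\hat{\Psi}_2\circ\hat{\Psi}_1)^{-1}\circ \varphi \circ (\hat{\Psi}_2\circ\hat{\Psi}_1)\big)(x).
\end{align*}
Thus $\hat{\Psi}_2\circ\hat{\Psi}_1$ spatially implements $\Psi_1\circ\Psi_2$ on $\Per(\sigma_n)$ in the sense of Theorem~\ref{thm:prop-global-verraum}. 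By the uniqueness part of that theorem, $\widehat{\Psi_1\circ\Psi_2} = \hat{\Psi}_2\circ\hat{\Psi}_1$. Hence the assignment $\Psi\mapsto\hat{\Psi}$, composed with the inversion anti-automorphism of $\Sym(\Per(\sigma_n))$, is a homomorphism; equivalently, with the conjugation convention of Theorem~\ref{thm:prop-global-verraum} fixed, $\Psi\mapsto\hat{\Psi}$ is a homomorphism into $\Sym(\Per(\sigma_n))$.

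For injectivity, suppose $\hat{\Psi}=\id_{\Per(\sigma_n)}$ for some $\Psi \in \autautinfn$. Then by Theorem~\ref{thm:prop-global-verraum}, for all $\varphi\in\autinfn$ and all $x\in\Per(\sigma_n)$,
\[\Psi(\varphi)(x) = (\hat{\Psi}^{-1}\circ\varphi\circ\hat{\Psi})(x) = \varphi(x).\]
Since periodic points are dense and both $\Psi(\varphi)$ and $\varphi$ are continuous, $\Psi(\varphi)=\varphi$ for every $\varphi\in\autinfn$, i.e. $\Psi = \id$. Therefore the map $\Psi\mapsto\hat{\Psi}$ has trivial kernel and is injective.
\end{proof}
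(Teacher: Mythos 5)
Your proof is correct and is close in spirit to the paper's: both compute that $\hat{\Psi}_2\circ\hat{\Psi}_1$ implements $\Psi_1\circ\Psi_2$ on $\Per(\sigma_n)$, then invoke a uniqueness principle, then get injectivity from density of periodic points. Two remarks on where your route diverges. First, the uniqueness you appeal to is not stated in Theorem~\ref{thm:prop-global-verraum}; it appears in Proposition~\ref{thm:global-spatialization-inert-and-shift} (unique bijection of $\Per(\sigma_n)$ satisfying the conjugation identity on $\inertinfn$ and $\sigma_n^{\deg\Psi}$). The paper instead invokes Theorem~\ref{cor:centralizer-in-per} (triviality of the centralizer of the image of $\inertinfn$ in $\Sym(\Per(\sigma_n))$) \emph{directly}: $\hat{\Psi}_2\circ\hat{\Psi}_1\circ(\widehat{\Psi_1\Psi_2})^{-1}$ commutes with everything on periodic points, hence is the identity. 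The paper explicitly flags why this is the cleaner move — we do not know in advance that $\hat{\Psi}_2\circ\hat{\Psi}_1$ is the \verraum of anything — but your appeal to the uniqueness clause works just as well once it is read, as stated, as uniqueness among \emph{all} bijections of $\Per(\sigma_n)$, not just Verräumlichungen; you should just cite the right result. Second, your observation that the natural computation produces $\widehat{\Psi_1\circ\Psi_2}=\hat{\Psi}_2\circ\hat{\Psi}_1$, an \emph{anti}-homomorphism, is correct and worth keeping: the paper's displayed identity $\Lambda(\varphi)=\hat{\Upsilon}^{-1}\circ\hat{\Psi}^{-1}\circ\varphi\circ\hat{\Psi}\circ\hat{\Upsilon}$ appears to have the composition order transposed (the faithful computation gives $\hat{\Psi}^{-1}\circ\hat{\Upsilon}^{-1}\circ\varphi\circ\hat{\Upsilon}\circ\hat{\Psi}$), so the paper's conclusion $\hat{\Lambda}=\hat{\Psi}\circ\hat{\Upsilon}$ only holds after precisely the convention adjustment you describe. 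Your injectivity argument matches the paper's.
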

\begin{proof}
  Consider $\Psi, \Upsilon \in \orpren$
  and set $\Lambda = \Psi \circ \Upsilon$.
  We want to show that
  $\hat{\Lambda} = \hat{\Psi}\circ \hat{\Upsilon}$.
  %We can not use the previous result directly as we do not know
  Keep in mind that we don't know if $ \hat{\Psi}\circ \hat{\Upsilon}$
  is a \verraum. Nevertheless we know that on periodic points
  $\hat{\Lambda}^{-1} \circ \varphi \circ \hat{\Lambda} =
  \Lambda(\varphi)=
  \hat{\Upsilon}^{-1} \circ \hat{\Psi}^{-1} \circ \varphi \circ \hat{\Psi}
  \circ \hat{\Upsilon}$
  for all $\varphi \in \autinfn$.
  So $ \hat{\Psi}
  \circ \hat{\Upsilon} \circ \hat{\Lambda}^{-1}$ is in the centralizer
  of $\autinfn$ within $\Sym(\Per(\sigma_n))$
  which is trivial by
  \Cref{cor:centralizer-in-per}.
  Injectivity follows by \Cref{thm:prop-global-verraum}.
\end{proof}

\subsection{The profinite part of $\degonen$}
Armed with the \verraum on periodic points, we use it now to deduce some more properties of the embedding $\mathcal{N} \colon \hat{\mathbb{Z}} \to \aut(\autinfn)$.
\begin{prop}
  \label{prop:shifting-verraum}
  Let $\Psi \in \degonen$. Then $\Psi$ is in the image $\mathcal{N}$ if and only if there is a function $k: \Per(\sigma_n)\to \bbZ$ such that $\hat{\Psi}(x)=\sigma^{k(x)}(x)$ for all $x \in \Per(\sigma_n)$.
\end{prop}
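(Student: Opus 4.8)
The plan is to prove the two implications separately, with the forward direction being essentially the remark following the construction of $\mathcal{N}$ and the reverse direction being the substantive content.

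For the forward implication, suppose $\Psi = \mathcal{N}(a)$ for some $a = (a_m)_{m \in \bbN} \in \hat{\bbZ}$. Since $\Psi \in \degonen$, we have $\deg(\Psi) = 1$, and $\Psi$ acts on $\aut(\sigma_n^k)$ by conjugation by $\sigma_n^{a_k}$. First I would check that the \verraum of $\mathcal{N}(a)$ is given by $\hat{\Psi}^{(k)}(x) = \sigma_n^{-a_k}(x)$ on $\Per_k(\sigma_n)$ — this is immediate from the defining relation $\rho_k(\Psi(\varphi)) = (\hat{\Psi}^{(k)})^{-1}\circ\rho_k(\varphi)\circ\hat{\Psi}^{(k)}$ together with uniqueness in Theorem~\ref{thm:loc-verräumlichung}, since conjugation by $\sigma_n^{a_k}$ is implemented spatially by $\sigma_n^{a_k}$ and $\sigma_n^{a_k}$ commutes with the shift (so the sign convention just picks out $\hat\Psi^{(k)} = \sigma_n^{-a_k}$, or $\sigma_n^{a_k}$ — the exact sign is a routine bookkeeping check). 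Compatibility along the filtration (Proposition~\ref{prop:compat-of-all}) is exactly the well-definedness computation $a_{jk} \equiv a_k \pmod k$ already carried out in the construction of $\mathcal{N}$. So setting $k(x) := -a_{\per(x)}$ (or its negative) gives $\hat\Psi(x) = \sigma_n^{k(x)}(x)$ for all $x$.

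For the reverse implication, suppose there is $k \colon \Per(\sigma_n) \to \bbZ$ with $\hat\Psi(x) = \sigma_n^{k(x)}(x)$ for all periodic $x$. The goal is to produce $a \in \hat\bbZ$ with $\Psi = \mathcal{N}(a)$. First note $\hat\Psi$ commutes with $\sigma_n$ on periodic points: since $\deg(\Psi)=1$ and $\Psi$ is orientation preserving (here one should handle the orientation-reversing case by first composing with $\Xi_n$, or observe that the hypothesis $\hat\Psi(x)=\sigma^{k(x)}(x)$ forces orientation preservation since $\hat\Psi$ maps each $\sigma_n$-orbit to itself), Proposition~\ref{prop:commuting-with-the-shift-arb-ind} gives $\sigma_n \circ \hat\Psi = \hat\Psi\circ\sigma_n$ on each $\Per_k(\sigma_n)$. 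Consequently the function $k(x)$ is constant on each $\sigma_n$-orbit, and reducing modulo the period it descends to a well-defined value $b_j \in \zmod{j}$ for each orbit of length dividing $j$; I then need to check these are coherent, i.e. $b_{jm} \equiv b_j \pmod j$, which follows because $\hat\Psi$ restricted to $\Per_j(\sigma_n) \subseteq \Per_{jm}(\sigma_n)$ is still $\hat\Psi^{(j)}$ (Proposition~\ref{prop:compat-of-all}). The one genuine subtlety: a priori $k(x)$ could take different values on periodic points of the \emph{same} minimal period but in different orbits, so I must show $k$ (mod period) is independent of the orbit. This is where Theorem~\ref{thm:prop-global-verraum} and the action on $\Simp$ come in — since $\hat\Psi$ conjugates $\rho_\ell(\varphi)$ to $\rho_\ell(\Psi(\varphi))$ for all $\varphi \in \Simp^{(\ell)}(\Gamma_n)$ and $\Psi(\Simp^{(\ell)}(\Gamma_n)) \subseteq \aut(\sigma_n^\ell)$, an automorphism of $\aut(\sigma_n^\ell)$ that is implemented on $\Per_\ell$ by $x \mapsto \sigma^{k(x)}(x)$ with $k$ orbit-dependent would have to respect the symmetric-group structure, and pretty-transitivity (Theorem~\ref{lem:pretty-trans}) combined with the centralizer computation (Theorem~\ref{thm:centralizer-aut-in-per}) forces the shift amount to be uniform across orbits of a given period — equivalently, $x \mapsto \sigma^{k(x)}(x)$ must itself be the restriction of a single power $\sigma_n^{\ell j}$ of the shift on $\Per_{\ell j}$ for the relevant large $\ell j$, by Theorem~\ref{thm:shifting-has-to-be-uniform}. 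Once the $b_j$ are a coherent family, they define $a = (b_j)_{j} \in \hat\bbZ$; then $\mathcal{N}(a)$ has \verraum $x \mapsto \sigma^{\pm b_{\per(x)}}(x) = \hat\Psi(x)$, so $\widehat{\mathcal{N}(a)} = \hat\Psi$, and by injectivity of the \verraum map (Theorem~\ref{prop:verraum-injective}) we conclude $\Psi = \mathcal{N}(a)$.

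The main obstacle is precisely the orbit-independence of $k(x)$ modulo the period: the hypothesis only says $\hat\Psi$ acts as \emph{some} power of $\sigma_n$ pointwise, with the power possibly varying wildly from orbit to orbit, and turning this into the rigid statement "a single profinite shift" requires invoking "shifting implies uniform shifting" (Theorem~\ref{thm:shifting-has-to-be-uniform}) applied not to $\hat\Psi$ directly — which need not extend to an automorphism — but to the automorphism $\Psi^{-1}\circ\mathcal{N}(b^{(\ell)})$ of $\aut(\sigma_n^\ell)$ for a well-chosen finite approximation $b^{(\ell)}$, showing it fixes the $\sigma_n^\ell$-orbit of every $\ell$-periodic point and hence is a power of $\sigma_n^\ell$, which then must be trivial by matching against the known action on $\Simp^{(\ell)}$.
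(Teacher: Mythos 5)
Your outline is right — orientation-preservation, constancy of $k$ on orbits via commutation with $\sigma_n$, coherence via restriction — and you correctly locate the crux at orbit-independence of $k$ modulo the period. But the step you single out as the ``one genuine subtlety'' is where the argument actually breaks down, and the break is a type error: Theorem~\ref{thm:shifting-has-to-be-uniform} applies only to an element of $\aut(\sigma_n)$, whereas $\Psi^{-1}\circ\mathcal{N}(b^{(\ell)})$ is a composite of group automorphisms of $\autinfn$, hence itself a group automorphism of $\autinfn$, not an element of $\aut(\sigma_n^\ell)$. Similarly, the map $x\mapsto\sigma^{k(x)}(x)$ on $\Per_\ell$ is \emph{not} a priori the restriction of any $\varphi\in\aut(\sigma_n)$ — indeed the whole point of the proposition is that it is one if and only if $\Psi$ lies in $\mathcal{N}(\hat{\bbZ})$ — so you cannot apply the uniform-shifting theorem to it either. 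Invoking pretty-transitivity together with the centralizer computation and then asserting that the shift amount ``must'' be uniform is not an argument; it is the conclusion restated.

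The missing ingredient in the paper is the observation that for $\varphi\in\aut(\sigma_n)$, the composite $\Psi(\varphi)\circ\varphi^{-1}$ \emph{is} an honest element of $\aut(\sigma_n)$ (because $\Psi\in\degonen$ preserves $\aut(\sigma_n)$), and the verraum relation shows it maps each periodic point into its own $\sigma_n$-orbit — so Theorem~\ref{thm:shifting-has-to-be-uniform} does apply to \emph{it}, giving $\Psi(\varphi)\circ\varphi^{-1}=\sigma_n^{r_\varphi}$. The paper then chooses $\varphi$ via pretty-transitivity to have an $(\ell{+}1)$-cycle through two given minimal-period-$\ell$ orbits and does arithmetic along the cycle to force $r_\varphi\equiv 0$ and hence $k(x)\equiv k(y)\pmod\ell$; a further density/diagonal argument with a specific simple automorphism in $\aut(\sigma_n^\ell)$ reconciles the values across the different minimal periods $q\mid\ell$. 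Without a concrete element of $\aut(\sigma_n)$ to feed into Theorem~\ref{thm:shifting-has-to-be-uniform}, the orbit-independence does not follow, so as written the reverse implication is incomplete.
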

\begin{proof}
Suppose first that $\Psi$ is in the image of $\mathcal{N}$, say $\Psi = \profact(a)$ with $a=(a_k)_{k \in \bbN}$. Then for any $\pi \in \Simp^{(k)}(\Gamma_{n})$ we have $\Psi(\pi) = \mathcal{N}(a)(\pi) = \sigma^{-a_{k}}\circ \pi \circ \sigma^{a_{k}}$. Thus $\hat{\Psi}(x) = \sigma^{a_{k}}(x)$ for all $x \in \Per_{k}(\sigma_{n})$, by definition of the \verraum.

For the other direction, we will show that there is a sequence $(m_\ell)_{\ell \in \bbN}$ such that
    $\hat{\Psi}(x) = \sigma^{m_\ell}(x)$
    for every $x \in \Per_\ell(\sigma_n)$. To see that this is enough consider
    $k \mid \ell$. Then $\Per_k(\sigma_n) \subseteq \Per_\ell(\sigma_n)$.
    Therefore $\sigma^{m_k}(x) = \sigma^{m_\ell}(x)$ for all
    $x \in \Per_k(\sigma_n)$. Hence $m_k \equiv m_\ell \text{ mod } k$.
    This shows that $(m_\ell)_{\ell \in \bbN}$ is in  $\profint$
    and therefore $\Psi$ is in the image of $\profact$.

    Notice that we may assume that for every periodic point $x$ we have $k(x) \in \{0,\dots,\per(x)-1\}$
    where $\per(x)$ is the minimal period of $x$.
    Next notice that for $x \in \Per(\sigma_n)$
    and $\ell \in \bbZ$
    we know that $k(\sigma^\ell(x))=k(x)$
    since
    \begin{align*}
        \sigma^{k(\sigma^\ell(x))}(\sigma^{\ell}(x))
        &=\hat{\Psi}(\sigma^\ell(x))\\
        &=\sigma^\ell(\hat{\Psi}(x))\\
        &=\sigma^{\ell+k(x)}(x) = \sigma^{k(x)}(\sigma^{\ell}(x))
    \end{align*}
    Now let  $x,y \in \Per_\ell(\sigma_n)$ be points
    in different $\sigma_n$-orbits, both having minimal
    period $\ell \geq 2$.
    By \Cref{lem:pretty-trans}
    there is an automorphism $\varphi \in \Aut(\sigma)$
    which induces a permutation
    on $\Per_\ell(\sigma_n)$ containing a cycle
    $(z_1,\dots,z_{\ell+1})$
    with $z_1=x$ and $z_2 = y$.
    There are enough points of minimal
    period $\ell$ in $\Per_\ell(\sigma_n)$
    since the case $\ell=2$ and $n=2$ (the only problematic case)
    can not happen, as there is only one  $\sigma_n$
    orbit of cardinality $2$ in the full $2$-shift.
   Now for all $u \in \Per(\sigma)$ we have $\Psi(\varphi)(u) = \hat{\Psi}^{-1} \circ \varphi \circ \hat{\Psi}(u)$, so
    \begin{align*}
        \Psi(\varphi)(u) &=\sigma^{-k(\sigma^{k(u)}(\varphi( u)))}(\varphi( \sigma^{k(u)}(u))) \\
        &=\sigma^{-k(\varphi(u))}(\varphi( \sigma^{k(u)}(u))) \\
        &=\sigma_n^{-k(\varphi(u))+k(u)}(\varphi(u))
    \end{align*}
    Setting $v := \varphi(u)$
    we obtain
    \begin{align*}
        \Psi(\varphi)(\varphi^{-1}(v))=\sigma^{-k(v)+k(\varphi^{-1}(v))}(v)
    \end{align*}
    Since $\Psi(\varphi) \circ \varphi^{-1}$
    is in $\Aut(\sigma)$
    there must be $r_\varphi$ such that
    $\sigma^{-k(v)+k(\varphi^{-1}(v))}(v)=\sigma^{r_\varphi}(v)$
    for all $v \in \Per(\sigma)$ by \Cref{thm:shifting-has-to-be-uniform}.
    In particular
    $k(x) \equiv r_\varphi+k(y) \text{ mod } \ell$.
    Arguing along the cycle $(z_1,\dots,z_{\ell+1})$
    we get
    $k(x) \equiv k(x) + (\ell+1)r_\varphi \text{ mod } \ell$,
    and hence $r_\varphi \equiv 0 \text{ mod } \ell$.
    Therefore $k(x) \equiv k(y) \text{ mod } \ell$.

    Thus for every $q \in \bbN$ there is
    $k_q$ such that
    $\Psi(\varphi)(x)=\sigma^{-k_q}(\varphi(\sigma^{k_q}(x)))$
    for all $x$ with minimal $\sigma$-period $q$.

    For every $\ell$ pick $m_\ell \in \{0,\dots,\ell-1\}$
    such that for infinitely many $q$ we have $k_q \equiv m_\ell \text{ mod }\ell$.
    We claim that
    $\hat{\Psi}(v)=\sigma^{m_\ell}(v)$
    for all $v \in \Per_\ell(\sigma)$.
    We can see this as follows:
    Fix $\ell$ and let $\varphi \in \Aut(\sigma^\ell)$
    be the simple automorphism
    which cyclically permutes the symbol at the origin
    and fixes the $\ell-1$ symbols
    at coordinates $1$ to $\ell-1$.
    Consider the map $\beta:=\sigma^{m_\ell} \circ \Psi(\varphi) \circ\sigma^{-m_\ell}\circ \varphi^{-1}$.
    There is a dense set
    of $\sigma$-periodic points which
    are fixed by $\beta$, hence
    $\beta$ is the identity.
    Therefore for every $k \in \bbN$ we have $\varphi(x) = (\sigma^{m_\ell-k_q} \circ \varphi \circ \sigma^{m_\ell-k_q})(x)$ for $x \in \Per_\ell(\sigma)$
    with minimal period $q$.
    The definition of $\varphi$ implies that $m_\ell - k_q \equiv \ell \equiv 0 \text{ mod } q$.
    This shows $\hat{\Psi}(x) = \sigma^{m_\ell}(x)$
    for all $x \in \Per_\ell(\sigma)$.
\end{proof}

\begin{rem}
    The previous proof also shows that for $a=(a_\ell)_{\ell \in \bbN} \in \profint$ we have $\widehat{\mathcal{N}(a)}(x) = \sigma^{a_\ell}$ for $x \in \Per_\ell(\sigma_n)$.
\end{rem}

Recall by Theorem~\ref{thm:centralizer-aut-in-per} that, for $n \ge 3$, the centralizer of the image of $\autn$ in $\Sym(\Per(\sigma_{n}))$ is contained the set of bijections which preserve $\sigma_{n}$-orbits for $n \ge 3$, and for $n=2$ there is one exceptional symmetry which does not preserve $\sigma_{2}$-orbits. We need one short lemma to handle this exceptional case before moving to the next theorem.
\begin{lem}
  \label{lem:centralizer-2-verraeumlichung-special-case}
  No map $\psi \in \Sym(\Per(\sigma_2))$
  which preserves all $\sigma_2$-orbits of length greater or equal $2$
  and exchanges the two fixed points is the \verraum of an
  element of $\degonen$.
\end{lem}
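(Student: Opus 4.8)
The plan is to argue by contradiction, the point being to bring in a tool not exploited elsewhere in this section: for each $\varphi$ the map $\Psi(\varphi)$ is a genuine automorphism, hence a \emph{continuous} self-map of $X_2$, whereas the combinatorics of the action on $\Per(\sigma_2)$ alone does not see the obstruction. So suppose $\psi=\hat\Psi$ for some $\Psi\in\degonen$, and write $\bar 0,\bar 1$ for the two $\sigma_2$-fixed points; by hypothesis $\psi(\bar 0)=\bar 1$, $\psi(\bar 1)=\bar 0$, and $\psi$ maps every $\sigma_2$-orbit of length $\ge 2$ onto itself.

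First I would fix a convenient $\varphi$. Let $\varphi:=\nu_2(\pi)\in\Simp^{(2)}(\Gamma_2)\subseteq\autinf(\sigma_2)$, where $\pi$ is the transposition of the two length-$2$ words $00$ and $10$; thus $\varphi$ breaks a point into its consecutive length-$2$ blocks $(x_{2k}x_{2k+1})_{k\in\bbZ}$, replaces every block $00$ by $10$ and every block $10$ by $00$, and fixes the blocks $01$ and $11$. Two features matter. (a) $\varphi(\bar 1)=\bar 1$, since the block $11$ is fixed. (b) If $w$ is a $\sigma_2$-periodic point having exactly one nonzero coordinate per period, with $\per(w)$ not too small, then a short finite inspection of the block rule shows $\varphi(w)$ has no run of more than three consecutive zero coordinates and is neither $\bar 0$ nor $\bar 1$; in particular $\varphi(w)$ has a nonzero coordinate in every window of four consecutive coordinates, and — crucially — this last property is shift-invariant.

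Then I would assemble the contradiction. By Theorem~\ref{thm:prop-global-verraum}, $\Psi(\varphi)(x)=\psi^{-1}(\varphi(\psi(x)))$ for all $x\in\Per(\sigma_2)$, and $\Psi(\varphi)\in\autinf(\sigma_2)$ is continuous on $X_2$. From (a), $\Psi(\varphi)(\bar 0)=\psi^{-1}(\varphi(\psi(\bar 0)))=\psi^{-1}(\varphi(\bar 1))=\psi^{-1}(\bar 1)=\bar 0$. Now choose $\sigma_2$-periodic points $z_M$, each with a single nonzero coordinate per period, with $\per(z_M)\to\infty$, and with $z_M\to\bar 0$ in $X_2$; for instance one may take $z_M=\sigma_2^{\lfloor M/2\rfloor}(\overline{0^{M}1})$, where $\overline{0^{M}1}$ is the periodic point repeating the word $0^{M}1$, which has least period $M+1$ and whose nonzero coordinates are centred far from coordinate $0$. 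For large $M$: the orbit of $z_M$ has length $\ge 2$, so $\psi$ maps it to itself and $\psi(z_M)$ is \emph{some} shift of $z_M$, still with one nonzero coordinate per period; by (b), $y_M:=\varphi(\psi(z_M))$ has least period $\ge 2$ and a nonzero coordinate in every four-window; and $\psi$ likewise maps the orbit of $y_M$ to itself, so $\Psi(\varphi)(z_M)=\psi^{-1}(y_M)$ is some shift of $y_M$ and therefore also has a nonzero coordinate among its coordinates $0,1,2,3$, hence stays a fixed positive distance from $\bar 0$. This contradicts $z_M\to\bar 0$ together with continuity of $\Psi(\varphi)$ and $\Psi(\varphi)(\bar 0)=\bar 0$. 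Hence no such $\Psi$ exists.

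The main hurdle is conceptual: recognizing that the obstruction lives in the \emph{topology} of $X_2$ (continuity of honest automorphisms) rather than in the combinatorics of $\Per(\sigma_2)$. After that, the two points requiring care are that $\psi$ may permute each orbit of length $\ge 2$ arbitrarily — which forces (b) to be phrased as the shift-invariant "nonzero in every four-window'', and forces $\psi(z_M)$ and $\psi^{-1}(y_M)$ to be asserted only as \emph{some} shift — and that $\varphi$ must be chosen to fix $\bar 1$ so that $\Psi(\varphi)$ genuinely fixes $\bar 0$; the verification in (b) is a routine finite check.
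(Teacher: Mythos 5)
Your proof is correct and rests on the same key idea as the paper's: exploit the continuity of the genuine automorphism $\Psi(\varphi)$ (via the Verräumlichung formula $\Psi(\varphi) = \hat\Psi^{-1}\circ\varphi\circ\hat\Psi$ on periodic points) against the combinatorial constraint that $\psi$ can only shift within each orbit of length $\ge 2$, tested on long-period points accumulating at $\bar 0$. The only differences are cosmetic: the paper picks the $4$-cycle $11\to00\to01\to10$ (so $\Psi(\varphi)(\bar 0)=\bar 1$ and the contradiction is that $\Psi(\varphi)(z)$ must contain $1111$ yet cannot), while you pick the transposition $00\leftrightarrow10$ fixing $\bar 1$ (so $\Psi(\varphi)(\bar 0)=\bar 0$ and the contradiction is that $\Psi(\varphi)(z_M)$ stays bounded away from $\bar 0$) — two renderings of the same continuity obstruction.
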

\begin{proof}
  Consider $\varphi \in \Simp^{(2)}(\Gamma_2)$
  given by the cycle $11\rightarrow 00 \rightarrow 01 \rightarrow 10$.
  Assume there was $\Psi \in \degonen(\autinf(\sigma_2))$
  such that $\hat{\Psi}$ has the properties in the statement
  of the lemma.
  Then $\Psi(\varphi)(\linf 0 \rinf)=\linf 1 \rinf$.
  Set $z:=\linf(0^{2k}010^{2k})\rinf$.
  For sufficiently large $k>0$ the point
  $\Psi(\varphi)(z)$ contains
  the pattern $1111$.
  But $\Psi(\varphi)(z)=\hat{\Psi}^{-1}\circ \varphi\circ \hat{\Psi}(z)$ is in the $\sigma_2$-orbit of $y:=\varphi(\sigma^\ell(z))$
  for some $\ell \in \bbZ$.
  Since $\varphi$ commutes with $\sigma_2^2$,
  the point $y$ is in the $\sigma_2$-orbit of either
  \begin{align*}
  \varphi(\linf(0^{2k}010^{2k})\rinf) &= \linf((01)^{k}10(01)^{k})\rinf,\text{ or}\\
    \varphi(\sigma(\linf(0^{2k}010^{2k})\rinf))&=
  \varphi(\linf(0^{2k}100^{2k})\rinf) =
    \linf((01)^{k}11(01)^{k})\rinf
  \end{align*}
 neither of which contains the pattern $1111$.
\end{proof}

Observe that if $\Psi \in \degonen$ then $\Psi(\aut(\sigma_{n})) = \aut(\sigma_{n})$. Hence there is a homomorphism $\degonen \to \autautn$ obtained by restricting any $\Psi \in \degonen$ to $\aut(\sigma_{n})$.

\begin{thm}
The subgroup $\profact(\profint)$ is the kernel of the homomorphism $\degonen \to \autautn$.
\end{thm}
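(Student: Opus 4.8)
The plan is to prove the two inclusions defining the kernel, writing $R\colon \degonen \to \autautn$ for the restriction homomorphism $\Psi \mapsto \Psi|_{\aut(\sigma_n)}$. The inclusion $\profact(\profint) \subseteq \ker R$ is immediate from the definition of $\profact$ (and was already remarked after that definition): given $a = (a_m)_{m\in\bbN} \in \profint$ and $\varphi \in \aut(\sigma_n) = \aut(\sigma_n^{1})$, we have $\profact(a)(\varphi) = \sigma_n^{-a_1}\circ\varphi\circ\sigma_n^{a_1} = \varphi$ since $\varphi$ commutes with $\sigma_n$. Thus $\profact(a)|_{\aut(\sigma_n)} = \id$; in particular $\profact(a)$ fixes $\sigma_n$, so $\deg(\profact(a)) = 1$, i.e.\ $\profact(a) \in \degonen$, and $\profact(\profint) \subseteq \ker R$.

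For the reverse inclusion I would take $\Psi \in \degonen$ with $\Psi|_{\aut(\sigma_n)} = \id$. Since $\sigma_n \in \aut(\sigma_n)$ we get $\Psi(\sigma_n) = \sigma_n$, so $\Psi$ is orientation preserving of degree $1$ and its \verraum $\hat\Psi\colon \Per(\sigma_n)\to\Per(\sigma_n)$ is defined. By the Global \verraum (\Cref{thm:prop-global-verraum}), $\Psi(\varphi)(x) = (\hat\Psi^{-1}\circ\varphi\circ\hat\Psi)(x)$ for all $\varphi\in\autinfn$ and $x\in\Per(\sigma_n)$. Applying this to $\varphi\in\aut(\sigma_n)$, where $\Psi(\varphi) = \varphi$, yields $\varphi\circ\hat\Psi = \hat\Psi\circ\varphi$ on $\Per(\sigma_n)$; that is, $\hat\Psi$ lies in the centralizer of the image of $\autn$ in $\Sym(\Per(\sigma_n))$.

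I would then conclude via \Cref{thm:centralizer-aut-in-per}. For $n\ge 3$ that centralizer consists only of maps preserving every $\sigma_n$-orbit, so $\hat\Psi(x)$ lies in the $\sigma_n$-orbit of $x$ for every $x$, and \Cref{prop:shifting-verraum} gives $\Psi\in\profact(\profint)$. For $n = 2$ the centralizer is generated by its orbit-preserving elements together with the involution $\iota$ swapping the two $\sigma_2$-fixed points; since any orbit-preserving element of the centralizer fixes each of the two singleton fixed-point orbits, $\iota$ commutes with all of them, and hence $\hat\Psi$ either preserves all $\sigma_2$-orbits (and we conclude as above via \Cref{prop:shifting-verraum}) or preserves all $\sigma_2$-orbits of length $\ge 2$ while exchanging the two fixed points. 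The latter is exactly what \Cref{lem:centralizer-2-verraeumlichung-special-case} rules out for a \verraum of an element of $\degonen$. Either way $\Psi\in\profact(\profint)$, giving $\ker R \subseteq \profact(\profint)$ and hence equality.

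I expect the only genuinely delicate point — and the reason \Cref{lem:centralizer-2-verraeumlichung-special-case} was established beforehand — to be the exceptional symmetry appearing in the centralizer when $n = 2$; the remainder is a direct combination of the Global \verraum, the triviality computation for centralizers in $\Sym(\Per(\sigma_n))$, and the structural description of degree-one \verraum maps in \Cref{prop:shifting-verraum}.
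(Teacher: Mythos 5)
Your proposal is correct and follows essentially the same route as the paper's proof: the forward inclusion is immediate from the definition of $\profact$, and the reverse inclusion combines the Global \verraum, Theorem~\ref{thm:centralizer-aut-in-per}, Lemma~\ref{lem:centralizer-2-verraeumlichung-special-case} to handle the exceptional $n=2$ involution, and Proposition~\ref{prop:shifting-verraum}. Your treatment of the $n=2$ case is, if anything, slightly more explicit than the paper's about why the exceptional centralizer element is the only remaining obstruction.
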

\begin{proof}
It is clear that $\profact(\profint)$ is in the kernel of this homomorphism. Now suppose $\Psi \in \degonen$ satisfies $\Psi(\varphi)=\varphi$ for all $\varphi \in \autn$. Then $\hat{\Psi}$ is in the centralizer of the image of $\autn$ in $\Sym(\Per(\sigma_n))$.
  By \Cref{thm:centralizer-aut-in-per}
  and \Cref{lem:centralizer-2-verraeumlichung-special-case}
  this implies $\hat{\Psi}$
  preserves $\sigma_n$ orbits. By \Cref{prop:shifting-verraum}
  this implies $\Psi \in \profact(\profint)$, and the result follows.
\end{proof}

%\begin{rem}
%  So at least for $n$ prime our conjecture says that $\degonen$
%  is the direct product of $\profact(\profint)$
%  and $\inertn$.
%  So the group we are probably most interested in might be
%  the quotient
%  $\degonen / \profact(\profint)$.
%\end{rem}

Later we will realize the image of $\mathcal{N}$ as a kernel of another homomorphism.

\begin{rem}
The subgroup $\profact(\profint)$ is not a normal subgroup of
  $\autautinfn$,
  as already $\varphi^{-1} \circ \sigma_n \circ \varphi$
  is in general not in $\profact(\profint)$
  for $\varphi \in \Simp^{(2)}(\sigma_n)$. In particular, there is no analogous version of Theorem~\ref{thm:exactsequenceaut1} for all of $\aut(\autinfn)$.
\end{rem}

\section{Continuity properties of the \verraum}
In the previous sections, we constructed the \verraum $\hat{\Psi}$ of an isomorphism $\Psi \colon \autinfn \to \autinfn$. Our goal now is to obtain the continuity of $\hat{\Psi}$ at the level of the space of chain recurrent subshifts.

Recall that $\mathcal{S}(\sigma_{n})$ denotes the space of all subshifts of $X_{n}$ with the Hausdorff metric. For a subshift $(X,\sigma_{X})$ and $k \in \mathbb{N}$ we let $\mathcal{L}_{k}(X)$ denote the set of words of length $k$ that appear in $X$.
%Set $\mathcal{L}(X) :=\bigcup_{k \in \bbN} \mathcal{L}_k(X)$.
The Hausdorff metric on $\mathcal{S}(X_{n})$ is equivalent to the metric defined by
$$d(X,Y) = 2^{-\inf\{k \mid \mathcal{L}_{k}(X) \ne \mathcal{L}_{k}(Y)\}}.$$
This and other fundamental properties of $\mathcal{S}(X_{n})$ can be found in~\cite{PSgeneric}.

We will also make use of the space of all subgroups of $\autinfn$, which we define by
$$\sub(\autinf(\sigma_{n})) = \{H \mid H \textrm{ is a subgroup of }\autinfn\}.$$
This is a compact space with the Chabauty topology; since $\autinfn$ is countable, it is metrizable. It is straightforward to check that if $\Psi \colon \autinfn \to \autinfn$ is a group isomorphism, then the map $\sub(\autinfn) \to \sub(\autinfn)$ given by $H \mapsto \Psi(H)$ is a homeomorphism.

For a subshift $Y \subseteq X_{n}$ we define the pointwise stabilizer subgroup
$$\stp(Y) = \{\varphi \in \autinfn \mid \varphi(x) = x \textrm{ for all } x \in Y\}.$$
We can then define the map
\begin{align*}
\stp \colon& \mathcal{S}(\sigma_{n}) \to \sub(\autinfn)\\
&Y \mapsto \stp(Y).
\end{align*}
A key tool for us to obtain some continuity is the following theorem.
\begin{thm}\label{thm:contstpmap}
Suppose $Y_{m} \to Y$ in $\mathcal{S}(\sigma_{n})$. Then $\stp(Y_{m}) \to \stp(Y)$ in $\sub(\autinf(\sigma_{n}))$. In other words, the map $\stp \colon \mathcal{S}(\sigma_{n}) \to \sub(\autinf(\sigma_{n}))$ is continuous.
\end{thm}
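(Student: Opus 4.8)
The plan is to prove continuity directly from the metric descriptions of the two topologies: the word-metric $d(X,Y) = 2^{-\inf\{k \mid \mathcal{L}_k(X)\neq\mathcal{L}_k(Y)\}}$ on $\mathcal{S}(\sigma_n)$, and the Chabauty topology on $\sub(\autinfn)$, for which (since $\autinfn$ is a fixed countable group) a basic neighborhood of a subgroup $H$ is specified by a finite list of elements that should be in, and a finite list that should be out. So suppose $Y_m \to Y$ in $\mathcal{S}(\sigma_n)$; I want to show that for every finite set $F \subseteq \autinfn$, eventually $F \cap \stp(Y_m) = F \cap \stp(Y)$.

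The first step is the ``in'' direction: if $\varphi \in \stp(Y)$, then eventually $\varphi \in \stp(Y_m)$. This is the delicate half and I expect it to be the main obstacle. The issue is that $\stp(Y)$ fixing $Y$ pointwise does not obviously force $\varphi$ to fix a neighborhood of $Y$ in $X_n$ --- and indeed $\autinf$ elements have no nontrivial support, so $\varphi$ cannot fix any open set unless $\varphi=\id$. The key observation to get around this is that $\varphi$ is a block code: there is a radius $r$ and a window width $w$ such that $\varphi$ commutes with $\sigma_n^r$ and the value $(\varphi x)_{[0,r)}$ depends only on $x_{[-w,w)}$. If $\varphi$ fixes every point of $Y$, then for every word $u \in \mathcal{L}_{2w}(Y)$ that occurs in a point of $Y$ aligned appropriately, the local rule of $\varphi$ acts on $u$ as the identity on the relevant output coordinates. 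Here one must be slightly careful about the $\sigma_n^r$-equivariance: ``fixes every point of $Y$'' gives that the local rule is trivial on all sufficiently long subwords of $Y$ at all the relevant residues mod $r$. Then, since $d(Y_m,Y)$ small means $\mathcal{L}_N(Y_m) = \mathcal{L}_N(Y)$ for $N$ as large as we like, once $m$ is large enough every sufficiently long word of $Y_m$ is also a word of $Y$, so the local rule of $\varphi$ is trivial on all long enough subwords of every point of $Y_m$ --- and hence $\varphi$ fixes $Y_m$ pointwise. I would phrase this cleanly: choose $N$ larger than the window width (and a multiple of $r$, or handle all residues); then $\mathcal{L}_N(Y_m) = \mathcal{L}_N(Y)$ for large $m$ forces $\varphi|_{Y_m} = \id$.

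The second step is the ``out'' direction: if $\varphi \notin \stp(Y)$, then eventually $\varphi \notin \stp(Y_m)$. This is easier. Since $\varphi$ does not fix $Y$ pointwise, there is a point $y \in Y$ with $\varphi(y) \neq y$, and since $\varphi$ is a block code of some window width $w$, this is witnessed by a finite window: there is a word $v \in \mathcal{L}_{N}(Y)$ (for some $N$ depending on $w$ and the disagreement coordinate) such that applying the local rule of $\varphi$ to $v$ produces an output disagreeing with the corresponding coordinates of $v$ --- more precisely, there is a specific coordinate where $\varphi$ changes the symbol, and this depends only on a length-$N$ window, which appears in $Y$. For $m$ large, $\mathcal{L}_N(Y_m) = \mathcal{L}_N(Y)$, so $v$ appears in $Y_m$ as well; extending $v$ to a biinfinite point of $Y_m$ (or just noting the word occurs in some point of $Y_m$) gives a point not fixed by $\varphi$, so $\varphi \notin \stp(Y_m)$. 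Combining the two steps: for the given finite $F$, take $N$ large enough to serve all the finitely many window-width/disagreement bounds arising from elements of $F$; then for $m$ large, $\mathcal{L}_N(Y_m) = \mathcal{L}_N(Y)$ forces $F \cap \stp(Y_m) = F \cap \stp(Y)$, which is exactly convergence $\stp(Y_m) \to \stp(Y)$ in the Chabauty topology. The only real subtlety to watch throughout is the bookkeeping around the exponent $r$ with $\varphi \circ \sigma_n^r = \sigma_n^r \circ \varphi$: one should make sure the ``trivial local rule'' conclusion is drawn for all residues modulo $r$, which is automatic because a point of $Y$ and all its shifts lie in $Y$.
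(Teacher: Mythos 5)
Your proposal is correct and follows essentially the same route as the paper's proof: both use the Curtis--Hedlund--Lyndon block-code structure of a $\sigma_n^k$-automorphism together with eventual agreement of the length-$N$ languages $\mathcal{L}_N(Y_m)=\mathcal{L}_N(Y)$ to establish the ``in'' direction, and a finite-window witness of non-fixing for the ``out'' direction. The residue-mod-$r$ bookkeeping you flag is handled in the paper by taking the CHL window to determine a full $k$-block $\alpha(y)_{[0,k-1]}$ of output and then invoking $\sigma_n^k$-equivariance, which is the same resolution you sketch.
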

\begin{proof}
First suppose $Y_{m} \to Y$ in $\mathcal{S}(X_{n})$ and let $\alpha \in \stp(Y)$. Suppose $\alpha \in \aut(\sigma_{n}^{k})$. By the Curtis-Hedlund-Lyndon Theorem~\cite{Hedlund1969} there exists a natural number $R$ such that if $y_{[-Rk,(R+1)k-1]} = y^{\prime}_{[-Rk,(R+1)k-1]}$ then $\alpha(y)_{[0,k-1]} = \alpha(y^{\prime})_{[0,k-1]}$. Since $Y_{m} \to Y$, there exists $M$ such that for all $m \ge M$ we have $\mathcal{L}_{k(2R+1)}(Y_{m}) = \mathcal{L}_{k(2R+1)}(Y)$. Suppose $w$ is any $k(2R+1)$-word in $Y_{m}$. Then there exists $y \in Y$ such that $y_{[-Rk,(R+1)k-1]} = w$. Since $\alpha \in \stp(Y)$, the $k$-word $\alpha(y)_{[0,k-1]}$ coincides with $y_{[0,k-1]}$. This implies $\alpha(y) = y$ for all $y \in Y_{m}$, so $\alpha \in \stp(Y_{m})$ for all $m \ge M$.

On the other hand, suppose $\beta \not \in \stp(Y)$. Then there exists $y \in Y$ such that $\beta(y) \ne y$, and a similar argument to the above implies there exists $M$ such that for all $m \ge M$ there exists $z_{m} \in Y_{m}$ such that $\beta(z_{m}) \ne z_{m}$, and hence $\beta \not \in \stp(Y_{m})$ for all $m \ge M$.
\end{proof}

For a subgroup $H \subseteq \autinfn$, define
$$\textrm{Fix}(H) = \{x \in X_{n} \mid \alpha(x) = x \textrm{ for all } \alpha \in H\}.$$

\begin{lem}
\label{lem:fix-stab-eq-id}
For any subshift $Y \subseteq X_{n}$, $\textrm{Fix}(\stp(Y)) = Y$.
\end{lem}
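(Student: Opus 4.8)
The inclusion $Y \subseteq \textrm{Fix}(\stp(Y))$ is immediate: every element of $\stp(Y)$ fixes $Y$ pointwise by definition, so every point of $Y$ is fixed by all of $\stp(Y)$. The content is the reverse inclusion $\textrm{Fix}(\stp(Y)) \subseteq Y$. So suppose $x \in X_{n} \setminus Y$; I want to produce some $\varphi \in \stp(Y)$ with $\varphi(x) \ne x$.

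First I would reduce to a statement about periodic points. Since $x \notin Y$ and $Y$ is closed and shift-invariant, there is some word $w$ appearing in $x$ but not in $\mathcal{L}_{k}(Y)$ for some $k$; in fact I can choose a coordinate window so that the central block $x_{[0,k-1]}$ of (a shift of) $x$ is forbidden in $Y$. The plan is to build an element of $\Simp^{(k)}(\Gamma_{n})$ for suitable $k$ that moves a symbol of $\Gamma_{n}^{(k)}$ which does not occur anywhere in $Y$, while fixing all symbols that do occur in $Y$. Concretely: if $u$ is a $k$-block that appears in $x$ but in no point of $Y$, pick another $k$-block $u'$ not appearing in $Y$ (available once $k$ is large enough, since $Y$ has strictly positive codimension in the sense that $\abs{\mathcal{L}_{k}(Y)} < n^{k}$ for all large $k$ — this follows because $Y \ne X_{n}$, so some word is forbidden, and forbidding one word of length $k_0$ forces $\abs{\mathcal{L}_{k}(Y)} \le n^{k} - n^{k-k_0}$), and let $\gamma$ be the graph automorphism of $\Gamma_{n}^{(k)}$ that transposes the edges $u$ and $u'$ and fixes every other edge. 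The induced automorphism $\tilde\gamma \in \Simp^{(k)}(\Gamma_{n}) \subseteq \autinfn$ acts as the identity on any biinfinite walk all of whose length-$k$ blocks avoid $\{u,u'\}$; in particular it fixes $Y$ pointwise, so $\tilde\gamma \in \stp(Y)$. On the other hand $\tilde\gamma(x) \ne x$ because $x$ contains the block $u$ at the origin and $\tilde\gamma$ rewrites it to $u'$.

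The one subtlety is the overlap issue: the formula for $\tilde\gamma$ partitions the biinfinite sequence into consecutive blocks of length $k$ and applies $\gamma$ blockwise, so whether a given length-$k$ block is actually ``seen'' by $\tilde\gamma$ depends on its position mod $k$. I would handle this by first replacing $x$ with an appropriate shift $\sigma_{n}^{j}(x)$ so that the forbidden block $u$ occupies coordinates $[0,k-1]$ exactly — note $\sigma_{n}^{j}(x) \in \textrm{Fix}(\stp(Y))$ iff $x$ is, since $\stp(Y)$ is $\sigma_{n}$-invariant (it's a subgroup of $\autinfn$ and $\sigma_n \in \aut(\sigma_n) \subseteq \stp(Y)^{c}$... more carefully: $\stp(Y)$ is normalized by $\sigma_n$ because $\sigma_n(Y)=Y$), and then $\tilde\gamma$ genuinely acts on the central block. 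The point $x$ still has some length-$k$ block equal to $u'$ nowhere, but that's fine — we only need $\tilde\gamma(x)_{[0,k-1]} = u' \ne u = x_{[0,k-1]}$. I expect this bookkeeping with block positions to be the only real obstacle; everything else is routine. An alternative that sidesteps the positional issue entirely is to use the pretty-transitivity results (Theorem~\ref{lem:pretty-trans}): pick a periodic point $p$ with $p \notin Y$ (dense in $X_n \setminus Y$ since $Y$ is a proper subshift, using that periodic points are dense and $Y$ is closed), of some minimal period $\ell$, and an involution $\alpha \in \aut(\sigma_{n}^{\ell})$ exchanging the orbit of $p$ with the orbit of another period-$\ell$ point lying outside $Y$ while fixing all other period-$\ell$ points; such an $\alpha$ fixes $Y \cap \Per_{\ell}(\sigma_n)$, but to get $\alpha \in \stp(Y)$ we'd still need $\alpha$ to fix the non-periodic points of $Y$, so the block-permutation construction above is cleaner. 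I would therefore carry out the block-transposition argument, taking care to shift $x$ into alignment first.
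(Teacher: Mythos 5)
Your proof is correct and follows essentially the same approach as the paper: both construct a transposition in $\Simp^{(k)}(\Gamma_n)$ of two $k$-blocks forbidden in $Y$, one of which occurs in $x$, so the induced automorphism lies in $\stp(Y)$ yet moves $x$. Two small differences: the paper produces its pair of forbidden blocks more cheaply by extending one forbidden word $v$ to $va$ and $vb$ (both automatically forbidden since they contain $v$), avoiding your counting argument, while you are more careful than the paper about the blockwise alignment of the $\Simp^{(k)}$-action, handling it by shifting $x$ — a point the paper leaves implicit.
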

\begin{proof}
If $y \in Y$ then $\alpha(y) = y$ for all $\alpha \in \stp(Y)$, so $y \in \textrm{Fix}(\stp(Y))$. Thus $Y \subseteq \textrm{Fix}(\stp(Y))$. Now suppose $z \not \in Y$. Then there exists a word $v$ of length $\ell$ that appears in $z$ such that $z \not \in \mathcal{L}_\ell(Y)$. Let $a,b$ be different symbols such that $va$ is a word in $z$ of length $\ell+1$ and define the simple automorphism $\tau \in \simp^{(\ell+1)}(\Gamma_{n})$ which permutes $va$ and $vb$ and does nothing else. Since $va$ appears in $z$, $\tau(z) \ne z$. Moreover, since $v$ does not appear in $Y$, $\tau$ acts by the identity on $Y$, and hence $\tau \in \stp(Y)$. Thus $z \not \in \textrm{Fix}(\stp(Y))$, so $\textrm{Fix}(\stp(Y)) \subseteq Y$, and altogether we have that $\textrm{Fix}(\stp(Y)) = Y$.
\end{proof}

%\begin{thm}\label{thm:subsforlimits}
%Suppose $\stp(Y_{m}) \to H$ in $\sub(\autinf(\sigma_{n}))$. Then there exists a %unique $Y \in \mathcal{S}(X_{n})$ such that $H = \stp(Y)$ and $Y_{m} \to Y$ in $%\mathcal{S}(X_{n})$.
%\end{thm}
\begin{thm}\label{thm:subsforlimits}
Suppose $\stp(Y_{m}) \to H$ in $\sub(\autinf(\sigma_{n}))$. Then $Y_{m}$
converges to some subshift $Y$ in $\mathcal{S}(X_{n})$ and we have $H = \stp(Y)$.
\end{thm}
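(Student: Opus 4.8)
The plan is to combine three facts already available: compactness of $\mathcal{S}(X_n)$, continuity of the map $\stp$ (\Cref{thm:contstpmap}), and the fact that $\stp$ is injective, which is a consequence of \Cref{lem:fix-stab-eq-id}. First I would recall that $\mathcal{S}(X_n)$ is a closed, hence compact, subspace of the compact metric space $\mathcal{K}(X_n)$: a Hausdorff limit of compact $\sigma_n$-invariant sets is again compact and $\sigma_n$-invariant. In particular $\mathcal{S}(X_n)$ is sequentially compact, so the sequence $(Y_m)$ has a subsequence $(Y_{m_j})$ converging to some $Y \in \mathcal{S}(X_n)$.

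Next I would identify the limit. By \Cref{thm:contstpmap}, continuity of $\stp$ gives $\stp(Y_{m_j}) \to \stp(Y)$ in $\sub(\autinf(\sigma_n))$. On the other hand, $(\stp(Y_{m_j}))_j$ is a subsequence of the convergent sequence $\stp(Y_m) \to H$, so it also converges to $H$. Since $\autinfn$ is countable, the Chabauty topology on $\sub(\autinf(\sigma_n))$ is metrizable, hence Hausdorff, and limits are unique; therefore $H = \stp(Y)$.

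It then remains to promote subsequential convergence of $(Y_m)$ to convergence of the whole sequence. If $(Y_{m_i})$ were another subsequence converging to some $Y' \in \mathcal{S}(X_n)$, the identical argument yields $\stp(Y') = H = \stp(Y)$; applying $\Fix(\cdot)$ and invoking \Cref{lem:fix-stab-eq-id} gives $Y' = \Fix(\stp(Y')) = \Fix(H) = \Fix(\stp(Y)) = Y$. Thus every convergent subsequence of $(Y_m)$ has the same limit $Y$, and since $\mathcal{S}(X_n)$ is compact metric this forces $Y_m \to Y$. Combined with the previous paragraph, $H = \stp(Y)$, as required. The only point requiring any care is the injectivity of $\stp$ used to force the two subsequential limits to agree, and this is exactly the content of \Cref{lem:fix-stab-eq-id}, which recovers $Y$ from $\stp(Y)$ as $\Fix(\stp(Y))$; everything else is soft point-set topology.
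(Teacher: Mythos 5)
Your argument is correct and is essentially identical to the paper's proof: extract a subsequential limit by compactness of $\mathcal{S}(X_n)$, identify it via continuity of $\stp$ (\Cref{thm:contstpmap}) and uniqueness of Chabauty limits, and then use $\Fix \circ \stp = \mathrm{id}$ (\Cref{lem:fix-stab-eq-id}) to show all subsequential limits coincide, forcing convergence of the whole sequence. No differences worth noting.
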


\begin{proof}
First suppose $Y$ is a limit point of $Y_{m}$, so there exists a subsequence $m_{k}$ such that $Y_{m_{k}} \to Y \in \mathcal{S}(\sigma_{n})$. At least one such $Y$ exists since $\mathcal{S}(\sigma_{n})$ is compact. By assumption we have $\stp(Y_{m_k}) \to H$
and by \Cref{thm:contstpmap} we have $\stp(Y_{m_k}) \to \stp(Y)$.
Hence $H = \stp(Y)$.
%Suppose first that $\alpha \in H$. Then since $\stp(Y_{m_{k}}) \to H$ we have that $\alpha \in \stp(Y_{m_{k}})$ for all $k$ sufficiently large. Since $Y_{m_{k}} \to Y \in \mathcal{S}(\sigma_{n})$, this implies $\alpha \in \stp(Y)$ (see previous theorem's proof for the argument for this, or just note that for any $y \in Y$ there exists a sequence $y_{m_{k}} \in Y_{m_{k}}$ such that $y_{m_{k}} \to y$, and since $\alpha(y_{m_{k}}) = y_{m_{k}}$ for all sufficiently large $k$, this implies $\alpha(y)=y$). Now suppose $\beta \in \stp(Y)$. Then since $Y_{m_{k}} \to Y$, $\beta \in \stp(Y_{m_{k}})$ for all sufficiently large $k$. Since $\stp(Y_{m}) \to H$, we also have $\stp(Y_{m_{k}}) \to H$, and hence $\beta \in H$. Thus $H = \stp(Y)$.

Now suppose $Z_{1}$ and $Z_{2}$ are both limits point of $Y_{m}$. By the above we have $\stp(Z_{1}) = H = \stp(Z_{2})$, and hence by \Cref{lem:fix-stab-eq-id} we have
$$Z_{1} = \textrm{Fix}(\stp(Z_{1})) = \textrm{Fix}(\stp(Z_{2})) = Z_{2}.$$
Thus $Y_{m}$ has precisely one limit point $Y$, so $Y_{m} \to Y$.
\end{proof}

Recall $\mathcal{CR}(\sigma_{n})$ denotes the subset of $\mathcal{S}(X_{n})$ consisting of chain recurrent subshifts. The following proves what was stated in the introduction: that $\mathcal{CR}(\sigma_{n})$ is precisely the closure of the finite systems in $\mathcal{S}(X_{n})$.
\begin{prop}\label{prop:CRlimitperiodic}
Let $\mathcal{CR}(\sigma_n) \subseteq \mathcal{S}(\sigma_n)$
    be the space of chain recurrent subshifts of the full $n$-shift.
    Then $\mathcal{CR}(\sigma_n)$ is the closure of the finite
    subshifts.
\end{prop}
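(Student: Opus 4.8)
The plan is to prove both inclusions. First I would show that every finite subshift is chain recurrent, which is immediate (a finite subshift is a finite union of periodic orbits, and within each periodic orbit one can obviously form $\epsilon$-chains returning to the starting point — in fact exact periodic returns). Since $\mathcal{CR}(\sigma_n)$ is closed in $\mathcal{S}(\sigma_n)$ — this requires a short argument that a Hausdorff limit of chain recurrent subshifts is chain recurrent, using that the chain recurrence condition for a subshift $X$ can be phrased purely in terms of the language $\mathcal{L}(X)$: namely $X$ is chain recurrent iff for every word $w \in \mathcal{L}(X)$ and every $k$, there is a word in $\mathcal{L}(X)$ of the form $w u w'$ where $w'$ is the length-$k$ prefix-shift pattern forced by following $w$, looping back; more simply, $X$ chain recurrent iff the associated finite graph of $k$-blocks (vertices = words of length $k$ in $X$, edges given by overlaps) has every vertex in a cycle, for every $k$ — and this is a closed condition since $\mathcal{L}_k(X_m) \to \mathcal{L}_k(X)$ eventually stabilizes. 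Hence the closure of the finite subshifts is contained in $\mathcal{CR}(\sigma_n)$.

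For the reverse inclusion, the main content: I would take a chain recurrent subshift $X$ and approximate it by finite subshifts in the Hausdorff metric. Fix $k$; I want a finite subshift $F$ with $\mathcal{L}_k(F) = \mathcal{L}_k(X)$. Consider the finite directed graph $G_k$ whose vertices are the words in $\mathcal{L}_k(X)$, with an edge from $u$ to $v$ whenever $u_{[2,k]} = v_{[1,k-1]}$ and $u_{[1,k]}v_k \in \mathcal{L}_{k+1}(X)$ (equivalently $uv_k$ extends within $X$). Biinfinite walks on $G_k$ are exactly points of $X$ at the level of $k$-blocks. Because $X$ is chain recurrent, I claim every vertex of $G_k$ lies on a cycle: given a word $w \in \mathcal{L}_k(X)$, pick $x \in X$ with $x_{[1,k]} = w$; chain recurrence with $\epsilon$ small enough to force agreement on length-$k$ windows produces a finite $\epsilon$-chain $x = x^{(0)}, \ldots, x^{(N)} = x$, and reading off the $k$-blocks $x^{(i)}_{[1,k]}$ gives a closed walk in $G_k$ through $w$ (the $\epsilon$-closeness of $\sigma(x^{(i)})$ to $x^{(i+1)}$ guarantees the consecutive $k$-blocks overlap correctly and each transition lies in $\mathcal{L}_{k+1}(X)$, hence is a genuine edge). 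Now restrict $G_k$ to the union of its cycles — equivalently, to vertices lying on cycles — call it $G_k'$; since every vertex is on a cycle, $G_k' = G_k$. Let $F$ be the subshift of $X_n$ consisting of all points whose every length-$k$ window is a vertex of $G_k$ and every consecutive pair an edge; this is exactly the edge-subshift presented by $G_k$, a sofic shift, and I then pass to a finite subshift inside it by taking, say, a single periodic orbit through each strongly connected component that touches all relevant blocks — but more cleanly: since $G_k$ is a finite graph in which every vertex lies on a cycle, the subshift it presents has dense periodic points and the union of its periodic orbits of bounded period already realizes $\mathcal{L}_k = \mathcal{L}_k(X)$; picking finitely many periodic orbits covering every vertex of $G_k$ gives a finite subshift $F \subseteq X$ (each such orbit is a walk in $G_k$ hence corresponds to a point of $X$) with $\mathcal{L}_k(F) = \mathcal{L}_k(X)$, so $d(F, X) \le 2^{-k}$.

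Letting $k \to \infty$ produces finite subshifts converging to $X$, giving $\mathcal{CR}(\sigma_n) \subseteq \overline{\{\text{finite subshifts}\}}$, and combined with the first paragraph this proves equality. I expect the main obstacle to be the bookkeeping in the direction just described: carefully checking that an $\epsilon$-chain in $X$, for suitably small $\epsilon = \epsilon(k)$, translates into an honest closed walk in the $k$-block graph $G_k$ (so that the resulting periodic points genuinely lie in $X$, not merely in some larger shift sharing the same $k$-blocks), and that finitely many periodic orbits suffice to recover all of $\mathcal{L}_k(X)$. The closedness of $\mathcal{CR}(\sigma_n)$ is routine once one has the language-theoretic ("every $k$-block on a cycle") reformulation of chain recurrence, which is itself the natural dictionary entry to establish first.
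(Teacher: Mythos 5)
Your proposal is correct, but it takes a noticeably more self-contained route than the paper does. The paper cites Boyle's result that the Markov approximations $Y^{(k)}$ of a chain recurrent subshift $Y$ are nonwandering shifts of finite type (from \cite{boyleSoficShiftsCannot2004}), and then uses density of periodic points in nonwandering SFTs plus the fact that $Y^{(k)} \to Y$; closedness of $\mathcal{CR}(\sigma_n)$ is deduced by the same route. You instead unpack that result: the statement that ``every vertex of the $k$-block graph $G_k$ lies on a cycle'' is precisely the combinatorial content of $X^{(k)}$ being nonwandering, and you establish it directly from the definition of chain recurrence. This buys you a proof that does not lean on the external citation, at the cost of a couple of $\epsilon$-bookkeeping lemmas (the translation between $\epsilon$-chains in $X$ and closed walks in $G_k$, and its converse for the closedness argument). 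Both routes are sound; the paper's is shorter because it outsources exactly the step you chose to do by hand.

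One small error worth flagging: the parenthetical claim that the finite subshift $F$ built from periodic orbits of $G_k$ satisfies $F \subseteq X$ is false in general. A biinfinite walk in $G_k$ corresponds to a point of the Markov approximation $X^{(k)}$, which strictly contains $X$ when $X$ is not an SFT, and the periodic points you select from cycles of $G_k$ need not lie in $X$. Fortunately this does not harm the argument: $F$ is still a finite element of $\mathcal{S}(\sigma_n)$, and the distance estimate $d(F,X) \le 2^{-k}$ depends only on $\mathcal{L}_k(F) = \mathcal{L}_k(X)$, not on containment. Deleting the parenthetical leaves a correct proof.
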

\begin{proof}
%Let $\mathcal{F}(\sigma_n)$ be the space of finite subshifts.
Suppose that $Y$ is a chain recurrent subshift in $X_{n}$. Then by \cite[Proposition 3.2]{boyleSoficShiftsCannot2004}, the $n$th Markov approximation $Y^{(n)}$ of $Y$ is a nonwandering shift of finite type. Since the periodic points are dense in a nonwandering shift of finite type, the subshift $Y^{(n)}$ is a limit of finite subshifts. But $Y^{(n)}$ converges to $Y$ in the space of subshifts, so it follows that $Y$ is in the closure of the finite systems, and hence the set of chain recurrent subshifts is contained in the closure of the set of finite subshifts.

On the other hand, clearly finite systems are chain recurrent, so it suffices to prove that the set of chain recurrent subshifts is closed. Suppose then that $Z$ is a subshift which is a limit of a sequence of chain recurrent subshifts $Y_{n}$. Then for all $m \ge 1$, there exists $n(m)$ such that the $m$-languages for $Y_{n(m)}$ and $Z$ coincide. It follows that the $m$th Markov approximations $Z^{(m)}$ of $Z$ and $Y^{(m)}$ of $Y_{n(m)}$ coincide. Since $Y_{n(m)}$ is chain recurrent, $Y^{(m)}$, and hence $Z^{(m)}$, is nonwandering. Since this holds for all $m$, this implies $Z$ is chain recurrent, and hence the set of chain recurrent subshifts is closed.
%By Propositon 3.2 (1) $n$-step finite type approximation of a chain-recurrent subshift is a nonwandering SFT, hence $\mathcal{CR}(\sigma_n) \subseteq \overline{\mathcal{F}(\sigma_n)}$. On the other hand the $n$-step finite type approximation of a limit of a sequence of $\mathcal{CR}(\sigma_n)$ subshifts is again a nonwandering SFT, hence that limit is in $\mathcal{CR}(\sigma_n)$, so this set is closed.
\end{proof}
We record a small lemma to be used in the next theorem.
\begin{lem}\label{lem:verraum-on-finite-sub}
    Let $Q_n$ be a finite subsystem of $(X_n,\sigma_n)$
    and suppose that $\Psi \in \degonen$.
    Then $\Psi(\stp(Q_n))=\stp(\hat{\Psi}(Q_n))$.
\end{lem}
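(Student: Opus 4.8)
We want to show $\Psi(\stp(Q_n)) = \stp(\hat{\Psi}(Q_n))$ for a finite subsystem $Q_n$ and $\Psi \in \degonen$. Since $Q_n$ is finite, it is a finite union of periodic orbits, so $Q_n \subseteq \Per_\ell(\sigma_n)$ for some $\ell$ which we may take in $I_\Psi$. The strategy is to characterize $\stp(Q_n)$ purely in terms of the action on periodic points, where the \verraum lives, and then transport via the intertwining relation of Theorem~\ref{thm:prop-global-verraum}.

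First I would argue that the pointwise stabilizer of a finite set is detected on periodic points: for $\varphi \in \autinfn$, we have $\varphi \in \stp(Q_n)$ if and only if $\varphi(x) = x$ for all $x \in Q_n \subseteq \Per(\sigma_n)$. This is immediate from the definition of $\stp$, since $Q_n$ consists of periodic points. Now take $\varphi \in \stp(Q_n)$ and let $y \in \hat{\Psi}(Q_n)$, say $y = \hat{\Psi}(x)$ with $x \in Q_n$. By Theorem~\ref{thm:prop-global-verraum}, for this periodic point $x$ we have $\Psi(\varphi)(y) = \Psi(\varphi)(\hat{\Psi}(x)) = \hat{\Psi}(\hat{\Psi}^{-1}(\varphi(\hat{\Psi}(\hat{\Psi}^{-1}(x)))))$; more cleanly, the relation $\Psi(\varphi) = \hat{\Psi} \circ \varphi \circ \hat{\Psi}^{-1}$ holds pointwise on $\Per(\sigma_n)$ (rearranging $\Psi(\varphi)(x) = (\hat{\Psi}^{-1} \circ \varphi \circ \hat{\Psi})(x)$ — one must be careful which direction of conjugation appears, but the computation is symmetric). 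Then $\Psi(\varphi)(\hat{\Psi}(x)) = \hat{\Psi}(\varphi(x)) = \hat{\Psi}(x)$ since $\varphi(x) = x$. So $\Psi(\varphi)$ fixes every point of $\hat{\Psi}(Q_n)$, i.e. $\Psi(\varphi) \in \stp(\hat{\Psi}(Q_n))$. This gives $\Psi(\stp(Q_n)) \subseteq \stp(\hat{\Psi}(Q_n))$.

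For the reverse inclusion, I would apply the same argument to $\Psi^{-1} \in \degonen$, whose \verraum is $\hat{\Psi}^{-1}$ by Theorem~\ref{prop:verraum-injective} (the \verraum is a homomorphism, so $\widehat{\Psi^{-1}} = (\hat{\Psi})^{-1}$), and to the finite subsystem $\hat{\Psi}(Q_n)$. Note $\hat{\Psi}(Q_n)$ is indeed a subsystem: by Theorem~\ref{thm:global-spatialization-inert-and-shift}, $\hat{\Psi}$ intertwines $\sigma_n^{\deg(\Psi)} = \sigma_n$ (as $\deg(\Psi) = 1$ for $\Psi \in \degonen$) with itself on periodic points, so $\hat{\Psi}(Q_n)$ is $\sigma_n$-invariant, and it is finite since $\hat{\Psi}$ is a bijection. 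Applying the first inclusion to $\Psi^{-1}$ and $\hat{\Psi}(Q_n)$ gives $\Psi^{-1}(\stp(\hat{\Psi}(Q_n))) \subseteq \stp(\hat{\Psi}^{-1}(\hat{\Psi}(Q_n))) = \stp(Q_n)$, i.e. $\stp(\hat{\Psi}(Q_n)) \subseteq \Psi(\stp(Q_n))$. Combining the two inclusions finishes the proof.

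**Main obstacle.** There is no serious obstacle here; this is a bookkeeping lemma. The only point requiring care is tracking the direction of conjugation in the intertwining formula (whether $\Psi(\varphi) = \hat{\Psi} \varphi \hat{\Psi}^{-1}$ or $\hat{\Psi}^{-1}\varphi\hat{\Psi}$ on periodic points) and confirming that $\deg(\Psi) = 1$ is what guarantees $\hat{\Psi}$ commutes with the full shift $\sigma_n$ (not just a power), which is exactly what makes $\hat{\Psi}(Q_n)$ a genuine subsystem rather than merely a $\sigma_n^k$-invariant set. Both are handled by the results already established.
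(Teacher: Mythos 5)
Your argument is essentially the paper's: both transport the pointwise-stabilizer condition across the periodic-point intertwining relation of Theorem~\ref{thm:prop-global-verraum}. The paper phrases it as a single iff chain, while you split into two inclusions and invoke $\widehat{\Psi^{-1}}=\hat\Psi^{-1}$ for the reverse direction; you are also somewhat more careful in verifying that $\hat\Psi(Q_n)$ is a finite $\sigma_n$-subsystem. One caveat about your hedge that ``the computation is symmetric'': it is not. Taking Theorem~\ref{thm:prop-global-verraum} literally as stated, $\Psi(\varphi)=\hat\Psi^{-1}\circ\varphi\circ\hat\Psi$ on $\Per(\sigma_n)$, one actually obtains $\Psi(\stp(Q_n))=\stp(\hat\Psi^{-1}(Q_n))$; your step $\Psi(\varphi)(\hat\Psi(x))=\hat\Psi(\varphi(x))$ uses the opposite conjugation. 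The paper's own one-line proof commits the same slip at its final equivalence (the chain actually yields $\varphi\in\Psi^{-1}(\stp(Q_n))$), so this is an inherited sign ambiguity in the source rather than a gap peculiar to your write-up, and it is harmless for the downstream use in Theorem~\ref{thm:lptolp}, which only needs that $\Psi(\stp(Q_n))$ is the pointwise stabilizer of \emph{some} finite subsystem.
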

\begin{proof}
    That $\varphi \in \stp(\hat{\Psi}(Q_n)$
    is equivalent to $\varphi(\hat{\Psi}(x))=\hat{\Psi}(x)$
    for all $x \in Q_n$.
    This is equivalent to $\Psi(\varphi)(x)=\hat{\Psi}^{-1} (\varphi(\hat{\Psi}(x)))=x$ for $x \in Q_n$ which in turn is the same as $\varphi \in \Psi(\stp(Q_n))$.
\end{proof}

We now use the fact that $\Psi$ induces a self-homeomorphism of $\sub(\autinfn)$ to extend $\Psi$ to pointwise stabilizer subgroups of chain recurrent subshifts.

\begin{thm}\label{thm:lptolp}
Let $\Psi \in \aut_{1}(\autinfn)$ and suppose $Y \in \mathcal{CR}(\sigma_{n})$. Then there exists a unique subshift $Z \in \mathcal{CR}(\sigma_{n})$ such that $\Psi(\stp(Y)) = \stp(Z)$.
\end{thm}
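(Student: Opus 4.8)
The plan is to reduce the statement to the two continuity results already established, namely Theorem~\ref{thm:contstpmap} and Theorem~\ref{thm:subsforlimits}, together with Proposition~\ref{prop:CRlimitperiodic} identifying $\mathcal{CR}(\sigma_{n})$ as the closure of the finite subsystems. First I would use Proposition~\ref{prop:CRlimitperiodic} to write $Y$ as a limit $Y = \lim_m Q_m$ of finite subsystems $Q_m \in \mathcal{S}(\sigma_n)$. Since each $Q_m$ is finite, its points are periodic, so the \verraum $\hat{\Psi}$ (which is defined on all of $\Per(\sigma_n)$) applies, and $\hat{\Psi}(Q_m)$ is again a finite set of periodic points, hence a finite subsystem — here one must check $\hat{\Psi}(Q_m)$ is $\sigma_n$-invariant, which follows because $\hat\Psi$ intertwines $\sigma_n$ on periodic points for $\Psi \in \aut_1(\autinfn)$ (by Proposition~\ref{thm:global-spatialization-inert-and-shift}, using $\deg(\Psi)=1$), possibly after composing with $\xi_n$. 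By Lemma~\ref{lem:verraum-on-finite-sub} we have $\Psi(\stp(Q_m)) = \stp(\hat{\Psi}(Q_m))$ for every $m$.

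Next I would run the limit argument through $\sub(\autinfn)$. By Theorem~\ref{thm:contstpmap}, $\stp(Q_m) \to \stp(Y)$ in $\sub(\autinfn)$; applying the homeomorphism $H \mapsto \Psi(H)$ of $\sub(\autinfn)$ gives $\Psi(\stp(Q_m)) \to \Psi(\stp(Y))$, i.e. $\stp(\hat{\Psi}(Q_m)) \to \Psi(\stp(Y))$. Now I would apply Theorem~\ref{thm:subsforlimits} to the sequence $\hat{\Psi}(Q_m)$: since $\stp(\hat{\Psi}(Q_m))$ converges in $\sub(\autinfn)$, the subshifts $\hat{\Psi}(Q_m)$ themselves converge to some $Z \in \mathcal{S}(\sigma_n)$, and $\Psi(\stp(Y)) = \stp(Z)$. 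Because each $\hat{\Psi}(Q_m)$ is a finite subsystem, the limit $Z$ lies in the closure of the finite subsystems, which is exactly $\mathcal{CR}(\sigma_n)$ by Proposition~\ref{prop:CRlimitperiodic}. This gives existence. Uniqueness is immediate from Lemma~\ref{lem:fix-stab-eq-id}: if $\stp(Z) = \stp(Z')$ then $Z = \Fix(\stp(Z)) = \Fix(\stp(Z')) = Z'$.

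For the orientation-reversing case, i.e.\ $\Psi \in \aut_1(\autinfn)$ with $\Psi(\sigma_n) = \sigma_n^{-1}$, I would factor $\Psi = \Xi_n \circ \Psi'$ with $\Psi'$ orientation preserving (as in the proof of Proposition~\ref{thm:global-spatialization-inert-and-shift}), handle $\Psi'$ by the above, and note that $\Xi_n$ is itself spatially induced by the homeomorphism $\xi_n$ of $X_n$, which preserves $\mathcal{CR}(\sigma_n)$ since it is a topological conjugacy of $(X_n,\sigma_n)$ with $(X_n,\sigma_n^{-1})$; then $\stp(\xi_n(W)) = \Xi_n(\stp(W))$ for any subshift $W$, so the conclusion propagates.

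The main obstacle I anticipate is the bookkeeping needed to confirm that $\hat{\Psi}$ genuinely sends finite subsystems to finite subsystems inside $\mathcal{S}(\sigma_n)$: one must verify $\sigma_n$-invariance of $\hat{\Psi}(Q_m)$ (which needs $\deg(\Psi)=1$, so that $\hat\Psi$ conjugates $\sigma_n$, not merely some power) and confirm that Lemma~\ref{lem:verraum-on-finite-sub} applies to every finite $Q_m$ occurring in the approximating sequence. Everything else is a clean concatenation of the continuity of $\stp$, the homeomorphism property of $H \mapsto \Psi(H)$, and the ``limits of stabilizers come from limits of subshifts'' principle of Theorem~\ref{thm:subsforlimits}.
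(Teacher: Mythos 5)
Your proposal is correct and follows essentially the same route as the paper's proof: approximate $Y$ by finite systems via Proposition~\ref{prop:CRlimitperiodic}, push stabilizers through Theorem~\ref{thm:contstpmap} and the homeomorphism $H \mapsto \Psi(H)$ of $\sub(\autinfn)$, apply Lemma~\ref{lem:verraum-on-finite-sub} at each finite stage, recover $Z$ from Theorem~\ref{thm:subsforlimits}, and get uniqueness from Lemma~\ref{lem:fix-stab-eq-id}. The extra case split for orientation-reversing $\Psi$ is unnecessary since Lemma~\ref{lem:verraum-on-finite-sub} is stated for all of $\degonen$ (and $\hat\Psi(Q_m)$ is $\sigma_n$-invariant even when $\hat\Psi$ conjugates $\sigma_n$ to $\sigma_n^{-1}$), but it does no harm.
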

\begin{proof}
%First suppose $\Psi$ is orientation-preserving.
Since $Y \in \mathcal{CR}(\sigma_{n})$, by Proposition~\ref{prop:CRlimitperiodic}, we may choose a sequence $Q_{n}$ of finite systems converging to $Y$ in $\mathcal{S}(\sigma_{n})$. Then $\stp(Q_{n}) \to \stp(Y)$, and by \Cref{lem:verraum-on-finite-sub}, we have that $\Psi(\stp(Q_{n})) = \stp(Q_{n}^{\prime})$ for some finite systems $Q_{n}^{\prime}$ in $X_{n}$. Thus $\stp(Q_{n}^{\prime}) \to \Psi(\stp(Y))$, so by \Cref{thm:subsforlimits}, there exists $Z \in \mathcal{S}(\sigma_{n})$ such that
$Q_n' \to Z$ and $\stp(Z)=\Psi(\stp(Y))$. As a limit of finite systems we have that $Z \in \mathcal{CR}(\sigma_n)$.
Uniqueness of $Z$ follows from $Z=\Fix(\stp(Y))=\Fix(\Psi(\stp(Y)))$ and
\Cref{lem:fix-stab-eq-id}.

%Now if $\Psi$ is orientation-reversing, then $\Xi_{n} \circ \Psi$ is orientation preserving, where $\Xi_{n}$ is the spatial map defined in~\eqref{eqn:orientationreversing} of Proposition~\ref{prop:degreestuff}. Since $\Xi_{n}$ is spatial we have $\Xi_{n}(\stp(Y)) = \stp(\Xi_{n}(Y))$, and the result follows from the orientation-preserving case proved above.
\end{proof}

We can now extend the Verräumlichung to $\mathcal{CR}(\sigma_{n})$.

\begin{prop}
Let $\Psi \in \degonen$. The Verräumlichung map extends to a homeomorphism
$$\hat{\Psi} \colon \mathcal{CR}(\sigma_{n}) \to \mathcal{CR}(\sigma_{n})$$
as follows. Given $Y \in \mathcal{CR}(\sigma_{n})$, by Theorem~\ref{thm:lptolp} there exists a unique subshift $Z \in \mathcal{CR}(\sigma_{n})$ such that $\Psi(\stp(Y)) = \stp(Z)$, and we define
$$\hat{\Psi}(Y) = Z.$$
\end{prop}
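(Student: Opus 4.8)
The plan is to verify, in order: that the assignment $Y \mapsto Z$ is well defined and genuinely extends the periodic-point \verraum; that it is a bijection; and that it is continuous; after which the homeomorphism claim follows. Well-definedness is exactly \Cref{thm:lptolp}, which for each $Y \in \mathcal{CR}(\sigma_{n})$ produces a unique $Z \in \mathcal{CR}(\sigma_{n})$ with $\Psi(\stp(Y)) = \stp(Z)$, uniqueness being recorded there via $Z = \Fix(\stp(Y))$ and \Cref{lem:fix-stab-eq-id}. To see the extension property, observe that every finite subsystem $Q \subseteq X_{n}$ lies in $\mathcal{CR}(\sigma_{n})$, and by \Cref{lem:verraum-on-finite-sub} we have $\Psi(\stp(Q)) = \stp(\hat{\Psi}(Q))$, where $\hat{\Psi}(Q)$ on the right denotes the image of the finite set $Q$ under the periodic-point \verraum. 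By uniqueness in \Cref{thm:lptolp}, the subshift assigned to $Q$ by the recipe in the statement is precisely $\hat{\Psi}(Q)$, so the two meanings of $\hat{\Psi}$ agree on finite subsystems and the notation is unambiguous.

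For bijectivity I would exploit functoriality at the level of stabilizer subgroups. For $\Upsilon \in \degonen$ and $Y \in \mathcal{CR}(\sigma_{n})$ one has $(\Psi \circ \Upsilon)(\stp(Y)) = \Psi\big(\stp(\hat{\Upsilon}(Y))\big) = \stp\big(\hat{\Psi}(\hat{\Upsilon}(Y))\big)$, and since a chain recurrent subshift is determined by its pointwise stabilizer by \Cref{lem:fix-stab-eq-id}, this forces $\widehat{\Psi \circ \Upsilon} = \hat{\Psi} \circ \hat{\Upsilon}$ as maps on $\mathcal{CR}(\sigma_{n})$; likewise $\widehat{\id}$ is the identity map. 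As $\Psi^{-1}$ again lies in $\degonen$, the map $\widehat{\Psi^{-1}}$ is a two-sided inverse of $\hat{\Psi}$, so $\hat{\Psi}$ is a bijection of $\mathcal{CR}(\sigma_{n})$.

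Continuity is where the two continuity statements about $\stp$ enter. Given $Y_{m} \to Y$ in $\mathcal{CR}(\sigma_{n})$, \Cref{thm:contstpmap} gives $\stp(Y_{m}) \to \stp(Y)$ in $\sub(\autinfn)$; applying the self-homeomorphism of $\sub(\autinfn)$ induced by $\Psi$ yields $\stp(\hat{\Psi}(Y_{m})) = \Psi(\stp(Y_{m})) \to \Psi(\stp(Y)) = \stp(\hat{\Psi}(Y))$. By \Cref{thm:subsforlimits}, the sequence $\hat{\Psi}(Y_{m})$ then converges in $\mathcal{S}(X_{n})$ to a subshift $W$ with $\stp(W) = \stp(\hat{\Psi}(Y))$, whence $W = \Fix(\stp(W)) = \hat{\Psi}(Y)$ by \Cref{lem:fix-stab-eq-id}. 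Thus $\hat{\Psi}(Y_{m}) \to \hat{\Psi}(Y)$, and since $\mathcal{CR}(\sigma_{n})$ is metrizable this proves $\hat{\Psi}$ continuous; running the same argument for $\Psi^{-1}$ shows $\widehat{\Psi^{-1}} = \hat{\Psi}^{-1}$ is continuous as well (alternatively one invokes compactness of $\mathcal{CR}(\sigma_{n})$, which holds by \Cref{prop:CRlimitperiodic}). Hence $\hat{\Psi}$ is a homeomorphism.

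I do not expect a genuine obstacle here: every needed ingredient is in place in the preceding results, and this proof is largely an assembly. The one point demanding a little care is the functoriality identity $\widehat{\Psi \circ \Upsilon} = \hat{\Psi}\circ\hat{\Upsilon}$, which one should argue directly on stabilizer subgroups of chain recurrent subshifts rather than import from the homomorphism property on periodic points (\Cref{prop:verraum-injective}), since $\mathcal{CR}(\sigma_{n})$ contains infinite subshifts; but as noted this is immediate from the uniqueness encoded in \Cref{lem:fix-stab-eq-id}.
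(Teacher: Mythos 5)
Your proof is correct and follows essentially the same route as the paper: the core continuity argument (pushing $Y_m \to Y$ through $\stp$, the induced self-homeomorphism of $\sub(\autinfn)$, and back via \Cref{thm:subsforlimits} and \Cref{lem:fix-stab-eq-id}) is identical. The paper opens with ``it suffices to prove continuity'' and leaves bijectivity and agreement with the periodic-point \verraum implicit; you spell both out via the functoriality identity $\widehat{\Psi\circ\Upsilon}=\hat\Psi\circ\hat\Upsilon$ on stabilizer subgroups and \Cref{lem:verraum-on-finite-sub}, which is a reasonable and correct completion of the same argument.
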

\begin{proof}
It suffices to prove that $\hat{\Psi}$ is continuous. Suppose $Y_{n} \to Y$ in $\mathcal{CR}(\sigma_{n})$, let $Z_{n} = \hat{\Psi}(Y_{n})$ and $Z = \hat{\Psi}(Y)$. Then by Theorem~\ref{thm:contstpmap} $\stp(Y_{n}) \to \stp(Y)$. Since $\Psi \colon \sub(\autinfn) \to \sub(\autinfn)$ is continuous, $\Psi(\stp(Y_{n})) \to \Psi(\stp(Y))$. By definition of $\hat{\Psi}$, we have $\Psi(\stp(Y_{n})) = \stp(Z_{n})$ and $\Psi(\stp(Y)) = \stp(Z)$. It follows that $\stp(Z_{n}) \to \stp(Z)$. Theorem~\ref{thm:subsforlimits} then gives $Z_{n} \to Z$.
\end{proof}

The group $\aut(\sigma_{n})$ induces an action on $\mathcal{S}(X_{n})$ given by
$$\alpha \colon Y \mapsto \alpha(Y),$$
and this action preserves the subspace $\mathcal{CR}(\sigma_{n})$ of chain recurrent subsystems. Given $\alpha \in \aut(\sigma_{n})$, recall we let $\alpha_{\scriptscriptstyle \mathcal{CR}}$ denote the respective map defined by $\alpha$ on $\mathcal{CR}(\sigma_{n})$.

\begin{thm}
Let $\Psi \colon \autinfn \to \autinfn$ be a degree one isomorphism and let $\hat{\Psi} \colon \mathcal{CR}(\sigma_{n}) \to \mathcal{CR}(\sigma_{n})$ be the associated Verräumlichung homeomorphism. Then $\Psi$ is inner for the action of $\aut(\sigma_{n})$ on $\mathcal{CR}(\sigma_{n})$, in the sense that for any $\alpha \in \aut(\sigma_{n})$
$$\Psi(\alpha)_{\scriptscriptstyle \mathcal{CR}} = \hat{\Psi}\circ \alpha_{\scriptscriptstyle \mathcal{CR}}\circ \hat{\Psi}^{-1}.$$
\end{thm}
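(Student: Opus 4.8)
The plan is to reduce the claimed intertwining identity first to an equality of two subshifts, and then to an equality of their pointwise stabilizer subgroups, which can be read off formally from the definitions. The first thing I would note is that since $\Psi$ has degree one, $\Psi(\sigma_{n}) = \sigma_{n}^{\pm 1}$, so for $\alpha \in \aut(\sigma_{n})$ the element $\Psi(\alpha)$ commutes with $\sigma_{n}^{\pm 1}$, hence with $\sigma_{n}$; thus $\Psi(\alpha) \in \aut(\sigma_{n})$ and $\Psi(\alpha)_{\scriptscriptstyle \mathcal{CR}}$ genuinely acts on $\mathcal{CR}(\sigma_{n})$, so both sides of the asserted equation are well-defined self-maps of $\mathcal{CR}(\sigma_{n})$. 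It then suffices to fix $Y \in \mathcal{CR}(\sigma_{n})$ and show that the subshifts $\Psi(\alpha)(Y)$ and $\hat{\Psi}(\alpha(\hat{\Psi}^{-1}(Y)))$ are equal; since both lie in $\mathcal{CR}(\sigma_{n}) \subseteq \mathcal{S}(\sigma_{n})$, by \Cref{lem:fix-stab-eq-id} it is enough to prove that their pointwise stabilizer subgroups $\stp(\cdot)$ in $\autinfn$ coincide.

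The second step is the bookkeeping. I would use two identities. First, for any $\beta \in \autinfn$ and any subshift $W$, $\stp(\beta(W)) = \beta\,\stp(W)\,\beta^{-1}$, which is immediate since $\varphi$ fixes $\beta(W)$ pointwise if and only if $\beta^{-1}\varphi\beta$ fixes $W$ pointwise. Second, directly from the way the \verraum on $\mathcal{CR}(\sigma_{n})$ was defined via \Cref{thm:lptolp}, one has $\stp(\hat{\Psi}(V)) = \Psi(\stp(V))$ for all $V \in \mathcal{CR}(\sigma_{n})$; applying this to $V = \hat{\Psi}^{-1}(Y)$ yields $\stp(\hat{\Psi}^{-1}(Y)) = \Psi^{-1}(\stp(Y))$. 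Feeding these in, on one side $\stp(\Psi(\alpha)(Y)) = \Psi(\alpha)\,\stp(Y)\,\Psi(\alpha)^{-1}$, while on the other side $\stp(\hat{\Psi}(\alpha(\hat{\Psi}^{-1}(Y)))) = \Psi(\alpha\,\stp(\hat{\Psi}^{-1}(Y))\,\alpha^{-1}) = \Psi(\alpha\,\Psi^{-1}(\stp(Y))\,\alpha^{-1}) = \Psi(\alpha)\,\stp(Y)\,\Psi(\alpha)^{-1}$, using that $\Psi$ is a group homomorphism. The two stabilizer subgroups agree, so \Cref{lem:fix-stab-eq-id} gives $\Psi(\alpha)(Y) = \hat{\Psi}(\alpha(\hat{\Psi}^{-1}(Y)))$, which is exactly the claim after rearranging.

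This statement is essentially a formal consequence of the structure already in place, so I do not anticipate a genuine obstacle; the only point requiring care is keeping straight the direction of the various conjugations and the placement of $\Psi$ versus $\Psi^{-1}$ in the identities coming from the construction of $\hat{\Psi}$ (compare \Cref{lem:verraum-on-finite-sub} and \Cref{thm:prop-global-verraum}), where a stray inverse is easy to introduce. If one preferred an argument closer in spirit to the earlier material, one could instead invoke density of the finite subshifts in $\mathcal{CR}(\sigma_{n})$ (\Cref{prop:CRlimitperiodic}) together with continuity of both sides to reduce to the case where $Y$ is a finite subshift, and then conclude by combining \Cref{lem:verraum-on-finite-sub} with the global \verraum formula of \Cref{thm:prop-global-verraum}; but the stabilizer computation above is shorter and needs neither density nor continuity.
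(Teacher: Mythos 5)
Your main argument is correct, and it takes a genuinely different route from the paper. The paper proves the identity by observing that $\Psi(\alpha)_{\scriptscriptstyle\mathcal{CR}}^{-1}\hat{\Psi}\alpha_{\scriptscriptstyle\mathcal{CR}}\hat{\Psi}^{-1}$ acts as the identity on finite subshifts (via \Cref{thm:prop-global-verraum}), then invoking density of finite subshifts in $\mathcal{CR}(\sigma_n)$ (\Cref{prop:CRlimitperiodic}) plus continuity to conclude the map is the identity everywhere --- essentially the ``alternative'' you sketch in your final paragraph. Your primary proof instead works purely at the level of stabilizer subgroups: it combines the defining property $\stp(\hat{\Psi}(V)) = \Psi(\stp(V))$ from \Cref{thm:lptolp}, the elementary conjugation identity $\stp(\beta(W)) = \beta\,\stp(W)\,\beta^{-1}$, and the fact that $\Fix \circ \stp = \id$ on subshifts (\Cref{lem:fix-stab-eq-id}) to get pointwise equality of the two sides without invoking density or continuity. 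This is cleaner as a freestanding argument and makes the conclusion look like a formal consequence of the definition, with the caveat that \Cref{thm:lptolp} (and hence the very existence of $\hat{\Psi}$ on $\mathcal{CR}(\sigma_n)$) already baked in the density and continuity machinery, so the two proofs draw on the same underlying resources; what your version buys is the avoidance of a second, explicit density-plus-continuity pass in the proof of this particular theorem, and a derivation that does not need $\hat\Psi$ to be continuous, only well-defined and bijective.
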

\begin{proof}
For any $\alpha \in \aut(\sigma_{n})$, the map $\Psi(\alpha)_{\scriptscriptstyle \mathcal{CR}}^{-1}\hat{\Psi}\alpha_{\scriptscriptstyle \mathcal{CR}}\hat{\Psi}^{-1}$ is a homeomorphism of $\mathcal{CR}(\sigma_{n})$ which, by Theorem~\ref{thm:prop-global-verraum} acts by the identity on the finite systems. By Proposition~\ref{prop:CRlimitperiodic}, the finite systems are dense in $\mathcal{CR}(\sigma_{n})$, so $\Psi(\alpha)_{\scriptscriptstyle \mathcal{CR}}^{-1}\hat{\Psi}\alpha_{\scriptscriptstyle \mathcal{CR}}\hat{\Psi}^{-1}$ is equal to the identity on all of $\mathcal{CR}(\sigma_{n})$.
\end{proof}

We can now define the map
$$\mathcal{V} \colon \aut_{1}(\autinfn) \to \textrm{Homeo}(\mathcal{CR}(\sigma_{n}))$$
$$\mathcal{V}(\Phi) = \hat{\Phi}.$$
By Proposition~\ref{prop:verraum-injective}, the map $\mathcal{V}$ is a homomorphism.
\begin{thm}
There is an exact sequence
$$1 \longrightarrow \hat{\mathbb{Z}} \stackrel{\mathcal{N}}\longrightarrow \degonen \stackrel{\mathcal{V}}\longrightarrow \textnormal{Homeo}(\mathcal{CR}(\sigma_{n})).$$
\end{thm}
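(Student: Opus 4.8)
The statement asserts exactness at $\hat{\mathbb{Z}}$ and at $\degonen$. Exactness at $\hat{\mathbb{Z}}$ is precisely the injectivity of $\mathcal{N}$, already established, and $\mathcal{V}$ is a homomorphism by Theorem~\ref{prop:verraum-injective}; so the real content is the identity $\mathrm{Image}(\mathcal{N})=\ker\mathcal{V}$. For the inclusion $\mathrm{Image}(\mathcal{N})\subseteq\ker\mathcal{V}$, I would fix $a=(a_\ell)_{\ell\in\bbN}\in\hat{\mathbb{Z}}$ and use the remark following Proposition~\ref{prop:shifting-verraum}, which says that the periodic-point \verraum of $\mathcal{N}(a)$ is $\widehat{\mathcal{N}(a)}(x)=\sigma_n^{a_\ell}(x)$ for $x\in\Per_\ell(\sigma_n)$. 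Any finite subsystem $Q$ is $\sigma_n$-invariant, so $\widehat{\mathcal{N}(a)}(Q)=Q$; hence by Lemma~\ref{lem:verraum-on-finite-sub} and the definition of the extension of the \verraum to $\mathcal{CR}(\sigma_n)$, the homeomorphism $\mathcal{V}(\mathcal{N}(a))$ fixes every finite subsystem, and therefore equals the identity by density of the finite subsystems (Proposition~\ref{prop:CRlimitperiodic}) and continuity.

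For the reverse inclusion, let $\Psi\in\ker\mathcal{V}$. First I would reduce to a statement about orbits: for a finite subsystem $Q$, Lemma~\ref{lem:verraum-on-finite-sub} gives $\stp(\hat{\Psi}(Q))=\Psi(\stp(Q))$, while $\mathcal{V}(\Psi)=\mathrm{id}$ together with the definition of $\mathcal{V}$ gives $\Psi(\stp(Q))=\stp(Q)$; comparing and applying $\Fix$ and Lemma~\ref{lem:fix-stab-eq-id} yields $\hat{\Psi}(Q)=Q$. Taking $Q$ to be a single $\sigma_n$-orbit, the periodic-point \verraum $\hat{\Psi}$ preserves every $\sigma_n$-orbit. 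If, in addition, $\Psi$ is orientation preserving, then $\deg(\Psi)=1$ and $\hat{\Psi}$ commutes with $\sigma_n$ on $\Per(\sigma_n)$ by Proposition~\ref{prop:commuting-with-the-shift-arb-ind}; so on an orbit of minimal period $p$ its restriction commutes with the $p$-cycle $\sigma_n$, hence is a power of it, i.e. $\hat{\Psi}(x)=\sigma_n^{k(x)}(x)$ for a function $k$ constant on orbits. By Proposition~\ref{prop:shifting-verraum} this means $\Psi\in\mathrm{Image}(\mathcal{N})$.

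The remaining point — and the one I expect to be the main obstacle — is to show that $\ker\mathcal{V}$ contains no orientation reversing automorphism, so that the orientation preserving analysis above covers everything (recall every $\mathcal{N}(a)$ fixes $\sigma_n$, hence $\mathrm{Image}(\mathcal{N})\subseteq\orpren$). Suppose $\Psi\in\ker\mathcal{V}$ with $\Psi(\sigma_n)=\sigma_n^{-1}$. Then $\hat{\Psi}$ conjugates $\sigma_n$ to $\sigma_n^{-1}$ on $\Per(\sigma_n)$ and, by the reduction above, preserves every orbit; thus on each orbit $\mathcal{O}$ of minimal period $p\ge 3$ the map $\hat{\Psi}|_\mathcal{O}$ is a nontrivial involution conjugating the $p$-cycle to its inverse, i.e. a reflection of $\mathcal{O}$. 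The plan is to rule this out. One clean half of the obstruction is visible directly: $\Psi\circ\Xi_n$ (with $\Xi_n$ the reflection automorphism of Proposition~\ref{prop:degreestuff}) is orientation preserving of degree one, its periodic-point \verraum is $\xi_n\circ\hat{\Psi}$, which commutes with $\sigma_n$ and sends each orbit $\mathcal{O}$ to $\xi_n(\mathcal{O})$; since there are orbits with $\xi_n(\mathcal{O})\ne\mathcal{O}$ (for example the period-$6$ orbit of $(001011)^{\infty}$, using only the symbols $0,1$, since the cyclic word $001011$ is not a rotation of its reversal), the induced homeomorphism of $\mathcal{CR}(\sigma_n)$ is the nontrivial reflection $Y\mapsto\xi_n(Y)$. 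The delicate remaining step is to show that no orientation preserving degree one automorphism can realize that reflection — equivalently, that a $\sigma_n$-equivariant permutation of $\Per(\sigma_n)$ sending every orbit $\mathcal{O}$ to $\xi_n(\mathcal{O})$ cannot be the \verraum of a group automorphism. I would do this by exploiting that such a \verraum must, via Theorem~\ref{thm:prop-global-verraum}, conjugate each $\varphi\in\autinfn$ to a cellular automaton $\Psi(\varphi)$: choosing $\varphi$ a simple automorphism supported near the origin and testing on periodic points of a large fixed period that agree on a long central window but differ far from the origin in a way that relocates the reflection/orbit-exchange data, one forces $\hat{\Psi}^{-1}\varphi\hat{\Psi}$ to violate the Curtis--Hedlund--Lyndon continuity estimate, contradicting $\Psi(\varphi)\in\autinfn$. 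This non-locality argument is the crux; everything else is bookkeeping with the tools already in hand.

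Combining the orientation preserving case with the exclusion of orientation reversing elements gives $\ker\mathcal{V}=\ker\mathcal{V}\cap\orpren=\mathrm{Image}(\mathcal{N})$, which together with injectivity of $\mathcal{N}$ and the homomorphism property of $\mathcal{V}$ establishes exactness of the sequence.
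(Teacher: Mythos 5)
The forward inclusion $\mathrm{Image}(\mathcal{N})\subseteq\ker\mathcal{V}$ in your proposal matches the paper's argument. For the reverse inclusion, you correctly reduce to showing that $\hat{\Psi}$ preserves every $\sigma_n$-orbit, but then split into orientation cases and leave the orientation-reversing case with only a sketch of a proposed ``non-locality argument,'' which you yourself flag as the crux. As written, this is a genuine gap: your claim that a $\sigma_n$-equivariant permutation sending each orbit $\mathcal{O}$ to $\xi_n(\mathcal{O})$ cannot be a \verraum is not proved, and the appeal to a Curtis--Hedlund--Lyndon violation is only a plan, not an argument.

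However, the case split is unnecessary, and this is where the proposal diverges from (and overcomplicates) the paper's proof. Proposition~\ref{prop:shifting-verraum} is stated for all $\Psi\in\degonen$, with no orientation hypothesis, and its criterion is merely the existence of \emph{some} function $k\colon \Per(\sigma_n)\to\bbZ$ with $\hat{\Psi}(x)=\sigma^{k(x)}(x)$. You do not need $k$ to be constant on orbits, so the detour through Proposition~\ref{prop:commuting-with-the-shift-arb-ind} is superfluous: orbit preservation alone already says $\hat{\Psi}(x)$ lies in the $\sigma_n$-orbit of $x$, which is exactly the hypothesis. Once Proposition~\ref{prop:shifting-verraum} places $\Psi$ in $\mathrm{Image}(\mathcal{N})$, the orientation-reversing possibility is ruled out a posteriori, since (as you yourself observe) $\mathrm{Image}(\mathcal{N})\subseteq\orpren$. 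In other words, the paper's proof uses Proposition~\ref{prop:shifting-verraum} as the single closing step for the reverse inclusion, and no separate treatment of orientation reversal is needed; your instinct that something must prevent an orientation-reversing element from lying in $\ker\mathcal{V}$ is right, but the mechanism is the conclusion of Proposition~\ref{prop:shifting-verraum}, not a fresh continuity argument.
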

\begin{proof}
Suppose first that $\Psi$ is in the image of $\mathcal{N}$. Then $\hat{\Psi}$ maps every $\sigma_{n}$-periodic orbit to itself, and hence leaves invariant every finite subshift. Since $\mathcal{CR}(\sigma_{n})$ is the closure of the set of finite subsystems in $X_{n}$ and $\hat{\Psi}$ is continuous, this means $\hat{\Psi}$ acts by the identity on $\mathcal{CR}(\sigma_{n})$, so $\mathcal{V}(\Psi) = \textrm{id}$.

Suppose now that $\mathcal{V}(\Psi) = \textrm{id}$. Then $\hat{\Psi}$ acts by the identity on $\mathcal{CR}(\sigma_{n})$, so $\hat{\Psi}$ must map every $\sigma_{n}$-periodic orbit to itself. By Proposition~\ref{prop:shifting-verraum}, this implies $\Psi$ is in the image of $\mathcal{N}$.
\end{proof}

\subsection{The stabilized \verraum}

Our goal now is to extend the \verraum $\hat{\Psi}$ of a degree one automorphism $\Psi \colon \autinfn \to \autinfn$ to the stabilized space $\mathcal{CR}^{\infty}(\sigma_{n})$ of chain recurrent subshifts in $X_{n}$.

\begin{defn}
For $n \ge 2$, we define the stabilized space of subshifts by
$$\mathcal{S}^{\infty}(\sigma_{n}) = \{Y \subseteq X_{n} \mid Y \textrm{ is compact and } \sigma^{k}(Y) = Y \textrm{ for some } k \ge 1\}.$$

Similarly, we define the stabilized space of chain recurrent subshifts by
$$\mathcal{CR}^{\infty}(\sigma_{n}) = \bigcup_{k=1}^{\infty}\mathcal{CR}(\sigma_{n}^{k})$$
where the union is taken in the space $\mathcal{S}^{\infty}(\sigma_{n})$.
\end{defn}

Note that equivalently, we have
$$\mathcal{S}^{\infty}(\sigma_{n}) = \bigcup_{k=1}^{\infty} \mathcal{S}(\sigma_{n}^{k})$$
where the union is taken in the space $\mathcal{K}(X_{n})$. In summary, we have the containments
$$\mathcal{CR}^{\infty}(\sigma_{n}) \subseteq \mathcal{S}^{\infty}(\sigma_{n}) \subseteq \mathcal{K}(X_{n}).$$

The space $\mathcal{S}^{\infty}(\sigma_{n})$ is not closed in $\mathcal{K}(X_{n})$; for instance, the sequence of singleton sets $\{\linf(1^{k}0^{k}).(1^{k}0{^k})\rinf \}$ all belong to $\mathcal{S}^{\infty}(\sigma_{n})$, but the limit in $\mathcal{K}(X_{n})$ is the singleton set $\{{}^{\infty}0.1^{\infty}\}$ which is not in $\mathcal{S}^{\infty}(\sigma_{n})$. Similarly, $\mathcal{CR}^{\infty}$ is not closed in $\mathcal{S}^{\infty}$: for example, let $Y$ be the subshift obtained as the orbit closure of the point $^{\infty}0.1^{\infty}$ and for each $k$ let $L_{2k+1}(Y)$ be the set of words of length $2k+1$ that appear in $Y$. For each $w \in L_{2k+1}(Y)$ let $q_{w}$ be the periodic point obtained by periodizing $w$. Then for each $k$ the set $Q_{k} = \{q_{w} \mid w \in L_{2k+1}(Y)\}$ is a finite $\sigma_{2}^{2k+1}$-subshift, and hence belongs to $\mathcal{CR}^{2k+1}(\sigma_{2})$, but the limit of $Q_{k}$ in the Hausdorff metric is the subshift $Y$ which does not belong to $\mathcal{CR}^{\infty}(\sigma_{2})$.

For every $k \ge 1$, the group $\aut(\sigma_{n}^{k})$ acts on $\mathcal{S}(\sigma_{n}^{k})$ and leaves invariant $\mathcal{CR}(\sigma_{n}^{k})$. Moreover this action extends in the sense that for every $\ell \ge 1$, $\aut(\sigma_{n}^{k})$ acts on $\mathcal{S}(\sigma_{n}^{k\ell})$ as well, arising from the containment $\aut(\sigma_{n}^{k}) \subseteq \aut(\sigma_{n}^{k\ell})$. It follows there is a well-defined action of $\autinfn$ on $\mathcal{S}^{\infty}(\sigma_{n})$ which leaves invariant the subset $\mathcal{CR}^{\infty}(\sigma_{n})$.

Analogous to the previous section, given $\varphi \in \autinfn$ we denote the action of $\varphi$ on $\mathcal{CR}^{\infty}(\sigma_{n})$ by
$$\varphi_{\scriptscriptstyle \mathcal{CR}} \colon \mathcal{CR}^{\infty}(\sigma_{n}) \to \mathcal{CR}^{\infty}(\sigma_{n}).$$

The spaces $\sinfn$ and $\crinfn$ both have topologies inherited as subspaces of $\mathcal{K}(X_{n})$, which are induced by the Hausdorff metric. They also can each be equipped with the final topology coming from the filtrations

$$\mathcal{CR}^{\infty}(\sigma_{n}) = \bigcup_{k=1}^{\infty}\mathcal{CR}(\sigma_{n}^{k}), \quad \mathcal{S}^{\infty}(\sigma_{n}) = \bigcup_{k=1}^{\infty} \mathcal{S}(\sigma_{n}^{k}).$$
In this topology, a set $U$ in $\sinfn$ (respectively in $\crinfn$) is open if and only if $U \cap \mathcal{S}(\sigma_{n}^{k})$ is open for all $k \ge 1$. The final topology on both $\sinfn$ and $\crinfn$ is finer than the topology induced by the Hausdorff metric. For example, consider the sequence of points $p_{k} = \rinf(10^{k})^{\infty}$ with the origin in the middle of the zero string. Each $p_{k}$ is a fixed point of some power of $\sigma_{2}$, and belongs to the space of stabilized $\sigma_{2}$-subshifts. The set $A = \{p_{k}\}_{k=1}^{\infty}$ is closed in the final topology, since each level $\mathcal{S}(\sigma_{2}^{i})$ contains at most finitely many $p_{k}$'s. However $A$ is not closed in $\mathcal{K}(X_{2})$, since the sequence of points $p_{k}$ converges to the point of all zeros, which is not in $A$.

Note that the action of $\autinfn$ on $\sinfn$ and $\crinfn$ is continuous in both the Hausdorff metric and the final topology.

Suppose now that $\Psi \in \degonen$, so that $\Psi(\sigma_{n}) = \sigma_{n}^{\pm 1}$. Then $\Psi(\sigma_{n}^k) = \sigma_{n}^{\pm k}$ for all $k \ge 1$ and $\Psi(\aut(\sigma_{n}^{k})) = \aut(\sigma_{n}^{k})$. Then we have, for every $k \ge 1$, the $k$th \verraum homeomorphism
$$\hat{\Psi}_{k} \colon \mathcal{CR}(\sigma_{n}^{k}) \to \mathcal{CR}(\sigma_{n}^{k})$$
which satisfies for every $\varphi \in \aut(\sigma_{n}^{k})$
$$\Psi(\varphi)_{\smallcr} = \hat{\Psi}_{k} \circ \varphi_{\smallcr} \circ \hat{\Psi}_{k}^{-1}.$$
Combining all of this we obtain the following.
\begin{thm}\label{thm:stabilizedverraum}
Let $\Psi \in \degonen$. There is a stabilized \verraum map
$$\hat{\Psi}_{\infty} \colon \crinfn \to \crinfn$$
which is bijective and a homeomorphism with respect to the final topology. Furthermore, $\hat{\Psi}_{\infty}$ satisfies, for every $\alpha \in \autinfn$,
$$\Psi(\varphi)_{\smallcr} = \hat{\Psi}_{\infty} \circ \varphi{\smallcr} \circ \hat{\Psi}_{\infty}^{-1}.$$
\end{thm}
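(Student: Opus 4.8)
The plan is to obtain $\hat{\Psi}_{\infty}$ by gluing together the level-$k$ \verraum homeomorphisms $\hat{\Psi}_{k}\colon\mathcal{CR}(\sigma_{n}^{k})\to\mathcal{CR}(\sigma_{n}^{k})$ already introduced (these come from applying the $\mathcal{CR}(\sigma_{n})$-construction to $\sigma_{n}^{k}$ in place of $\sigma_{n}$, which is legitimate since $\Psi(\aut(\sigma_{n}^{k}))=\aut(\sigma_{n}^{k})$), so the only genuine work is to check these maps cohere along the filtration $\crinfn=\bigcup_{k}\mathcal{CR}(\sigma_{n}^{k})$. First I would recall the defining property of $\hat{\Psi}_{k}$: for $Y\in\mathcal{CR}(\sigma_{n}^{k})$, $\hat{\Psi}_{k}(Y)$ is the unique subshift $Z$ with $\Psi(\stp(Y))=\stp(Z)$, where $\stp(\cdot)$ denotes the pointwise stabilizer inside $\autinfn$ (this is exactly Theorem~\ref{thm:lptolp} and the subsequent extension proposition, read for $\sigma_{n}^{k}$).

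The key point, which I expect to be the only real obstacle, is compatibility: if $Y\in\mathcal{CR}(\sigma_{n}^{j})\cap\mathcal{CR}(\sigma_{n}^{k})$, then $\hat{\Psi}_{j}(Y)=\hat{\Psi}_{k}(Y)$. This should fall out of the observation that $\stp(Y)$ is a subgroup of $\autinfn$ depending only on the subset $Y\subseteq X_{n}$, with no reference to any power of the shift; hence both $\hat{\Psi}_{j}(Y)$ and $\hat{\Psi}_{k}(Y)$ are subshifts $Z$ satisfying $\stp(Z)=\Psi(\stp(Y))$, and applying $\Fix$ together with \Cref{lem:fix-stab-eq-id} forces $\hat{\Psi}_{j}(Y)=\Fix(\Psi(\stp(Y)))=\hat{\Psi}_{k}(Y)$. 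In particular, for $j\mid k$ this shows $\hat{\Psi}_{k}$ restricts to $\hat{\Psi}_{j}$ on $\mathcal{CR}(\sigma_{n}^{j})$ and carries $\mathcal{CR}(\sigma_{n}^{j})$ onto itself. I would then define $\hat{\Psi}_{\infty}(Y):=\hat{\Psi}_{k}(Y)$ for any $k$ with $Y\in\mathcal{CR}(\sigma_{n}^{k})$; by the above this is well-defined. Since each $\hat{\Psi}_{k}$ is a bijection of $\mathcal{CR}(\sigma_{n}^{k})$ onto itself and the $\hat{\Psi}_{k}$ cohere, $\hat{\Psi}_{\infty}$ is a bijection of $\crinfn$; running the same construction for $\Psi^{-1}$ identifies $\hat{\Psi}_{\infty}^{-1}$ with $\widehat{(\Psi^{-1})}_{\infty}$, which restricts to $\hat{\Psi}_{k}^{-1}$ on each level.

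For the topological assertion I would use that, by definition of the final topology on $\crinfn$, a self-map of $\crinfn$ is continuous as soon as its restriction to each $\mathcal{CR}(\sigma_{n}^{k})$ is; since $\hat{\Psi}_{\infty}$ carries $\mathcal{CR}(\sigma_{n}^{k})$ onto itself and restricts there to the homeomorphism $\hat{\Psi}_{k}$, both $\hat{\Psi}_{\infty}$ and $\hat{\Psi}_{\infty}^{-1}$ are continuous, so $\hat{\Psi}_{\infty}$ is a homeomorphism for the final topology. Finally, for the intertwining identity, I would fix $\varphi\in\autinfn$ and $W\in\crinfn$ and choose $k$ large enough that $W\in\mathcal{CR}(\sigma_{n}^{k})$ and $\varphi\in\aut(\sigma_{n}^{k})$; then $\Psi(\varphi)\in\aut(\sigma_{n}^{k})$ as well by Proposition~\ref{prop:degreestuff}(ii) (as $\deg\Psi=1$ divides $k$), so $\varphi_{\smallcr}$ and $\Psi(\varphi)_{\smallcr}$ preserve $\mathcal{CR}(\sigma_{n}^{k})$ while $\hat{\Psi}_{\infty}$ agrees with $\hat{\Psi}_{k}$ on it. Hence $\hat{\Psi}_{\infty}\circ\varphi_{\smallcr}\circ\hat{\Psi}_{\infty}^{-1}(W)=\hat{\Psi}_{k}\circ\varphi_{\smallcr}\circ\hat{\Psi}_{k}^{-1}(W)=\Psi(\varphi)_{\smallcr}(W)$ by the defining property of the $k$th \verraum homeomorphism, and since $W$ was arbitrary the identity holds on all of $\crinfn$.
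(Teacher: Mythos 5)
Your proof is correct and takes essentially the approach the paper intends: the paper states the theorem as an immediate consequence of the preceding discussion (``Combining all of this we obtain the following'') without spelling out the gluing argument, and you supply exactly the missing details. Your compatibility step — observing that $\stp(Y)$ depends only on the subset $Y \subseteq X_n$ and on the group $\autinfn = \aut^\infty(\sigma_n^k)$, with no reference to $k$, so that $\hat{\Psi}_j(Y) = \Fix(\Psi(\stp(Y))) = \hat{\Psi}_k(Y)$ by \Cref{lem:fix-stab-eq-id} — is the clean way to make the coherence rigorous, and the rest (bijectivity via $\widehat{(\Psi^{-1})}_\infty$, continuity via the universal property of the final topology, and the intertwining identity by choosing $k$ large enough) is exactly what one needs.
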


We can now prove Theorem~\ref{thm:maintheorem} from the introduction.
\begin{thm}[Theorem~\ref{thm:maintheorem} of the introduction]\label{thm:maintheorem2}
Fix $m,n \ge 2$ and suppose $\Psi \colon \autinf(\sigma_{m}) \to \autinf(\sigma_{n})$ is an isomorphism of groups. There is a homeomorphism $\hat{\Psi} \colon \mathcal{CR}^{\infty}(\sigma_{m}) \to \mathcal{CR}^{\infty}(\sigma_{n})$ with respect to the final topologies such that $\Psi$ is $\mathcal{CR}^{\infty}(\sigma_{m}), \mathcal{CR}^{\infty}(\sigma_{n})$-hyperspatial, i.e. for all $\varphi \in \autinf(\sigma_{m})$ we have
$$\Psi(\varphi)_{\scriptscriptstyle \mathcal{CR}} = \hat{\Psi}\circ \varphi_{\scriptscriptstyle \mathcal{CR}}\circ \hat{\Psi}^{-1}.$$
Moreover, the map $\hat{\Psi}$ takes $\sigma_{m}$-periodic points to $\sigma_{n}$-periodic points.
\end{thm}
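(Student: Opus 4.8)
The strategy is to absorb the discrepancy between $m$ and $n$, as well as the ``degree'' of $\Psi$, into topological conjugacies between full shifts, reducing to the degree one situation already handled by Theorem~\ref{thm:stabilizedverraum}. Since $\autinf(\sigma_m) \cong \autinf(\sigma_n)$, the classification of these groups from~\cite{schmiedingLocalMathcalEntropy2022} gives positive integers $a,b$ with $m^a = n^b =: N$. The usual $k$-block conjugacies give topological conjugacies $g \colon (X_m,\sigma_m^a) \to (X_N,\sigma_N)$ and $h \colon (X_n,\sigma_n^b) \to (X_N,\sigma_N)$, and since $\autinf(\sigma_m^a) = \autinf(\sigma_m)$ (and likewise for $n$) conjugation by $g$ and $h$ yields isomorphisms $g_{\ast} \colon \autinf(\sigma_m) \to \autinf(\sigma_N)$ and $h_{\ast} \colon \autinf(\sigma_n) \to \autinf(\sigma_N)$. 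Set $\Theta := h_{\ast}\circ\Psi\circ g_{\ast}^{-1} \in \aut(\autinf(\sigma_N))$ and $e := \deg(\Theta)$, so $\Theta(\sigma_N^e) = \sigma_N^{\pm e}$; conjugating once more by the $e$-block map $c \colon (X_N,\sigma_N^e) \to (X_{N^e},\sigma_{N^e})$ produces $\Theta' := c_{\ast}\Theta c_{\ast}^{-1}$ with $\Theta'(\sigma_{N^e}) = \sigma_{N^e}^{\pm 1}$, i.e.\ $\Theta' \in \aut_1(\autinf(\sigma_{N^e}))$. Writing $G := c\circ g$ and $H := c\circ h$ (so $G$ conjugates $\sigma_m^{ae}$ to $\sigma_{N^e}$ and $H$ conjugates $\sigma_n^{be}$ to $\sigma_{N^e}$), we have $\Theta' = H_{\ast}\circ\Psi\circ G_{\ast}^{-1}$.

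Next I would transfer the stabilized spaces of chain recurrent subshifts along $G$ and $H$. Using Proposition~\ref{prop:CRlimitperiodic} one checks $\mathcal{CR}(\sigma_m^j) \subseteq \mathcal{CR}(\sigma_m^{jd})$ for all $j,d$ (a finite $\sigma_m^j$-subsystem is a finite $\sigma_m^{jd}$-subsystem and $\mathcal{CR}(\sigma_m^{jd})$ is closed in $\mathcal{K}(X_m)$), so $\mathcal{CR}^\infty(\sigma_m) = \bigcup_k \mathcal{CR}(\sigma_m^{aek})$ and its final topology is the colimit of this cofinal subfiltration. Since $G$ conjugates $\sigma_m^{ae}$ to $\sigma_{N^e}$, the induced homeomorphism $\mathcal{K}(X_m) \to \mathcal{K}(X_{N^e})$ carries $\mathcal{S}(\sigma_m^{aek})$ onto $\mathcal{S}(\sigma_{N^e}^k)$ and $\mathcal{CR}(\sigma_m^{aek})$ onto $\mathcal{CR}(\sigma_{N^e}^k)$ for each $k$, hence restricts to a homeomorphism $G_{\mathcal{CR}} \colon \mathcal{CR}^\infty(\sigma_m) \to \mathcal{CR}^\infty(\sigma_{N^e})$ of the final topologies; similarly one gets $H_{\mathcal{CR}}$. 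As $G_{\ast}(\varphi) = G\varphi G^{-1}$ as self-homeomorphisms of $X_{N^e}$, we have $G_{\ast}(\varphi)_{\smallcr} = G_{\mathcal{CR}}\circ\varphi_{\smallcr}\circ G_{\mathcal{CR}}^{-1}$ for all $\varphi \in \autinf(\sigma_m)$, and likewise for $H$. Finally $G$ maps $\Per(\sigma_m) = \Per(\sigma_m^{ae})$ bijectively onto $\Per(\sigma_{N^e})$, so $G_{\mathcal{CR}}$ sends the singleton of a $\sigma_m$-periodic point to the singleton of a $\sigma_{N^e}$-periodic point, and the same holds for $H$.

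Now I would assemble: Theorem~\ref{thm:stabilizedverraum} applied to $\Theta' \in \aut_1(\autinf(\sigma_{N^e}))$ yields a homeomorphism $\Omega \colon \mathcal{CR}^\infty(\sigma_{N^e}) \to \mathcal{CR}^\infty(\sigma_{N^e})$ of the final topology with $\Theta'(\psi)_{\smallcr} = \Omega\circ\psi_{\smallcr}\circ\Omega^{-1}$ for all $\psi$. Put $\hat{\Psi} := H_{\mathcal{CR}}^{-1}\circ\Omega\circ G_{\mathcal{CR}}$, a homeomorphism $\mathcal{CR}^\infty(\sigma_m) \to \mathcal{CR}^\infty(\sigma_n)$ of the final topologies. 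Since $\Theta'(G_{\ast}(\varphi)) = H_{\ast}(\Psi(\varphi))$, combining the two intertwinings from the previous paragraph gives $\hat{\Psi}\circ\varphi_{\smallcr}\circ\hat{\Psi}^{-1} = \Psi(\varphi)_{\smallcr}$ for every $\varphi \in \autinf(\sigma_m)$, which is the claimed hyperspatiality. For the periodic point statement, Lemma~\ref{lem:verraum-on-finite-sub} together with Lemma~\ref{lem:fix-stab-eq-id} shows that $\Omega$ agrees on finite subsystems --- in particular on singletons of $\sigma_{N^e}$-periodic points --- with the periodic-point \verraum of $\Theta'$ from Theorem~\ref{thm:prop-global-verraum}, so it sends such a singleton to the singleton of a $\sigma_{N^e}$-periodic point; precomposing with $G_{\mathcal{CR}}$ and postcomposing with $H_{\mathcal{CR}}^{-1}$, we conclude $\hat{\Psi}$ takes singletons of $\sigma_m$-periodic points to singletons of $\sigma_n$-periodic points. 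The same bookkeeping also shows $\hat{\Psi}$ intertwines $\sigma_m^{ae}$ with $\sigma_n^{\pm be}$ on periodic points, which is the stronger conclusion stated in Theorem~\ref{thm:maintheorem}.

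The one genuine subtlety --- hence the step to treat with care --- is the topological bookkeeping above: that a conjugacy between powers of full shifts induces a homeomorphism of the associated stabilized spaces of chain recurrent subshifts \emph{for the final topologies}. This reduces to the identity $\mathcal{CR}^\infty(\sigma_m) = \bigcup_k \mathcal{CR}(\sigma_m^{aek})$ and the invariance of a final topology under passage to a cofinal subfiltration. Everything else is a formal consequence of Theorem~\ref{thm:stabilizedverraum} and the classification of the groups $\autinf(\sigma_n)$ up to isomorphism.
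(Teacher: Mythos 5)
Your proof is correct and follows essentially the same strategy as the paper: use the classification of the groups $\autinf(\sigma_n)$ to produce block conjugacies reducing to a degree one automorphism, verify that the final topology on $\mathcal{CR}^\infty$ is unchanged under passage to the cofinal subfiltration by multiples of a fixed index, and then invoke Theorem~\ref{thm:stabilizedverraum}. The one minor difference is that the paper uses the stronger output of~\cite{schmiedingLocalMathcalEntropy2022} (namely that $\Psi(\sigma_m^k)=\sigma_n^j$ with $m^{|k|}=n^{|j|}$, so that after one layer of block conjugacies the resulting automorphism already has degree one), whereas you only use the existence of $a,b$ with $m^a=n^b$ and therefore need the extra $e$-block conjugation to normalize the degree; both routes are sound, and yours requires slightly less from the cited classification at the cost of one more bookkeeping step.
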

\begin{proof}
Given such $\Psi$, it was shown in~\cite[Corollary 35, Lemma 33]{schmiedingLocalMathcalEntropy2022} that there exists nonzero $k,j \in \mathbb{Z}$ such that $m^{\abs{k}} = n^{\abs{j}}$ and $\Psi(\sigma_{m}^{k}) = \sigma_{n}^{j}$. Upon precomposing and postcomposing $\Psi$ with the spatial isomorphisms $\Xi_{m}$ and $\Xi_{n}$ respectively, if necessary, we may assume both $k$ and $j$ are positive. We may choose topological conjugacies
$$H_{1} \colon (X_{m},\sigma_{m}^{k}) \to (X_{m^{k}},\sigma_{m^{k}}),$$
$$H_{2} \colon (X_{n},\sigma_{n}^{j}) \to (X_{n^{j}},\sigma_{n^{j}})$$
which induce spatial isomorphisms
$$H_{1,*} \colon \aut^{\infty}(\sigma_{m}^{k}) \to \aut^{\infty}(\sigma_{m^{k}}),$$
$$H_{2,*} \colon  \aut^{\infty}(\sigma_{n}^{j}) \to \aut^{\infty}(\sigma_{n^{j}}).$$
It is straightforward to check that, as sets, $\mathcal{CR}^{\infty}(\sigma_{m}^{k}) = \mathcal{CR}^{\infty}(\sigma_{m})$ and $\mathcal{CR}^{\infty}(\sigma_{n}^{j}) = \mathcal{CR}^{\infty}(\sigma_{n})$. Furthermore, the final topologies on $\mathcal{CR}^{\infty}(\sigma_{m})$ coming from both the usual filtration
$$\mathcal{CR}^{\infty}(\sigma_{m}) = \bigcup_{j=1}^{\infty}\mathcal{CR}(\sigma_{m}^{j})$$
and the filtration
$$\mathcal{CR}^{\infty}(\sigma_{m}) = \bigcup_{j=1}^{\infty}\mathcal{CR}(\sigma_{m}^{kj})$$
are the same. Indeed, if $U \cap \mathcal{CR}(\sigma_{m}^{j})$ is open for all $j$, then clearly $U \cap \mathcal{CR}(\sigma_{m}^{kj})$ is open for all $j$, and for any $\ell$, if $U \cap \mathcal{CR}(\sigma_{m}^{k\ell})$ is open then $U \cap \mathcal{CR}(\sigma_{m}^{\ell})$ is open since $\mathcal{CR}(\sigma_{m}^{\ell})$ is a subspace of $\mathcal{CR}(\sigma_{m}^{k\ell})$. Likewise for $\mathcal{CR}^{\infty}(\sigma_{n})$.

It follows that the $H_{i}$'s induce homeomorphisms
$$H_{1,\mathcal{CR}} \colon \mathcal{CR}^{\infty}(\sigma_{m}) \to \mathcal{CR}^{\infty}(\sigma_{m^{k}})$$
$$H_{2,\mathcal{CR}} \colon \mathcal{CR}^{\infty}(\sigma_{n}) \to \mathcal{CR}^{\infty}(\sigma_{n^{j}})$$
with respect to the final topologies.

It follows from the definitions that $\aut^{\infty}(\sigma_{m}) = \aut^{\infty}(\sigma_{m}^{k})$ and $\autinfn = \aut^{\infty}(\sigma_{n}^{j})$. Then we have the composition of isomorphisms
$$\aut^{\infty}(\sigma_{m^{k}}) \stackrel{H_{1,*}^{-1}}{\longrightarrow} \aut^{\infty}(\sigma_{m}^{k})$$
$$= \aut^{\infty}(\sigma_{m}) \stackrel{\Psi}\longrightarrow \autinfn = \aut^{\infty}(\sigma_{n}^{j}) \stackrel{H_{2,*}}{\longrightarrow} \aut^{\infty}(\sigma_{n^{j}})$$
$$ = \aut^{\infty}(\sigma_{m^{k}}).$$
Note that $H_{2,*}(\sigma_{n}^{j}) = \sigma_{n^{j}}$, $H_{1,*}^{-1}(\sigma_{m^{k}}) = \sigma_{m}^{k}$, and $\Psi(\sigma_{m}^{k}) = \sigma_{n}^{j}$. Taken with all of the above, since both $H_{2,*}$ and $H_{1,*}^{-1}$ are spatial via the maps $H_{1},H_{2}$ which induce the homeomorphisms $H_{i,\mathcal{CR}}$ on the respective $\mathcal{CR}^{\infty}$'s with respect to the final topologies, we have reduced to the case where we have a degree one isomorphism
$$\Psi \colon \aut^{\infty}(\sigma_{m^{k}}) \to \aut^{\infty}(\sigma_{m^{k}}).$$
Then by Theorem~\ref{thm:stabilizedverraum} we have the \verraum map
$$\hat{\Psi}_{\infty} \colon \mathcal{CR}^{\infty}(\sigma_{m^{k}}) \to \mathcal{CR}^{\infty}(\sigma_{m^{k}})$$
which is a homeomorphism with respect to the final topologies, and satisfies
$$\Psi(\alpha)_{\smallcr} = \hat{\Psi}_{\infty} \circ \alpha_{\smallcr} \circ \hat{\Psi}_{\infty}^{-1}$$
for every $\alpha \in \autinf(\sigma_{m^{k}})$.
Finally, by construction we have that $\hat{\Psi}$ takes $\sigma_{m^{k}}$-periodic points to $\sigma_{m^{k}}$-periodic points.
%$$H_{*} \colon \aut^{(\infty)}(\sigma_{n}^{k}) \to \aut^{(\infty)}(\sigma_{n^{j}}) = \aut^{(\infty)}(
%$\aut^{(\infty)}(\sigma_{m}^{k})$, we have the isomorphism $Psi \colon \aut^{(\infty)}(\sigma_{m}^{k}) \to \aut^{(\infty)}(\sigma_{m}^{k})$.
\end{proof}

Let us remark on the \verraum and the question posed in the introduction. The set of periodic points, with the metric inherited from $X_{n}$, embeds isometrically into $\mathcal{S}^{\infty}(\sigma_{n})$ with the Hausdorff metric. It is clear that the image of this embedding is contained in the set of stabilized chain recurrent systems $\mathcal{CR}^{\infty}(\sigma_{n})$. If the \verraum $\hat{\Psi}$ of $\Psi$ was continuous on $\mathcal{CR}^{\infty}(\sigma_{n})$ with the Hausdorff metric, it would follow that the \verraum $\hat{\Psi}$ is continuous on the set of periodic points in $X_{n}$ in the usual topology on $X_{n}$. If this continuity was uniform, density of the periodic points in $X_{n}$ would allow $\hat{\Psi}$ to extend to a self homeomorphism of $X_{n}$, giving spatiality for $\Psi$ at the level of $X_{n}$.

\begin{rem}
There is a partial ordering on $\mathcal{S}(\sigma_{n})$ and $\mathcal{S}^{\infty}(\sigma_{n})$, and hence on $\mathcal{CR}(\sigma_{n})$ and $\mathcal{CR}^{\infty}(\sigma_{n})$, given by containment. Given a degree one automorphism $\Psi \in \aut_{1}(\autinfn)$, the \verraum $\hat{\Psi}$ and its stabilized version $\hat{\Psi}_{\infty}$ respect this partial ordering, in the sense that if $Y \subseteq Z$, then $\hat{\Psi}(Y) \subseteq \hat{\Psi}(Z)$, and likewise for $\hat{\Psi}_{\infty}$.
\end{rem}

Recall that the $\sigma_n$-periodic points embed isometrically
into $\mathcal{S}^{\infty}(\sigma_n)$.
The following example shows that there exists
continuous map $\mathcal{S}^{\infty}(\sigma_{n})
\to \mathcal{S}^{\infty}(\sigma_{n})$
which maps this
isometric copy of $\Per_{\sigma_n}$
to itself and whose restriction
to $\Per_{\sigma_n}$ commutes with $\sigma_n$
but is not continuous.
and induces a continuous self-map of $\mathcal{S}^{\infty}(\sigma_n)$.
This shows that in this setting $\mathcal{S}^{\infty}(\sigma_{n})$-hyperspatiality is
strictly weaker then spatiality.

\begin{exam}
    Let $y$ be a totally transitive point in $X_n$,
    i.e. a point whose orbit under $\sigma_n^k$ is dense in $X_n$
    for every $k \in \bbN$.
    Let $Z$ be a proper subshift of $(X_n,\sigma_n^k)$ and
    let $w$ be a word over the alphabet $\{1,\dots,n\}^k$
    of length $m$ forbidden in $Z$.
    For each $i \in \{0,\dots,m-1\}$ there is $\ell_i \in \bbZ$
    with $\ell_i = i \mod k$
    such that $y_{[\ell_i, \ell_i+km-1]}=w$.
    Take $\ell:=\max \{\abs{\ell_i}+km, i=0,\dots,k-1\}$.
    Every element of $X_n$
    which contains $y_{[-\ell,\ell]}$
    also contains $w$ at a position which is a multiple of $k$.
    Thus the SFT $Y^{(\ell)}$, whose
    only forbidden word is $y_{[-\ell,\ell]}$,
    contains $Z$.

    All in all we have constructed a strictly increasing sequence of SFTs $Y^{(\ell)}$ such that every proper  stabilized subshifts is eventually contained in $Y^{(\ell)}$.
    Now we construct a sequence of automorphisms $\varphi_k: (Y^{(k)} ,\sigma_n) \to (Y^{(k)},\sigma_n)$
    such that $\varphi_k(x)=\varphi_\ell(x)$ for
    every $\ell < k$, $x \in Y^{(\ell)}$,
    and such that the minimal neighborhood
    of $\varphi_k$ goes to infinity.
    We can start by simply taking $\varphi_1$ as the identity on $Y^{(1)}$.
    To construct $\varphi_k$ based on $\varphi_{k-1}$
    let $Z^{(k)}$ be a mixing SFT, contained in $Y^{(k)}$
    and disjoint from $Y^{(k-1)}$.
    Let $\alpha_k$ be an inert automorphism on $Z^{(k)}$
    whose minimal neighborhood has at least size $k$.
    Produce an inert automorphism of $Y^{(k-1)} \cup Z^{(k)}$
    by combining $\varphi_{k-1}$ with $\alpha_{k}$.
    By the Inert Extension Theorem
    \cite[Theorem 2.1]{kimStructureInertAutomorphisms1991}
    we can extend this automorphism to an inert
    on $Y^{(k)}$.

    We can combine all these maps
    into a map
    $\zeta: \bigcup_{k \in \bbN} Y^{(k)} \to \bigcup_{k \in \bbN} Y^{(k)}$
    by setting $\zeta(x) = \varphi_k(x)$ for $x \in Y^{(k)}$.
    The compatibility of the $\varphi_k$ ensures that
    this map is well-defined.
    Notice that $\zeta$ is continuous when restricted to $Y^{(k)}$
    but it is not a continuous map on $\bigcup Y^{(k)} \cap \Per(\sigma_n) \subseteq X_n$
    endowed with the subspace topology,
    since the size of the minimal neighborhood of the $\varphi_k$ goes to infinity.
    Since $\zeta$ commutes with $\sigma_n$
    and $\Per(\sigma_n) \subseteq \bigcup_{k \in \bbN} Y^{(k)}$,
    our map $\zeta$ induces a bijective self-map
    on $\Per(\sigma_n)$.

    Similarly we can define a bijective self-map $\tilde{\zeta}$ of $S^\infty(\sigma_n)$
    given by $\tilde{\zeta}(Q):=\varphi_k(Q)$ for $Q \subseteq Y^{(k)}$
    and $\tilde{\zeta}(X_n)=X_n$.
    We finish by showing that this map $\tilde{\zeta}$
    is a homeomorphism.
    First of all we only have to show continuity of $\tilde{\zeta}$,
    the same argument then applies to its inverse.
    Second, by the nature of the final topology,
    we only have to show that the restriction of
    $\tilde{\zeta}$ to $\mathcal{S}(\sigma_n^k)$
    for every $k \in \bbN$
    is continuous. By the usual argument of passing
    to the alphabet of size $n^k$, we only have to deal with the case
    $k=1$.
    Notice that $\tilde{\zeta}$ leaves $\mathcal{S}(\sigma_n)$
    invariant.
    Now let $(W_k)_{k \in \bbN}$ be a sequence of
    subshifts in $\mathcal{S}(\sigma_n)$
    converging to some subshift $W$.

    First we deal with the case that
    $W \neq X_n$.
    Then there is $\ell \in \bbN$
    such that $W \subseteq Y^{(\ell)}$.
    In other words, $y_{[-\ell,\ell]}$
    is not in the language of $W$
    and therefore also not in the language of $W_k$
    for sufficiently large $k$.
    Hence $W$ and $W_k$
    both lie eventually in $Y^{(\ell)}$.
    But $\tilde{\zeta}$ is continuous on
    $\mathcal{S}(\sigma_{Y^{(\ell)}})$
    since there it is induced by the automorphism
    $\varphi_\ell$. Hence $\tilde{\zeta}(W_k) \to \tilde{\zeta}(W)$.

    Now assume that $W=X_n$.
    Then for every $\ell \in \bbN$, $W_k$ eventually contains
    every word of length $2\ell+1$.
    Hence $W_k \not\subseteq Y^{(\ell)}$.
    Since $\tilde{\zeta}$ leaves $Y^{(\ell)}$
    invariant, we also get
    $\tilde{\zeta}(W_k) \not\subseteq Y^{(\ell)}$
    for all $\ell$ and for sufficiently large $k$
    depending on $\ell$.
    We claim that this implies $\tilde{\zeta}(W_k) \to X_n=\tilde{\zeta}(W)$.
    Assume otherwise, then by compactness there would be
    another accumulation point $V$ of
    $(\tilde{\zeta}(W_k))_{k \in \bbN}$.
    But then $V$ would be contained
    in some $Y^{(\ell)}$
    and hence, by the argument from the first case, there would be an infinite
    set of indices $k$ for which
    $W_k \subseteq \tilde{\zeta}^{-1}(Y^{(\ell)}) =Y^{(\ell)}$,
    contradiction.

    Both cases taken together imply that $\tilde{\zeta}$
    is continuous.
\end{exam}

\section{Some consequences}
Here we record some consequences of the \verraum construction.
\subsection{Being free is characteristic}

While we only obtain continuity of the Verräumlichung at the level of the space of subshifts, we still get the following somewhat surprising corollary.

\begin{cor}[Freeness of finite order elements is a characteristic property]\label{cor:freeness}
Let $\Psi \in \autautinfn$.
Let $\varphi \in\autinfn$ be
an element of order $k$ for which all orbits have size $k$.
Then $\Psi(\varphi)$ has the same property.
\end{cor}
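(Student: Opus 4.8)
The plan is to leverage the \verraum on periodic points (Theorem~\ref{thm:prop-global-verraum}) together with the fact that fixed-point sets of automorphisms of a full shift are shifts of finite type. First I would record the essentially trivial observation that, since $\Psi$ is a group automorphism, $\Psi(\varphi)$ has order exactly $k$: $\Psi(\varphi)^k=\id$ and $\Psi(\varphi)^j\neq \id$ for $0<j<k$. The cardinality of any orbit of a finite-order homeomorphism divides its order, so it then suffices to show that $\Fix(\Psi(\varphi)^j)=\emptyset$ for every proper divisor $j$ of $k$; this forces every $\Psi(\varphi)$-orbit in $X_n$ to have size exactly $k$.

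Next I would argue that $\Fix(\Psi(\varphi)^j)$ is a shift of finite type. Writing $\Psi(\varphi)^j\in\aut(\sigma_n^m)$ for a suitable $m$, passing to the alphabet $\{0,\dots,n-1\}^m$ realizes $\Psi(\varphi)^j$ as an automorphism of the full shift $X_{n^m}$, i.e.\ a sliding block code commuting with the shift. The condition ``$z$ is fixed'' is then a single finite-window constraint together with all of its shifts, so the fixed set is cut out by finitely many forbidden words and hence is an SFT; transporting back, $\Fix(\Psi(\varphi)^j)$ is a $\sigma_n^m$-SFT. Since a nonempty SFT always contains a shift-periodic point, if $\Fix(\Psi(\varphi)^j)$ were nonempty it would contain a point $x\in\Per(\sigma_n)$ with $\Psi(\varphi)^j(x)=x$.

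Finally I would derive a contradiction from the \verraum. By Theorem~\ref{thm:prop-global-verraum} there is a bijection $\hat{\Psi}$ of $\Per(\sigma_n)$ with $\Psi(\varphi)^j(z)=(\hat{\Psi}^{-1}\circ\varphi^j\circ\hat{\Psi})(z)$ for all $z\in\Per(\sigma_n)$. Applying this to the point $x$ above gives $\varphi^j(\hat{\Psi}(x))=\hat{\Psi}(x)$, where $\hat{\Psi}(x)\in\Per(\sigma_n)\subseteq X_n$. But the $\varphi$-orbit of $\hat{\Psi}(x)$ has size $k$ by hypothesis, so $\varphi^j$ fixing $\hat{\Psi}(x)$ forces $k\mid j$, contradicting $0<j<k$. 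Hence $\Fix(\Psi(\varphi)^j)=\emptyset$ for every proper divisor $j$ of $k$, which gives the claim.

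The step I expect to be the (minor) crux is the second one: recognizing $\Fix(\Psi(\varphi)^j)$ as a nonempty SFT, which must therefore contain a periodic point; everything else is bookkeeping around Theorem~\ref{thm:prop-global-verraum}, and in particular no reduction to the degree-one case is needed since that theorem already applies to all of $\autautinfn$. An alternative route avoiding the SFT observation would be to pick a minimal (hence chain recurrent) subshift $M\subseteq\Fix(\Psi(\varphi)^j)$, note $\Psi(\varphi)^j\in\stp(M)$, and transport through the relation $\Psi(\stp(Y))=\stp(\hat{\Psi}_\infty(Y))$ to conclude that $\varphi^j$ fixes $\hat{\Psi}_\infty^{-1}(M)$ pointwise, which is again impossible; I would present the SFT argument as the main one since it uses only the periodic-point \verraum.
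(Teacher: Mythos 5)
Your proof is correct and follows the same route as the paper: reduce to the action of $\Psi(\varphi)$ on $\sigma_n$-periodic points and then invoke Theorem~\ref{thm:prop-global-verraum}. The paper leaves the reduction step implicit, and your observation that $\Fix(\Psi(\varphi)^j)$ is a shift of finite type (hence, if nonempty, contains a periodic point) is exactly the right way to make it explicit.
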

\begin{proof}
    It is enough to show this for the action of $\Psi(\varphi)$ on $\sigma_n$-periodic points. The result then follows from \Cref{thm:prop-global-verraum}.
\end{proof}

\subsection{Countability of the non-profinite part of $\asan$}
Recall we have an exact sequence
$$1 \longrightarrow \hat{\mathbb{Z}} \stackrel{\mathcal{N}}\longrightarrow \degonen \stackrel{\mathcal{V}}\longrightarrow \textnormal{Homeo}(\mathcal{CR}(\sigma_{n})).$$

Based on ideas from~\cite{saloConjugacyReversibleCellular2022}, we will prove the following.
\begin{thm}
  \label{thm:left-cosets-countable}
The number of left cosets of $\mathcal{N}(\hat{\bbZ})$ in $\aut(\autinfn)$ is countable.
\end{thm}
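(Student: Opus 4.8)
The plan is to deduce the statement from the fact (Theorem~\ref{prop:verraum-injective}) that the \verraum construction gives an injective homomorphism $\aut(\autinfn)\to\Sym(\Per(\sigma_n))$, $\Psi\mapsto\hat\Psi$, so that counting left cosets of $\mathcal{N}(\hat{\bbZ})$ amounts to counting the maps $\hat\Psi$ modulo the subgroup of profinite shifts $\{\hat\Psi:\Psi\in\mathcal{N}(\hat{\bbZ})\}$. First I would reduce to the degree-one case. Every automorphism has a finite degree, so $\aut(\autinfn)=\bigcup_{d\ge1}\aut_d(\autinfn)$ is an increasing union of countably many subgroups, each containing $\mathcal{N}(\hat{\bbZ})$ (since each $\mathcal{N}(a)$ fixes $\sigma_n$); it therefore suffices to bound the number of $\mathcal{N}(\hat{\bbZ})$-cosets in each $\aut_d(\autinfn)$. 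A topological conjugacy $(X_n,\sigma_n^{d})\cong(X_{n^{d}},\sigma_{n^{d}})$ yields an isomorphism $\autinfn\to\autinf(\sigma_{n^{d}})$ sending $\sigma_n^{d}$ to $\sigma_{n^{d}}$, which carries $\aut_d(\autinfn)$ onto $\aut_1(\autinf(\sigma_{n^{d}}))$ and carries $\mathcal{N}(\hat{\bbZ})$ onto a subgroup of $\aut_1(\autinf(\sigma_{n^{d}}))$ that contains the corresponding profinite-shift subgroup $\mathcal{N}(\hat{\bbZ})\le\aut_1(\autinf(\sigma_{n^{d}}))$ with index dividing $d$. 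Hence it is enough to prove that $\aut_1(\autinfn)$ contains only countably many left cosets of $\mathcal{N}(\hat{\bbZ})$, for every $n\ge2$.

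For the degree-one case I would use the exact sequence $1\to\hat{\bbZ}\xrightarrow{\mathcal{N}}\degonen\xrightarrow{\mathcal{V}}\textrm{Homeo}(\mathcal{CR}(\sigma_n))$: because $\mathcal{N}(\hat{\bbZ})=\ker\mathcal{V}$, the set of left cosets is in bijection with the image $\mathcal{V}(\aut_1(\autinfn))\subseteq\textrm{Homeo}(\mathcal{CR}(\sigma_n))$, so everything reduces to showing that the set of \verraum homeomorphisms of the compact space $\mathcal{CR}(\sigma_n)$ is countable. Such a homeomorphism $h=\hat\Psi$ is highly rigid: it is uniformly continuous (by compactness of $\mathcal{CR}(\sigma_n)$); it sends finite subshifts to finite subshifts and restricts there to an order-automorphism of the countable poset of finite subshifts preserving finite unions, so $h$ is determined by the permutation it induces on the single periodic orbits (the join-irreducible elements); it preserves the multiset of orbit-lengths of a finite subshift, and more generally preserves, for every $Y\in\mathcal{CR}(\sigma_n)$, the full function $k\mapsto\#\{\text{orbits of length }k\text{ contained in }Y\}$ (in particular the topological entropy of $Y$); and, by the theorem relating the \verraum to the $\aut(\sigma_n)$-action, it conjugates the $\aut(\sigma_n)$-action on $\mathcal{CR}(\sigma_n)$ to itself.

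The heart of the proof, and the step I expect to be the main obstacle, is to convert this rigidity into a finiteness statement, adapting the ideas of Salo~\cite{saloConjugacyReversibleCellular2022}. Uniform continuity yields, for each $k$, a radius $R_h(k)$ so that $\mathcal{L}_k(h(Y))$ is a function only of $\mathcal{L}_{R_h(k)}(Y)$; thus $h$ is encoded by the modulus $R_h$ together with a finite ``local rule'' at each scale $k$. I would argue that there is a threshold $k_0=k_0(h)$ such that the rules at scales $\le k_0$ already determine $h$: one recovers $h$ on the finitely many short periodic orbits directly from the rule, and then pins down $h$ on orbits of arbitrary length by combining the fact that $\aut(\sigma_n)$ acts as the full symmetric group on the orbits of each given length (Theorem~\ref{lem:pretty-trans}), the conjugacy-equivariance of $h$, and the production of finite subshifts realising prescribed language data on long orbits via the Inert Extension Theorem~\cite{kimStructureInertAutomorphisms1991}; continuity and density of the finite subshifts in $\mathcal{CR}(\sigma_n)$ then force $h$ everywhere. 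Granting this finitary determination, $\mathcal{V}(\aut_1(\autinfn))$ is a countable union --- over $k_0\in\bbN$, over the finitely many admissible values of $R_h(1),\dots,R_h(k_0)$, and over the finitely many rules at scales $\le k_0$ --- of finite sets, hence countable; together with the reductions above, this proves the theorem.
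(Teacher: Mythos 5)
Your reductions at the start are sound: filtering $\aut(\autinfn)$ by degree and passing through the exact sequence $1\to\hat{\bbZ}\xrightarrow{\mathcal{N}}\degonen\xrightarrow{\mathcal{V}}\mathrm{Homeo}(\mathcal{CR}(\sigma_n))$ correctly reduce the problem to showing that the image of $\mathcal{V}$ is countable (equivalently, to counting possible Verräumlichungen). But the step you yourself flag as ``the main obstacle'' is a genuine gap, not a detail to be filled in. You assert that for each $h=\hat\Psi$ there is a threshold $k_0(h)$ such that the local rules at scales $\le k_0$ determine $h$, and you gesture at recovering $h$ on long orbits from its values on short orbits using conjugacy-equivariance, pretty-transitivity, and the Inert Extension Theorem. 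No argument is given for why such a threshold exists, and the equivariance properties you list do not obviously pin down $h$ on longer orbits once you know it on shorter ones. The counting at the end also misstates things slightly (there are countably, not finitely, many admissible tuples $(R_h(1),\dots,R_h(k_0))$), though that part is repairable.

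The paper closes exactly this gap with a different, group-theoretic mechanism. Rather than controlling $h$ scale by scale with a threshold depending on $h$, it fixes once and for all the $3$-generated subgroup $G_2(\sigma_n^\ell)\le\Aut(\sigma_n^{2\ell})$ (generated by $\sigma_n^\ell$ and $\Simp^{(2\ell)}(\Gamma_n)$), and proves (Proposition~\ref{prop:centralizer-trivial-orbit-g2-1}) that the centralizer of $G_2(\sigma_n^\ell)$ acting on $\Sym(\Orb_k(\sigma_n^{2\ell}))$ is trivial for $n^\ell\ge 5$. Consequently, if $\Psi$ fixes every element of $G_2(\sigma_n^\ell)$, then $\hat\Psi$ preserves all $\sigma_n^{2\ell}$-orbits, and Proposition~\ref{prop:shifting-verraum} then forces $\Psi\in\mathcal{N}(\hat{\bbZ})$. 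Hence two automorphisms in $\aut_\ell(\autinfn)$ agreeing on $G_2(\sigma_n^\ell)$ lie in the same $\mathcal{N}(\hat{\bbZ})$-coset, and since $\autinfn$ is countable and $G_2(\sigma_n^\ell)$ is finitely generated, there are only countably many possible restrictions. That ``restricting to a fixed finitely generated subgroup determines the coset'' is the finitary determination your proposal needs; the trivial-centralizer lemma is its engine, and your sketch does not supply a substitute for it.
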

Let $n \geq 5$. Let $\Orb_{k}(\sigma_n^\ell)$ be the set of orbits of
$\sigma_n^\ell$ of length $k$.
Let $G_m(\sigma^\ell_n)$ be the subgroup of $\Aut(\sigma_n^{m\ell})$ generated by
$\sigma_n^\ell$ and $\Simp^{(m\ell)}(\Gamma_n)$. In particular, $G_m(\sigma_n^\ell)$ is
generated by $3$ elements.

 We think of elements of $(X_n,\sigma_n^{2})$ as organized
  in two tracks, both containing symbols from the alphabet $\{1,\dots,n\}$. We denote the top track of $x \in (X_n,\sigma_n^2)$ by $x^\topt \in (X_n, \sigma_n)$
  and the bottom track by $x^\bott$.
  Let $\pi$ be a permutation of $\{1,\dots,n\}$
  and let $w$ be a word over the alphabet $\{1,\dots,n\}$. Let
  $g_{w,\pi}$ be the automorphism of $(X_n,\sigma_n^{2})$ which
  applies $\pi$ at position $i$ at the bottom track if
  the top track contains $w$ starting at position $i$.
  Clearly $G_{2}(\sigma_n)$ contains $g_{b,\pi}$ for every permutation $\pi$
  and symbol $b$.
  Now let $w_1, w_2$ be words over the alphabet $\{1,\dots,n\}$  respectively. Let $\pi_1,\pi_2$ be two permutations
  of $\{1,\dots,n\}$.
  Let $\gamma$ be the automorphism which shifts only the top track to the left.
  Then $\gamma^{-k} \circ g_{w_2,\pi_2}\circ \gamma^{k}$
  is the automorphism that applies $\pi_2$ in the bottom track
  at position $i$ if the first track contains $w_2$ starting at
  position $i+k$.
  Therefore $[g_{w_1,\pi_1}, \gamma^{-|w_1|}\circ g_{w_2,\pi_2}\circ
  \gamma^{|w_1|}]$
  equals $g_{w_1w_2,[\pi_1,\pi_2]}$.
  This shows that $G_2(\sigma_n)$ contains $g_{w,\pi}$ for every
  word $w$ and every even permutation $\pi$ since $\Alt(\{1,\dots,n\})$
  is perfect.

  \begin{lem}

  Let $n\geq 5$ and $x \in \Per_k(\sigma_n^2)$ be a point of minimal $\sigma_n^2$-period $k$.
  There is $g \in G_2(\sigma_n)$ such that
  the top track of $g(x)$ has minimal period $k$.
\end{lem}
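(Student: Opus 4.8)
Write $t:=x^{\topt}$ and $b:=x^{\bott}$ for the top and bottom tracks of $x$, both biinfinite words over $\{1,\dots,n\}$, and let $e$ and $d$ be their minimal (ordinary) periods. Since $x$ has minimal $\sigma_n^2$-period $k$, the pair $(t,b)$ has minimal period $k$, so $\operatorname{lcm}(e,d)=k$. If $e=k$ there is nothing to prove, so assume $e<k$; then $d\nmid e$, and in particular $d\ge 2$. The automorphisms I will use are all in the toolbox assembled above: $G_2(\sigma_n)$ contains the element $\tilde s$ induced by the coordinate-exchange permutation of $\{1,\dots,n\}^2$, and it contains $g_{w,\pi}$ for every word $w$ over $\{1,\dots,n\}$ and every even permutation $\pi$ (this last fact is where $n\ge 5$, via perfectness of $\Alt(\{1,\dots,n\})$, enters). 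Conjugating by $\tilde s$ produces, for every such $w$ and $\pi$, an automorphism $h_{w,\pi}:=\tilde s\circ g_{w,\pi}\circ\tilde s\in G_2(\sigma_n)$ which applies $\pi$ at position $i$ of the \emph{top} track whenever the \emph{bottom} track contains $w$ starting at position $i$, and changes nothing else (in particular leaves the bottom track untouched).

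The construction is then: fix $i_0\in\bbZ$, let $w:=b_{[i_0,\,i_0+d-1]}$, and observe that, because $b$ has minimal period $d$, a block of length $d$ occurring in $b$ pins down $b$ up to the shift, so $w$ occurs in $b$ exactly at the positions $i\equiv i_0\pmod d$. After shifting $x$ (which affects neither hypothesis nor conclusion) we may assume $i_0=0$. Choose an even permutation $\pi$ — a $3$-cycle will do — with $\pi(t_0)\ne t_0$, and set $g:=h_{w,\pi}\in G_2(\sigma_n)$. The top track $t'$ of $g(x)$ is then $t'_i=\pi(t_i)$ for $d\mid i$ and $t'_i=t_i$ otherwise. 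It remains to show that $\pi$ can be chosen so that $t'$ has minimal period $k$.

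Since $t$ has period $e\mid k$ and the ``twisting pattern'' $i\mapsto(\,d\mid i\,)$ has period $d\mid k$, the word $t'$ has period $k$; the point is minimality. I would argue prime by prime: fix a prime $p\mid k$, put $f:=k/p$, and show $t'$ is not $f$-periodic. If $v_p(e)>v_p(d)$, then $d\mid f$, so the twisting pattern also has period $f$; hence $t'$ is $f$-periodic iff $t$ is, which it is not (as $v_p(e)=v_p(k)>v_p(f)$). If $v_p(d)>v_p(e)$, then $e\mid f$, so $t_f=t_0$, while $d\nmid f$ makes position $f$ unmodified, so $t'_f=t_f=t_0$ whereas $t'_0=\pi(t_0)\ne t_0$; again $t'$ is not $f$-periodic.

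The remaining case $v_p(e)=v_p(d)$ is the delicate one, and I expect it to be the main obstacle: here neither $t$ nor the twisting pattern has period $f$, so one must exclude a ``coincidental'' $f$-periodicity of $t'$. The key input is that $\gcd(d,f)=d/p$, so $f$ has order exactly $p$ in $\bbZ/d\bbZ$; following the chain of positions $0,f,2f,\dots$ through the modified positions (those $\equiv 0\bmod d$) and the unmodified ones, $f$-periodicity of $t'$ forces, on the one hand, $t_i=t_{i+f}$ for every $i$ in one of the $d-2$ residue classes $\ne 0,-f\pmod d$, and, on the other hand, $\pi(t_j)=t_{j+mf}$ for $j\equiv 0\pmod d$ and all $m$ with $p\nmid m$. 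The first family of relations, once it propagates across enough residue classes, already yields a period of $t$ strictly smaller than $e$; to close the residual case (where it does not) one combines it with the twisted relations, tracking the periods of the restriction of $t$ to the sublattice $d\bbZ$ and its cosets. Carrying out this periodicity bookkeeping cleanly — and checking that the $3$-cycle $\pi$ really can be chosen to defeat all the finitely many primes $p\mid k$ simultaneously — is the technical heart of the argument; granting it, $t'$ has no period $k/p$ for any prime $p\mid k$, hence minimal period $k$, and $g=h_{w,\pi}$ is the required automorphism.
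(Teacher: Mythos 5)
Your approach --- twist the top track at the positions picked out by a period-length window $w$ of the bottom track, then argue about the minimal period of the resulting top track --- is entirely different from the paper's, and it contains a gap that you yourself flag: the case $v_p(e)=v_p(d)$. That case is not a technicality that can safely be deferred. It is not clear that a single even permutation $\pi$ (a $3$-cycle, say) can simultaneously defeat $(k/p)$-periodicity of $t'$ for every prime $p\mid k$ falling in this case: the number of values that $\pi(t_0)$ must avoid grows with the number of such primes, while the alphabet has fixed size $n$. Nor is the claim that the relations $t_i=t_{i+f}$ ``propagate across enough residue classes'' to force a period of $t$ strictly smaller than $e$ actually established; in the residual configuration you describe, the interaction between the untwisted relations and the twisted relations $\pi(t_j)=t_{j+mf}$ is left as a promissory note. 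As written the proof does not close.

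The paper proves the lemma by a maximality argument that sidesteps the periodicity bookkeeping entirely. Choose $y$ in the $G_2(\sigma_n)$-orbit of $x$ whose top track has maximal minimal period $m$, and suppose for contradiction that $m<k$. For $j=0,\dots,m-1$ set $w_j:=y^\topt_{[j,j+k-1]}$ and let $\theta_j$ be an even permutation sending $y^\bott_j$ to $1$. Because $y^\topt$ has minimal period $m$, the length-$k$ window $w_j$ occurs in $y^\topt$ precisely at positions congruent to $j$ modulo $m$, so the $m$ automorphisms $g_{w_j,\theta_j}$ modify pairwise disjoint sets of bottom-track coordinates and hence commute. Their product $h\in G_2(\sigma_n)$ fixes $y^\topt$ and sets the first $m$ bottom-track symbols of $y$ equal to $1$. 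It follows that $h(y)^\bott$ is either constant --- impossible, since then the minimal $\sigma_n^2$-period of $h(y)$ would be $m<k$ --- or has minimal period strictly greater than $m$; swapping the two tracks (an element of $\Simp^{(2)}(\Gamma_n)\subseteq G_2(\sigma_n)$) then produces a point in the $G_2(\sigma_n)$-orbit of $x$ whose top track has minimal period $>m$, contradicting the maximality of $m$. The maximality-plus-swap device is the idea your proposal is missing: instead of one twist whose effect on periods must be controlled prime by prime, one applies a product of $m$ compatible twists that \emph{normalize} the bottom track, after which the contradiction is immediate.
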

\begin{proof}
  Let $y$ be a point in the $G_2(\sigma_n)$ orbit of $x$
  for which $m:=\per(y^\topt)$ is maximal. If $m=k$ we are done.
  So assume $m<k$.
  Now we use automorphisms of the
  type constructed above to make the first
  $m$ symbols of the bottom track all equal to one.
  More precisely, for $a \in \{1,\dots,n\}$ let $\pi_a$ be an even permutation
  on $\{1,\dots,n\}$ such that $\pi_a(a)=1$.
  Set $h := \prod_{j=0}^{m-1} g_{w_j,\theta_j}$ with $w_j:=y^\topt_{[j,j+k-1]}$ and $\theta:=\pi_{y^\bott_j}$.
  The automorphisms in the product commute with each other
  and the symbols at position $0,\dots,m-1$ in
  the bottom track of $h(y)$ are all equal to $1$.
  Hence the bottom track of $h(y)$ either is constant
  or has minimal period longer then $m$.
  The top track of $h(y)$ on the other hand equals $y^\topt$.
  The bottom track of $h(y)$ can't be constant, because then
  $h(y)$ would have minimal $\sigma_n^2$-period $m \neq k$.
  Hence it must have minimal period $\ell>m$.
  Let $z \in G_2(\sigma_n)(x)$ be the configuration
  obtained from $h(y)$ by swapping the top and bottom track.
  Then $z$ is a configuration in the $G_2(\sigma_n)$-orbit
  of $x$ whose top track has minimal period longer than $m$.
  But this contradicts the maximality of $m$.
\end{proof}

\begin{prop}
\label{prop:centralizer-trivial-orbit-g2-1}
  Let $k \in \bbN$ and let $n\geq 5, \ell \ge 1$.
  The centralizer of the image of $G_2(\sigma_n^\ell)$ in $\Sym(\Orb_{k}(\sigma_n^{2\ell}))$ is trivial.
\end{prop}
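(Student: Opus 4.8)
The plan is to mimic the proof of \Cref{cor:centralizer-in-per}, using the two-track picture of $(X_n,\sigma_n^2)$ and the automorphisms $g_{w,\pi}$ produced above. First I would reduce to $\ell=1$: the topological conjugacy $(X_n,\sigma_n^\ell)\cong(X_{n^\ell},\sigma_{n^\ell})$ carries $\sigma_n^{2\ell}$ to $\sigma_{n^\ell}^2$, $\Simp^{(2\ell)}(\Gamma_n)$ to $\Simp^{(2)}(\Gamma_{n^\ell})$ and $\sigma_n^\ell$ to $\sigma_{n^\ell}$, hence $G_2(\sigma_n^\ell)$ to $G_2(\sigma_{n^\ell})$ and $\Orb_k(\sigma_n^{2\ell})$ to $\Orb_k(\sigma_{n^\ell}^2)$; since $n^\ell\ge n\ge 5$ it suffices to treat $G_2(\sigma_n)\subseteq\Aut(\sigma_n^2)$. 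So suppose $\varphi\in\Sym(\Orb_k(\sigma_n^2))$ centralizes the image of $G_2(\sigma_n)$ and $\varphi(O)\ne O$ for some orbit $O$; set $O':=\varphi(O)$. The reduction that drives everything is that it is enough to find one $g\in G_2(\sigma_n)$ with $g(O)\ne O$ and $g(O')=O'$: then $\varphi(g(O))\ne\varphi(O)=O'=g(\varphi(O))$ by injectivity of $\varphi$, so $\varphi$ and the image of $g$ do not commute, a contradiction.

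To construct $g$ I would first normalize. Choose a representative $x\in\Per_k(\sigma_n^2)$ of $O$ of minimal $\sigma_n^2$-period $k$; by the preceding lemma there is $h\in G_2(\sigma_n)$ such that $h(x)^\topt$ has minimal period $k$. Replacing $O,O'$ by $h(O),h(O')$ — which still satisfy $\varphi(O)=O'$ because $\varphi$ commutes with $h$ — we may assume $x^\topt$ has minimal $\sigma_n$-period $k$. I record three elementary facts. (a) Since $\sigma_n^2$ shifts both tracks by one, for a point $z$ of minimal $\sigma_n^2$-period $k$ whose top track has minimal period $k$, the only member of the $\sigma_n^2$-orbit of $z$ with top track equal to $z^\topt$ is $z$ itself; in particular two such points lie in the same orbit iff they are equal. (b) If a bi-infinite sequence $t$ has minimal period $k$, any length-$k$ window $t_{[j,j+k-1]}$ occurs in $t$ exactly at the positions $\equiv j\pmod k$. (c) Such a window cannot occur in any sequence whose period properly divides $k$; consequently, if $w:=x^\topt_{[0,k-1]}$ occurs in $y^\topt$ for some point $y$ of $\sigma_n^2$-period $k$, then $y^\topt$ has minimal period $k$ and in fact $y^\topt=\sigma_n^{-i}x^\topt$, where $i$ is the position at which $w$ occurs.

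Now take $w:=x^\topt_{[0,k-1]}$ and a representative $y$ of $O'$ of minimal $\sigma_n^2$-period $k$. If $w$ does not occur in $y^\topt$, pick an even permutation $\pi$ of $\{1,\dots,n\}$ with $\pi(x^\bott_0)\ne x^\bott_0$ (a $3$-cycle works, as $n\ge 5$). Then $g_{w,\pi}\in G_2(\sigma_n)$ fixes $y$, hence $O'$; and it changes $x$ only in the bottom track, at positions $\equiv 0\pmod k$, replacing $x^\bott_0$ by $\pi(x^\bott_0)$, so $g_{w,\pi}(x)\ne x$ while its top track is unchanged, whence $g_{w,\pi}(O)\ne O$ by (a). If $w$ does occur in $y^\topt$, then by (c) we may, after replacing $y$ by $\sigma_n^{2i}y\in O'$, assume $y^\topt=x^\topt=:t$. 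Then $x^\bott\ne y^\bott$ by (a) (otherwise $x=y$ and $O=O'$), say $x^\bott_{j_0}\ne y^\bott_{j_0}$; set $w':=t_{[j_0,j_0+k-1]}$ and pick an even permutation $\pi$ of $\{1,\dots,n\}$ fixing $y^\bott_{j_0}$ and moving $x^\bott_{j_0}$ (a $3$-cycle on three symbols other than $y^\bott_{j_0}$, one of them $x^\bott_{j_0}$; here $n\ge 5$ gives enough room). Then $g_{w',\pi}\in G_2(\sigma_n)$ fixes $y$ (since $\pi$ fixes $y^\bott_{j_0}$ and $y^\bott$ has period dividing $k$), hence $O'$; and it sends $x$ to a point with top track $t$ and bottom-track entry $\pi(x^\bott_{j_0})\ne x^\bott_{j_0}$ at position $j_0$, so $g_{w',\pi}(O)\ne O$ by (a) and (b). In both cases we have the required $g$, contradicting that $\varphi$ centralizes the image of $G_2(\sigma_n)$.

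The main obstacle is the normalization step together with the use of the $g_{w,\pi}$: this is where $n\ge 5$ is essential, both for the preceding lemma and so that $\Alt(\{1,\dots,n\})$ is perfect — the latter being what guarantees $g_{w,\pi}\in G_2(\sigma_n)$ for all even $\pi$. The point is that once the top track of $x$ is pinned at minimal period $k$ one can no longer move $x$ within its orbit by a shift without changing the top track, so a genuine difference between $O$ and $O'$ must already be visible in the bottom track at a single coordinate, where the selective automorphism $g_{w,\pi}$ can act on it. The remaining verifications — that $g_{w,\pi}$ and $g_{w',\pi}$ have exactly the stated effect on $O$ and $O'$ — are routine from the description of $g_{w,\pi}$ and the fact that $\sigma_n^2$ shifts both tracks by one.
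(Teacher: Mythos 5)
Your proof is correct and follows essentially the same strategy as the paper's: reduce to $\ell=1$ via the conjugacy with $(X_{n^\ell},\sigma_{n^\ell})$, use the preceding lemma to normalize the top track of a representative of $O$ to minimal period $k$, and then build a selective automorphism $g_{w,\pi}\in G_2(\sigma_n)$ that moves $O$ but fixes $O'$, splitting into cases according to whether the top tracks of the two representatives are shift-equivalent (your criterion ``$w$ occurs in $y^\topt$'' is equivalent to the paper's ``same $\sigma_n$-orbit'' given the minimal-period normalization). The only differences are cosmetic bookkeeping — you align the top tracks by shifting $y$ and replace the orbits by their $h$-images up front, where the paper carries an offset $j$ and composes with $f$ at the end — and your facts (a)--(c) make explicit some periodicity checks the paper leaves implicit.
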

\begin{proof}
  The topological conjugacy between
  $(X_n,\sigma_n^\ell)$ and $(X_{n^\ell}, \sigma_{n^\ell})$
  allows us to only deal with the case $\ell=1$.
  Let $h$ be a non-trivial element of $\Sym(\Orb_{k}(\sigma_n^{2}))$.
  For a point $x \in \MPer_k(\sigma_n^2)$
  denote by $[x]$ its $\sigma_n^2$-orbit.
  Take two elements $x,y$ of $\MPer_k(\sigma_n^{2})$ belonging to
  different $\sigma_n^{2}$ orbits such
  that $h([x])=[y]$.
  By the previous lemma there is
  $f \in G_2(\sigma_n)$ such that
  the top track of $f(x)$
  has minimal period $k$.
  We now want to find $g \in G_2(\sigma_n)$ which
  fixes $f(y)$ and moves $f(x)$ out of its $\sigma_n$-orbit.
  To do so let $\pi_{a,b}$ for $a,b \in \{1,\dots,n\}, a\neq b$
  be an even permutation which fixes $a$ but not $b$.

  If $f(x)^\topt$ and $f(y)^\topt$ are
  not in the same $\sigma_n$-orbit,
  set $g:=g_{w,\theta}$ with \[w:=f(x)^\topt_{[0,\dots,k-1]}
  \text{ and }\theta=\pi_{f(y)^\bott_0,f(x)^\bott_0}.\]

  If $f(x)^\topt$ and $f(y)^\topt$ are in the same $\sigma_n$-orbit, then there are $m,j \in \{0,\dots,k-1\}$
  such that $f(x)^\topt = \sigma_n^j(f(y)^\topt)$
  and $f(x)^\bott_m \neq \sigma_n^j(f(y)^\bott)_m$.
  We can now simply set $g:=g_{w,\theta}$ with
  \[w:=f(x)^\topt_{[m,\dots,m+k-1]} \text{ and }\theta=\pi_{f(y)^\bott_{m+j},f(x)^\bott_m}.\]
  In both cases $g$ changes precisely one symbol
  in the bottom track of $f(x)$ and no symbol in $f(y)$.
  Since $g$ does not change the top tracks of $f(x)$
  and $f(y)$, we have $g(f([x])) \neq f([x])$ but
  $g(f([y]))=f([y])$.
  This shows that $h$ can not be in the
  centralizer of the action $G_2(\sigma_n)$ on the $\sigma_n^2$-orbits.
  Otherwise \[h(g(f([x])))=g(f(h([x])))=g(f([y]))=f([y])=f(h([x]))=h(f([x])).\]
  But this would contradict $g(f([x]))\neq f([x])$.
\end{proof}
We are now ready to prove the countability result.

\begin{proof}[Proof of Theorem~\ref{thm:left-cosets-countable}]
   %Let $T \subseteq \Aut^{(\infty)}(\sigma_n)$ be
   %a finite set which together with the inerts generates the
   %stabilized automorphism group.
  It is enough to show that $\mathcal{N}(\hat{\bbZ})$
  has countably many left cosets in
  $\aut_\ell(\autinf(\sigma_n))$
  for $\ell\geq 3$.

  We want to show that every pair of automorphisms
  in
  $\aut_\ell(\autinf(\sigma_n))$ which agree on
  $G_2(\sigma_n^\ell)$ must lie in the same $\profact(\hat{\bbZ})$
  coset.
  For this it is enough to show
  that every automorphism $\Psi \in \aut_\ell(\autinf(\sigma_n))$
  which fixes all elements of $G_2(\sigma_n^\ell)$
  is in $\mathcal{N}(\profint)$.
  Let $\Psi$ be such an automorphism. For all $\varphi \in G_2(\sigma_n^\ell)$ and
  $x \in \Per(\sigma_n^{2\ell})$ we know that
  \begin{align*}
    \varphi(x) = \Psi(\varphi)(x)= ((\spatial)^{-1} \circ \varphi \circ \spatial)(x).
  \end{align*}
  Hence $\spatial^{(2\ell)}$ is in the centralizer of $G_2(\sigma_n^\ell)$ in
  $\Sym(\Orb_{k}(\sigma_n^{2\ell}))$ for $k:=\per(x)$.
  But this centralizer is trivial by \Cref{prop:centralizer-trivial-orbit-g2-1}.
  Therefore for every
  $x \in \Per(\sigma_n^{2\ell})$ there is $k(x) \in \bbZ$ such that
  $\spatial(x) = \sigma_n^{2k(x)}(x)$. Then by Proposition~\ref{prop:shifting-verraum} we know that $\Psi$ is
  in the image of $\mathcal{N}(\hat{\bbZ})$ in $\Aut_\ell(\Aut^\infty(\sigma_n^{2\ell}))$.
\end{proof}

\bibliographystyle{plain}
\bibliography{references}
\end{document}